\documentclass[english,microtype]{scrartcl}

\usepackage[T1]{fontenc}
\usepackage[utf8]{inputenc}
\usepackage[english]{babel}
\usepackage{lmodern}
\usepackage{exscale}
\usepackage[babel]{microtype}
\usepackage{amsmath, amssymb, mathtools, mathrsfs}
\usepackage[only,mapsfrom,llbracket,rrbracket]{stmaryrd}
\usepackage{xparse}
\usepackage[inline]{enumitem}
\usepackage{wrapfig}
\usepackage{xcolor}

\usepackage{tikz}
\usetikzlibrary{cd,shapes.geometric,positioning,patterns,calc}
\setlength\mathsurround{0pt}    

\usepackage[autostyle]{csquotes}
\usepackage[sorting=nyt, maxnames=10]{biblatex}
\addbibresource{sfm-mn.bib}

\usepackage[numbered]{bookmark}
\usepackage{hyperref}
\usepackage{amsthm, thmtools}
\usepackage[xspace]{ellipsis}
\setcounter{tocdepth}{1}
\hypersetup{
  pdfencoding=auto,      
  colorlinks,
  urlcolor = {blue!80!black},
  linkcolor = {red!70!black},
  citecolor = {green!50!black}
}

\numberwithin{figure}{section}
\numberwithin{equation}{section}
\numberwithin{table}{section}
\NewDocumentCommand{\declthm}{m m O{plain}}{
  \declaretheorem[
  name = #2,
  sibling = equation,
  style = #3
  ]
  {#1}
}
\declaretheorem[sibling=equation]{theorem}
\declaretheorem[sibling=equation]{proposition}
\declaretheorem[sibling=equation]{corollary}
\declaretheorem[sibling=equation]{lemma}
\declaretheorem[sibling=equation,style=definition]{definition}
\declaretheorem[sibling=equation,style=remark]{example}
\declaretheorem[sibling=equation,style=remark]{remark}
\declaretheorem[name=Description, sibling=equation,style=remark]{mydescription}
\declaretheorem[name=Theorem]{theoremI}

\newcommand{\ashk}{\textnormal{¡}}
\newcommand{\gk}{\mathfrak{g}}
\newcommand{\K}{\Bbbk}
\newcommand{\R}{\mathbb{R}}
\newcommand{\Q}{\mathbb{Q}}
\newcommand{\Z}{\mathbb{Z}}
\newcommand{\PP}{\mathsf{P}}
\newcommand{\QQ}{\mathsf{Q}}
\newcommand{\MM}{\mathsf{M}}
\newcommand{\CC}{\mathsf{C}}
\newcommand{\DD}{\mathsf{D}}

\newcommand{\vol}{\mathrm{vol}}
\DeclareMathOperator{\id}{id}
\DeclareMathOperator{\im}{im}
\DeclareMathOperator{\Tw}{Tw}
\DeclareMathOperator{\Def}{Def}

\newcommand{\BF}{\mathcal{BF}}  
\newcommand{\Conf}{\mathrm{Conf}}
\newcommand{\Conft}{\underline{\Conf}}
\newcommand{\OmPA}{\Omega_{\mathrm{PA}}}

\newcommand{\ee}{\mathsf{e}}
\newcommand{\eV}{\ee^{\vee}}
\newcommand{\ssc}{\mathsf{sc}}
\newcommand{\vsc}{\mathsf{cd}}
\newcommand{\scV}{\ssc^{\vee}}
\newcommand{\vscV}{\vsc^{\vee}}
\newcommand{\Ass}{\mathsf{Ass}}
\newcommand{\Pois}{\mathsf{Pois}}
\newcommand{\Lie}{\mathsf{Lie}}
\newcommand{\Lieto}{\overrightarrow{\Lie}}
\newcommand{\hoLie}{\mathsf{hoLie}}
\newcommand{\hoLieto}{\mathsf{ho}\overrightarrow{\mathsf{Lie}}}
\newcommand{\hoe}{\mathsf{hoe}}

\newcommand{\Com}{\mathsf{Com}}
\newcommand{\Comto}{\overrightarrow{\Com}}
\newcommand{\FM}{\mathsf{FM}}
\newcommand{\VFM}{\mathsf{CFM}}
\newcommand{\SC}{\mathsf{SC}}
\newcommand{\VSC}{\mathsf{CD}}
\newcommand{\ESC}{\mathsf{ESC}}
\newcommand{\Disk}{\mathsf{Disk}}
\newcommand{\fr}{\mathrm{fr}}

\newcommand{\Bij}{\mathsf{Bij}}
\newcommand{\Gra}{\mathsf{Gra}}
\newcommand{\Graphs}{\mathsf{Graphs}}
\newcommand{\graphs}{\mathsf{graphs}}

\newcommand{\SGraphs}{\mathsf{SGraphs}}

\newcommand{\VGra}{\mathsf{CGra}}
\newcommand{\VGraphs}{\mathsf{CGraphs}}
\newcommand{\vgraphs}{\mathsf{cgraphs}}
\newcommand{\fGC}{\mathrm{fGC}}
\newcommand{\GC}{\mathrm{GC}}
\newcommand{\fVGC}{\mathrm{fCGC}}
\newcommand{\VGC}{\mathrm{CGC}}
\newcommand{\HGC}{\mathrm{HGC}}
\newcommand{\fHGC}{\mathrm{fHGC}}
\newcommand{\conn}{\mathrm{cn}}
\newcommand{\calT}{\mathcal{T}}
\DeclareMathOperator{\inc}{in}
\DeclareMathOperator{\parent}{par}
\newcommand{\rooot}{\mathrm{root}}

\tikzset{
  vtx/.style={draw, minimum size = 1ex, inner sep = 2pt}, 
  ext/.style={vtx, fill=white, font=\small}, 
  int/.style={vtx, fill=black}, 
  unk/.style={vtx, fill=gray},  
  exta/.style={circle, ext}, inta/.style={circle, int}, unka/.style={circle, unk},
  extt/.style={semicircle,ext}, intt/.style={semicircle, int}, unkt/.style={semicircle, unk}
}


\author{Najib Idrissi\thanks{Université de Paris and Sorbonne Université, CNRS, IMJ-PRG, F-75006 Paris, France. Email: \href{mailto:najib.idrissi-kaitouni@imj-prg.fr}{najib.idrissi-kaitouni@imj-prg.fr}}}
\title{Formality of a higher-codimensional Swiss-Cheese operad}
\date{November 3, 2020}

\begin{document}

\maketitle
\begin{abstract}
  We study bicolored configurations of points in the Euclidean $n$-space that are constrained to remain either inside or outside a fixed Euclidean $m$-subspace, with $n - m \ge 2$.
  We define a higher-codimensional variant of the Swiss-Cheese operad, called the complementarily constrained disks operad $\VSC_{mn}$, associated to such configurations.
  The operad $\VSC_{mn}$ is weakly equivalent to the operad of locally constant factorization algebras on the stratified space $\{\R^{m} \subset \R^{n}\}$.
  We prove that this operad is formal over $\R$.
\end{abstract}

\tableofcontents

\section*{Introduction}

The little disks operads $\DD_{n}$ (for $n \ge 1$) represent operations acting on $n$-fold loop spaces.
They have had many applications in homotopy theory over the years (see e.g.~\cite{Fresse2019} for a survey).
Elements of $\DD_{n}(k)$ consist of configurations of $k$ disks inside the unit $n$-disk.
Voronov's Swiss-Cheese operads $\SC_{n}$~\cite{Voronov1999} (for $n \ge 2$) are relative versions of the little disks operads.
They encode central actions of $\DD_{n}$-algebras on $\DD_{n-1}$-algebras.
Elements of $\SC_{n}(k,l)$ are given by configurations of $k$ half-disks and $l$ full disks in the unit upper half-disk.

In this article, we introduce higher-codimensional variants of the Swiss-Cheese operads $\VSC_{mn}$ (for $n-2 \ge m \ge 1$), called the ``complementarily constrained (little) disks operads''.\footnote{In previous versions of this article, this operad was denoted $\mathsf{VSC}_{mn}$ and called the ``variant Swiss-Cheese operad''.}
These operads encode actions of $\DD_{n}$-algebras on a $\DD_{m}$-algebras by central derivations (see Section~\ref{sec:pres-homol}).
Elements of $\VSC_{mn}(k,l)$ are given by configurations of $k$ ``terrestrial'' disks centered on $D^{m} \subset D^{n}$ and  $l$ ``aerial'' entirely contained in $D^{n} \setminus D^{m}$.

A fundamental property of $\DD_{n}$ is its formality~\cite{Kontsevich1999,Tamarkin2003,LambrechtsVolic2014,Petersen2014,FresseWillwacher2015}: the cohomology $H^{*}(\DD_{n};\Q)$ is quasi-isomorphic (as a cooperad in CDGAs) to the forms on $\DD_{n}$.
Unlike the little disks operads, the Swiss-Cheese operads are not formal~\cite{Livernet2015,Willwacher2017a}.
In this paper, we establish the following result:

\begin{theoremI}[See Theorem~\ref{thm:main}]\label{thm:intro}
  For $n - 2 \ge m \ge 1$, the complementarily constrained disks operad $\VSC_{mn}$ is formal over $\R$.
\end{theoremI}

The operad $\VSC_{(n-1)n}$ contains the Swiss-Cheese operad $\SC_{n}$ as a suboperad of connected components.
It thus follows from arguments of Livernet~\cite{Livernet2015} that $\VSC_{(n-1)n}$ is not formal (Remark~\ref{rmk:livernet}).
By~\cite[Section~5.1]{Willwacher2017a}, nonformality of $\SC_{n}$ is equivalent to nonformality of the inclusion $\DD_{n-1} \hookrightarrow \DD_{n}$ (established in~\cite{TurchinWillwacher2018}).
Since the inclusion $\DD_{m} \hookrightarrow \DD_{n}$ is formal for $n \ge m+2$, Theorem~\ref{thm:intro} might not be a surprise.
However, Willwacher studied another generalization of the Swiss-Cheese operad, the ``extended Swiss-Cheese operad'' $\ESC_{mn}$~\cite{Willwacher2017a}.
He proved that its formality is equivalent to the formality of $\DD_{m} \subset \DD_{n}$~\cite[Section~5.1]{Willwacher2017a}.
The difference with $\VSC_{mn}$ is that in $\VSC_{mn}$, the aerial disks are forbidden from touching the ``ground'' $D^{m}$, whereas this is allowed in $\ESC_{mn}$ (see Remark~\ref{rmk:difference}).
The argument used for the formality of $\ESC_{mn}$ thus does not seem easily adaptable: $\VSC_{mn}$ is obtained by removing a subspace from $\ESC_{mn}$, an operation which is usually difficult to deal with in homotopy theory.
It is not clear that formality of $\DD_{m} \subset \DD_{n}$ directly implies formality of $\VSC_{mn}$, or conversely.

\paragraph{Motivation}
\label{sec:motivation}

The general motivation for this article comes from the study of configuration spaces started in previous works.
Campos--Willwacher~\cite{CamposWillwacher2016} and the author~\cite{Idrissi2018b} provided combinatorial models for the real homotopy types of configuration spaces of simply connected closed smooth manifolds.
Campos, Lambrechts, Willwacher, and the author~\cite{CamposIdrissiLambrechtsWillwacher2018}  provided similar models for configuration spaces of compact, simply connected smooth manifolds of dimension $\ge 4$.
Campos, Ducoulombier, Willwacher, and the author~\cite{CamposDucoulombierIdrissiWillwacher2018} studied framed configuration spaces of orientable closed manifolds, i.e.\ configurations of points equipped with trivializations of the tangent spaces.

In each of these articles, knowing models for the little disks operads or their variants was essential.
Configuration spaces of $\R^{n}$ are intimately linked to the little disks operad $\DD_{n}$, and closed manifolds are locally homeomorphic to $\R^{n}$.
The formality of $\DD_{n}$, and more precisely its proof by Kontsevich and Lambrechts--Volić, was thus essential in finding models for configurations spaces of closed manifolds in~\cite{CamposWillwacher2016,Idrissi2018b}.
Similarly, a manifold with boundary is locally homeomorphic to the upper half-space $\mathbb{H}^{n} \subset \R^{n}$, and configuration spaces of $\mathbb{H}^{n}$ are linked to the Swiss-Cheese operad $\SC_{n}$.
While $\SC_{n}$ is not formal, Willwacher~\cite{Willwacher2015a} defined a model for the real homotopy type of $\SC_{n}$ which was used extensively in~\cite{CamposIdrissiLambrechtsWillwacher2018}.
For framed configuration spaces~\cite{CamposDucoulombierIdrissiWillwacher2018}, we used the model for the framed little disks operad due to Khoroshkin--Willwacher~\cite{KhoroshkinWillwacher2017}.

Our goal is to study the configuration spaces of the complement $N \setminus M$, where $N$ is a closed $n$-manifold and $M$ is a closed sub-$m$-manifold of codimension $\ge 2$ (e.g.\ the complement of a knot $S^{3} \setminus K$).
Such a pair $(N,M)$ is locally homeomorphic to the stratified space $(\R^{n},\R^{m})$.
Using the analogy above, configuration spaces of $(\R^{n},\R^{m})$ are linked to the operad $\VSC_{mn}$.
Based on the previous works cited above, we are led to expect that models for $\VSC_{mn}$ will produce models for $\Conf_{N \setminus M}$ by adapting and generalizing the proof, just like models for $\DD_{n}$ produced to models for configuration spaces of closed manifolds.
In this article, we find that models for $\VSC_{mn}$ are as simple as possible: the operad is formal, i.e.\ the cohomology $H^{*}(\VSC_{mn})$ is a model.

\paragraph{Proof strategy and outline}

The proof of our theorem is inspired by Kontsevich's~\cite{Kontsevich1999} proof of the formality of the little disks operad and its improvement by Lambrechts--Voli\'{c}~\cite{LambrechtsVolic2014}.
We first define the compactifications $\VFM_{mn}(k,l)$ of $\Conf_{\R^{m}(k)} \times \Conf_{\R^{n} \setminus \R^{m}}(l)$, inspired by the Fulton--MacPherson compactification~\cite{AxelrodSinger1994,FultonMacPherson1994,Sinha2004}.
These compactifications form an operad with the homotopy type of $\VSC_{mn}$.
We build an intermediate cooperad, $\vgraphs_{mn}$, which serves as a bridge between the cohomology $H^{*}(\VFM_{mn})$ and the piecewise semi-algebraic forms (see~\cite{HardtLambrechtsTurchinVolic2011}) $\OmPA^{*}(\VFM_{mn})$.
The definition of $\vgraphs_{mn}$ is inspired by  Willwacher's model for the Swiss-Cheese operad~\cite{Willwacher2015a} and by~\cite[Section~8]{KhoroshkinWillwacher2017}.
The map $\vgraphs_{mn} \to \OmPA^{*}(\VFM_{mn})$ is defined by integrals.
We cannot find a direct map $\vgraphs_{mn} \to H^{*}(\VFM_{mn})$, as the differential of $\vgraphs_{mn}$ depends on non-explicit integrals.
Using vanishing results on the cohomology of some graph complex, we are able to simplify $\vgraphs_{mn}$ up to homotopy, and then map it to $H^{*}(\VFM_{mn})$.

Section~\ref{sec:prerequisites} contains background on operads, the little disks operads, and piecewise semi-algebraic forms.
In Section~\ref{sec.defin-comp-with}, we define the compactifications $\VFM_{mn}$ and we compare them with $\VSC_{mn}$.
We give examples of $\VSC_{mn}$-algebras based on relative iterated loop spaces.
In Section~\ref{sec:cohom-sfm}, we compute the cohomology of $\VSC_{mn}$.
We give a presentation of its homology $\vsc_{mn} = H_{*}(\VSC_{mn})$.
In Section~\ref{sec.graph-complexes}, we start by reviewing Kontsevich's proof of the formality of the little disks operad, and we define the cooperad $\vgraphs_{mn}$ that will be used to adapt that proof to $\VFM_{mn}$.
We moreover construct the map from $\vgraphs_{mn}$ to $\OmPA^{*}(\VFM_{mn})$ using integrals.
In Section~\ref{sec:proof-formality}, we first show that certain integrals used in the definition of $\vgraphs_{mn}$ can be simplified.
We construct the map into $H^{*}(\VFM_{mn})$ and we show that all our maps are quasi-isomorphisms.
In Appendix~\ref{sec:twist-relat-cooper}, we define twisting of relative cooperads.
In Appendix~\ref{sec:semi-algebr-comp}, we sketch a proof that $\VFM_{mn}(U,V)$ is an SA manifold and that canonical projections are SA bundles.

\paragraph{Acknowledgments}

The author would like to thank Benoit Fresse, Matteo Felder, Muriel Livernet, and Thomas Willwacher for fruitful discussions and helpful comments.
The author would also like to thank the anonymous referees for detailed and thoughtful reports and many suggestions for improvements.
The author acknowledges support from ERC StG 678156--GRAPHCPX.\@

\section{Prerequisites}
\label{sec:prerequisites}

We work with cohomologically graded modules over the base field $\R$ (except in Section~\ref{sec.comp-cohom} which is over $\Z$).
The cohomology (resp.\ homology) of a space is concentrated in nonnegative (resp.\ nonpositive) degrees.
For a graded vector space $V$, the free graded symmetric algebra $S(V)$ is $\R[V^{\mathrm{even}}] \otimes \Lambda(V^{\mathrm{odd}})$, i.e.\ the tensor product of the polynomial algebra on even elements by the exterior algebra on odd elements.
Homogeneous elements $x,y$ satisfy $yx = (-1)^{(\deg x)(\deg y)} xy$.

\subsection{Operads}
\label{sec:operads}

We work extensively with operads and cooperads and we assume basic proficiency with the theory.
General references include~\cite{LodayVallette2012} and~\cite[Part~I(a)]{Fresse2017}.
We will usually label inputs by elements of arbitrary finite sets rather than numbers.
Briefly, let $\Bij$  be the category of finite sets and bijections.
A symmetric collection is a functor $\Bij \to \mathcal{C}$ where $\mathcal{C}$ is a symmetric monoidal category.
For $k \ge 0$, we let $\underline{k} = \{ 1, \dots, k\}$.
A symmetric collection $\MM$ can equivalently be seen as a symmetric sequence $\{ \MM(n) \coloneqq \MM(\underline{n}) \}_{n \ge 0}$ with actions of $\Sigma_{n} = \operatorname{Aut}_{\Bij}(\underline{n})$ on $\MM(n)$.

For a pair of finite sets $W \subset U$, we define the quotient $U/W \coloneqq (U \setminus W) \sqcup \{*\}$ (note that $U/\varnothing = U \sqcup \{*\}$).
For $u \in U$, we let $[u] \in U/W$ be its class in the quotient.
An operad $\PP$ is a symmetric collection equipped with composition maps $\circ_{W} : \PP(U/W) \otimes \PP(W) \to \PP(U)$, for each pair $W \subset U$, satisfying the usual axioms.
A cooperad $\CC$ is a symmetric collection equipped with cocomposition maps $\circ_{W}^{\vee} : \CC(U) \to \CC(U/W) \otimes \CC(W)$.
A Hopf cooperad~\cite{Fresse2017} a cooperad in the category of commutative differential-graded algebras (CDGAs).

We also deal with some special particular bicolored operads called ``relative operads''~\cite{Voronov1999} or ``Swiss-Cheese type operads''~\cite{Willwacher2016}.
Given an operad $\PP$, a relative $\PP$-operad is an operad in the category of right $\PP$-modules.
Equivalently, a relative $\PP$-operad is a bisymmetric collection, i.e.\ a functor $\QQ : \Bij \times \Bij \to \mathcal{C}$, equipped with composition maps:
\begin{align*}
  \circ_{T} : \QQ(U, V/T) \otimes \PP(T) & \to \QQ(U, V) & \text{for } V \subset T, \\
  \circ_{W,T} : \QQ(U/W, V) \otimes \QQ(W, T) & \to \QQ(U, V \sqcup T) & \text{for } W \subset U.
\end{align*}
We will often write ``the operad $\QQ$'' when $\PP$ is implicit from the context.
Relative cooperad are defined dually.
We also apply the adjective ``Hopf'' to refer to such cooperads in the category of CDGAs.

\subsection{Little disks and variants}
\label{sec:little-disks-fulton}

Fix some $n \ge 0$ and
let $D^{n} = \{ x \in \R^{n} \mid \|x\| \le 1 \}$ be the closed disk.
The space $\DD_{n}(r)$ is the space of maps $c : (D^{n})^{\sqcup r} \to D^{n}$ such that:
\begin{enumerate*}[label=(\roman*)]
\item each $c_{i} : D^{n} \to D^{n}$ is an embedding given by the composition of a translation and a positive rescaling;
\item the interiors of two different little disks are disjoint, i.e.\ $c_{i}(\mathring{D}^{n}) \cap c_{j}(\mathring{D}^{n}) = \varnothing$ for $i \neq j$.
\end{enumerate*}
Using composition of embeddings, the collection $\DD_{n} = \{ \DD_{n}(r) \}_{r \ge 0}$ forms a topological operad called the little $n$-disks operad.

Let us now fix notations for the (Axelrod--Singer--)Fulton--MacPherson operad~\cite{FultonMacPherson1994,AxelrodSinger1994} (see also~\cite{Sinha2004} and~\cite[Chapter~5]{LambrechtsVolic2014}).
Given some space $X$, and a finite set $U$, we define the configuration space:
\begin{equation}
  \Conf_{X}(U) \coloneqq \{ (x_{u})_{u \in U} \in X^{U} \mid \forall u \neq v, \, x_{u} \neq x_{v} \}.
\end{equation}
Consider the quotient $\Conft_{n}(U) \coloneqq \Conf_{\R^{n}}(U) / \R^{n} \rtimes \R_{>0}$ by the action of translations and positive rescalings.
This space embeds in $(S^{n-1})^{\Conf_{U}(2)} \times [0,+\infty]^{\Conf_{U}(3)}$ using the maps $\theta_{ij}$ and $\delta_{ijk}$ from~\eqref{eq:theta}, \eqref{eq:delta}.
The Fulton--MacPherson compactification $\FM_{n}(r)$ is the closure of the image of this embedding.
The collection $\FM_{n}$ forms an operad~\cite[Section~5.2]{LambrechtsVolic2014} with the same homotopy type as $\DD_{n}$, see~\cite{Markl1999} and~\cite[Proposition~4.3]{Salvatore2001}.

\begin{remark}\label{rmk:diskn}
  There is an operad $\Disk_{n}^{\fr}$ related to locally constant framed factorization algebras on $\mathring{D}^{n}$ and which has the same homotopy type as $\DD_{n}$, see~\cite[Definition~5.4.2.10]{Lurie2016} or~\cite[Notation~2.8]{AyalaFrancisTanaka2017a}.
  This operad $\Disk_{n}^{\fr}$ is associated, in some sense, to the trivial stratification of $\R^{n}$.
  The present article is devoted to the operad $\Disk_{m \subset n}^{\fr}$ associated to the stratification $\{\R^{m} \subset \R^{n}\}$~\cite[Section~4.3]{AyalaFrancisTanaka2017a}.
\end{remark}

\subsection{Semi-algebraic sets and PA forms}
\label{sec:pa-forms}

For technical reasons, we will use the technology of semi-algebraic (SA) sets and piecewise semi-algebraic (PA) forms. We use~\cite{HardtLambrechtsTurchinVolic2011} as a general reference.
Recall in particular that the CDGA $\OmPA^{*}(X)$ of all PA forms on a compact SA set $X$ is a model for the real homotopy type of $X$~\cite[Theorem~6.1]{HardtLambrechtsTurchinVolic2011}.

If $\PP$ is an operad in compact SA sets, then the symmetric sequence $\OmPA^{*}(\PP)$ is not a Hopf cooperad: the Künneth quasi-isomorphisms go in the wrong direction.
However, for a Hopf cooperad $\CC$, we can define a ``morphism'' $\CC \to \OmPA^{*}(\PP)$ as a collection of maps $\CC(U) \to \OmPA^{*}(\PP(U))$ making the obvious diagrams commute~\cite[Chapter~3]{LambrechtsVolic2014}.
We call it a quasi-isomorphism if it is a quasi-isomorphism in each arity.
For simplicity, we will treat $\OmPA^{*}(\PP)$ as if it were an actual Hopf cooperad; when we write a morphism into $\OmPA^{*}(\PP)$, we actually have a ``morphism'' as defined above.
Results of Fresse~\cite[Discussion after Proposition~4.4]{Fresse2018} can be adapted to $\OmPA^{*}$: if $\PP$ is cofibrant in the category of operads satisfying $\PP(0) = \{*\}$ (e.g.\ $\PP = \FM_{n}$), then any Hopf cooperad quasi-isomorphic to $\OmPA^{*}(\PP)$ encodes the real homotopy type of $\PP$.
Briefly, there is an operadic upgrade of the functor $\OmPA^{*}$ which turns operads into Hopf cooperads, and this operadic upgrade is part of a Quillen adjunction.
We will thus say that an operad $\PP$ in compact SA sets is formal if there exists a zigzag of quasi-isomorphisms of Hopf cooperads $H^{*}(\PP;\R) \gets \cdot \to \OmPA^{*}(\PP)$.
These constructions can be extended to relative operads over a given base, as they can be seen as operads in a given symmetric monoidal category (of right operadic modules).

\section{Definition of \texorpdfstring{$\VFM_{mn}$}{VFM\_mn} and comparison}
\label{sec.defin-comp-with}

From now on and until the end of the article, we fix integers $n - 2 \ge m \ge 1$.
(In some tangential remarks, we will consider $n = m +1$.)
We identify $\R^{m}$ as the subspace of $\R^{n}$ given by $\R^{m} \times \{0\}^{n-m}$.

\subsection{The compactification and its boundary}
\label{sec.comp-its-bound}

Let $U$ and $V$ be finite sets.
We define the colored configuration spaces by:
\begin{equation}
  \Conf_{mn}(U,V)
  \coloneqq \Conf_{\R^{m}}(U) \times \Conf_{\R^{n} \setminus \R^{m}}(V) \subset \Conf_{\R^{n}}(U \sqcup V).
\end{equation}
Roughly speaking, $\Conf_{mn}(U,V)$ is the set of configurations of bicolored of points in $\R^{n}$: $U$ ``terrestrial'' points in $\R^{m}$, and $V$ ``aerial'' points in $\R^{n} \setminus \R^{m}$.
We will reuse the terminology ``aerial/terrestrial'' throughout the document.

The group $\R^{m} \rtimes \R_{>0}$ of translations and positive rescalings acts on $\Conf_{mn}(U,V)$.
Let $\Conft_{mn}(U,V)$ be the quotient.
Elements of $\Conft_{mn}(U,V)$ can be seen as configurations of radius $1$ (i.e.\ $\max(\|x_{i}\|)_{i \in U \sqcup V} = 1$) with a barycenter in $\{0\}^{m} \times \R^{n-m} \subset \R^{n}$.
Since $\R^{m} \rtimes \R_{> 0}$ is contractible, the quotient map is a weak homotopy equivalence.
If $\#U + 2 \# V \ge 2$, then the action is free, smooth, and proper, thus $\Conft_{mn}(U,V)$ is a manifold of dimension $m \#U + n \#V - m - 1$.
However, if $\#U \le 1$ and $\#V = 0$, then $\Conft_{mn}(U,V)$ is merely a point.
We embed $\Conft_{mn}(U,V)$ into $(S^{n-1})^{\Conf_{U \sqcup V}(2)} \times [0, +\infty]^{\Conf_{U \sqcup V}(3)} \times (S^{n-m-1})^{V} \times [0,+\infty]^{\Conf_{V}(2)} \times [0,+\infty]^{\Conf_{U \sqcup V}(2) \times V}$ using the maps:
\begingroup
\allowdisplaybreaks
\begin{align}
  \label{eq:theta}
  \theta_{ij}(x) & \coloneqq (x_{i} - x_{j})/\|x_{i} - x_{j}\|, & \text{for } i \neq j \in U \sqcup V; \\
  \label{eq:delta}
  \delta_{ijk}(x) & \coloneqq \|x_{i} - x_{j}\| / \|x_{i} - x_{k}\|, & \text{for } i \neq j \neq k \neq i \in U \sqcup V; \\
  \alpha_{v}(x) & \coloneqq p_{(\R^{m})^{\perp}}(x_{v}) / \|p_{(\R^{m})^{\perp}}(x_{v})\|, & \text{for } v \in V; \\
  \label{eq:rho}
  \rho_{vv'}(x) & \coloneqq \| p_{(\R^{m})^{\perp}}(x_{v}) \| / \| p_{(\R^{m})^{\perp}}(x_{v'}) \|, & \text{for } v,v' \in V;
  \\
  \label{eq:sigma}
  \sigma_{ijv}(x) & \coloneqq \|x_{i}-x_{j}\| / \| p_{(\R^{m})^{\perp}}(x_{v}) \|, & \text{for } i,j \in U \sqcup V, \, v \in V;
\end{align}
\endgroup
where $p_{(\R^{m})^{\perp}}$ is the orthogonal projection on $(\R^{m})^{\perp} = \{0\}^{m} \times \R^{n-m} \subset \R^{n}$.

\begin{definition}
  The space $\VFM_{mn}(U,V)$ is the closure of the image of the embedding $(\theta_{ij}, \delta_{ijk}, \alpha_{v}, \rho_{vv'}, \sigma_{ijv})$.
\end{definition}

\begin{example}
  We have $\VFM_{mn}(U,\varnothing) = \FM_{m}(U)$ and $\VFM_{mn}(0,1) = S^{n-m-1}$.
\end{example}

\begin{proposition}[Appendix~\ref{sec:semi-algebr-comp}]\label{prop:vfm-sa}
  The space $\VFM_{mn}(U,V)$ is a compact semi-algebraic manifold and a smooth manifold with corners.
  Its dimension is $m \#U + n\#V - m - 1$ if $\#U + 2\#V \ge 2$, and zero otherwise.
  The projections $p_{U,V} : \VFM_{mn}(U \sqcup I, V \sqcup J) \to \VFM_{mn}(U,V)$ are SA bundles.
\end{proposition}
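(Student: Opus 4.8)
The plan is to mimic the proof that $\FM_n(k)$ is a compact semi-algebraic manifold with corners whose forgetful projections are SA bundles, as carried out in \cite[Chapter~5]{LambrechtsVolic2014}, and to indicate only where the presence of the subspace $\R^m \subset \R^n$ forces modifications. Compactness is immediate: $\VFM_{mn}(U,V)$ is by definition a closed subset of the compact semi-algebraic set $(S^{n-1})^{\binom{U \sqcup V}{2}} \times [0,+\infty]^{\binom{U \sqcup V}{3}} \times (S^{n-m-1})^{V}$, and a closed semi-algebraic subset of a compact semi-algebraic set is a compact semi-algebraic set, since the defining conditions (being in the closure of the image of the polynomial map $(\theta_{ij}, \delta_{ijk}, \alpha_v)$) are themselves semi-algebraic by the Tarski--Seidenberg theorem. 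So the real content is the manifold-with-corners structure and the bundle statement.

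First I would give an explicit description of the boundary strata of $\VFM_{mn}(U,V)$, generalizing the decomposition of $\FM_n$ into strata indexed by nested families of subsets. Here there are two flavors of degeneration: a cluster of points (terrestrial and/or aerial) colliding with each other far from $\R^m$, which is governed locally by a factor of $\FM_n$ of the collapsing cluster (all points behave as aerial once we zoom in, since the subspace recedes to infinity), and a cluster of points approaching $\R^m$, which is governed by a factor of $\VFM_{mn}$ of the cluster. This yields a stratification indexed by certain nested families of subsets of $U \sqcup V$, with the combinatorics recording whether each cluster is "terrestrial" (limits onto $\R^m$) or "aerial". Near a codimension-$c$ stratum one builds, exactly as in the little-disks case, explicit semi-algebraic coordinates: angular coordinates on the relevant spheres, and $c$ nonnegative "scale" parameters measuring the relative sizes of the nested clusters (for aerial clusters) or the heights above $\R^m$ (for terrestrial clusters), together with the extra $S^{n-m-1}$ coordinates $\alpha_v$ which extend continuously across the collision loci. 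These charts exhibit a neighborhood of each point as semi-algebraically diffeomorphic to $\R^{d-c} \times [0,+\infty)^c$ with $d = m\#U + n\#V - m - 1$, which simultaneously proves the dimension count and the smooth-manifold-with-corners claim; the edge cases $2\#V + \#U < 2$ are checked by hand from the Example, where the space is a point or a single sphere.

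For the SA bundle statement, I would proceed as in \cite[Theorem~5.3.2]{LambrechtsVolic2014} and \cite[Theorem~5.3.1]{LambrechtsVolic2014}: it suffices to treat the "short" projections forgetting a single point, since a general $p_{U,V}$ factors as a composite of such, and composites of SA bundles are SA bundles. Forgetting one point $x \in I \sqcup J$ from $\VFM_{mn}(U \sqcup I, V \sqcup J)$, one checks local triviality over the strata of the base by writing down explicit semi-algebraic trivializations: over the open stratum (no collisions in the base configuration) the fiber is a compactified configuration space of one point in $\R^m$ or in $\R^n \setminus \R^m$ relative to the already-placed points, which is itself a compact SA manifold with corners, and the trivialization is given by the standard "inserting the forgotten point" formulas; over a boundary stratum of the base one combines this with the local coordinate charts above. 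The key point, as in the classical case, is that although $p_{U,V}$ is \emph{not} a submersion at the corners, it is an SA bundle in the sense of \cite[Definition~8.1]{HardtLambrechtsTurchinVolic2011}, which is what is needed for fiber integration later.

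The main obstacle, and the reason this is relegated to an appendix sketch, is the bookkeeping around the \emph{terrestrial} degenerations and the behavior of the $\alpha_v$ coordinates there: when an aerial point $v$ approaches $\R^m$, the direction $\alpha_v \in S^{n-m-1}$ does converge, but when a whole cluster of aerial points simultaneously collapses \emph{onto} $\R^m$ one must check that the rescaled limit configuration lives in $\VFM_{mn}$ of the cluster (not merely $\FM_n$) and that the $\alpha$-coordinates of the cluster members and of the collapsed "macro-point" fit together into continuous, semi-algebraic, corner-compatible coordinates. Verifying that the resulting charts genuinely overlap smoothly (rather than merely continuously) — i.e. that the manifold-with-corners structure is well defined — is the delicate part; everything else is a routine adaptation of \cite[Chapter~5]{LambrechtsVolic2014}.
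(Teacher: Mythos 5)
Your proposal is correct and follows essentially the same route as the paper's appendix: a stratification of $\VFM_{mn}(U,V)$ by nested clusters (encoded there by relative trees with full/dashed edges distinguishing aerial collisions governed by $\FM_{n}$ from terrestrial degenerations governed by $\VFM_{mn}$), explicit semi-algebraic charts built from scale parameters as in~\cite[Section~5.9]{LambrechtsVolic2014}, reduction of the bundle claim to the one-point forgetful maps via closure of SA bundles under composition, and explicit fibers given by a ball with smaller balls removed, where for the aerial projection one additionally removes a tubular neighborhood of $\R^{m}$ — exactly the modification you flag. The delicate points you identify (behavior of the $\alpha_{v}$-coordinates at terrestrial degenerations, compatibility of the trivializing maps with $\R^{m}$) are precisely the ones the paper handles via the properties of the radial maps $\phi_{r}^{c,\varepsilon}$.
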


\begin{proposition}\label{prop:vfm-operad}
  The collection $\VFM_{mn}$ forms a relative $\FM_{n}$-operad.
\end{proposition}
\begin{proof}
  On the coordinates $\theta_{ij}$ and $\delta_{ijk}$, the formulas are identical to $\FM_{n}$, see~\cite[Section~5.2]{LambrechtsVolic2014}.
  For $v \in V$, one simply define $\alpha_{v}(x \circ_{T} y) = \alpha_{[v]}(x)$; and $\alpha_{v}(x' \circ_{W,T} y') = \alpha_{v}(x')$ if $v \notin T$, or $\alpha_{v}(y')$ if $v \in T$.
  For $v \neq v' \in V$, then $\rho_{vv'}(x \circ_{T} y) = 1$ if $v,v' \in T$, or $\rho_{[v][v']}(x)$ otherwise; and $\rho_{vv'}(x' \circ_{W,T} y') = \rho_{vv'}(x')$ if $v,v' \notin T$, or $\rho_{vv'}(y')$ if $v,v' \in T$, or $0$ if $v \in T$, $v' \notin T$, or $+\infty$ if $v' \in T$, $v \notin T$.
  Finally, $\sigma_{ijv}(x \circ_{T} y) = 0$ if $i,j \in T$, or $\sigma_{[i][j][v]}(x)$ otherwise; and $\sigma_{ijv}(x' \circ_{W,T} y') = \sigma_{ijv}(y')$ if $i,j,v \in W \cup T$, or $0$ if $i,j \in W \cup T$, $v \notin T$, or $\sigma_{[i][j]v}(x')$ if $v \notin T$ and $\#(\{i,j\} \cap (W \cup T)) = 1$, or $\infty$ otherwise.
\end{proof}

\begin{remark}
  \label{rmk:difference}
  The operad $\VFM_{mn}$ is not homotopy equivalent to the operad $\ESC_{mn}$ considered by Willwacher~\cite{Willwacher2017a}.
  Recall that $\ESC_{mn}(U,V) \coloneqq \DD_{n}(U \sqcup V) \times_{\DD_{n}(U)} \DD_{m}(U)$, where $\DD_{n}(U \sqcup V) \to \DD_{n}(U)$ is the projection that forgets disks and $\DD_{m}(U) \to \DD_{n}(U)$ is the usual embedding.
  The difference is that we do not allow ``aerial'' points to be on $\R^{m}$, so e.g.\ $\VFM_{mn}(0,1) = S^{n-m-1} \not\simeq \ESC_{mn}(0,1) \simeq *$, and $\VFM_{mn}(1,1) \simeq S^{n-m-1} \not\simeq \ESC_{mn}(1,1) \simeq S^{n-1}$.
\end{remark}

\begin{proposition}
  \label{prop:decomp-boundary}
  We have a decomposition in terms of faces:
  \begin{equation*}
    \partial \VFM_{mn}(U,V) = \bigcup_{T \in \BF'(V)} (\im {\circ}_{T}) \cup \bigcup_{(W,T) \in \BF''(U;V)} (\im {\circ}_{W,T}),
  \end{equation*}
  where the subsets $\BF'(V) \subset V$ and $\BF''(U;V) \subset U \times V$ are respectively defined by the conditions $\# T \ge 2$ and by $(W \cup T \subsetneq U \cup V \text{ and } 2 \cdot \# T + \# W \ge 2)$.
  Each of these boundary faces is a compact SA subset of the boundary, and the intersection of two distinct faces is of positive codimension in $\partial \VFM_{mn}(U,V)$.
\end{proposition}
\begin{proof}
  Adapting the proofs of~\cite[Proposition~5.4.1]{LambrechtsVolic2014} is straightforward.
\end{proof}

If $p : E \to B$ is an SA bundle of rank, then its fiberwise boundary $p^{\partial} : E^{\partial} \to B$ is an SA bundle of rank $k-1$, where $E^{\partial} = \bigcup_{x \in B} \partial p^{-1}(x)$, see~\cite[Definition~8.1]{HardtLambrechtsTurchinVolic2011}.
The fiberwise Stokes formula reads $d(p_{*}\alpha) = p_{*}(d\alpha) \pm p^{\partial}_{*}\alpha$~\cite[Proposition~8.12]{HardtLambrechtsTurchinVolic2011}, where $p_{*} : \Omega_{\mathrm{min}}^{*}(E) \to \OmPA^{*-k}(B)$ is integration along fibers.

\begin{proposition}
  The fiberwise boundary of the projection $p_{U,V} : \VFM_{mn}(U \sqcup I, V \sqcup J) \to \VFM_{mn}(U,V)$ is the subset of $\VFM_{mn}(U \sqcup I, V \sqcup J)$ given by:
  \begin{equation*}
    \VFM_{mn}^{\partial}(U \sqcup I, V \sqcup J) = \bigcup_{T \in \BF'(V,J)} (\im {\circ}_{T}) \cup \bigcup_{(W,T) \in \BF''(U,I;V,J)} (\im {\circ}_{W,T}),
  \end{equation*}
  where the subsets $\BF'(V,J) \subset \BF'(V \sqcup J)$ and $\BF''(U,I;V,J) \subset \BF''(U \sqcup I, V \sqcup J)$ are respectively defined by the conditions $\# (T \cap J) \le 1$ and by $(( U \subset W \text{ and } V \subset T ) \text{ or } (V \cap T = \varnothing \text{ and } \#(U \cap W) \leq 1))$.
\end{proposition}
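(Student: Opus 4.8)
The plan is to match the boundary decomposition of the total space $E \coloneqq \VFM_{mn}(U \sqcup I, V \sqcup J)$ supplied by Proposition~\ref{prop:decomp-boundary} against the structure of $p_{U,V} \colon E \to B \coloneqq \VFM_{mn}(U,V)$ as an SA bundle of compact manifolds with corners (Proposition~\ref{prop:vfm-sa}), and to single out the codimension-one boundary faces of $E$ that point ``in the fiber direction''. In a semi-algebraic corner chart, $E$ looks like $B_{0} \times \R^{a}_{\geq 0} \times \R^{b}$, with $p_{U,V}$ the projection onto an open subset $B_{0} \subset B$ and the fiber a model corner; so $E^{\partial}$ is locally $B_{0} \times \partial(\R^{a}_{\geq 0}) \times \R^{b}$, and hence globally $E^{\partial}$ is a union of closed codimension-one faces of $E$, namely those faces $F$ along which $p_{U,V}$ restricts to a bundle onto $B$ --- equivalently, those with $\dim \overline{p_{U,V}(F)} = \dim B$ --- whereas every other codimension-one face of $E$ maps into $\partial B$ and so is not contained in $E^{\partial}$. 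A dimension count shows there is nothing in between: a codimension-one face $F$ has $\dim F = \dim B + \dim(\text{fiber}) - 1$, so if $\overline{p_{U,V}(F)}$ has codimension $c$ in $B$ then the generic fiber of $p_{U,V}|_{F}$ has codimension $1-c$ inside the fibers of $p_{U,V}$, forcing $c \in \{0,1\}$ with $c = 0$ exactly the fiberwise case.

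It then remains to run through the two families of faces from Proposition~\ref{prop:decomp-boundary} and, for each, decide whether the degeneration it records survives after forgetting the points indexed by $I$ and $J$, i.e.\ after applying $p_{U,V}$. For a face $\im \circ_{T}$ with $T \in \BF'(V \sqcup J)$, forgetting the points of $J$ replaces the colliding aerial cluster $T$ by $T \cap V$, which is still a degeneration of $B$ precisely when $\#(T \cap V) \geq 2$; hence $\im \circ_{T} \subset E^{\partial}$ exactly when at most one point of $T$ lies in $V$, which is the condition cutting out $\BF'(V,J)$. For a face $\im \circ_{W,T}$ with $(W,T) \in \BF''(U \sqcup I, V \sqcup J)$, forgetting the points of $I$ and $J$ replaces the colliding ground cluster $W \sqcup T$ by $(W \cap U) \sqcup (T \cap V)$ and replaces the macroscopic marked sets $\bigl(((U \sqcup I) \setminus W) \sqcup \{*\},\, (V \sqcup J) \setminus T\bigr)$ by $\bigl((U \setminus W) \sqcup \{*\},\, V \setminus T\bigr)$; the image stays full-dimensional in $B$ in exactly two situations: either the ground cluster is no longer a degeneration of $B$ --- which, by the non-degeneracy inequality $2\#T + \#W \geq 2$ built into $\BF''$, happens precisely when $V \cap T = \varnothing$ and $\#(U \cap W) \leq 1$ --- or the macroscopic configuration collapses to a point downstairs, which happens precisely when $U \subset W$ and $V \subset T$ (the ``escape to infinity'' face, whose surviving cluster is all of $B$ while $\#(W \cup T) < \#(U \cup V \cup I \cup J)$ keeps it codimension one). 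In every other case the image lands in $\partial B$, so such a face is absent from $E^{\partial}$. This leaves exactly the index set $\BF''(U,I;V,J)$, and combining the two families yields the claimed formula.

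I expect the first step to be where most of the work lies: rigorously establishing that $E^{\partial}$ is a union of honest codimension-one corner faces and that the ``survives/collapses downstairs'' dichotomy pins down exactly which ones. In substance this is the compatibility between the SA-bundle structure of $p_{U,V}$ (Proposition~\ref{prop:vfm-sa}, proved in Appendix~\ref{sec:semi-algebr-comp} via explicit coordinates near the boundary) and the face stratification of Proposition~\ref{prop:decomp-boundary}; it runs entirely parallel to the corresponding analysis for the Fulton--MacPherson operad in~\cite[Section~5]{LambrechtsVolic2014}, so the cleanest write-up would cite that and then carry out only the extra bookkeeping forced by the terrestrial/aerial split and the angular coordinates $\alpha_{v}$. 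The one place where genuine care is needed is the ``escape to infinity'' face $U \subset W$, $V \subset T$: although inside $E$ it looks like a complete collapse of the old configuration, it is a fiber-direction face because its image downstairs is all of $B$ --- it records the $I$- and $J$-points receding to infinity relative to a fixed base configuration.
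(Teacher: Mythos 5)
Your argument is correct and follows essentially the same route as the paper: both reduce to the observation that over the interior of the base the fiberwise boundary coincides with $\partial\VFM_{mn}(U\sqcup I,V\sqcup J)$ (local triviality of the SA bundle plus a closure/density argument), and then sort the faces of Proposition~\ref{prop:decomp-boundary} according to whether their generic points map to the interior of $\VFM_{mn}(U,V)$ or into its boundary, with the same face-by-face bookkeeping, including the ``escape to infinity'' faces $U\subset W$, $V\subset T$. One remark: the condition you derive for the first family, $\#(T\cap V)\le 1$, is the correct one, and the statement's condition ``$\#(T\cap J)\le 1$'' should be read as a typo for it (a cluster $T\subset J$ of forgotten aerial points is certainly a fiberwise-boundary face yet has $\#(T\cap J)\ge 2$, while two retained aerial points colliding is not), so your identification of your criterion with $\BF'(V,J)$ is right in substance even though it does not match the literal wording.
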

\begin{proof}
  We can adapt the proof of~\cite[Proposition~5.7.1]{LambrechtsVolic2014} immediately.
  We simply use the decomposition of Proposition~\ref{prop:decomp-boundary} and we check easily that $\BF'(V,J)$ and $\BF''(U,I;V,J)$ index the faces which are sent to the interior of $\Conft_{mn}(U,V)$ under the projection $p_{U,V}$.
\end{proof}

\subsection{Comparison with $\VSC_{mn}$}
\label{sec.comp-with-disks}

In this section, we compare $\VFM_{mn}$ with
$\VSC_{mn}$, the complementarily constrained (little) disks operad.
Let $D^{m} = D^{n} \cap \R^{m}$ for convenience.

\begin{definition}
  \label{def:vsc}
  The space $\VSC_{mn}(U,V)$ is the space of maps $c : (D^{n})^{U \sqcup V} \hookrightarrow D^{n}$ satisfying:
  \begin{enumerate*}[label=(\roman*)]
  \item for all $i$, $c_{i} : D^{n} \hookrightarrow D^{n}$ is an embedding obtained by composing a translation and a positive rescaling;
  \item for $i \neq j$, we have $c_{i}(\mathring{D}^{n}) \cap c_{j}(\mathring{D}^{n}) = \varnothing$;
  \item for $u \in U$, we have $c_{u}(D^{m}) \subset D^{m}$;
  \item for $v \in V$, we have $c_{v}(D^{n}) \cap D^{m} = \varnothing$.
  \end{enumerate*}
  Using composition of embeddings, $\VSC_{mn}$ is a relative $\DD_{n}$-operad, called the complementarily constrained disks operad.
\end{definition}

\begin{remark}
  The usual Swiss-Cheese operad $\SC_{n}$ is the suboperad of $\VSC_{(n-1)n}$ formed by the components where all the aerial disks are in the upper half-disk.
\end{remark}

\begin{proposition}
  \label{prop:zigzag-vsc-vfm}
  There exists a zigzag of weak homotopy equivalences of operads $(\VSC_{mn}, \DD_{n}) \simeq (\VFM_{mn}, \FM_{n})$.
\end{proposition}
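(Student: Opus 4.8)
The plan is to mimic the strategy already used for Proposition~\ref{prop:zigzag-disk-sfm}, exploiting the fact that $\VSC_{mn}$ and $\Disk^{\fr}_{m \subset n}$ are two models of ``the same'' operad: both are built from embeddings of little $n$-disks (resp.\ $n$-cubes/$\R^n$'s) constrained by the stratification $\R^m \subset \R^n$. Concretely, first I would recall the classical comparison between $\DD_n$ and $\Disk^{\fr}_n$: forgetting the framing data and rescaling gives a chain of weak equivalences, and this comparison is compatible with the extra structure in colors because the constraints in Definition~\ref{def:vsc} (items (iii)--(iv)) match exactly the constraints in the definition of $\Disk^{\fr}_{m\subset n}$ (a disk $c_u$ with $c_u(D^m)\subset D^m$ corresponds to a framed embedding preserving $\R^m$; a disk $c_v$ with $c_v(D^n)\cap D^m=\varnothing$ corresponds to an embedding into $\R^n\setminus\R^m$). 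So there is a zigzag $(\VSC_{mn},\DD_n) \simeq (\Disk^{\fr}_{m\subset n},\Disk^{\fr}_n)$ of relative operads. Then I would invoke Proposition~\ref{prop:zigzag-disk-sfm}, which already provides $(\Disk^{\fr}_{m\subset n},\Disk^{\fr}_n)\simeq(\VFM_{mn},\FM_n)$, and compose the two zigzags.

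In more detail, the key steps would be: (1) Construct a map $\VSC_{mn} \to \Disk^{\fr}_{m\subset n}$ (or a zigzag between them) — for instance, post-compose each little-disk embedding with the canonical homeomorphism $\mathring D^n\cong\R^n$ and equip it with the constant framing homotopy, checking that conditions (iii)--(iv) translate into the conditions in the definition of $\Disk^{\fr}_{m\subset n}$, and that this is compatible with both kinds of operadic composition $\circ_T$ and $\circ_{W,T}$. (2) Verify this map is a weak equivalence in each bi-arity $(U,V)$: as in the unframed-to-framed comparison, the space of framings is contractible (the framing homotopy data can be contracted to the canonical framing), so the forgetful map $\Disk^{\fr}_{m\subset n}(U,V)\to$ (space of unframed constrained embeddings) is a weak equivalence, and the latter deformation retracts onto $\VSC_{mn}(U,V)$ by shrinking the disks; alternatively, one can compare both with $\Conf_{mn}(U,V)$ directly by forgetting radii, using $\DD_n(r)\simeq\Conf_{\R^n}(r)$ fiberwise over the color structure. (3) Splice with the zigzag of Proposition~\ref{prop:zigzag-disk-sfm}, and invoke the $2$-out-of-$3$ property to conclude that the composite is a zigzag of weak equivalences of relative operads, with the underlying operad comparison being the standard $\DD_n\simeq\FM_n$ of~\cite{Markl1999,Salvatore2001}.

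The main obstacle is the bookkeeping for the relative (two-colored) operadic structure: one must ensure that the chosen maps are strictly compatible with \emph{both} composition maps $\circ_T$ (plugging an $n$-disk configuration into an aerial slot) and $\circ_{W,T}$ (plugging a smaller $\VSC$-configuration into a terrestrial slot), and that the intermediate resolutions (Boardman--Vogt $W$-construction, or whatever bridge one uses) carry the relative structure throughout. The homotopy-theoretic content is minor and standard — the only genuine checks are that ``space of framings is contractible'' and that ``shrinking radii is a deformation retract'' respect the $\R^m$-vs-$\R^n\setminus\R^m$ constraints — but organizing the equivariance and operadic-compatibility diagrams cleanly is where the real work lies. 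I would expect the proof to be short, essentially pointing to Proposition~\ref{prop:zigzag-disk-sfm} and remarking that the argument of~\cite[Proposition~3.9]{Salvatore2001} adapts verbatim.
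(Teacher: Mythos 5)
Your proposal follows essentially the same route as the paper: the paper also produces an embedding $\VSC_{mn} \subset \Disk^{\fr}_{m \subset n}$ (together with $\DD_{n} \subset \Disk^{\fr}_{n}$), checks it is an arity-wise weak equivalence by comparing both sides with $\Conf_{mn}(U,V)$ via the ``forget radii/keep centers'' maps and the 2-out-of-3 property, and then splices with Proposition~\ref{prop:zigzag-disk-sfm}. Your alternative justification in step (2) (contractibility of framings plus shrinking disks) and the configuration-space comparison you mention are interchangeable here, the latter being exactly what the paper uses.
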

\begin{proof}
  We can adapt the proof of Salvatore~\cite[Proposition~3.9]{Salvatore2001} directly.
  Briefly, we use the Boardman--Vogt resolution $W \VSC_{mn} \xrightarrow{\sim} \VSC_{mn}$.
  Elements of $W\VSC_{mn}(U,V)$ are rooted trees with $(U,V)$ leaves, bicolored edges, internal vertices labeled by $\VSC_{mn}$, and internal edges labeled by time parameters $t \in [0,1]$.
  Operadic composition is grafting (new edges are decorated by $1$).
  If $t=0$ then the corresponding edge is collapsed and the decorations are composed.
  The map $W\VSC_{mn} \to \VFM_{mn}$ is defined on trees with edge decorations $<1$ by rescaling the disks by one minus edge decoration, composing in $\VSC_{mn}$, and keeping the centers of the remaining disks.
  This extends continuously to composite trees and defines a weak equivalence of operads.
\end{proof}

Let us also give examples of $\VSC_{mn}$-algebras.
Recall first that for a pointed space $* \in X$, the iterated loop space $\Omega^{n}X$ is the space of maps $\gamma : D^{n} \to X$ such that $\gamma(\partial D^{n}) = *$.
The space $\Omega^{n}X$ is an algebra over $\DD_{n}$.
Conversely, the recognition principle states that any ``group-like'' $\DD_{n}$-algebra is weakly equivalent to an iterated loop space~\cite{BoardmanVogt1968,May1972}.

\begin{wrapfigure}[4]{r}{2cm}
  \vspace{-5mm}
  \begin{tikzpicture}[every label/.style={font=\scriptsize}, every node/.style={outer sep = 0, inner sep = .5mm}, baseline=5mm]
    \fill[pattern=north west lines] (-1,0) -- (1,0) arc [start angle=0, end angle=180, radius=1];
    \draw [red] (-1,0) to (1,0) node[midway, label={-90:$\mapsto A$}] {};
    \draw [blue] (1,0) arc [start angle=0, end angle=180, radius=1] node[midway, label={90:$\mapsto *$}] {};
    \node[fill=white, fill opacity = .8, ellipse, inner sep = 0] at (0,.5) {$\mapsto X$};
  \end{tikzpicture}
\end{wrapfigure}

For a pair of pointed topological spaces $* \in A \subset X$, the iterated loop space $\Omega^{n}(X,A)$ is $\operatorname{hofib}(\Omega^{n-1}A \to \Omega^{n-1}X)$.
Concretely, let $D^{n}_{h} \coloneqq D^{n} \cap \mathbb{H}^{n}$ be the upper half-disk.
Its boundary $\partial D^{n}_{h}$ is the union of the disk $\partial_{-} D^{n}_{h} \coloneqq D^{n} \cap \partial \mathbb{H}^{n} \cong D^{n-1}$ and the upper hemisphere $\partial_{+} D^{n}_{h} \coloneqq \partial D^{n} \cap \mathbb{H}^{n} \cong D^{n-1}$ along the equator $\partial D^{n} \cap \partial \mathbb{H}^{n} \cong S^{n-2}$.
The relative iterated loop space $\Omega^{n}(X,A)$ is the space of maps $\gamma : D^{n}_{h} \to X$ such that $\gamma(\partial_{-}D^{n}_{h}) \subset A$ and $\gamma(\partial_{+}D^{n}_{h}) = *$.
For example, $\Omega^{1}(X,A) = \{ \gamma : [0,1] \to X \mid \gamma(0) \in A,\, \gamma(1) = *\}$.
A sketch for $n=2$ is on the side.
The pair $(\Omega^{n}(X,A), \Omega^{n}X)$ is an algebra over the operad $\SC_{n}$.
By the relative recognition principle, any $\SC_{n}$-algebra satisfying appropriate properties is weakly equivalent to such a pair~\cite{Ducoulombier2014,Quesney2015,HoefelLivernetStasheff2016,Vieira2018}.

By analogy, we define the $(n,m)$-relative iterated loop space:
\begin{equation}
  \Omega^{n,m}(X,A) \coloneqq \{ \gamma : D^{n} \to X \mid \gamma(D^{m}) \subset A \text{ and } \gamma(\partial D^{n}) = * \}.
\end{equation}
The pair $(\Omega^{n,m}(X,A), \Omega^{n}X)$ is an algebra over the operad $\VSC_{mn}$.
We conjecture that an analogous relative recognition principle holds: any $\VSC_{mn}$-algebra satisfying appropriate conditions should be weakly equivalent to such a pair.

\section{(Co)homology of \texorpdfstring{$\VSC_{mn}$}{VSC\_mn}}
\label{sec:cohom-sfm}

In this section, we compute the integral cohomology of $\VSC_{mn}$ (Definition~\ref{def:vsc}).
We then give a presentation of the operad $H_{*}(\VSC_{mn})$ by generators and relations.
Unless specified, the ring of coefficients is $\Z$ in this section.

\subsection{The cohomology as a Hopf cooperad}
\label{sec.comp-cohom}

We will first compute the cohomology of $\Conf_{W}(l)$ with $W \coloneqq \R^{n} \setminus \R^{m}$ for $n - m \ge 2$.
The computation is inspired by the methods of~\cite{Sinha2013}.
We prove that it is free as an abelian group, thus we will be able to apply Künneth's formula to get the cohomology of $\VSC_{mn}(k,l) \simeq \Conf_{\R^{m}}(k) \times \Conf_{W}(l)$ as a tensor product.
Then we study the maps induced on cohomology by the operad structure of $\VSC_{mn}$.

\begin{definition}
  The Poincaré polynomial of $X$ is $\mathscr{P}(X) \coloneqq \sum_{i \ge 0} (\operatorname{rk} H^{i}(X)) \cdot t^{i}$.
  For $P,\, Q \in \mathbb{N}[[t]]$, we say that $P \preceq Q$ if the coefficients of $Q-P$ are nonnegative.
\end{definition}

\begin{proposition}\label{prop:poinc-poly}
  For $n-m\ge2$, the Poincaré polynomial of $\Conf_{W}(l)$ satisfies:
  \begin{equation}
    \label{eq.poinc-poly}
    \mathscr{P}(\Conf_{W}(l)) \preceq \prod\nolimits_{i=0}^{l-1} (1 + t^{n-m-1} + i t^{n-1}).
  \end{equation}
  Moreover, if the equality is reached and the homology of $\Conf_{W}(l-1)$ is free as a $\Z$-module, then the homology of the total space $\Conf_{W}(l)$ is free too.
\end{proposition}

\begin{proof}
  We use the Serre spectral sequence of the Fadell--Neuwirth fibrations:
  \begin{equation}
    \label{eq:fibration}
    \begin{tikzcd}
      S^{n-m-1} \vee (S^{n-1})^{\vee (l-1)} \ar[r, hook] & \Conf_{W}(l) \ar[r, two heads, "\pi"] & \Conf_{W}(l-1).
    \end{tikzcd}
  \end{equation}
  For $n - m = 2$, the base $\Conf_{W}(l-1)$ is not simply connected.
  However, we can adapt the arguments of~\cite[Lemma~6.3]{Cohen1976} to show that the coefficient system is trivial.
  Let $c_{1}, c_{2} \in \Conf_{W}(l-1)$ be two configurations and $F_{1}, F_{2} = \pi^{-1}(c_{1}), \pi^{-1}(c_{2}) \subset \Conf_{W}(l)$ the fibers ($\simeq S^{1} \vee (S^{n-1})^{\vee (l-1)}$).
  Any path $\gamma \in \Omega_{c_{1},c_{2}}\Conf_{W}(l-1)$ lifts to a path in $\Conf_{W}(l)$ by putting the $l$th point far from all the others (e.g.\ outside a ball $B$ enclosing the compact subset $\operatorname{im}(\gamma)$).
  Let us show that the induced isomorphism $h_{\gamma} : H_{*}(F_{1}) \to H_{*}(F_{2})$ is the identity.

  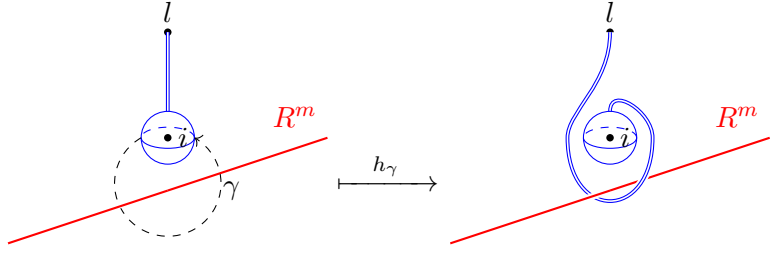
\begin{figure}[htbp]
    \centering
    \begin{equation*}
      \begin{tikzpicture}[scale=.5, baseline=0]
        \fill (0,3) circle[radius=2pt,blue] node[above] (l) {$l$};
        \draw[blue,thick] (l) -- (0,1.5);

        \draw (0,1) arc[radius=1, start angle=90, end angle=270];
        \draw[draw=white,ultra thick,double=red] (-3,-1) -- (3,1) node[above left] {\color{red} $R^{m}$};
        \node at (1.2,0) {$\gamma$};
        \draw[->,white,ultra thick,postaction={draw=black,thin,->}] (0,-1) arc[radius=1, start angle=270, end angle=420];

        \path[draw=blue,fill=white] (0,1) circle[radius=.5];
        \draw[blue] (-.5,1) arc[x radius=.5, y radius=.2, start angle=180, end angle=360];
        \draw[blue, dashed] (.5,1) arc[x radius=.5, y radius=.2, start angle=0, end angle=180];
        \fill (0,1) circle[fill,radius=2pt] node[right] (i) {$i$};
      \end{tikzpicture}
      \xmapsto{\quad h_{\gamma} \quad}
      \begin{tikzpicture}[scale=.5, baseline=0]
        \draw[red,thick] (-3,-1) -- (3,1) node[above left] {\color{red} $R^{m}$};
        \fill (0,3) circle[radius=2pt,blue] node[above] (l) {$l$};

        \draw[white, line width=3pt] (l)
        .. controls (0,2) and (-.8,1.5)
        .. (-.8,1)
        .. controls (-.8,.2) and (-.4,-.2)
        .. (0,-.2)
        .. controls (.4,-.2) and (.8,.2)
        .. (.8,1)
        .. controls (.8,1.5) and (0,2)
        .. (0,1.5);

        \draw[blue,thick] (l)
        .. controls (0,2) and (-.8,1.5)
        .. (-.8,1)
        .. controls (-.8,.2) and (-.4,-.2)
        .. (0,-.2)
        .. controls (.4,-.2) and (.8,.2)
        .. (.8,1)
        .. controls (.8,1.5) and (0,2)
        .. (0,1.5);

        \draw[white, line width=2.5pt] (-3,-1) -- (0,0);
        \draw[red,thick] (-3,-1) -- (0,0);

        \draw[blue] (0,1) circle[radius=.5];
        \draw[blue] (-.5,1) arc[x radius=.5, y radius=.2, start angle=180, end angle=360];
        \draw[blue, dashed] (.5,1) arc[x radius=.5, y radius=.2, start angle=0, end angle=180];
        \fill (0,1) circle[fill,radius=2pt] node[right] (i) {$i$};
      \end{tikzpicture}
    \end{equation*}
    \caption{The effect of $h_\gamma$ on the fundamental class (in blue) of $S^{n-1}$}
    \label{fig:effect-h-gamma}
  \end{figure}

  It is clear that $h_{\gamma}$ does not affect the fundamental class of $S^{n-m-1} = S^{1}$, as we can choose a representative with the $l$th point rotating around the axis $R^{m}$ outside the ball $B$.
  The class of the $i$th $S^{n-1}$ in the fiber corresponds to the $l$th point rotating around the $i$th point.
  This can be represented by concatenating a path $\eta_{ij}$ from $l$ to $i$ with a small sphere $\sigma_{i}$ around $i$.
  Consider the path $\gamma_{i}$ given by the $i$th coordinate of the path $\gamma$.
  Then $(h_{\gamma})_{*}[S^{n-1}]$ can be represented by $\eta_{ij} \cdot \gamma_{i} \cdot \sigma_{i}$ (see Figure~\ref{fig:effect-h-gamma} for an example).
  But this $(n-1)$st homology class is homologous to $\eta_{ij} \cdot \sigma_{i}$ for any path $\gamma$.
\end{proof}

\begin{remark}
  \label{rmk:cohen-proof}
  If $n = m + 1$, then $W = \R^{n} \setminus \R^{n-1} \cong \R^{n} \sqcup \R^{n}$ is not even connected.
  However, we then have $\Conf_{W}(l) = \bigsqcup_{l = l'+l''} \Conf_{\R^{n}}(l') \times \Conf_{\R^{n}}(l'') \times_{\Sigma_{l'} \times \Sigma_{l''}} \Sigma_{l}$.
  Its Poincaré polynomial is $\sum_{l = l' + l''} \frac{l!}{(l')! (l'')!} \prod_{i=0}^{l'-1} (1+it^{n-1}) \prod_{j=0}^{l''-1} (1+jt^{n-1})$, which is the RHS of~\eqref{eq.poinc-poly} (here, $\prod_{i=0}^{l-1}(2+t^{n-1})$) by induction.
\end{remark}

The classical presentation of $\eV_{n}(l) \coloneqq H^{*}(\Conf_{\R^{n}}(l))$~\cite{Arnold1969,Cohen1976} is ($\deg \omega_{ij} = n-1$):
\begin{equation}
  \eV_{n}(l) = S(\omega_{ij})_{1 \leq i \neq j \leq l} \bigm/ (\omega_{ji} - (-1)^{n} \omega_{ij}, \, \omega_{ij}^{2},\, \omega_{ij} \omega_{jk} + \omega_{jk} \omega_{ki} + \omega_{ki} \omega_{ij}).
\end{equation}

\begin{definition}\label{def:vscV}
  We define an algebra, with generators $\eta_{i}$ of degree $n-m-1$:
  \begin{equation}
    \vscV_{mn}(0,l) \coloneqq \eV_{n}(l) \otimes S(\eta_{i})_{1 \leq i \le l} \bigm/ (\eta_{i}^{2}, \eta_{i} \omega_{ij} - \eta_{j} \omega_{ij}).
  \end{equation}
\end{definition}

Elements of $\eV_{n}(l)$ can be seen as linear combinations of simple oriented graphs on $l$ vertices, modulo orientation signs, double edges, and a local three-term Arnold relation.
In $\vscV_{mn}(0,l)$, each connected component is decorated by $1$ or $\eta$ (formally, vertices can be decorated and decorations can move along edges; two $\eta$ classes on the same vertex give $0$).

\begin{remark}
  This algebra is very similar to the Lambrechts--Stanley model $\mathsf{G}_{A}$~\cite{Idrissi2018b} applied to $A = H^{*}(D^{n} \setminus D^{m})$ with vanishing diagonal class.
\end{remark}

\begin{remark}\label{rmk:asked-by-referee2}
  The space $\Conf_{W}(l) = (\R^{n})^{l} \setminus \bigl( \bigcup_{i \neq j} \{x_{i}=x_{j}\} \cup \bigcup_{i=1}^{l} p_{i}^{-1}(\R^{m}) \bigr)$ is a real subspace arrangement.
  Its Betti numbers can be computed by~\cite[Theorem~A]{GoreskyMacPherson1988}.
  For even $n$ and $m$, this is also a complex arrangement and the ring structure of the cohomology can also be computed using~\cite[Section~5]{ConciniProcesi1995}.
\end{remark}

\begin{lemma}\label{lem:equal-poly}
  The Poincaré polynomial of $\vscV_{mn}(0,l)$ is the RHS of~\eqref{eq.poinc-poly}.
\end{lemma}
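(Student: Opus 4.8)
The plan is to compute the Poincaré polynomial of $\vscV_{mn}(0,l)$ directly from its presentation and show it equals $\prod_{i=0}^{l-1}(1 + t^{n-m-1} + i\,t^{n-1})$, which is the right-hand side of~\eqref{eq.poinc-poly}. The starting observation is that $\eV_n(l) = H^*(\Conf_{\R^n}(l))$ has a well-known additive basis: an element is a product $\prod_{(i,j) \in S} \omega_{ij}$ where $S$ is a set of pairs forming a forest (after the Arnold and $\omega_{ij}^2 = 0$ relations, one may normalize so that for each $j$ at most one $\omega_{ij}$ with $i < j$ appears, giving the basis indexed by functions $j \mapsto i(j) < j$ defined on a subset of $\{2,\dots,l\}$); consequently the Poincaré polynomial of $\eV_n(l)$ is $\prod_{i=0}^{l-1}(1 + i\,t^{n-1})$.

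First I would make precise the normal form for $\vscV_{mn}(0,l)$. The extra generators $\eta_i$ square to zero and satisfy $\eta_i \omega_{ij} = \eta_j \omega_{ij}$. I would fix the standard basis of $\eV_n(l)$ as above, where each basis monomial corresponds to a forest $F$ on vertex set $\underline l$ with edges oriented toward the larger vertex. For a monomial $\mu_F \in \eV_n(l)$, the products $\eta_{i_1}\cdots\eta_{i_r}\,\mu_F$ (with $i_1 < \dots < i_r$, each $\eta_i$ appearing at most once) span $\vscV_{mn}(0,l)$, but the relation $\eta_i \omega_{ij} = \eta_j \omega_{ij}$ lets one slide an $\eta$ along any edge of $F$: so $\eta_{i}\mu_F$ only depends on the connected component of $F$ containing $i$, not on $i$ itself. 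Hence a basis is given by pairs $(F, C)$ where $F$ is a forest on $\underline l$ and $C$ is a set of connected components of $F$, with degree $(n-1)\cdot\#\{\text{edges of }F\} + (n-m-1)\cdot\#C$.

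Next I would carry out the enumeration by the same Fadell–Neuwirth-style induction used for configuration spaces, now purely combinatorially. Consider the "last vertex" $l$: either $l$ is isolated in $F$ and carries no $\eta$ (contributing factor $1$), or $l$ is isolated and carries $\eta_l$ (contributing $t^{n-m-1}$), or $l$ has a unique incoming edge from some $i < l$ (contributing $t^{n-1}$, and the choice of $i$ among $\{1,\dots,l-1\}$ gives the factor $i$ when summed — note sliding the $\eta$ means attaching $l$ to a component does not create a new independent $\eta$-choice). This shows the generating function factors as $\mathscr P(\vscV_{mn}(0,l)) = (1 + t^{n-m-1} + (l-1)t^{n-1}) \cdot \mathscr P(\vscV_{mn}(0,l-1))$, and the base case $l = 0$ (or $l=1$, giving $1 + t^{n-m-1}$) closes the induction to yield exactly the claimed product.

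The main obstacle is verifying that the proposed set of pairs $(F,C)$ is genuinely a \emph{basis} and not merely a spanning set — i.e.\ that no further linear relations are hidden among these elements. Spanning follows from the relations by the sliding argument just described; for linear independence I would invoke the standard fact that the $\mu_F$ are independent in $\eV_n(l)$ and then argue that the $\eta$-monomials, being supported on distinct "component-selection" data in a suitable associated-graded or filtration by $\eta$-degree, cannot collapse. Alternatively, one can sidestep independence entirely: the lemma only asserts equality of Poincaré polynomials, so it suffices to show (i) the stated product is an \emph{upper bound} for $\mathscr P(\vscV_{mn}(0,l))$ via the spanning set, and (ii) it is also a \emph{lower bound} — but here (ii) is delicate unless independence is established, so in practice the honest route is to prove the normal form is a basis. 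This is routine but requires care with the interaction of the two families of relations.
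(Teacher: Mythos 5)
The combinatorial core of your argument is correct, and it is presumably the routine count the paper has in mind (the lemma is stated there with no proof at all): after sliding the $\eta$'s along edges of the monomial, the elements $\eta_C\,\mu_F$, indexed by an admissible forest $F$ (the standard basis monomials of $\eV_n(l)$, in which each vertex $j$ carries at most one factor $\omega_{i(j)j}$ with $i(j)<j$) and a set $C$ of components of $F$, span $\vscV_{mn}(0,l)$; and since in this normal form the last vertex $l$ is either isolated or a univalent leaf attached to one of $1,\dots,l-1$, the number of pairs $(F,C)$ obeys exactly the recursion $\mathscr P_l(t)=(1+t^{n-m-1}+(l-1)t^{n-1})\,\mathscr P_{l-1}(t)$, which gives the stated product. (For spanning you also need, and use implicitly, that a monomial with two $\eta$'s in the same component vanishes; this follows from sliding together with $\eta_i^2=0$.)

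The gap — which you flag but do not close — is the linear independence of the $(F,C)$-monomials, i.e.\ the lower bound on the dimensions, and your proposed shortcuts do not close it. The hint ``filtration by $\eta$-degree'' is vacuous: the relations $(\eta_i-\eta_j)\omega_{ij}$ are already homogeneous of $\eta$-degree one, so $\vscV_{mn}(0,l)$ is $\eta$-graded to begin with, and the whole difficulty is to compute the dimension of each graded piece, where the sliding relations interact with the Arnold relations after re-expanding $\mu\,\omega_{ij}$ in the forest basis. Your alternative (upper bound only) is also not an option for the paper's purposes: the later proof that $\vscV_{mn}(0,l)\to H^*(\Conf_{W}(l))$ is an isomorphism sandwiches $\mathscr P(\Conf_W(l))$ between the Serre spectral sequence bound and the \emph{exact} value asserted in this lemma, so an inequality for $\vscV_{mn}(0,l)$ would not suffice. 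To finish, one genuinely has to prove the basis claim, for instance by defining the linear retraction of $S(\eta_1,\dots,\eta_l)/(\eta_i^2)\otimes\eV_n(l)$ onto the span of the $(F,C)$-monomials (send $\eta_S\mu_F$ to zero when $S$ meets some component of $F$ twice, and to the corresponding $(F,C)$-monomial otherwise) and checking it annihilates the ideal generated by the elements $(\eta_i-\eta_j)\omega_{ij}$ — the only nontrivial point being compatibility with the Arnold rewriting — or by identifying $\vscV_{mn}(0,l)$ with the Lambrechts--Stanley model $\mathsf G_A$ for $A=H^*(S^{n-m-1})$ with vanishing diagonal class (as the paper's preceding remark suggests), or by pairing against decorated trees as in Sinha's graph/tree pairing, which the paper invokes anyway in the next proposition. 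Without one of these steps the lemma, which \emph{is} the dimension count, is not yet proved.
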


\begin{proof}
  Let $V_{d}$ be the graded vector space with basis $\langle 1, \omega_{1d}, \dots, \omega_{(d-1)d}, \eta_{d}\rangle$.
  Its Poincaré polynomial is $1 + t^{n-m-1} + (d-1)t^{n-1}$.
  It thus suffices to show that $\vscV_{mn}(0,l) \cong V_{1} \otimes \dots \otimes V_{l}$,
  i.e.\ that $\vscV_{mn}(0,l)$ has the basis $\omega_{i_{1}j_{1}} \dots \omega_{i_{r}j_{r}} \eta_{k_{1}} \dots \eta_{k_{s}}$, where $j_{1} < \dots < j_{r}$, $i_{a} < j_{a}$ and $k_{b} \neq j_{a}$ for all $a,b$ (that is, graphs where edges increase and are ordered by their target, no two edges have the same target, and no target is decorated by $\eta$).

  We use the theory of commutative PBW bases~\cite[Section~4.8]{PolishchukPositselski2005}.
  Generators of $\vscV_{mn}(0,l)$ are $\omega_{ij}$ for $1 \le i < j \le l$ and $\eta_{i}$ for $1 \le i \le l$.
  We order them by $\eta_{1} > \dots > \eta_{l}$, $\omega_{ij} < \omega_{kl}$ if $i < k$ or $i = k \wedge j < l$, and e.g.\ $\eta_{k} < \omega_{ij}$ for all $i,j,k$.
  Monomials are ordered lexicographically.
  Quadratic relations, written as ``rewriting rules'' (higher monomial = lower monomials), are $\omega_{jk}\omega_{ik} = (-1)^{n+1}\omega_{ij} \omega_{jk} - \omega_{ik}\omega_{ij}$ for all $i < j < k$, $\eta_{j} \omega_{ij} = \eta_{i} \omega_{ij}$ and $\omega_{ij}^{2} = 0$ for all $i < j$, and $\eta_{i}^{2} = 0$ for all $i$.
  The claimed basis is exactly the PBW generating set.

  We need to check that overlapping rewriting rules are confluent, i.e.\ we get to the same element using only the rewriting rules.
  The critical monomials are $\omega_{lk}\omega_{jk}\omega_{ik}$, $\omega_{ij}^{3}$, $\eta_{i}^{3}$, $\omega_{jk}^{2} \omega_{ik}$, $\eta_{k}\omega_{jk}\omega_{ik}$, $\eta_{j}\omega_{ij}^{2}$, and $\eta_{j}^{2}\omega_{ij}$ (where $i < j < k < l$).
  These are all easily seen to be confluent.
  For example, $\eta_{k}\omega_{jk}\omega_{ik}$ can be rewritten in two ways.
  The first way, $(\eta_{k}\omega_{jk})\omega_{ik} = \eta_{j}\omega_{jk}\omega_{ik} = \eta_{j}((-1)^{n+1}\omega_{ij} \omega_{jk} - \omega_{ik}\omega_{ij}) = (-1)^{n+1}\eta_{i}\omega_{ij}\omega_{jk} - \eta_{i}\omega_{ik}\omega_{ij}$.
  The second way, $\eta_{k}(\omega_{jk}\omega_{ik}) = \eta_{k}((-1)^{n+1}\omega_{ij} \omega_{jk} - \omega_{ik}\omega_{ij}) = (-1)^{n+1} \eta_{j}\omega_{ij}\omega_{jk}-\eta_{i}\omega_{ik}\omega_{ij} = (-1)^{n+1} \eta_{i}\omega_{ij}\omega_{jk}-\eta_{i}\omega_{ik}\omega_{ij}$.
\end{proof}

\begin{proposition}
  \label{prop:well-def}
  For $n-2 \ge m \ge 1$, we have a well-defined algebra map $\vscV_{mn}(0,l) \to H^{*}(\Conf_{W}(l))$ given by $\omega_{ij} \mapsto \theta^{*}_{ij}(\vol_{n-1})$ and $\eta_{i} \mapsto \alpha^{*}_{i}(\vol_{n-m-1})$.
\end{proposition}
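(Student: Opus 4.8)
The plan is to check that the three defining relations of $\vscV_{mn}(0,l)$—namely $\omega_{ij}\omega_{jk}+\omega_{jk}\omega_{ki}+\omega_{ki}\omega_{ij}$ (plus the $\eV_n$-relations already known by Arnold--Cohen), $\eta_i^2$, and $\eta_i\omega_{ij}-\eta_j\omega_{ij}$—are sent to zero in $H^*(\Conf_W(l))$ under the assignment on generators. The $\eV_n$-relations are automatic: the composite $\eV_n(l)\to H^*(\Conf_{\R^n}(l))\xrightarrow{\ \iota^*\ } H^*(\Conf_W(l))$ induced by the inclusion $\Conf_W(l)\subset\Conf_{\R^n}(l)$ already kills them, and by construction $\omega_{ij}\mapsto\theta_{ij}^*(\vol_{n-1})$ factors through this composite because $\theta_{ij}$ on $\Conf_W(l)$ is the restriction of $\theta_{ij}$ on $\Conf_{\R^n}(l)$. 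So the only genuine work concerns the $\eta$-relations.

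For $\eta_i^2=0$: the class $\alpha_i^*(\vol_{n-m-1})$ is the pullback of the volume form of $S^{n-m-1}$ along the map $\alpha_i:\Conf_W(l)\to S^{n-m-1}$ recording the direction of the orthogonal projection of $x_i$ onto $(\R^m)^\perp$. Since $\vol_{n-m-1}\in H^{n-m-1}(S^{n-m-1})$ squares to zero in the cohomology of the sphere (top degree), its pullback squares to zero; more precisely, $\alpha_i^*(\vol_{n-m-1})^2=\alpha_i^*(\vol_{n-m-1}^2)=\alpha_i^*(0)=0$. For the mixed relation $\eta_i\omega_{ij}=\eta_j\omega_{ij}$: here I would use the map $(\alpha_i,\theta_{ij}):\Conf_W(l)\to S^{n-m-1}\times S^{n-1}$ and observe that on the subspace where $\theta_{ij}$ is relevant, the directions $\alpha_i$ and $\alpha_j$ of the two orthogonal projections can be compared. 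Concretely, restrict attention to the projection $p:\Conf_W(l)\to\Conf_W(\{i,j\})$ forgetting all but two points; it suffices to prove the relation in $H^*(\Conf_W(\{i,j\}))$ and pull back. On $\Conf_W(\{i,j\})$ one has, up to the $\R^m\rtimes\R_{>0}$-action, a model where $\theta_{ij}$ and $\alpha_i,\alpha_j$ are explicit, and the key geometric fact is that $\eta_i-\eta_j$ (the difference of the two spherical directions) becomes exact—indeed null-homotopic—once one multiplies by $\omega_{ij}$, because restricting $\alpha_i$ and $\alpha_j$ to the locus governed by $\theta_{ij}$ identifies the two projection directions up to something that bounds. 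Equivalently, the forms $\theta_{ij}^*(\vol_{n-1})\wedge\alpha_i^*(\vol_{n-m-1})$ and $\theta_{ij}^*(\vol_{n-1})\wedge\alpha_j^*(\vol_{n-m-1})$ differ by an exact PA form, which can be exhibited by a fiberwise Stokes argument on $\Conf_W(\{i,j\})$.

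The main obstacle is the mixed relation $\eta_i\omega_{ij}=\eta_j\omega_{ij}$: unlike the others it is not inherited from an ambient space, and it requires genuinely understanding the interaction between the pairwise direction $\theta_{ij}$ and the ``vertical'' directions $\alpha_i,\alpha_j$ on the two-point configuration space $\Conf_W(\{i,j\})\simeq S^{n-m-1}\vee S^{n-1}$ (or its $\R^m\rtimes\R_{>0}$-reduction). I expect the cleanest route is to compute $H^*(\Conf_W(\{i,j\}))$ directly—it is the cohomology of a wedge of two spheres, hence has a one-dimensional group in each of degrees $n-m-1$ and $n-1$ (beyond degree $0$) and nothing in degree $n-m-1+n-1$—so the product $\eta_i\omega_{ij}$ lands in a zero group and $\eta_i\omega_{ij}=0=\eta_j\omega_{ij}$ trivially in arity two; pulling back along $p$ then forces $\eta_i\omega_{ij}=\eta_j\omega_{ij}$ (in fact both sides may be nonzero in higher arity, so one must instead argue that their difference pulls back from the arity-two space where it vanishes). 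Making this pullback argument precise—checking that $\eta_i\omega_{ij}-\eta_j\omega_{ij}$ genuinely lies in the image of $p^*$—is the delicate point and will use the Serre spectral sequence description of $H^*(\Conf_W(l))$ from the preceding proposition together with the multiplicativity of the Leray--Hirsch-type splitting.
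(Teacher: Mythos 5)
Your reduction of the problem to the two relations $\eta_i^2=0$ and $\eta_i\omega_{ij}=\eta_j\omega_{ij}$, and the further reduction of the latter to the two-point space $\Conf_W(\{i,j\})$, match the paper's proof (and your worry at the end about whether $\eta_i\omega_{ij}-\eta_j\omega_{ij}$ lies in the image of $p^*$ is a non-issue: all three generators involved are by definition pullbacks along $p$, and $p^*$ is a ring map). The problem is the argument you then give on the two-point space: it rests on a false identification of $\Conf_W(2)$ with $S^{n-m-1}\vee S^{n-1}$. That wedge is the \emph{fiber} of the Fadell--Neuwirth fibration $\Conf_W(2)\to\Conf_W(1)\simeq S^{n-m-1}$, not the total space. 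The total space has Poincar\'e polynomial (bounded above by, and in fact equal to) $(1+t^{n-m-1})(1+t^{n-m-1}+t^{n-1})$, so $H^{2n-m-2}(\Conf_W(2))$ has rank one, not zero. In particular the class $\eta_1\omega_{12}$ does \emph{not} vanish in arity two: it pairs to $1$ with the class of the torus $S^{n-1}\times S^{n-m-1}\to\Conf_W(2)$ (point $2$ rotating around point $1$, the pair rotating around $\R^m$). So your ``trivial vanishing'' route collapses, and it would in fact contradict the next proposition of the paper, where $\eta_1\omega_{12}$ is a nonzero basis element of $\vscV_{mn}(0,2)$ mapping isomorphically to cohomology.

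The fallback you sketch earlier (that $\theta_{12}^*(\vol_{n-1})\wedge(\alpha_1^*-\alpha_2^*)(\vol_{n-m-1})$ is exact by some fiberwise Stokes argument) is only a hope, not a proof: no primitive or bounding chain is produced, and ``the two projection directions are identified up to something that bounds'' is exactly the statement to be proved. The paper closes this gap differently: from the Poincar\'e polynomial inequality, $H^{2n-m-2}(\Conf_W(2))$ has rank at most one; both $\eta_1\omega_{12}$ and $\eta_2\omega_{12}$ evaluate to $1$ on the explicit torus class above, hence they coincide. If you want to keep your structure, you should replace your arity-two step by this evaluation argument (or some equally concrete computation on $\Conf_W(2)$); as written, the key relation is not established.
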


\begin{proof}
  Clearly $\alpha^{*}_{i}(\vol_{n-m-1})^{2} = 0$, so we just need to check is that $\eta_{i} \omega_{ij} = \eta_{j} \omega_{ij}$.
  It is sufficient to check this on $\Conf_{W}(2)$.
  The product $S^{n-1} \times S^{n-m-1}$ maps into $\Conf_{W}(2)$: the $S^{n-1}$ describes the rotation of point $2$ around point $1$, while the $S^{n-m-1}$ describes the rotation of the pair around $\R^{m}$.
  By~\eqref{eq.poinc-poly}, $\dim H^{n-m-1+n-1}(\Conf_{2}(W)) \le 1$.
  Since $\eta_{1} \omega_{12}$ and $\eta_{2} \omega_{12}$ pair to $1$ with the pushforward of the fundamental class of $S^{n-1} \times S^{n-m-1}$, we get $\eta_{1} \omega_{12} = \eta_{2} \omega_{12}$.
\end{proof}

\begin{proposition}\label{prop:iso-vscv-h}
  For $n-2 \ge m \ge 1$, the map $\vscV_{mn}(0,l) \to H^{*}(\Conf_{W}(l))$ is an isomorphism.
\end{proposition}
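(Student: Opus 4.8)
The plan is to promote the inequality of Poincaré polynomials established above into an equality, and then deduce bijectivity of the map degreewise. First I would argue by induction on $l$ that the map $\vscV_{mn}(0,l) \to H^*(\Conf_W(l))$ is surjective. The base case $l=1$ is clear since $\Conf_W(1) \simeq S^{n-m-1}$ and $\eta_1 \mapsto \vol_{n-m-1}$ hits the generator. For the inductive step I would use the Fadell--Neuwirth fibration \eqref{eq:fibration} together with the Serre spectral sequence: the fiber $S^{n-m-1} \vee \bigvee^l S^{n-1}$ has cohomology generated by classes of the form $\eta_{l+1}$ (the wedge summand $S^{n-m-1}$, realized via $\alpha_{l+1}$) and $\omega_{i,l+1}$ for $i \le l$ (the summands $S^{n-1}$, realized via $\theta_{i,l+1}$), all of which lie in the image of our algebra map; combined with the inductive hypothesis on the base, multiplicativity of the map forces surjectivity onto every term that survives the spectral sequence, hence onto $H^*(\Conf_W(l+1))$.

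**Conclusion from dimensions.** Once surjectivity is known, I would invoke the earlier Lemma that $\mathscr{P}(\vscV_{mn}(0,l))$ equals the right-hand side of \eqref{eq.poinc-poly}, together with the Proposition giving $\mathscr{P}(\Conf_W(l)) \preceq$ that same polynomial. A surjection of finitely generated graded $\R$-modules whose source has Poincaré polynomial $\preceq$ that of the target must actually be an isomorphism, and this in turn forces $\mathscr{P}(\Conf_W(l))$ to attain the upper bound. (Working over $\Z$, one first deduces from surjectivity and the dimension count over $\Q$ that equality holds, then applies the freeness clause of the Proposition inductively to conclude that $H^*(\Conf_W(l);\Z)$ is free and the map is an isomorphism integrally; alternatively one proves the statement over $\R$ as stated and uses the freeness result separately.)

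**Main obstacle.** The delicate point is the inductive step in the surjectivity argument: one must check that the Serre spectral sequence of \eqref{eq:fibration} degenerates enough — equivalently, that the equality in \eqref{eq.poinc-poly} propagates — so that no cohomology of $\Conf_W(l+1)$ is "hidden" beyond what the fiber and base classes produce. Concretely, the worry is a nonzero differential $d_{n-m}$ or $d_n$ killing part of the $E_2$-page; but since our explicit algebra map already produces global classes $\eta_i, \omega_{ij}$ restricting correctly to fiber generators and pulled back correctly from base generators, every $E_2$-class we need is a permanent cycle, which both forces degeneration in the relevant range and gives the surjection. Handling the non-simply-connected base when $n-m=2$ requires the triviality of the coefficient system, but that was already verified in the proof of the Poincaré polynomial bound above, so it can be cited directly. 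I expect the proof itself to be short, essentially the two paragraphs above made precise, with the spectral-sequence bookkeeping being the only place demanding care.
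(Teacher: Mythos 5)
Your strategy is genuinely different from the paper's: you prove \emph{surjectivity} by induction along the Fadell--Neuwirth fibration, whereas the paper proves \emph{injectivity} over each field $\K \in \{\Q,\mathbb{F}_p\}$ by exhibiting explicit homology classes (trees with vertices possibly decorated by a loop, as in Sinha's work) and using the nondegeneracy of the graphs/trees pairing; injectivity combined with the bound $\mathscr{P}(\Conf_W(l)) \preceq \mathscr{P}(\vscV_{mn}(0,l))$ forces equality of dimensions, and the integral statement then follows from the freeness clause of the earlier proposition together with universal coefficients. The reason the paper goes through injectivity is precisely that it is the direction compatible with the inequality that is actually available.

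This is where your write-up has a genuine gap: in the ``conclusion from dimensions'' you invoke a surjection ``whose source has Poincaré polynomial $\preceq$ that of the target,'' but what is known is the opposite inequality --- the Lemma gives $\mathscr{P}(\vscV_{mn}(0,l)) = {}$RHS of~\eqref{eq.poinc-poly} and the Proposition gives $\mathscr{P}(\Conf_W(l)) \preceq {}$RHS, i.e.\ target $\preceq$ source. A surjection from a graded vector space onto one of smaller or equal dimension can perfectly well have a kernel, so surjectivity plus the available inequality does not yield an isomorphism; as stated, the final step is circular. The repair is contained in your own ``main obstacle'' paragraph, but it must be made the load-bearing step rather than an afterthought: the observation that $\eta_{l+1}$ and $\omega_{i,l+1}$ are global classes restricting to a basis of $H^*(S^{n-m-1}\vee\bigvee^l S^{n-1})$ is exactly the hypothesis of the Leray--Hirsch theorem (which, incidentally, does not require a simply connected base, so the $n-m=2$ coefficient-system worry can be sidestepped). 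Leray--Hirsch gives $H^*(\Conf_W(l+1)) \cong H^*(\Conf_W(l)) \otimes H^*(\mathrm{fiber})$ as modules, hence by induction the \emph{equality} in~\eqref{eq.poinc-poly} (and freeness over $\Z$), and only then does your surjectivity argument, now between graded modules of equal finite rank in each degree, force the map to be an isomorphism. With that reordering your route works and is arguably more self-contained than the paper's, at the price of not producing the explicit dual homology classes that the paper's pairing argument provides.
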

\begin{proof}
  Our proof is inspired by the proof of~\cite[Theorem~4.9]{Sinha2013} (see e.g.~\cite{SinhaWalter2011} for other uses of the pairing defined below).
  Given Inequality~\eqref{eq.poinc-poly}, the universal coefficients theorem, and Proposition~\ref{prop:well-def}, it is sufficient to show that the map is injective over any field $\K \in \{ \Q, \mathbb{F}_{p} \}$.

  Let us define classes in $H_{*}(\Conf_{W}(l);\K)$ using
  ``solar system''~\cite[Section~2]{Sinha2013}.
  With this point of view, any forest consisting of binary trees whose roots are possibly decorated by a loop induces a class in $H_{*}(\Conf_{W}(l); \K)$.
  The difference with classical solar systems is that orbital centers are chosen far enough from $\R^{m}$, and that if a root is decorated by a loop then the orbital center itself orbits around $\R^{m}$ in the shape of $S^{n-m-1}$.
  For example, the embedding $S^{n-1} \times S^{n-m-1} \to \Conf_{W}(2)$ of Proposition~\ref{prop:well-def} corresponds to the decorated binary tree with two leaves.
  
  The homology classes induced by undecorated trees satisfy the Jacobi relation, as these homology classes come from the subspace $\Conf_{\R^{n}}(l)$ (with $\R^{n}$ being e.g.\ the upper half-space).
  The homology/cohomology pairing is given by the pairing between graphs (using the description after Definition~\ref{def:vscV}) and trees, which is nondegenerate by~\cite[Theorem~4.7]{Sinha2013}, tensored with the pairing between the loops and the classes $\eta$ (which is clearly nondegenerate).
  We therefore get that $\vscV_{mn}(l) \to H^{*}(\Conf_{W}(l))$ is injective, establishing the result.
\end{proof}

\begin{definition}
  We define, for integers $k,l \geq 0$, $\vscV_{mn}(k,l) \coloneqq \eV_{m}(k) \otimes \vscV_{mn}(0,l)$.
\end{definition}

For cosmetic reasons, for $m \ge 2$ we will write $\tilde{\omega}_{ij}$ for the generators of $\eV_{m}(k)$, to distinguish them from the generators of $\vscV_{mn}(0,l)$.
(Recall $\eV_{1}(k)$ is simply the algebra of functions on $\Sigma_{k}$.)
The CDGA $\vscV_{mn}(k,l)$ is equipped with the obvious action of $\Sigma_{k} \times \Sigma_{l}$ and we can therefore view $\vscV_{mn}$ as a bisymmetric collection.

\begin{proposition}
  There is an isomorphism of algebras $\vscV_{mn}(k,l) \cong H^{*}(\VSC_{mn}(k,l))$.
\end{proposition}
\begin{proof}
  This follows directly from the Künneth formula and the homotopy equivalence $\VSC_{mn}(k,l) \simeq \Conf_{\R^{m}}(k) \times \Conf_{\R^{n} \setminus \R^{m}}(l)$.
\end{proof}

We now turn to the cooperad structure of $\vscV_{mn}$.
In $\eV_{n}$, we have $\circ_{W}^{\vee}(\omega_{uv}) = 1 \otimes \omega_{uv}$ if $u,v \in W$, and $\circ_{W}^{\vee}(\omega_{uv}) = \omega_{[u][v]} \otimes 1$ otherwise.
Moreover, $\eV_{1}(U) = \Ass^{\vee}(U) = \R[\Sigma_{U}]^{\vee}$ is the algebra of functions on $\Sigma_{U} \coloneqq \Bij(U, \{1,\dots,\#U\})$, the set of linear orders on $U$.
The cocomposition of $\eV_{1}$ is dual to block composition:
\begin{equation}
  \label{eq:block}
  \Sigma_{U/W} \times \Sigma_{W} \to \Sigma_{U}, \quad (\sigma, \tau) \mapsto \sigma \circ_{W} \tau,
\end{equation}
where $\sigma \circ_{W} \tau$ is the linear order on $U$ obtained by inserting the order on $W$ at the position $*$ in the order $\sigma$,
e.g.\ $(a < * < b) \circ_{\{x,y\}} (x < y) = (a < x < y < b)$.

\begin{proposition}
  \label{prop:coop-G}
  For $m \ge 2$, the isomorphism of Proposition~\ref{prop:iso-vscv-h} induces cooperadic structure maps $\circ_{T}^{\vee} : \vscV_{mn}(U,V) \to \vscV_{mn}(U, V/T) \otimes \eV_{n}(T)$ and $\circ_{W,T}^{\vee} : \vscV_{mn}(U,V \sqcup T) \to \vscV_{mn}(U/W,V) \otimes \vscV_{mn}(W,T)$ given by:
  \begin{equation*}
    \circ_{T}^{\vee}(\tilde{\omega}_{uu'}) = \tilde{\omega}_{uu'} \otimes 1.
    \qquad
    \circ_{W,T}^{\vee}(\tilde{\omega}_{uu'}) = \begin{cases} 1 \otimes \tilde{\omega}_{uu'}, & \text{if } u,u' \in W; \\ \tilde{\omega}_{[u][u']} \otimes 1, & \text{otherwise.} \end{cases}
  \end{equation*}
  \begin{equation*}
    \circ_{T}^{\vee}(\omega_{vv'}) = \begin{cases} 1 \otimes \omega_{vv'}, & \text{if } v,v' \in T; \\ \omega_{[v][v']} \otimes 1 & \text{otherwise.} \end{cases}
    \qquad
    \circ_{W,T}^{\vee}(\omega_{vv'}) = \begin{cases} 1 \otimes \omega_{vv'}, & \text{if } v,v' \in T; \\ \omega_{vv'} \otimes 1 & \text{if } v,v' \in V; \\ 0 & \text{otherwise.} \end{cases}
  \end{equation*}
  \begin{equation*}
    \circ_{T}^{\vee}(\eta_{v}) = \eta_{[v]} \otimes 1.
    \qquad
    \circ_{W,T}^{\vee}(\eta_{v}) = \begin{cases} \eta_{v} \otimes 1 & \text{if } v \in V; \\ 1 \otimes \eta_{v} & \text{if } v \in T. \end{cases}
  \end{equation*}

  For $m = 1$, the maps $\circ_{T}^{\vee}$ and $\circ_{W,T}^{\vee}$ have the same behavior as above on the generators $\omega_{vv'}$ and $\eta_{v}$.
  On $\eV_{1}(U) = \Ass^{\vee}(U) = \R[\Sigma_{U}^{\vee}]$, the map $\circ_{T}^{\vee}$ is the identity, and $\circ_{W,T}^{\vee}$ is the dual of the block composition of Equation~\eqref{eq:block}.
\end{proposition}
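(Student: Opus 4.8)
The plan is to verify that the formulas defining $\circ_T^\vee$ and $\circ_{W,T}^\vee$ are (i) well-defined on the given presentations, (ii) morphisms of CDGAs, and (iii) satisfy the relative cooperad axioms; then to observe that they agree with the cohomology maps induced by the operadic structure maps of $\VFM_{mn}$ (equivalently $\VSC_{mn}$). First I would check that the stated assignments on generators respect the defining relations of $\vscV_{mn}(U,V) = \eV_m(U) \otimes \eV_n(V) \otimes S(\eta_v) / (\text{Arnold}, \eta_v^2, \eta_v \omega_{vv'} - \eta_{v'}\omega_{vv'})$. For the $\eV_n$ and $\eV_m$ Arnold relations this is already known, since the restrictions to the $\omega$-generators are exactly the cooperad structure maps of $\eV_n$ (resp.\ $\eV_m$, in the block-order form for $m=1$). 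The only genuinely new relation to check is $\eta_v \omega_{vv'} = \eta_{v'}\omega_{vv'}$: applying $\circ_T^\vee$ sends both sides to $\eta_{[v]}\omega_{[v][v']} \otimes 1$ (or to $\eta_{[v]} \otimes \omega_{vv'}$-type terms when $v,v' \in T$, where one uses the corresponding relation in $\vscV_{mn}(0,T)$), and applying $\circ_{W,T}^\vee$ one checks the three cases $v,v'\in T$, $v,v'\in V$, and $v\in V, v'\in T$ separately—in the mixed case both sides land in the appropriate $0$ or matching term, using that $\omega_{vv'} \mapsto 0$ forces consistency. Compatibility with the differential is immediate since all CDGAs here have zero differential.

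Next I would verify the relative cooperad axioms: coassociativity for nested/disjoint subsets (the three pentagon-type diagrams relating $\circ_{T}^\vee$, $\circ_{W,T}^\vee$, and the structure maps of $\eV_n$), counitality, and $\Sigma_U \times \Sigma_V$-equivariance. On the $\omega$-generators these reduce to the already-established axioms for $\eV_n$ and $\eV_m$. On the $\eta$-generators the checks are short bookkeeping: e.g.\ under an iterated decomposition $V \supset T \supset T'$, the generator $\eta_v$ is routed to the innermost factor containing $v$, and one confirms the two orders of decomposition agree. Equivariance is clear because the formulas only reference the partition of $U \sqcup V$ into the blocks $W$, $T$, and their complements, never a chosen ordering (except in the $m=1$ case, where it is handled by the block-composition structure of $\Ass^\vee$, which is standard).

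Finally, to justify that these are \emph{the} cooperad structure maps and not merely \emph{some} abstract structure, I would compute the maps $H^*(\VFM_{mn}(U,V/T)) \otimes H^*(\FM_n(T)) \leftarrow H^*(\VFM_{mn}(U,V))$ induced by the operadic composition $\circ_T : \VFM_{mn}(U,V/T) \times \FM_n(T) \to \VFM_{mn}(U,V)$, using the explicit coordinate formulas for the structure maps (directions $\theta_{ij}$, relative distances $\delta_{ijk}$, projections $\alpha_v$) recorded in the proof that $\VFM_{mn}$ is a relative operad: pulling back $\theta_{vv'}^*(\vol_{n-1})$ gives $\omega_{vv'}$ from the $T$-factor if $v,v'\in T$ and from the quotient factor otherwise, pulling back $\alpha_v^*(\vol_{n-m-1})$ gives $\eta_{[v]}$ since the projection to $(\R^m)^\perp$ is determined by the macroscopic configuration, and the mixed/vanishing cases for $\circ_{W,T}^\vee$ reflect that an aerial point in a terrestrial disk $W$ contributes no $\alpha$- or $\omega$-class to that disk. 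The main obstacle is precisely this last dictionary between the geometric projection maps and the algebraic formulas—in particular getting the $\eta_v \mapsto \eta_{[v]}$ versus $\eta_v \mapsto 1 \otimes \eta_v$ dichotomy right and confirming that $\omega_{vv'} \mapsto 0$ in the genuinely mixed aerial/terrestrial situation—but this is a finite case analysis modeled closely on~\cite[Section~5.2]{LambrechtsVolic2014}.
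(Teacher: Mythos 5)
Your proposal is correct and follows essentially the same route as the paper, whose proof of this proposition is exactly the direct verification that the stated formulas respect the defining relations and satisfy the Hopf cooperad axioms (the paper simply declares these checks straightforward); the one small imprecision is that the restriction of $\circ_{W,T}^{\vee}$ to the aerial generators is \emph{not} literally the $\eV_{n}$ cocomposition because of the vanishing in the mixed case, so the Arnold relation with indices split between $V$ and $T$ still needs the (immediate) observation that every term is sent to zero. Your final paragraph, identifying these formulas with the maps induced on cohomology by the operadic structure of $\VSC_{mn}$/$\VFM_{mn}$, is not needed for this statement: in the paper that is the separate Proposition~\ref{prop:gmn-iso-cohom}, proved afterwards by the shrinking-homotopy arguments you sketch.
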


\begin{proof}
  Our proof is similar to~\cite[Theorem~6.3]{Sinha2013}.
  Recall the maps $\theta_{uu'} : \VSC_{mn}(U,V) \to S^{m-1}$, $\theta_{vv'} : \VSC_{mn}(U,V) \to S^{n-1}$, and $\alpha_{v} : \VSC_{mn}(U,V) \to S^{n-m-1}$ from Section~\ref{sec.defin-comp-with}.
  Let us first check that, when they are composed with the insertion maps $\circ_{T} : \VSC_{mn}(U,V/T) \times \DD_{n}(T) \to \VSC_{mn}(U,V)$, we obtain the behavior indicated above on generators.
  For $\theta_{uu'}$ and $\theta_{vv'}$, this is identical to the proof that $\eV_{n}$ (resp.\ $\scV_{m}$) is the cohomology of $\DD_{n}$ (resp.\ $\SC_{m}$).
  Now, let us consider the composite $\alpha_{v}(- \circ_{T} -) : \VSC_{mn}(U,V/T) \times \DD_{n}(T) \to S^{n-m-1}$ for some $v \in V$.
  If $v \not\in T$, then the composite map $\alpha_{v}(- \circ_{T} -)$ is equal to $\alpha_{v} \circ \operatorname{proj}_{1}$,
  therefore:
  \begin{equation}
    \circ_{T}^{\vee}(\eta_{v}) = (\alpha_{v}(- \circ_{T} -))^{*}(\vol_{n-m-1}) = \operatorname{proj}_{1}^{*}(\alpha_{v}^{*}(\vol_{n-m-1})) = \eta_{v} \otimes 1.
  \end{equation}
  If $v \in T$, consider the homotopy which precomposes the embedding indexed by $[v] \in V/T$ with $x \mapsto tx$.
  At the limit $t = 0$, we find the map $\alpha_{*} \circ \operatorname{proj}_{1}$.
  Since the homotopy class of the map is constant as $t$ varies, we also get $\circ_{T}^{\vee}(\eta_{v}) = \eta_{[v]} \otimes 1$.

  Now consider $\circ_{W,T} : \VSC_{mn}(U/W,V) \times \VSC_{mn}(W,T) \to \DD_{mn}(U,V \sqcup T)$ and let us check that we get the maps from the proposition.
  For $\theta_{uu'}$, this is again identical to the computation for $\eV_{n}$.
  For $\eta_{v}$, it is easy to see that $\eta_{v}(- \circ_{W,T} -)$ factors by the projection on one of the two factors in the product.
  Similarly, for $\theta_{vv'}$, if $v,v'$ are both in $V$ or both in $T$, then $\theta_{vv'}(- \circ_{W,T} -)$ also factors by one of the projections.
  Otherwise, WLOG assume that $v \in V$ and $v' \in T$.
  If we contract the appropriate disks by a homotopy which linearly decreases the radius, then the limit is a constant map, thus $\circ_{W,T}^{\vee}(\omega_{vv'}) = 0$.
\end{proof}

\begin{remark}\label{rmk:vsc-esc}
  We can compare $\vscV_{mn}$ with the cohomology of $\ESC_{mn}$ (see Remark~\ref{rmk:difference}).
  We have $H^{*}(\ESC_{mn}(U,V)) = \eV_{m}(U) \otimes_{\eV_{n}(U)} \eV_{n}(U \sqcup V)$, where $\eV_{n}(U) \to \eV_{n}(U \sqcup V)$ is the obvious inclusion and $\eV_{n}(U) \to \eV_{m}(U)$ sends all the generators $\omega_{uu'}$ to zero~\cite[Proposition~4.1]{Willwacher2017a}.
  Thus $H^{*}(\ESC_{mn}(U,V)) = \eV_{m}(U) \otimes \eV_{n}(U \sqcup V) / \eV_{n}(U)$.
  The cooperadic structure maps are defined  by formulas similar to Proposition~\ref{prop:coop-G} (forgetting the $\eta_{v}$).
  The inclusion $\VSC_{mn} \subset \ESC_{mn}$ induces on cohomology the composite $\eV_{m}(U) \otimes \eV_{n}(U \sqcup V)/\eV_{n}(U) \twoheadrightarrow \eV_{m}(U) \otimes \eV_{n}(V) \hookrightarrow \vscV_{mn}(U,V)$.
  An interesting question would be whether this inclusion is formal.
\end{remark}

\begin{remark}\label{rmk:why-not}
  The equality $\VSC_{mn}(-, \varnothing) = \DD_{m}$ induces a left $\DD_{m}$-module structure on $\VSC_{mn}(\varnothing,-)$, using the operad structure of $\VSC_{mn}$.
  This structure dualizes to the map $\Delta : \vscV_{mn}(\varnothing, \bigsqcup_{u \in U} V_{u}) \to \vscV_{mn}(U, \varnothing) \otimes \bigotimes_{u \in U} \vscV_{mn}(\varnothing, V_{u})$ given by $\Delta(\eta_{v}) = 1 \otimes \eta_{v}$ (put in the corresponding $\vscV_{mn}(\varnothing, V_{u})$), $\Delta(\omega_{vv'}) = 1 \otimes \omega_{vv'}$ if $v$ and $v'$ are in the same $V_{u}$ and $\Delta(\omega_{vv'}) = 0$ otherwise.
\end{remark}

\subsection{Generators and relations for the homology}
\label{sec:pres-homol}

We take the opportunity to describe a presentation of $\vsc_{mn} \coloneqq H_{*}(\VFM_{mn})$ by generators and relations.
Let us first recall the presentation of $\ee_{n} \coloneqq H_{*}(\FM_{n})$, due to Cohen~\cite{Cohen1976}.
An algebra over $\ee_{1}$ is a unital associative algebra.
For $n \ge 2$, an algebra over $\ee_{n}$ is a unital Poisson $n$-algebra, i.e.\ a unital commutative algebra equipped with a Lie bracket of (cohomological) degree $1-n$ which is a biderivation for the product.

\begin{proposition}
  \label{prop:pres-homology}
  For $n-2 \ge m \ge 1$, an algebra over $\vsc_{mn}$ is the data $(A,B,f,\delta)$ consisting of an $\ee_{m}$-algebra $A$, an $\ee_{n}$-algebra $B$, a central morphism of algebras $f : B \to A$, and its central derivation $\delta : B[n-m-1] \to A$.
\end{proposition}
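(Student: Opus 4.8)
The plan is to dualise the explicit computation of the Hopf cooperad $\vscV_{mn} = H^{*}(\VSC_{mn})$ from Section~\ref{sec.comp-cohom} into a presentation of the operad $\vsc_{mn}$ by generators and relations, and then to read off what an algebra over that presentation is. Since $\vsc_{mn} = H_{*}(\VFM_{mn}) \cong H_{*}(\VSC_{mn})$ is arity-wise the linear dual of the relative Hopf cooperad $\vscV_{mn}$ over $\eV_{n}$ (Propositions~\ref{prop:zigzag-vsc-vfm} and~\ref{prop:gmn-iso-cohom}), with operadic composition dual to the cocomposition maps of Proposition~\ref{prop:coop-G}, it is a relative operad over $\ee_{n} = H_{*}(\FM_{n})$. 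Hence in any $\vsc_{mn}$-algebra $(A,B)$ the aerial object $B$ is canonically an $\ee_{n}$-algebra by Cohen's theorem (Theorem~\ref{thm:cohen}); dually, $\VFM_{mn}(U,\varnothing) = \FM_{m}(U)$ gives $\vsc_{mn}(-,0) = \ee_{m}$, so $A$ is canonically an $\ee_{m}$-algebra. This produces the pair $(A,B)$, and the remaining generators of $\vsc_{mn}$ live in $\vsc_{mn}(0,1) = H_{*}(S^{n-m-1})$, which is two-dimensional: the degree-$0$ class gives a linear map $f : B \to A$ and the degree-$(n-m-1)$ class gives $\delta : B[n-m-1] \to A$.

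First I would check that the $\ee_{m}$-generators together with $f$ and $\delta$ generate $\vsc_{mn}$ as a relative operad over $\ee_{n}$. This is the operadic dual of the evident fact that $\vscV_{mn}(U,V)$ is generated as an algebra by the $\tilde{\omega}_{uu'}$, $\omega_{vv'}$ and $\eta_{v}$: unwinding the cocomposition formulas of Proposition~\ref{prop:coop-G}, the basis vector of $\vsc_{mn}(U,V)$ dual to a monomial is the corresponding iterated operadic composite of these generators (one Poisson-$m$ operation on the terrestrial inputs, one Poisson-$n$ operation per connected component of the aerial graph, each run through $f$ or, if the component carries an $\eta$, through $\delta$, all multiplied together with $\mu_{A}$). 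Next I would identify the relations, by dualising the defining relations of $\vscV_{mn}$ (the $\eV_{m}$- and $\eV_{n}$-relations, $\eta_{v}^{2} = 0$, and $\eta_{v}\omega_{vv'} = \eta_{v'}\omega_{vv'}$) through the same cocomposition formulas: the $\eV_{m}$- and $\eV_{n}$-relations reproduce exactly ``$A$ is an $\ee_{m}$-algebra, $B$ is an $\ee_{n}$-algebra''; the cocompositions of $\omega_{vv'}$ and $\eta_{v}$ force $f$ to be a morphism of unital commutative algebras $(B,\mu_{B}) \to (A,\mu_{A})$ (the same $\vol$-pullback computation as in the proof of Proposition~\ref{prop:gmn-iso-cohom}) and $\delta$ to be a $\delta(1_{B}) = 0$ derivation of commutative algebras over $f$, i.e.\ $\delta(\mu_{B}(b,b')) = \mu_{A}(\delta(b),f(b')) \pm \mu_{A}(f(b),\delta(b'))$; and a dimension count in arity $(1,1)$ — where $\vscV_{mn}(1,1)$ is concentrated in degrees $0$ and $n-m-1$ — forces the operations $a \otimes b \mapsto \lambda_{A}(a,f(b))$ and $a \otimes b \mapsto \lambda_{A}(a,\delta(b))$, of degrees $1-m$ and $2-n$, to vanish, i.e.\ the images of $f$ and $\delta$ lie in the Poisson centre of $A$. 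Under $n-m \ge 2$, $m \ge 1$ these degrees are nonzero and distinct from $0, n-m-1$ except when $m = 1$ or (for $f$) $n = 2m$; in those cases the vanishing must instead be read off directly from Proposition~\ref{prop:coop-G}. No further relation appears, since in the relevant small arities the monomial composites of $\mu_{A},\lambda_{A},\mu_{B},\lambda_{B},f,\delta$ are already linearly independent — they are dual to a basis of $\vscV_{mn}$ — while each listed relation visibly holds in $\vsc_{mn}$. This presentation says precisely that a $\vsc_{mn}$-algebra is an $\ee_{m}$-algebra $A$, an $\ee_{n}$-algebra $B$, a central morphism of algebras $f : B \to A$ and a central derivation $\delta : B[n-m-1] \to A$ (reading $\ee_{1} = \Ass$, ``algebra'' as associative algebra and ``central'' in the associative sense when $m = 1$).

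To make this rigorous one lets $\mathsf{P}$ be the relative operad over $\ee_{n}$ with the above presentation, uses the explicit operations in $\vsc_{mn}$ to define a morphism $\mathsf{P} \to \vsc_{mn}$ (surjective by the generation step), and bounds $\dim \mathsf{P}(U,V)$ from above by the number of the normal-form composites described above — which equals $\dim \eV_{m}(U) \cdot \bigl(\text{RHS of~\eqref{eq.poinc-poly}}\bigr) = \dim \vsc_{mn}(U,V)$ — so the morphism is an isomorphism. I expect the main obstacle to be the relation bookkeeping: verifying, with the correct Koszul signs, that the cocomposition formulas of Proposition~\ref{prop:coop-G} dualise to exactly the listed relations — in particular that $\eta_{v}\omega_{vv'} = \eta_{v'}\omega_{vv'}$ imposes nothing beyond them and that the normal-form count is sharp — together with the degenerate degree cases $m = 1$ and $n = 2m$, where the centrality of $f$ and $\delta$ cannot be forced on degree grounds alone.
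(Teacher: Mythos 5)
Your proposal is correct and follows essentially the same route as the paper: dualize the cocomposition formulas of Proposition~\ref{prop:coop-G}, use Theorem~\ref{thm:cohen} for the single-colour parts, read $f$ and $\delta$ off $\vsc_{mn}(0,1) = H_{*}(S^{n-m-1})$, get the morphism/derivation relations from the cocompositions of $1$ and $\eta$, and obtain centrality by degree reasons with the exceptional cases handled directly. The only differences are cosmetic: you make explicit the ``no further relations'' step via a normal-form dimension count (which the paper only asserts) and treat the $m=1$ and $n=2m$ centrality cases through the cooperad formulas rather than the paper's explicit homotopy.
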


Central means that $f$ and $\delta$ land in the center $Z(A) = \{ a \in A \mid \forall b \in A, \, [a,b] = 0\}$, where the bracket is the graded commutator ($m = 1$) or the shifted Lie bracket ($m \ge 2$).
The map $\delta$ is a derivation with respect to $f$: $\delta(xy) = \delta(x) f(y) \pm f(x) \delta(y)$.

\begin{proof}
  An algebra is a pair $(A,B)$ where $B$ is an $\ee_{n}$-algebra and $A$ is a $\vsc_{mn}$-algebra.
  Since $\vsc_{mn}(-,\varnothing) = \ee_{m}$, we know that $A$ is an $\ee_{m}$-algebra.
  Given that $\vsc_{mn}(0,1) = H_{*}(S^{n-m-1})$, we obtain two classes $f : B \to A$ of degree $0$ and $\delta : B[n-m-1] \to A$ of degree $1+m-n$.
  The two possible compositions $\vsc_{mn}(2,0) \times \vsc_{mn}(0,1) \to \vsc_{mn}(1,1)$ in either input of the product are homotopic, so $f$ and $\delta$ are central.
  Similarly, $f$ is a morphism of algebras.
  The proof that $\delta$ is a derivation is the same as the proof that the Lie bracket in $\ee_{n}$ is a biderivation.

  We thus get a suboperad of $\vsc_{mn}$ that contains exactly the data of the proposition.
  Elements of this suboperad can be described as forests of rooted binary trees (like elements of $\ee_{n}$) with roots possibly decorated by a loop of degree $1+m-n$ (the element $\delta$).
  These are exactly the trees that appeared in the proof of Proposition~\ref{prop:iso-vscv-h} and we proved there that they generate the whole homology.
\end{proof}

We can rephrase Proposition~\ref{prop:pres-homology} more compactly.
Let $A$ be an $\ee_{m}$-algebra and consider $A[\varepsilon] \coloneqq A \otimes \R[\varepsilon] / (\varepsilon^{2})$ where $\deg \varepsilon = n-m-1$.
If $m \ge 2$, then there is a Poisson bracket on $A[\varepsilon]$ given by $[x + \varepsilon y, x' + \varepsilon y'] = [x,x'] + \varepsilon ([x,y'] \pm [x',y])$.
A $\vsc_{mn}$-algebra is the data of an $\ee_{m}$-algebra $A$, an $\ee_{n}$-algebra $B$, and a central morphism $f + \varepsilon \delta : B \to A[\varepsilon]$.

Compare this result with the $\infty$-categorical counterparts from~\cite[Section~4.3]{AyalaFrancisTanaka2017a}.
An algebra over the $\infty$-categorical version $\mathscr{D}isk^{\fr}_{m \subset n}$ consist of a $\mathscr{D}isk^{\fr}_{m}$-algebra $A$, a $\mathscr{D}isk^{\fr}_{n}$-algebra $B$, and a morphism of $\mathscr{D}isk^{\fr}_{m+1}$-algebras $\alpha : \int_{S^{n-m-1}} B \to \operatorname{HC}^{*}_{\DD_{m}}(A)$, where $\int_{S^{n-m-1}} B$ is the ``factorization homology'' of $S^{n-m-1}$ with coefficients in $B$, and $\operatorname{HC}^{*}_{\DD_{m}}$ refers to Hochschild cochains.
We view this as a ``up to homotopy'' version of an $\vsc_{mn}$-algebra, the morphism $f + \varepsilon \delta$ being the part $\int_{S^{n-m-1}}B \to \operatorname{HC}^{0}_{\DD_{m}}(A)$ and the higher terms being homotopies.
It would be interesting to make this observation precise.

\begin{remark}
  \label{rmk:livernet}
  An algebra over the homology $\ssc_{n} \coloneqq H_{*}(\SC_{n})$ of the Swiss-Cheese operad is the data of $(A,B,f)$ as in the proposition, see~\cite[Section~4.3]{Livernet2015}.
  However, our computation for $H^{*}(\VSC_{(n-1)n})$ in Section~\ref{sec.comp-cohom} above does not apply (e.g.\ the class $\omega_{12} \in H^{*}(\VSC_{(n-1)n}(0,2))$ vanishes on some connected components).
  Instead, we get that $\vsc_{(n-1)n}$-algebras are given by quadruples $(A,B,f,g)$ where $(A,B,f)$ are as above and $g : B \to A$ is another central morphism (i.e.\ $A$ is a unitary $B$-bimodule).
  There is an embedding $\SC_{n} \subset \VSC_{(n-1)n}$.
  On homology, an $\vsc_{(n-1)n}$-algebra $(A,B,f,g)$ viewed as an $\ssc_{n}$-algebra is simply $(A,B,f)$, i.e.\ we forget the right action.
  Livernet~\cite[Theorem~4.3]{Livernet2015} proved that $\SC_{n}$ is not formal by exhibiting a nontrivial operadic Massey product $\langle \mu_{B}, f, \lambda_{A} \rangle$, where $\mu_{B}$ represents the product of $B$ and $\lambda_{A}$ represents the Lie bracket of $A$.
  This shows that the operad of chains $C_{*}(\VFM_{(n-1)n};\Q)$ cannot be formal either, because we obtain a nontrivial Massey product there too.
\end{remark}

\section{Graph complexes}
\label{sec.graph-complexes}

In this section, we define a bicolored Hopf cooperad, whose operations in the second color are given by Kontsevich's cooperad $\graphs_{n}$~\cite{Kontsevich1999}, and whose operations in the first color will be called $\vgraphs_{mn}$.
Our tool of choice to define $\vgraphs_{mn}$ will be ``operadic twisting''~\cite{Willwacher2014,DolgushevWillwacher2015}, just like in~\cite{Willwacher2015a}.
To give an idea of how $\vgraphs_{mn}$ is built, we first recall the steps in the definition of $\graphs_{n}$.
We also refer to~\cite[Sections~1.5--1.6]{Idrissi2018b} for more details with matching notation.

\begin{remark}
  We use the notation $\graphs_{n}$ for the cooperad rather than its dual operad.
  Its linear dual $\graphs_{n}^{\vee}$ is an operad which is quasi-isomorphic to $H_{*}(\DD_{n})$.
  We make this choice because we never deal with $\graphs^{\vee}_{n}$.
  Kontsevich's graph complex $\GC_{n}^{\vee}$ where the differential splits vertices will also be denoted with a dual sign.
  See e.g.~\cite{Idrissi2018b,CamposIdrissiLambrechtsWillwacher2018} for matching notations.
\end{remark}

\subsection{Recollections: the cooperad \texorpdfstring{$\graphs_n$}{graphs\_n}}
\label{sec.recoll-coop-graphs_n}

\paragraph{Untwisted}

The first step is to define an untwisted Hopf cooperad $\Gra_{n}$, given in each arity by the following CDGA, with generators $e_{uv}$ of degree $n-1$:
\begin{equation}
  \label{eq:4}
  \Gra_{n}(U) \coloneqq S(e_{uv})_{u, v \in U} \bigm/ (e_{vu} = (-1)^{n} e_{uv}).
\end{equation}
Graphically, $\Gra_{n}(U)$ is spanned by graphs on the set of vertices $U$.
The monomial $e_{u_{1}v_{1}} \dots e_{u_{r}v_{r}}$ corresponds to the graph with edges $\overrightarrow{u_{1}v_{1}}$, \dots, $\overrightarrow{u_{r}v_{r}}$.
The identification $e_{vu} = \pm e_{uv}$ allows us to view the graphs as undirected, although we need directions to define the signs precisely for odd $n$.
If $n$ is even then $\deg e_{uv}$ is odd, so we need to order edges to get precise signs.
We explicitly allow tadpoles $(e_{uu})$ and double edges $(e_{uv}^{2})$.
However, for even $n$, $e_{uv}^{2} = 0$ because $\deg e_{uv}$ is odd; and for odd $n$, $e_{uu} = (-1)^{n} e_{uu} = -e_{uu}$ thus $e_{uu} = 0$.
The product is given by gluing graphs along their vertices.
The cooperadic structure is given by subgraph contraction.
Explicitly, the map $\circ_{W}^{\vee} : \Gra_{n}(U) \to \Gra_{n}(U/W) \otimes \Gra_{n}(W)$ is given by $\circ_{W}^{\vee}(e_{uv}) = e_{**} \otimes 1 + 1 \otimes e_{uv}$  if $u,\,v \in W$, and by $\circ_{W}^{\vee}(e_{uv}) = e_{[u][v]} \otimes 1$ otherwise.

\begin{remark}\label{rmk:asked-by-referee}
  The quotient of $\Gra_{1}$ by disconnected graph forms an operad denoted by $\mathcal{G}r$ in~\cite{SinhaWalter2011}.
  It was used to present Harrison cohomology and find bases for cofree Lie coalgebras.
\end{remark}

One may then produce a first zigzag of Hopf cooperads:
\begin{equation}
  \label{eq:3}
  H^{*}(\FM_{n}) = \eV_{n} \gets \Gra_{n} \xrightarrow{\omega'} \OmPA^{*}(\FM_{n}),
  \quad
  \omega_{uv} \mapsfrom e_{uv} \mapsto p_{uv}^{*}(\varphi_{n}),
\end{equation}
where $p_{uv} : \FM_{n}(U) \to  \FM_{n}(2)$ is the projection and $\varphi_{n}$ is the ``propagator'':
\begin{equation}
  \label{eq:propagator1}
  \varphi_{n} \coloneqq \vol_{n-1} = \mathrm{cst} \cdot \sum_{i = 1}^{n} \pm x_{i} dx_{1} \wedge \ldots \widehat{dx}_{i} \ldots \wedge dx_{n} \in \OmPA^{n-1}(\FM_{n}(2)) = \OmPA^{n-1}(S^{n-1}).
\end{equation}

Given a graph $\Gamma \in \Gra_{n}(U)$, one may define its \emph{coefficient} $\mu(\Gamma)$ by $\mu(\Gamma) = 0$ if $\# U \le 1$ and $\mu(\Gamma) = \int_{\FM_{n}(U)} \omega'(\Gamma)$ otherwise.
This element $\mu$ has a simple description: it vanishes on all graphs, except for the one with exactly two vertices and an edge between them~\cite[Lemma~9.4.3]{LambrechtsVolic2014}.
In other words, in the dual basis:
\begin{equation}
  \label{eq:mu}
  \mu =
  \begin{tikzpicture}[scale=.3, baseline=(a.base)]
    \node[exta] (a) {1};
    \node[exta, right=1cm of a] (b) {2};
    \draw (a) -- (b);
  \end{tikzpicture}
  \; \in \Gra_{n}^{\vee}(2) \subset \prod\nolimits_{i \ge 0} \Gra_{n}^{\vee}(i).
\end{equation}

\paragraph{Twisting}

The second step is to twist the cooperad $\Gra_{n}$.
Briefly, the idea of twisting is to turn an operad $\PP$ that encodes algebras that are also Lie algebras (i.e.\ there is a morphism $\Lie \to \PP$) into an operad that encode $\PP$-algebras twisted by a Maurer--Cartan element, see~\cite[Appendix~I]{Willwacher2014}.
In the case of $\Gra_{n}$, this turns out to produce the Arnold relations, which are themselves the algebraic shadow of the stratification of $\FM_{n}$.
See Appendix~\ref{sec:twisting-cooperads} for concepts and notations.

The element $\mu$ of~\eqref{eq:mu} defines a morphism from $\Lie_{n}$ to $\Gra_{n}^{\vee}$: it sends the generating bracket $\lambda_{2} \in \Lie_{n}(2)$ to the graph appearing in~\eqref{eq:mu}.
By composing with the canonical resolution $\hoLie_{n} \to \Lie_{n}$, we thus obtain a morphism $\hoLie_{n} \to \Gra_{n}^{\vee}$, which sends the binary bracket to $\mu$ and the $k$-ary brackets to $0$ for $k \ge 3$.
One can check immediately that this graph satisfies the Jacobi relation.
Alternatively, this can be deduced from the definition as an integral and the decomposition of the boundary of $\FM_{n}(U)$, cf.\ Proposition~\ref{prop:c-mor-holie}.

Cooperadic twisting produces a dg-cooperad $\Tw\Gra_{n}$ from $\Gra_{n}$ and $\mu$.
Let us now describe it.
As a graded module,
\begin{equation}
  \label{eq:8}
  \Tw \Gra_{n}(r) \coloneqq \biggl( \bigoplus_{i \geq 0} \bigl( \Gra_{n}(r+i) \otimes \K[n]^{\otimes i} \bigr)_{\Sigma_{i}}, d_{\mu} \biggr).
\end{equation}
The CDGA structure is defined using the fact that $\Gra_{n}(\varnothing) = \R$ (see~\cite[Lemma~9]{Idrissi2018b}) and the fact that $\mu$ vanishes on disconnected graphs as well as graphs admitting a cut point (i.e.\ if a point that disconnects the graph when removed).

Let us now give a graphical interpretation of this definition.
The CDGA $\Tw\Gra_{n}(U)$ is spanned by graphs with two kinds of vertices: external vertices, which are in bijection with $U$, and internal vertices, which are indistinguishable (in pictures they will be drawn in black).
Given a graph $\Gamma$, its differential $d\Gamma = \sum_{e} \pm \Gamma/e$ is obtained as a sum, over all the edges $e \in E_{\Gamma}$ connected to at least one  internal vertex, of the graphs obtained by contracting these edges.
Note that in this differential, edges connected to univalent vertices are not contracted, roughly speaking because the contraction appears twice in $d\Gamma$ and cancels out; but if both endpoints of the edge are univalent, then the edge is contracted with a minus sign, see~\cite[Appendix~I.3]{Willwacher2014} and Description~\ref{descr:diff-gcn}.
The product of two graphs is the graph obtained by gluing them along their external vertices.
The cooperadic structure is given by subgraph contraction (summing over all choices of whether internal vertices and edges are in the subgraph).

One checks that the zigzag of Equation~\eqref{eq:3} extends to a zigzag:
\begin{equation}
  \label{eq:9}
  \eV_{n} \gets \Tw\Gra_{n} \xrightarrow{\omega} \OmPA^{*}(\FM_{n}).
\end{equation}
This follows from the results of~\cite[Section~9--10]{LambrechtsVolic2014} and~\cite[Section~3]{Willwacher2014}.
The left-pointing map sends all graphs with internal vertices to zero.
The right-pointing map is given by the following integrals.
Given a graph $\Gamma \in \Gra_{n}(U \sqcup I) \subset \Tw\Gra_{n}(U)$, the form $\omega(\Gamma)$ is the integral of $\omega'(\Gamma)$ along the fiber of the PA bundle $p_{U} : \FM_{n}(U \sqcup I) \to \FM_{n}(U)$ which forgets points in the configuration:
\begin{equation}
  \label{eq:10}
  \omega(\Gamma) \coloneqq (p_{U})_{*}(\omega'(\Gamma)) = \int_{\FM_{n}(U \sqcup I) \to \FM_{n}(U)} \omega'(\Gamma).
\end{equation}

\paragraph{Graph complex $\fGC_{n}$}

We record the following definition for future use.

\begin{definition}
  \label{def:fgcn}
  The full graph complex $\fGC_{n}$ is defined to be $\Tw\Gra_{n}(\varnothing)[-n]$.
  It is spanned by graphs with only internal vertices.
  The degree of $\gamma \in \fGC_{n}$ is $\deg \gamma = (n-1) \#E_{\gamma} - n \#V_{\gamma} + n$.
  We describe its differential below.
\end{definition}

\begin{remark}\label{rmk:degree-shift}
  This degree shift by $n$ comes from the fact that we mod out by $\R^{n} \rtimes \R_{>0}$ in the definition of $\FM_{n}$.
  To get a Maurer--Cartan element (of degree $1$) out of $\omega$ in the Lie algebra $\GC_{n}^{\vee}$ defined below, a shift by $n$ is thus used.
\end{remark}

The module $\fGC_{n}$ is a shifted algebra, with a product given by disjoint union of graphs.
If $\GC_{n} \subset \fGC_{n}$ is the subcomplex of connected graphs, then there is an isomorphism of shifted algebras $\fGC_{n} = S(\GC_{n}[n])[-n]$.
The module $\GC_{n}$ is a (pre)Lie coalgebra.
Its cobracket $\Delta$ is given by subgraph contraction (i.e.\ it is inherited from the cooperad structure on $\Gra_{n}$).
Dually, $\GC_{n}^{\vee}$ is a (pre)Lie algebra, with a bracket given by graph insertion.

\begin{mydescription}
  \label{descr:diff-gcn}
  Let us now describe the differential of $\GC_{n}$.
  It is given by $(-\mu \otimes 1 + 1 \otimes \mu)\Delta$, where $\mu \in \GC_{n}^{\vee}$ is the Maurer--Cartan element defined in Equation~\eqref{eq:mu}.
  The summand $(1 \otimes \mu) \Delta$ is given by the sum of contractions of edges, while the summand $(\mu \otimes 1) \Delta$ is given by the sum of contractions of edges attached to a univalent vertex.
  The contraction of an edge attached to exactly one univalent vertex appears twice in the sum with opposite signs, which thus cancels out.
  However, if both endpoints of the edge are univalent, then the contractions of this edge appears once with a plus and twice with a minus, which leaves one term with a minus.
  (See~\cite[Appendix~I.3]{Willwacher2014} for more details)
\end{mydescription}

Dually, the differential on $\GC_{n}^{\vee}$ is $[\mu, -]$.
It is roughly speaking given by vertex splitting (with the caveat about univalent vertices explained above).
The space $\Tw\Gra_{n}(U)$ is a module over the shifted CDGA $\fGC_{n}$, by taking disjoint unions.

\paragraph{Reduction}

The next step is to mod out graphs with \emph{internal components}, i.e.\ connected components with only internal vertices, to obtain a new Hopf cooperad $\Graphs_{n}$.
Formally, we consider the tensor product
\begin{equation}
  \label{eq:17}
  \Graphs_{n}(U) \coloneqq \Tw\Gra_{n}(U) \otimes_{\fGC_{n}} \R,
\end{equation}
where $\fGC_{n}$ acts trivially on $\R$.
In other words, in $\Graphs_{n}$, a graph with a connected component consisting entirely of internal vertices is set equal to zero.
The map $\omega$ of Equation~\eqref{eq:10} factors through the quotient defining $\Graphs_{n}$~\cite[Lemma~9.3.7]{LambrechtsVolic2014}, and the quotient map $\Tw\Gra_{n} \to \eV_{n}$ clearly does.

We can moreover reduce further the operad.
The quotient $\graphs_{n}$ is given by modding out graphs containing: internal vertices that are univalent or bivalent; double edges; or tadpoles.
It follows again from the lemmas of~\cite[Section~9.3]{LambrechtsVolic2014} that $\omega$ factors through this quotient (and the map $\Graphs_{n} \to \eV_{n}$ clearly does).

Recall that even though $\OmPA^{*}(\FM_{n})$ is not a Hopf cooperad, we take the convention that a morphism into it is a morphism in the sense defined in Section~\ref{sec:pa-forms}.

\begin{theorem}[{\cite{Kontsevich1999}, \cite[Chapter~10]{LambrechtsVolic2014}}]
  \label{thm:fml-kont}
  This defines quasi-isomorphisms of Hopf cooperads $\eV_{n} \xleftarrow{\sim} \graphs_{n} \xrightarrow{\sim} \OmPA^{*}(\FM_{n})$, thus $\FM_{n}$ is formal over $\R$.
\end{theorem}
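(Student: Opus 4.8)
The two maps in the zigzag are morphisms of Hopf cooperads by construction (this is exactly what the factorizations through $\Graphs_n$ and then $\graphs_n$, together with the formula $e_{uv} \mapsto \omega_{uv}$, encode), so it remains only to show that each is a quasi-isomorphism in every arity $U$; formality then follows immediately from the definition, with $\graphs_n$ playing the role of the middle term. The plan is to handle the left-hand map $p \colon \graphs_n \to \eV_n$ first. Since $p$ is a morphism of CDGAs, each edge $e_{uv}$ is a cocycle with $p(e_{uv}) = \omega_{uv}$, and $\eV_n(U) = H^*(\FM_n(U))$ is generated as an algebra by the classes $\omega_{uv}$, so $H(p)$ is surjective in every arity. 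For the reverse inequality $\dim H^k(\graphs_n(U)) \le \dim \eV_n^k(U)$ one runs the combinatorial argument of Lambrechts--Voli\'c: the passage $\Graphs_n \to \graphs_n$ first removes an acyclic piece (graphs with univalent or bivalent internal vertices, double edges, or tadpoles), and then a suitable filtration of $\graphs_n(U)$, whose page-zero differential contracts the edges lying in the ``internal part'' of a graph, yields a spectral sequence which, by an acyclicity result for graphs carrying internal vertices, collapses onto the span of graphs with no internal vertices modulo the Arnold relations, i.e.\ onto $\eV_n(U)$. Hence $p$ is a quasi-isomorphism.

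For the right-hand map $\omega \colon \graphs_n \to \OmPA^*(\FM_n)$, I would combine the previous step with the classical computation $H^*(\FM_n(U);\R) \cong \eV_n(U)$ of Arnold and Cohen. The form $\omega(e_{uv}) = p_{uv}^*(\varphi_n)$ is a cocycle representing the standard generator $\omega_{uv}$ of $H^{n-1}(\FM_n(U))$, because $\varphi_n$ is a normalized volume form on $\FM_n(2) = S^{n-1}$; since $\omega$ is a morphism of CDGAs and $H^*(\FM_n(U))$ is generated by these degree-$(n-1)$ classes, $H(\omega)$ is again surjective in every arity. By the first step $H(\graphs_n(U))$ and $H^*(\FM_n(U))$ are both isomorphic to $\eV_n(U)$, hence of the same finite dimension in each cohomological degree; a surjective linear map between graded vector spaces that are finite-dimensional and of equal dimension in each degree is an isomorphism, so $\omega$ is a quasi-isomorphism. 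This completes the zigzag, and the formality of $\FM_n$ over $\R$ follows.

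The part I expect to be the real obstacle is everything concealed in the phrase ``by construction'': that $\omega$ is well defined and natural. Concretely, one must check that each graph is sent to a bona fide PA form (using that the forgetful projections $\FM_n(U \sqcup I) \to \FM_n(U)$ are SA bundles), that $\omega$ is a chain map --- which is the fiberwise Stokes formula combined with the identification of the boundary contributions of $p_U$ with the edge contractions appearing in the differential --- that $\omega$ is multiplicative and compatible with the cocompositions, and, most delicately, that $\omega$ descends through the successive quotients down to $\graphs_n$. This last point is precisely the content of the vanishing lemmas: graphs with an internal connected component, with a univalent or bivalent internal vertex, with a double edge, or with a tadpole must integrate to zero. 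These vanishings are proved by orientation-reversing free involutions, by dimension counting inside $\FM_n$, and by Kontsevich's analysis of the degenerations in which an internal vertex escapes to infinity or collides with another vertex; they are the genuinely subtle ingredient, whereas the filtration argument bounding $H(\graphs_n)$, although long, is by now fairly standard homological algebra of graph complexes.
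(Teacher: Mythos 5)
Your proposal is correct and follows essentially the route the paper relies on: the paper itself does not reprove this statement but recalls the construction of the zigzag and cites Kontsevich and Lambrechts--Voli\'c, whose proof is exactly your outline (well-definedness of $\omega$ via the vanishing lemmas and fiberwise Stokes formula, the combinatorial computation showing $H(\graphs_{n}(U)) \cong \eV_{n}(U)$, and then surjectivity on cohomology plus equality of Betti numbers to conclude both maps are quasi-isomorphisms). Your identification of the genuinely delicate points (descent of $\omega$ through the quotients and the vanishing arguments) matches where the cited proof does the real work, so nothing essential is missing.
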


\subsection{The cooperad \texorpdfstring{$\VGraphs_{mn}$}{VGraphs\_mn}}
\label{sec.cooperad-sgraphs_mn}

Let us now define $\VGraphs_{mn}$, using the same methodology that was used to define $\Graphs_{n}$.
Note that we must take special care of the case $m = 1$.
We define the further reduced cooperad $\vgraphs_{mn}$ in the next section.

\paragraph{Untwisted}

The first step is the definition of the untwisted graph cooperad.

\begin{definition}
  For $n-2 \ge m \ge 2$, the untwisted graph cooperad is a relative $\VGra_{mn}$-cooperad given in each arity by ($\deg \tilde{e}_{uu'} = m-1$ and $\deg e_{ij} = n-1$):
  \[ \VGra_{mn}(U,V) \coloneqq S(\tilde{e}_{uu'})_{u,u' \in U} \otimes S(e_{ij})_{i,j \in U \sqcup V} \bigm/ (e_{ji} = (-1)^{n} e_{ij}, \tilde{e}_{uu'} = (-1)^{m} \tilde{e}_{u'u}). \]
\end{definition}

\begin{definition}
  \label{def:vgra-1}
  For $n \ge 3$, we let (where $\Sigma_{U} = \Ass(U) = \Bij(U, \{1, \dots, \#U\})$):
  \[ \VGra_{1n}(U,V) \coloneqq S(e_{ij})_{i,j \in U \sqcup V} \otimes \R[\Sigma_{U}]^{\vee} \bigm/ (e_{ji} = (-1)^{n} e_{ij}). \]
\end{definition}

Let us now give a graphical interpretation of $\VGra_{mn}(U,V)$.
We will concentrate first on the case $m \ge 2$.
As a vector space, it is spanned by graphs with two kinds of vertices: terrestrial (in bijection with $U$) and aerial (in bijection with $V$).
We will draw the aerial vertices as circles, and the terrestrial vertices as semicircles, below the aerial ones.
There are also two kind of edges: full edges (corresponding to the $e_{ij}$) between any two vertices, and dashed edges (corresponding to the $\tilde{e}_{uu'}$) between two terrestrial vertices.
Note that we allow tadpoles and double edges.
See Figure~\ref{fig:exa-sgra} for an example.

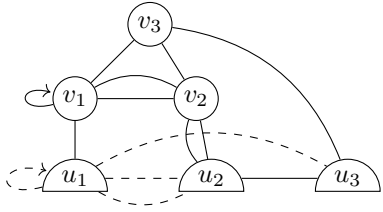
\begin{figure}[htbp]
  \centering
  \begin{tikzpicture}
    \node[exta] (a1) {$v_{1}$};
    \node[exta, right = 1cm of a1] (a2) {$v_{2}$};
    \node[exta, above right = .8cm of a1] (a3) {$v_{3}$};
    \node[extt, below = .5cm of a1] (t1) {$u_{1}$};
    \node[extt, right = 1cm of t1] (t2) {$u_{2}$};
    \node[extt, right = 1cm of t2] (t3) {$u_{3}$};
    \draw
    (a1) edge (a2) edge[bend left] (a2) edge (a3) edge[loop left] () edge (t1)
    (a2) edge (a3) edge (t2) edge[bend right] (t2)
    (t3) edge (t2) edge[bend right] (a3);
    \draw[dashed] (t1) edge (t2) edge [bend right] (t2) (t1) edge[loop left] () (t1) to[bend left] (t3);
  \end{tikzpicture}
  \caption{Example of a graph in $\VGra_{mn}$ (for $m \ge 2$)}
  \label{fig:exa-sgra}
\end{figure}

If $m = 1$, the interpretation is similar, except that there are no dashed edges, and in addition the set of terrestrial vertices is ordered.
Note that we are implicitly using the dual of the canonical basis of $\R[\Sigma_{U}]$ (i.e.\ we consider the basis elements of $\R[\Sigma_{U}]^{\vee}$ which vanish on all linear orders except one).

The product glues graphs along their vertices, and the differential is zero.
These CDGAs assemble to form a relative Hopf $\Gra_{n}$-cooperad.
The cooperadic structure maps act on the generators by the following formulas (compare with Proposition~\ref{prop:coop-G}), with the convention $u,u' \in U$, $v,v' \in V$:
\begin{align*}
  \circ_{T}^{\vee}(\tilde{e}_{uu'}) & = \tilde{e}_{uu'} \otimes 1.
  & \circ_{W,T}^{\vee}(\tilde{e}_{uu'}) & = \begin{cases} \tilde{e}_{**} \otimes 1 + 1 \otimes \tilde{e}_{uu'}, & \text{if } u,u' \in W; \\ \tilde{e}_{[u][u']} \otimes 1, & \text{otherwise.} \end{cases}
\end{align*}
\[
  \circ_{T}^{\vee}(e_{vv'}) =
  \begin{cases}
    e_{**} \otimes 1 + 1 \otimes e_{vv'}, \text{if } v,v' \in T; \\
    \circ_{T}^{\vee}(e_{vv'}) = e_{[v][v']} \otimes 1, \text{ otherwise.}
  \end{cases}
  \quad
  \circ_{W,T}^{\vee}(e_{vv'}) = \begin{cases} 1 \otimes e_{vv'}, & \text{if } v,v' \in T; \\ e_{vv'} \otimes 1 & \text{if } v,v' \in V; \\ 0 & \text{otherwise.} \end{cases} 
\]
\begin{align*}
  \circ_{T}^{\vee}(e_{uu'}) & = e_{uu'} \otimes 1,
  & \circ_{W,T}^{\vee}(e_{uu'}) & = \begin{cases} e_{**} \otimes 1 + 1 \otimes e_{uu'}, & \text{if } u,u' \in W, \\ e_{[u][u']} \otimes 1, & \text{otherwise}, \end{cases} \\
  \circ_{T}^{\vee}(e_{uv}) & = e_{u[v]} \otimes 1,
  & \circ_{W,T}^{\vee}(e_{uv}) & = \begin{cases} e_{**} \otimes 1 + 1 \otimes e_{uv}, & \text{if } u \in W, v \in T \\ e_{u*} \otimes 1, & \text{if } u \not\in W, v \in T, \\ e_{[u]v} \otimes 1 & \text{otherwise}. \end{cases}
\end{align*}
Moreover, if $m = 1$, then $\circ_{T}^{\vee}$ is the identity on $\R[\Sigma_{U}]^{\vee}$, and $\circ_{W,T}^{\vee}$ is the dual of block composition from Equation~\eqref{eq:block}.
Graphically, the cooperadic structure map $\circ_{W,T}$ is given by subgraph contraction, which we now explain.

\begin{definition}[Quotient graph, case $m \ge 2$]
  \label{def:quotient-graph}
  Let $\Gamma \in \VGra_{mn}(U,V)$ be a graph and $\Gamma' \subset \Gamma$ be a subgraph, not necessarily full.
  We define the quotient graph $\Gamma / \Gamma'$ as follows.
  The set of vertices of $\Gamma/\Gamma'$ is the quotient set $V_{\Gamma} / V_{\Gamma'}$, identifying all the vertices of $\Gamma'$ to produce a new vertex $[\Gamma']$, always terrestrial even if $\Gamma'$ contains no terrestrial vertices.
  The set of edges of $\Gamma/\Gamma'$ is the difference $E_{\Gamma} \setminus E_{\Gamma'}$ (we informally view the edges of $\Gamma'$ as being ``contracted'').
  If $e \in E_{\Gamma} \setminus E_{\Gamma'}$ has an endpoint in $\Gamma'$, then the corresponding endpoint of $e \in E_{\Gamma/\Gamma'}$ is the new terrestrial vertex $[\Gamma']$.
  In particular, if an edge is not in $\Gamma'$ but both of its endpoints are, then this edge becomes a tadpole on the vertex $[\Gamma']$ in $\Gamma/\Gamma'$.
\end{definition}

\begin{example}
  The quotient $\Gamma / \varnothing$ is $\Gamma$ with a new isolated terrestrial vertex.
\end{example}

\begin{definition}[Quotient graph, case $m = 1$]
  \label{def:quotient-graph-1}
  For $m = 1$, the definition of the quotient graph $\Gamma / \Gamma'$ is slightly modified to deal with the linear order on terrestrial vertices.
  Suppose that $\Gamma' \subset \Gamma$ is a subgraph.
  The linear order of the terrestrial vertices of $\Gamma/\Gamma'$ is dual to the block composition of linear orders from Equation~\eqref{eq:block}.
  More concretely, $\Gamma / \Gamma'$ is a linear combination of graphs:
  \begin{itemize}[nosep]
  \item If $\Gamma'$ has no terrestrial vertices, then $\Gamma / \Gamma'$ is a sum of several terms.
    Each term is a quotient graph as in Definition~\ref{def:quotient-graph}.
    The sum ranges over all possible positions for the new vertex: if $f \in \ee_{1}^{\vee}(U)$ is the decoration of $\Gamma$, then the decoration of $\Gamma/\Gamma'$ is $(\sigma \in \Sigma_{U/\varnothing}) \mapsto f(\sigma|_{U})$.
  \item If the terrestrial vertices of $\Gamma'$ are consecutive, then $\Gamma / \Gamma'$ is defined as in Definition~\ref{def:quotient-graph}, and the linear order on the terrestrial vertices of $\Gamma / \Gamma'$ is inherited from $\Gamma$, with the new terrestrial vertex $[\Gamma']$ in the position of the vertices of $\Gamma'$.
  \item If the terrestrial vertices of $\Gamma'$ are not consecutive, then $\Gamma / \Gamma' = 0$.
  \end{itemize}
\end{definition}

We can interpret $\circ_{W,T}(\Gamma)$ as the sum of the $\Gamma/\Gamma' \otimes \Gamma'$ for all (not necessarily full) subgraphs $\Gamma'$ with set of vertices $(W,T)$.

Let us now define a zigzag of Hopf cooperad maps
\begin{equation}
  H^{*}(\VFM_{mn}) = \vscV_{mn} \gets \VGra_{mn} \to \OmPA^{*}(\VFM_{mn}).
\end{equation}
The left-pointing map is defined by $\tilde{e}_{uu} \mapsto 0$, $\tilde{e}_{uu'} \mapsto \tilde{\omega}_{uu'}$ for $u \neq u' \in U$, $e_{vv} \mapsto 0$, $e_{vv'} \mapsto \omega_{vv'}$ for $v \neq v' \in V$, and $e_{ij} \mapsto 0$ if $i \in U$ or $j \in U$.
The right-pointing map is defined using the following three ``propagators'':
\begin{itemize}[nosep]
\item Using the identification $\VFM_{mn}(2,0) = \FM_{m}(2) \cong S^{m-1}$, we can use the propagator $\varphi_{m} \coloneqq \vol_{m-1} \in \OmPA^{m-1}(\FM_{m}(2))$ of Equation~\eqref{eq:propagator1}.
\item Recall the map $\theta_{12} : \VFM_{mn}(1, 1) \to S^{n-1}$ which records the direction from the aerial point to the terrestrial point.
  The propagator $\psi_{mn}$ is the pullback of the volume form of $S^{n-1}$ along $\theta_{12}$:
  \begin{equation}
    \label{eq:psi-del}
    \psi_{mn}^{\partial} \coloneqq \theta_{12}^{*}(\vol_{n-1}) \in \OmPA^{n-1}(\VFM_{mn}(1,1)).
  \end{equation}
\item Similarly, there is another map $\vartheta_{12} : \VFM_{mn}(0,2) \to S^{n-1}$ which records the direction from the second point to the first point.
  The propagator is:
  \begin{equation}
    \psi_{mn} \coloneqq \vartheta_{12}^{*}(\vol_{n-1}) \in \OmPA^{n-1}(\VFM_{mn}(0,2)).
  \end{equation}
\end{itemize}

\begin{remark}
  By construction, these propagators are all minimal forms on $\VFM_{mn}$, because the volume form on $S^{d}$ is $\vol_{d} = \mathrm{cst} \cdot \sum_{i} (-1)^{i} x_{i} dx_{1} \wedge \dots \widehat{dx}_{i} \dots \wedge dx_{d}$.
  Hence they can be pushed forward (once) along PA bundles~\cite{HardtLambrechtsTurchinVolic2011}.
\end{remark}

\begin{remark}\label{rmk:not-surj}
  The map $\VGra_{mn} \to \vscV_{mn}$ is not surjective as its image does not contain the classes $\eta_{i}$.
  These classes are only reached once $\VGra_{mn}$ is twisted: it is the image of a graph with internal vertices (see Remark~\ref{rmk:graph-eta} and Section~\ref{sec:conn-graphs-cohom}).
\end{remark}

We may then define a morphism
\begin{equation}
  \omega' : \VGra_{mn}(U,V) \to \OmPA^{*}(\VFM_{mn}(U,V))
\end{equation}
as follows, using the convention that $u,\,u' \in U$ and $v,\,v' \in V$:
\begin{equation}
  \label{eq:def-omega}
  \begin{aligned}
    \omega'(\tilde{e}_{uu'}) & \coloneqq p_{uu'}^{*}(\varphi_{m});
    & \omega'(e_{vv'}) & \coloneqq p_{vv'}^{*}(\psi_{mn}); \\
    \omega'(e_{uu'}) & \coloneqq 0;
    & \omega'(e_{vu}) & \coloneqq p_{vu}^{*}(\psi_{mn}^{\partial}).
  \end{aligned}
\end{equation}
Here, $p_{ij}(x) = (p_{i}(x), p_{j}(x))$ (including if $i = j$).
The following lemma is immediate:

\begin{lemma}\label{lem:mor-vgra-vsc-vfm}
  This defines a zigzag $\vscV_{mn} \gets \VGra_{mn} \to \OmPA^{*}(\VFM_{mn})$.
  \qed{}
\end{lemma}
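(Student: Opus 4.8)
The plan is to verify directly that each of the two maps is a morphism of relative Hopf cooperads, compatible with the second-color zigzag $\eV_{n} \gets \Gra_{n} \to \OmPA^{*}(\FM_{n})$, with $\OmPA^{*}(\VFM_{mn})$ understood in the weak sense of Section~\ref{sec:pa-forms}: a collection of CDGA maps $\VGra_{mn}(U,V) \to \OmPA^{*}(\VFM_{mn}(U,V))$ that commute with the cocomposition operations after inserting the Künneth morphisms. Since each $\VGra_{mn}(U,V)$ is freely generated as a commutative algebra by the $\tilde{e}_{uu'}$ and $e_{ij}$ (for $m \ge 2$; for $m = 1$ one carries along the extra tensor factor $\R[\Sigma_{U}]^{\vee} = \eV_{1}(U)$), it suffices to check on generators that (a) the prescribed images satisfy the defining relations, so that the assignments extend to well-defined CDGA morphisms, and (b) the two composites obtained by going around each cocomposition square agree on generators. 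Both are finite case analyses.

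For the left-pointing map $\VGra_{mn} \to \vscV_{mn}$: the antisymmetry relations are respected since $\tilde{e}_{u'u} \mapsto \tilde{\omega}_{u'u} = (-1)^{m}\tilde{\omega}_{uu'}$ and $e_{ji} \mapsto \omega_{ji} = (-1)^{n}\omega_{ij}$ in $\eV_{n}$, while tadpoles and full edges incident to a terrestrial vertex go to $0$; the further Arnold, square-zero and $\eta$-relations of $\vscV_{mn}$ merely make the map non-injective, which is irrelevant here. Compatibility with the cocomposition is then obtained by comparing the graphical description of $\circ_{T}^{\vee}$ and $\circ_{W,T}^{\vee}$ on $\VGra_{mn}$ (subgraph contraction, Definitions~\ref{def:quotient-graph} and~\ref{def:quotient-graph-1}) with Proposition~\ref{prop:coop-G}, generator by generator: a dashed edge maps to $\tilde{\omega}$ and is transformed exactly as $\tilde{\omega}$; an aerial full edge $e_{vv'}$ maps to $\omega_{vv'}$, and when it is contracted across onto the new terrestrial vertex $*$ in a mixed composition it becomes a \emph{terrestrial} full edge $e_{v*}$, which maps to $0$ --- matching $\circ_{W,T}^{\vee}(\omega_{vv'}) = 0$; tadpoles created by contraction again map to $0$. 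For $m = 1$, $\circ_{T}^{\vee}$ is the identity on $\eV_{1}(U)$ and $\circ_{W,T}^{\vee}$ is the dual block composition on both sides.

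For the right-pointing map $\omega'$: each propagator is the pullback of a sphere volume form along a direction map, and swapping the two forgotten points composes the projection with the coordinate swap, which acts on $\vol_{S^{d}}$ by $(-1)^{d+1}$; hence $\omega'$ respects the antisymmetry relations (with $d = m-1$ for $\tilde{e}$ and $d = n-1$ for $e$), and tadpoles and terrestrial full edges go to $0$. For compatibility with the cocomposition one uses the explicit coordinate formulas for the structure maps of $\VFM_{mn}$: composing a projection $p_{\bullet\bullet}$ with $\circ_{T}$ or $\circ_{W,T}$, the two marked points either remain inside a single operadic factor --- so that $\omega'(g)$ pulls back to $1 \otimes \omega'(g)$ or $\omega'(g) \otimes 1$, matching the corresponding formula for $\circ^{\vee}(g)$ --- or one of them enters an infinitesimal cluster collapsing onto a vertex $*$, in which case, on the relevant boundary stratum, the direction map $\theta_{\bullet\bullet}$ literally factors through the projection onto the outer configuration followed by $\theta_{\bullet *}$ (resp.\ becomes constant, resp.\ becomes the diagonal); correspondingly $\omega'(g)$ pulls back to $\omega'(e_{\bullet *}) \otimes 1$ (resp.\ to $0$, resp.\ to a form pulled back from a point), again matching $\circ^{\vee}(g)$. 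The case list is the one appearing in the proof of Proposition~\ref{prop:gmn-iso-cohom} and in~\cite[Section~5.2]{LambrechtsVolic2014}. The only point that requires any care --- and the reason these are the ``right'' definitions --- is that these factorizations must hold \emph{on the nose} at the level of forms rather than only up to homotopy; this is exactly what one gains by working with the Fulton--MacPherson model $\VFM_{mn}$, whose operadic compositions are honest inclusions of boundary strata, instead of with $\VSC_{mn}$. Granting this, everything reduces to a mechanical verification, whence the statement is immediate.
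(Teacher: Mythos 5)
The paper gives no argument for this lemma (it is stated as immediate), and your generator-by-generator verification — using that $\VGra_{mn}(U,V)$ is free as a commutative algebra modulo the symmetry relations, and that the operadic structure maps of $\VFM_{mn}$ are given by explicit coordinate formulas so the factorizations of the direction maps hold on the nose on boundary strata — is precisely the routine check being taken for granted, so your proof is correct and follows the same (implicit) approach. One small imprecision: in the mixed case $u \notin W$, $v \in T$, the map $\theta_{uv}$ restricted to the stratum is not constant but lands in the equatorial $S^{m-1} \subset S^{n-1}$, so $\omega'(e_{uv})$ restricts to zero because $\vol_{S^{n-1}}$ pulls back to zero there for degree reasons; the conclusion you draw for that case is nevertheless the right one.
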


Given a graph $\Gamma \in \VGra_{mn}(U,V)$, we define $V_{\Gamma} = V^{t}_{\Gamma} \cup V^{a}_{\Gamma} = U \cup V$ to be its set of vertices, partitioned into terrestrial and aerial ones.
Similarly, $E_{\Gamma} = E^{f}_{\Gamma} \cup E^{d}_{\Gamma}$ is its set of edges, split into full edges and dashed edges.
The graph $\Gamma$ induces:
\begin{equation}
  \label{eq:20}
  \Phi_{\Gamma} : \VFM_{mn}(U,V) \to (S^{m-1})^{E^{d}_{\Gamma}} \times (S^{n-1})^{E^{f}_{\Gamma}},
\end{equation}
obtained using the maps $\theta_{ij}$ from Section~\ref{sec.defin-comp-with}.
We also define
\begin{equation}
  \vol_{\Gamma} \in \OmPA^{\deg \Gamma}((S^{m-1})^{E^{d}_{\Gamma}} \times (S^{n-1})^{E^{f}_{\Gamma}})
\end{equation}
to be the product of the volume forms. Then by definition, $\omega'(\Gamma) = \Phi_{\Gamma}^{*}(\vol_{\Gamma})$.

Given a graph $\Gamma \in \VGra_{mn}(U,V)$ with $\# U + 2 \#V \ge 2$, we may define its \emph{coefficient} $c(\Gamma)$ by
\begin{equation}
  \label{eq:def-c}
  c(\Gamma) \coloneqq \int_{\VFM_{mn}(U,V)} \omega'(\Gamma).
\end{equation}
If $\# U + 2 \# V \le 1$ then we just set $c(\Gamma) = 0$ (this is due to the special case in the definition of $\BF''$, see Proposition~\ref{prop:decomp-boundary}).

\begin{remark}\label{rmk:konts-analogous}
  In the case $(n,m) = (2,1)$, these are analogous to the coefficients in Kontsevich's universal formality morphism $T_{\mathrm{poly}} \to D_{\mathrm{poly}}$ from~\cite{Kontsevich2003}.
  For $(n,m) = (3,1)$, these are related to local versions of Kontsevich integrals~\cite{Kontsevich1994}, which are conjectured to equal Bott--Taubes integrals~\cite{Poirier2002}.
\end{remark}

\paragraph{Twisting}

In this section, we are going to twist the cooperad $\VGra_{mn}$ using the theory developed in Appendix~\ref{sec:twist-relat-coop}.
The general motivation of twisting is to encode Maurer--Cartan equations.
Here, since we are going to twist with respect to the coefficients $c$, which come from the stratification of $\VFM_{mn}$, we get a differential which matches the way the boundary facets of $\VFM_{mn}$ interact.

Recall the relative $\hoLie_{n}$-operad $\hoLieto_{mn}$ from Appendix~\ref{sec:twist-relat-coop}.
Recall also the coefficient $\mu : \hoLie_{n} \to \Gra_{n}^{\vee}$ from Equation~\eqref{eq:mu}.

\begin{proposition}\label{prop:c-mor-holie}
  Together with $\mu$, the collection of coefficients $c$ defines a morphism of colored operads $(\hoLie_{n}, \hoLieto_{mn}) \to (\Gra^{\vee}_{n}, \VGra^{\vee}_{mn})$.
  The generating bracket $\lambda_{U,V} \in \hoLieto_{mn}(U,V)$ is sent to the element $c(\lambda_{U,V}) \in \VGra_{mn}^{\vee}(U,V)$ given by $\Gamma \mapsto c(\Gamma)$, which we can write as the possibly infinite sum $c = \sum_{\Gamma} c(\Gamma) \Gamma$.
\end{proposition}
\begin{proof}
  The proof is similar to Kontsevich's proof that $\mu$ defines a morphism $\hoLie_{n} \to \Gra_{n}^{\vee}$.
  Let $C_{*}^{SA}$ be the functor of semi-algebraic chains~\cite[Section~3]{HardtLambrechtsTurchinVolic2011}.
  It is lax-monoidal~\cite[Proposition~3.8]{HardtLambrechtsTurchinVolic2011} so $C^{SA}_{*}(\VFM_{mn})$ is a dg-operad.
  We can dualize Lemma~\ref{lem:mor-vgra-vsc-vfm} to get a morphism $I : C_{*}^{SA}(\VFM_{mn}) \to \OmPA^{*}(\VFM_{mn})^{\vee} \to \VGra_{mn}^{\vee}, \sigma \mapsto \langle \omega'(-), \sigma \rangle$.
  The claimed morphism $c : \hoLieto_{mn} \to \VGra_{mn}^{\vee}$ is the composition of $I$ with:
  \[ \llbracket \VFM_{mn} \rrbracket : \hoLieto_{mn} \to C_{*}^{SA}(\VFM_{mn}) \]
  which maps the generating bracket $\lambda_{U,V} \in \hoLieto_{mn}(U,V)$ to the fundamental chain $\llbracket \VFM_{mn}(U,V) \rrbracket$.
  So we just need to check that $\llbracket \VFM_{mn} \rrbracket$ is a morphism.

  Since $\hoLieto_{mn}$ is quasi-free, we just need to check that our prescription on generators is compatible with the differential.
  Recall~\cite[Theorem~3.5]{HardtLambrechtsTurchinVolic2011} that for a compact oriented SA manifold $M$, $d \llbracket M \rrbracket = \llbracket \partial M \rrbracket$.
  If we use the description of $\partial \VFM_{mn}(U,V)$ from Proposition~\ref{prop:decomp-boundary}, we find that the differential in $C_{*}^{SA}(\VFM_{mn})$ matches the cobar differential.
  The faces of type $\BF'$ correspond to cocompositions $\Comto_{mn}^{\vee}(U,V) \to \Comto_{mn}^{\vee}(U,V/T) \otimes \Com_{n}^{\vee}(T)$, and the faces of type $\BF''$ correspond to cocompositions $\Comto_{mn}^{\vee}(U,V) \to \Comto_{mn}^{\vee}(U/W, V \setminus T) \otimes \Comto_{mn}^{\vee}(W,T)$.
  The ``missing'' faces (when $\# T < 2$, $(W,T) = (U,V)$, or $2 \#T + \#W < 2$) come from the fact that the cobar construction uses the coaugmentation quotient of the cooperad and from $\Com_{n}^{\vee}(\varnothing) = \Comto_{mn}^{\vee}(\varnothing,\varnothing) = 0$.
\end{proof}

\begin{definition}
  \label{def:tw-sgra}
  The twisted graph cooperad $\Tw\VGra_{mn}$ is the relative $(\Tw\Gra_{n})$-cooperad obtained by twisting $\VGra_{mn}$ with respect to $\mu$ and $c$.
\end{definition}

This twisted cooperad has a graphical description.
The CDGA $\Tw\VGra_{mn}(U,V)$ is spanned by graphs with four types of vertices:
external terrestrial vertices, in bijection with $U$ (drawn as semicircles);
external aerial vertices, in bijection with $V$ (drawn as circles);
internal terrestrial vertices, indistinguishable among themselves, of degree $-m$ (drawn as black semicircles);
internal aerial vertices, indistinguishable among themselves, of degree $-n$ (drawn as black circles).

There are two kinds of edges: full edges of degree $n-1$, and dashed edges of degree $m-1$.
The product glues graphs along external vertices.
If $m = 1$, then there are no dashed edges, and the whole set of terrestrial vertices is ordered (if we view $\ee_{1}^{\vee}(U \sqcup I)$ as spanned by the dual basis of $\ee_{1}(U \sqcup I)$).
In this basis, the product of two graphs $\Gamma \cdot \Gamma'$ vanishes if the external vertices of $\Gamma$ and $\Gamma'$ are ordered differently.
Otherwise $\Gamma \cdot \Gamma'$ is obtained by gluing the two graphs along external vertices, and summing all possible ways of ordering the union (along external vertices) of the terrestrial vertices of $\Gamma$ and $\Gamma'$ in a way that the new order restricts to the given orders in $\Gamma$ and $\Gamma'$.

\begin{mydescription}
  \label{descr:diff}
  The differential is given by the twisting procedure (Appendix~\ref{sec:twist-relat-coop}).
  Recall the subgraph contractions defined in Definitions~\ref{def:quotient-graph},~\ref{def:quotient-graph-1}.
  Given a graph $\Gamma$, its differential $d\Gamma$ is given as a sum (with signs in Section~\ref{sec:twist-relat-coop}, the terms corresponding respectively to $(-) \cdot \mu$, $(-) \cdot (c-c_{1})$, and $c_{1} \cdot (-)$):
  \begin{enumerate}[nosep]
  \item contractions of full edges between an aerial internal vertex and an aerial vertex of any kind, including edges connected to a univalent internal vertex (this uses the simple description of $\mu$ in Equation~\eqref{eq:mu});
  \item contractions of subgraphs $\Gamma'$ containing at most one external (necessarily terrestrial) vertex, with result $c(\Gamma') \cdot \Gamma/\Gamma'$;
  \item forgetting of all vertices outside a subgraph $\Gamma''$ which contains all the external vertices, with result $c(\Gamma/\Gamma'') \cdot \Gamma''$.
  \end{enumerate}
  Note that the differential is not purely combinatorial: it depends on $c$, which is defined by integrals.
\end{mydescription}

Since $\VGra_{mn}$ is a Hopf cooperad satisfying $\VGra_{mn}(\varnothing, \varnothing) = \Bbbk$, and since $c$ vanishes on disconnected graphs (see Lemma~\ref{lem:c-discon}) and graphs with a terrestrial cut point (by a simple degree argument: $\dim \VFM_{mn}(i+i'+1,j+j') > \dim \VFM_{mn}(i+1,j) \times \VFM_{mn}(i'+1,j')$), we find again that $\Tw\VGra_{mn}$ inherits a Hopf cooperad structure, similarly to the uncolored case.
For $m = 1$, this is also true: the differential preserves (up to quotient) the order of the terrestrial vertices, and if two consecutive terrestrial vertices in $\Gamma \cdot \Gamma'$ come respectively from $\Gamma$ and $\Gamma'$ (which could potentially produce an unwanted term in $d(\Gamma \cdot \Gamma')$ due to the exception in Lemma~\ref{lem:c-discon}), then they also appear with the opposite order in the product and the two terms in the differential cancel out.

\begin{lemma}
  \label{lem:def-omega}
  For $n-2 \ge m \ge 1$, the morphism $\omega'$ extends to a morphism of Hopf cooperads
  $\omega : \Tw\VGra_{mn} \to \OmPA^{*}(\VFM_{mn})$
  by setting, for $\Gamma \in \VGra_{mn}(U \sqcup I, V \sqcup J) \subset \Tw\VGra_{mn}(U,V)$ and $\#U + 2 \#V \geq 2$:
  \[ \omega(\Gamma) \coloneqq (p_{U,V})_{*}(\omega'(\Gamma)) = \int_{\VFM_{mn}(U \sqcup I, V \sqcup J) \to \VFM_{mn}(U,V)} \omega'(\Gamma). \]
  If $\#U + 2\#V \leq 1$, then $\omega(\Gamma) = 1$ if $\Gamma$ has no edges and no internal vertices, and $\omega(\Gamma) = 0$ otherwise.
\end{lemma}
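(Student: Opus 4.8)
The plan is to imitate the classical construction of $\omega : \Tw\Gra_{n} \to \OmPA^{*}(\FM_{n})$ from~\eqref{eq:10} and~\cite[Section~9]{LambrechtsVolic2014}, while keeping track of the two new features: the extra (terrestrial) color of vertices and edges, and the coefficients $c$ that replace $\mu$ at terrestrial boundary faces. First one checks that $\omega$ is well defined. For $\Gamma \in \VGra_{mn}(U \sqcup I, V \sqcup J)$ the form $\omega'(\Gamma) = \Phi_{\Gamma}^{*}(\vol_{\Gamma})$ is a minimal form, since each propagator is a pullback of a volume form on a sphere, hence minimal; it therefore admits a single fiberwise pushforward along the SA bundle $p_{U,V}$ of Proposition~\ref{prop:vfm-sa}, yielding a well-defined PA form. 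The degree bookkeeping (internal terrestrial vertices contributing $-m$, internal aerial ones $-n$, and $\dim \VFM_{mn}(U \sqcup I, V \sqcup J) - \dim\VFM_{mn}(U,V) = m\#I + n\#J$ when arities are large enough) shows that $\omega$ preserves degrees, and the prescription in the case $\#U + 2\#V \le 1$ is imposed by hand and is consistent with the unit of the relative operad.

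Next come the three structural compatibilities. (a) \emph{Multiplicativity}: the product of graphs glues along external vertices and keeps internal vertices disjoint, so $\omega'(\Gamma_{1} \cdot \Gamma_{2})$ is the wedge of a form pulled back from one ``half'' of the fiber with one pulled back from the other; combining the projection formula $(p_{U,V})_{*}(\alpha \wedge p_{U,V}^{*}\beta) = (p_{U,V})_{*}(\alpha)\wedge\beta$ with a Fubini argument for the relevant iterated SA bundle gives $\omega(\Gamma_{1}\cdot\Gamma_{2}) = \omega(\Gamma_{1})\wedge\omega(\Gamma_{2})$, as in~\cite[Section~9.3]{LambrechtsVolic2014}. (b) \emph{Cooperad compatibility}: one checks that the combinatorial cocompositions on $\Tw\VGra_{mn}$ (subgraph contraction, Definitions~\ref{def:quotient-graph} and~\ref{def:quotient-graph-1}) are intertwined by $\omega$ with the pullbacks along the operadic insertions $\circ_{T}$ and $\circ_{W,T}$ of $\VFM_{mn}$. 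This reduces to the already-established identities for $\omega'$ on the generators $\tilde e_{uu'}, e_{vv'}, e_{vu}$, together with a base-change identity for fiber integrals along the inclusions $\im \circ_{T}$ and $\im \circ_{W,T}$ (the uncolored analogue is again in~\cite[Section~9.3]{LambrechtsVolic2014}); multiplicativity then propagates it to all graphs.

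(c) \emph{Chain map}: this is the core of the argument. Since $\vol_{\Gamma}$ is closed, so is $\omega'(\Gamma)$, and the fiberwise Stokes formula $d(p_{*}\alpha) = p_{*}(d\alpha) \pm p^{\partial}_{*}\alpha$ gives $d\,\omega(\Gamma) = \pm (p_{U,V})^{\partial}_{*}\,\omega'(\Gamma)$. I would then substitute the decomposition of $\VFM_{mn}^{\partial}(U,V)$ into the faces $\im \circ_{T}$, $T \in \BF'(V,J)$, and $\im \circ_{W,T}$, $(W,T)\in\BF''(U,I;V,J)$. On each such face $\omega'(\Gamma)$ restricts to a product: propagators joining two vertices of the colliding cluster pull back from the cluster's own compactification, whereas propagators with at most one endpoint in the cluster become pullbacks along the quotient map (their directions stabilize to ``the direction toward the contracted vertex''). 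Fiber-integrating the cluster factor yields, for an aerial-collision face, $\mu(\Gamma|_{\text{cluster}}) \cdot \bigl(\text{form for }\Gamma/\text{cluster}\bigr)$; by Kontsevich's vanishing lemma $\mu$ is supported on the two-vertex one-edge graph (\cite[Lemma~9.4.3]{LambrechtsVolic2014}, recalled around~\eqref{eq:7}), which together with $\#(T\cap J)\le 1$ isolates exactly the contractions of a full edge incident to an internal aerial vertex --- item~(1) of Description~\ref{descr:diff}. For a terrestrial-collision face (a cluster shrinking onto $\R^{m}$) the cluster factor integrates to $c(\Gamma|_{\text{cluster}}) \cdot \bigl(\text{form for }\Gamma/\text{cluster}\bigr)$; the two disjuncts defining $\BF''(U,I;V,J)$ --- ``the cluster contains at most one external (necessarily terrestrial) vertex'' and ``the cluster contains all external vertices'' --- match items~(2) and~(3) of Description~\ref{descr:diff}, while the vanishing of $c$ on disconnected graphs (Lemma~\ref{lem:c-discon}) and on graphs with a terrestrial cut point discards degenerate contributions. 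Signs are handled with the twisting conventions of Appendix~\ref{sec:twist-relat-coop} (equivalently~\cite[Appendix~I.3]{Willwacher2014}), and for $m=1$ one additionally tracks the linear order on terrestrial vertices via Definition~\ref{def:quotient-graph-1}.

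The step I expect to be the main obstacle is (c), and inside it two points in particular: first, showing that the faces \emph{not} appearing in Description~\ref{descr:diff} contribute zero, which needs the $\mu$-vanishing lemma, a degree/dimension count for clusters, and the delicate ``cancellation in pairs'' for edges at univalent internal vertices (as in~\cite[Appendix~I.3]{Willwacher2014}); and second, the bookkeeping that recasts the terrestrial faces precisely as the two $c$-weighted terms (2) and (3), including the correct incarnation of the new contracted vertex (internal vs.\ external terrestrial) and all intervening signs. The compatibilities (a) and (b) are the colored analogues of routine arguments from~\cite[Section~9.3]{LambrechtsVolic2014} and should present no real difficulty.
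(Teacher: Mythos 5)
Your treatment of the generic arities is essentially the paper's own argument (which is stated much more tersely): the chain-map property comes from the fiberwise Stokes formula and the double pushforward formula, the decomposition of the fiberwise boundary of $p_{U,V}$ from Section~\ref{sec.comp-its-bound}, the support of $\mu$ on the two-vertex one-edge graph, and the vanishing of $c$ on disconnected graphs and graphs with a terrestrial cut point; the twisted differential of Description~\ref{descr:diff} was built precisely so that these boundary contributions match. Your points (a) and (b) are likewise the routine colored analogues of~\cite[Section~9.3]{LambrechtsVolic2014}, so up to here the proposal is fine.

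The genuine gap is the exceptional case $\#U + 2\#V \le 1$, which you dismiss with ``imposed by hand and consistent with the unit of the relative operad''. Two things are missing. First, the reason a by-hand definition is needed at all: in these arities $\dim \VFM_{mn}(U,V) = 0$ rather than the value $-1$ (for $(\#U,\#V)=(1,0)$) or $-1-m$ (for $(\#U,\#V)=(0,0)$) predicted by the general dimension formula, so the fiber of $p_{U,V}$ has dimension $m\#I + n\#J + 1$, resp.\ $m\#I + n\#J + m + 1$, and the pushforward would not preserve degrees. Second, and more importantly, one must still check that the by-hand assignment is a map of complexes, i.e.\ that the edgeless graph with no internal vertices never occurs in $d\Gamma$ for a nontrivial $\Gamma$ in these arities; ``consistency with the unit'' does not give this, because the terrestrial part of the differential can a priori produce exactly that graph. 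For $U = V = \varnothing$ this is clear since the differential of a nonempty graph is nonempty, but for $(\#U,\#V) = (1,0)$ the edgeless graph appears twice, once from $\Gamma' = \Gamma$ in the sum $c(\Gamma')\,\Gamma/\Gamma'$ and once from $\Gamma''$ consisting of the lone external vertex in the sum $c(\Gamma/\Gamma'')\,\Gamma''$, and one has to observe that these two terms carry opposite signs and cancel. This cancellation is the actual content of the paper's proof in the degenerate case and is absent from your argument; without it the claim that $\omega$ is a morphism of (Hopf) cooperadic complexes in all arities is not established.
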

\begin{proof}
  We use the double pushforward formula~\cite[Proposition~8.13]{HardtLambrechtsTurchinVolic2011}, Stokes' formula~\cite[Proposition~8.12]{HardtLambrechtsTurchinVolic2011}  and the decomposition of the fiberwise boundary $\VFM_{mn}(U,V)$ from Section~\ref{sec.comp-its-bound} (compare with~\cite[Section~9.4]{LambrechtsVolic2014}).

  We deal separately with the case $\#U+2\#V \leq 1$ (cf.~\cite[Section~9.1]{LambrechtsVolic2014}).
  Indeed, the fiber of $p_{U,V} : \VFM_{mn}(U \sqcup I, V \sqcup J) \to \VFM_{mn}(U,V)$ has dimension $m \#I + n \#J$ in general, but $\dim \VFM_{mn}(U,V) = 0$ for $\#U + 2\#V \le 1$, so the dimension of the fiber of $p_{U,V}$ is either $m \#I + n\#J - 1$ or $m \#I + n \#J - m - 1$ and $\omega'$ would not preserve degrees.
  In these cases $\VFM_{mn}(U,V)$ is merely a singleton, so we just have to check that any cocycle is mapped to zero.
  This is clear if $U = V = \varnothing$, as the differential of a nonempty graph is nonempty.
  For $\Gamma \in \Tw \VGra_{mn}(1,0)$ with at least one edge, the graph with no edges appears exactly twice with opposite signs in $d\Gamma$ (see Description~\ref{descr:diff}): for $\Gamma' = \Gamma$ in $-\Gamma \cdot c_{1} = - c(\Gamma') \Gamma/\Gamma' + \dots$, and for $\Gamma''$ containing only the external vertex in $c_{1} \cdot \Gamma = c(\Gamma/\Gamma'') \Gamma'' + \dots$.
\end{proof}

\paragraph{Graph complex $\fVGC_{mn}$}

We now mimic Definition~\ref{def:fgcn}:

\begin{definition}\label{def:fvgcmn}
  For $n-2 \ge m \ge 1$, we define $\fVGC_{mn} \coloneqq \Tw\VGra_{mn}(\varnothing,\varnothing)[-m].$

  This shifted CDGA is free and generated by its submodule of connected graphs.
  We still denote by $c \in \fVGC^{\vee}_{mn}$ the restriction of the morphism $c$ from~\eqref{eq:def-c}.
\end{definition}

\begin{lemma}
  \label{lem:c-discon}
  Let $n -2 \ge m \ge 1$.
  Given a disconnected graph $\gamma \in \fVGC_{mn}$, the coefficient $c(\gamma)$ vanishes, unless $m=1$ and $\gamma = \bigl( \tikz{ \node[intt] (i) {}; \node[intt, right=.5cm of i] () {};} \bigr)$.
\end{lemma}
\begin{proof}
  Let us first assume that $\gamma$ has no isolated terrestrial vertices.
  Then $\gamma$ factors as a product $\gamma = \gamma_{1} \cdot \gamma_{2}$, with corresponding sets of vertices $(I_{1}, J_{1})$ and $(I_{2}, J_{2})$ both satisfying $\#I_{\bullet} + 2 \#J_{\bullet} \geq 2$ for $\bullet \in \{1,2\}$.
  We need $\deg \vol_{\gamma} = \dim \VFM_{mn}(I,J)$, otherwise $c(\gamma) = \int_{\VFM_{mn}(I,J)} \Phi^{*}_{\gamma}(\vol_{\gamma})$ obviously vanishes.
  But thanks to the hypothesis $\Phi_{\gamma}$ factors through $\VFM_{mn}(I_{1}, J_{1}) \times \VFM_{mn}(I_{2}, J_{2})$, which is of strictly lower dimension.
  Hence
  $\Phi^{*}_{\gamma}(\vol_{\gamma}) = 0 \implies c(\gamma) = 0$.

  Now let $\gamma$ have an isolated terrestrial vertex $i \in I$.
  Then $\Phi_{\gamma}$ factors through $\VFM_{mn}(I \setminus \{i\}, J)$.
  If $(\#I - 1) + 2 \#J \geq 2$, then the codimension with $\VFM_{mn}(I,J)$ is $m > 0$, and so similarly $c(\gamma) = 0$.
  However, if $\gamma$ is the graph with two terrestrial vertices and no aerial ones (i.e.\ $(\#I, \#J) = (2,0)$), then the codimension is $m-1$, because $\dim \VFM_{mn}(1,0) = 0$.
  Thus we only get $c(\gamma) = 0$ if $m > 1$.
\end{proof}

\begin{remark}
  \label{rmk:m-egal-un}
  In the case $m = 1$, we have that $\VFM_{1n}(2,0) \cong \FM_{1}(2) \cong S^{0}$.
  We find that for $\gamma = \bigl( \tikz{ \node[intt] (i) {}; \node[intt, right=.5cm of i] () {};} \bigr) \in \fVGC_{1n}$, we have $c(\gamma) = 1$.
\end{remark}

\begin{definition}\label{def:vgcmn}
  For $n - 2 \ge m \ge 2$, the connected complementarily constrained graph complex $\VGC_{mn}$ as the quotient of $\fVGC_{mn}$ by disconnected graphs.
\end{definition}

The case $m = 1$ is special.
Mimicking~\cite[Section~4]{Willwacher2015a}, we define an alternate basis of $\Tw \Gra_{1n}(U,V)$.
Recall that $\Gra_{1n}(U, \varnothing)$ contains $\Ass^{\vee}(U)$, where $\Ass$ encodes associative algebras.
The symmetric sequence $\Ass$ is isomorphic to $\Pois = \Com \circ \Lie$~\cite[Section~13.3]{LodayVallette2012}.
We can thus decree that terrestrial internal vertices are in the same ``Lie component'' if they are connected by brackets.

\begin{definition}[{Cf.~\cite[Section~4]{Willwacher2015a}}]
  A Lie-decorated graph is a graph $\Gamma$ defined similarly as an element of $\Tw \Gra_{1n}(U,V)$ (Definition~\ref{def:tw-sgra}), but instead of giving a function on linear orders of terrestrial vertices, we give a function on commutative products of Lie words of terrestrial vertices.
  More precisely, we use the dual of the PBW isomorphism to get an isomorphism and we obtain a new basis of $\Tw \VGra_{1n}(U,V)$ through the twisting procedure:
  \[ \VGra_{1n}(U,V) \cong S(e_{ij})_{i,j \in U \sqcup V} \otimes \Pois^{\vee}(U) \bigm/ (e_{ji} = (-1)^{n} e_{ij}). \]
\end{definition}

One possible way to describe a Lie-decorated graph $\Gamma \in \Tw \VGra_{1n}(U,V)$ (with internal vertices $I$) is to take a usual basis of $\Pois(U \sqcup I) = (\Com \circ \Lie)(U \sqcup I)$ and consider its dual basis in $\Pois^{\vee}(U \sqcup I)$, i.e.\ functions on products of Lie words that vanish on all such elements except one.
See Figure~\ref{fig:lie-dec-graph} for an example.

\begin{figure}[htbp]
  \centering
  \begin{tikzpicture}
    \node[exta] (a1) {$v_{1}$};
    \node[inta, right = 1cm of a1] (a2) {};
    \node[exta, above right = .5cm of a1] (a3) {$v_{2}$};
    \node[extt, below = .4cm of a1] (t1) {$u_{1}$};
    \node[intt, right = 1cm of t1] (t2) {};
    \node[extt, right = 1cm of t2] (t3) {$u_{2}$};
    \node[intt, right = 1cm of t3] (t4) {};
    \node[left = .2cm of t1] {$\lbrack$};
    \node[right = .3cm of t1] {$,$};
    \node[right = .2cm of t2] {$\rbrack$};
    \draw
    (a1) edge (a2) edge (a3) edge (t1)
    (a2) edge (a3) edge (t2) edge[bend right] (t2)
    edge[bend right] (a3)
    (t2) edge[bend right] (t3);
  \end{tikzpicture}
  \caption{Lie-decorated graph; the depicted product of Lie words $[x_{1}, x_{2}] \wedge x_{3} \wedge x_{4}$ is actually the element in the dual basis of $\Pois^{\vee}(4)$}
  \label{fig:lie-dec-graph}
\end{figure}

It is actually easier to describe the differential in of a Lie-decorated graph using coLie words.
Recall that a coLie word $U$ is an element in the cofree coalgebra $T^{\ge1}(U)^{\vee}$ which vanishes on the image of $S^{\ge 2}(\Lie(U))$ under the PBW isomorphism.
These words are the image of the first Eulerian idempotent $e^{(1)} = \log_{\star}(\id)$, where $\star$ is the convolution product of the bialgebra $T(U)^{\vee}$ (see e.g.~\cite[Appendix~4.2]{Fresse2006}).
CoLie words themselves are described by graphs (see~\cite{SinhaWalter2011}), one can also thus think of Lie-decorated graphs as ``graph-decorated graphs''.
With this basis, the differential merges coLie words (with a description similar to the one of $\Tw\VGra_{mn}$ above).

\begin{lemma}\label{lem:iso-pbw}
  The vector space spanned by Lie-decorated graphs with external vertices $(U,V)$ is isomorphic to $\Tw \Gra_{1n}(U,V)$.
\end{lemma}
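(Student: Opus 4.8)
This lemma is a graphical incarnation of the Poincaré--Birkhoff--Witt theorem, and I would model its proof on~\cite[Section~4]{Willwacher2015a}.

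First I would unravel the graded vector space underlying $\Tw\VGra_{1n}(U,V)$. By Definition~\ref{def:vgra-1} and the twisting construction of Appendix~\ref{sec:twist-relat-coop}, it is the direct sum, over isomorphism classes of pairs $(I,J)$ of finite sets of internal terrestrial and internal aerial vertices, of the $(\Sigma_{I}\times\Sigma_{J})$-coinvariants of $\VGra_{1n}(U\sqcup I,V\sqcup J)\otimes\K[m]^{\otimes I}\otimes\K[n]^{\otimes J}$. By Definition~\ref{def:vgra-1} again, $\VGra_{1n}(U\sqcup I,V\sqcup J)$ is the tensor product of a ``full-edge factor'' $S(e_{ij})/(e_{ji}=(-1)^{n}e_{ij})$, on the vertex set $U\sqcup I\sqcup V\sqcup J$, with a ``terrestrial factor'' $\R[\Sigma_{U\sqcup I}]^{\vee}$, the dual of the span of the linear orders on $U\sqcup I$. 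The key observation is that $\Sigma_{I}$ acts on the terrestrial factor alone, while $\Sigma_{J}$ acts on neither of the two order-sensitive factors; the full-edge factor and the sign lines $\K[m]^{\otimes I}\otimes\K[n]^{\otimes J}$ are inert bookkeeping. The span of Lie-decorated graphs admits exactly the same description, the sole difference being that the terrestrial factor is replaced by the span of the decorations allowed in the definition --- a commutative product of Lie words on $U\sqcup I$ --- which is a piece of $\Pois(U\sqcup I)$.

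The second step is to invoke the operadic form of the Poincaré--Birkhoff--Witt theorem: the symmetric sequences $\Ass$ and $\Pois=\Com\circ\Lie$ are isomorphic~\cite[Section~13.3]{LodayVallette2012}. Evaluated on $S=U\sqcup I$ this is an $\operatorname{Aut}(S)$-equivariant isomorphism $\Ass(S)=\R[\Sigma_{S}]\xrightarrow{\ \cong\ }\Pois(S)$ identifying, on canonical bases, linear orders of $S$ with Poisson monomials on $S$. Tensoring with the identity on the inert full-edge and sign factors, and then passing to $(\Sigma_{I}\times\Sigma_{J})$-coinvariants --- which is legitimate precisely because the isomorphism is $\Sigma_{S}$-equivariant, hence $\Sigma_{I}$-equivariant, and the $\Sigma_{J}$-action is untouched --- yields an isomorphism $\Tw\VGra_{1n}(U,V)\cong(\text{span of Lie-decorated graphs})$. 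It is visibly natural in $(U,V)$ and degree-preserving, because PBW does not change the underlying set $U\sqcup I$ and hence leaves the edge-count, the vertex-count and the associated degree unchanged.

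The step that demands real care --- and which I expect to be the main obstacle --- is the matching of combinatorics and signs on the terrestrial side, the more so because $m=1$ is odd. Concretely one must fix an explicit $\Sigma_{S}$-equivariant splitting realising $\Ass(S)\cong\Pois(S)$ (the symmetrisation map, or the one built from the Dynkin/Eulerian idempotents), verify that the Koszul signs picked up when permuting the odd internal terrestrial vertices are exactly those baked into the definition of Lie-decorated graphs (commutative product, bracket, and coinvariants), and check that the ``one external vertex per Lie word'' normalisation is precisely the image of the span of linear orders under this splitting. This is the bookkeeping carried out in~\cite[Section~4]{Willwacher2015a} for the Swiss-Cheese operad, and I would transplant it here with only cosmetic modifications.
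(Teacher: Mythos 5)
Your proposal is essentially the paper's own proof: the paper also just invokes the (dual) PBW isomorphism $\Pois^{\vee}(U\sqcup I)\cong\Ass^{\vee}(U\sqcup I)$ applied to the terrestrial order factor, tensored with the identity on the edge part, with internal terrestrial vertices treated as odd objects to fix the signs, following~\cite[Section~4]{Willwacher2015a}. One aside in your write-up is inaccurate --- $\Sigma_{I}$ does act on the full-edge factor as well, since internal terrestrial vertices can carry full edges --- but this is harmless, because your descent to $(\Sigma_{I}\times\Sigma_{J})$-coinvariants only uses the $\Sigma_{U\sqcup I}$-equivariance of the PBW isomorphism tensored with the identity, which you state correctly.
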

\begin{proof}
  We use the dual of the PBW isomorphism $\Pois^{\vee}(U \sqcup I) \cong \Ass^{\vee}(U \sqcup I)$ to identify linear combinations of Lie-decorated graphs with elements of $\Tw \Gra_{1n}(U,V)$.
  For the signs, terrestrial vertices have degree $-1$.
\end{proof}

\begin{definition}
  \label{def:lie-connected}
  Two vertices in a Lie-decorated graph are in the same Lie component if they are connected by full edges or if they are both terrestrial and in the same Lie word.
  A Lie-decorated graph is Lie-connected if all its vertices are in the same Lie component, and Lie-disconnected otherwise.
\end{definition}

\begin{lemma}
  The coefficient $c$ vanishes on the image of a Lie-disconnected graph under the isomorphism of Lemma~\ref{lem:iso-pbw}.
\end{lemma}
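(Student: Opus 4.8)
The plan is to push the statement through the PBW isomorphism of Lemma~\ref{lem:iso-pbw} so that it reduces to the dimension count already performed for Lemma~\ref{lem:c-discon}, together with one explicit low-dimensional computation.

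First I would use that a Lie-disconnected graph has at least two Lie components and group them into a product $\gamma = \gamma_{1} \cdot \gamma_{2}$ of two Lie-decorated graphs on disjoint, nonempty sets of vertices, with no full edge between the two parts and no Lie word straddling them. Since the isomorphism of Lemma~\ref{lem:iso-pbw} is the one induced by PBW --- which sends a commutative product of Lie words to a shuffle product of the corresponding commutators --- the image of $\gamma$ in $\Tw\Gra_{1n}$ is a linear combination $\sum_{\sigma} a_{\sigma}\,\Gamma_{\sigma}$ of ordered graphs, each having the same vertices as $\gamma$ and the same set of full edges $E^{f}_{\gamma_{1}} \sqcup E^{f}_{\gamma_{2}}$. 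As $\gamma_{1}$ and $\gamma_{2}$ are nonempty and no full edge crosses between them, the underlying graph of every $\Gamma_{\sigma}$ is disconnected. By linearity $c(\gamma) = \sum_{\sigma} a_{\sigma}\,c(\Gamma_{\sigma})$, and the dimension argument in the proof of Lemma~\ref{lem:c-discon} --- which compares only dimensions of the spaces $\VFM_{1n}$ and is therefore insensitive to which vertices are external --- shows that each $c(\Gamma_{\sigma})$ vanishes, with the sole possible exception of the ordered graph having exactly two terrestrial vertices, no aerial vertex, and no edge. If $\gamma$ is not of this last shape, none of the $\Gamma_{\sigma}$ is, and we are done.

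It remains to treat the case where $\gamma$ consists of two terrestrial vertices, no aerial vertex and no edge. If these vertices are internal, then $\gamma$ already vanishes in $\fVGC_{1n}$ after passing to $\Sigma_{2}$-coinvariants, because transposing the two odd-degree internal terrestrial vertices of the two singleton Lie words produces the sign $-1$; so there is nothing to prove. If the two vertices $u_{1}, u_{2}$ are external, then --- the only Lie-decorated graph on them being the product $u_{1} \cdot u_{2}$ of two singleton Lie words, since a single Lie word $[u_{1}, u_{2}]$ is forbidden (two external vertices in one Lie word), and since such a product symmetrizes without relative sign under PBW --- the image of $\gamma$ is the unit of $\R[\Sigma_{U}]^{\vee}$. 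Hence $\omega'(\gamma) = 1$ on $\VFM_{1n}(2,0) \cong \FM_{1}(2) \cong S^{0}$, so $c(\gamma) = \int_{S^{0}} 1 = 0$, the two points of $S^{0}$ carrying opposite orientations. The one step that needs care is precisely this last bookkeeping: one must check that the orientation convention making $\int_{S^{0}} 1 = 0$ is the same one that yields the value $1$ in Remark~\ref{rmk:m-egal-un} for the Lie-connected bracket graph $[u_{1}, u_{2}]$, whose PBW image is proportional to the antisymmetric combination of the two order classes; everything else is a routine translation through the PBW isomorphism plus an appeal to the dimension count in Lemma~\ref{lem:c-discon}.
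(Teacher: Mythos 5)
Your proof is correct and takes the same route as the paper, whose entire proof is the observation that the claim follows from Lemma~\ref{lem:c-discon}: every ordered graph occurring in the PBW image of a Lie-disconnected graph has the same (disconnected) underlying graph, so its coefficient vanishes. Your explicit treatment of the exceptional case of that lemma --- the product of two singleton Lie words on two internal terrestrial vertices is anti-invariant under the transposition, hence zero in $\fVGC_{1n}$ --- is precisely the detail the paper leaves implicit and is handled correctly; the external-vertex case you add is superfluous, since $c$ is only ever evaluated on graphs all of whose vertices are internal.
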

\begin{proof}
  This follows from Lemma~\ref{lem:c-discon}.
\end{proof}

\begin{definition}
  For $n \ge 3$, the Lie-connected complementarily constrained graph complex $\VGC_{1n}$ as the quotient of $\fVGC_{1n}$ by the image of the Lie-disconnected graph under the isomorphism of Lemma~\ref{lem:iso-pbw}.
\end{definition}

\begin{lemma}
  For $n - 2 \ge m \ge 1$, the coefficient $c : \fVGC_{mn} \to \R$ factors through the quotient defining $\VGC_{mn}$.
  Abusing notation, we denote by $c$ the induced map $\VGC_{mn} \to \R$.
\end{lemma}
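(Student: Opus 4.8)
The plan is to treat the cases $m\ge 2$ and $m=1$ separately, invoking in each the vanishing results already established. In both cases $c$ is merely a linear functional, so ``factoring through the quotient'' just means checking that $c$ vanishes on the subspace of $\fVGC_{mn}$ by which one quotients to obtain $\VGC_{mn}$.

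Suppose first $m\ge 2$. By Definition~\ref{def:vgcmn}, $\VGC_{mn}$ is the quotient of $\fVGC_{mn}$ by its span of disconnected graphs (equivalently, $\VGC_{mn}$ is the module of indecomposables of the free shifted CDGA $\fVGC_{mn}$), so its defining relations are spanned by products of at least two connected graphs. Thus it suffices to see that $c(\gamma)=0$ for every disconnected $\gamma\in\fVGC_{mn}$. This is precisely Lemma~\ref{lem:c-discon}: the only exception recorded there occurs for $m=1$, so for $m\ge 2$ there is no exception, and $c$ descends to a linear map $\VGC_{mn}\to\R$.

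Now suppose $m=1$ (so $n\ge 3$). By definition $\VGC_{1n}$ is the quotient of $\fVGC_{1n}$ by the image, under the PBW isomorphism of Lemma~\ref{lem:iso-pbw}, of the span of Lie-disconnected graphs. The lemma above asserting that $c$ vanishes on the image of a Lie-disconnected graph (itself deduced from Lemma~\ref{lem:c-discon}) says exactly that $c$ kills those relations, so again $c$ descends. This requires no new argument.

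There is no genuine obstacle; the one subtlety is to see that the above is consistent with Lemma~\ref{lem:c-discon}, which for $m=1$ exhibits a \emph{disconnected} graph $\gamma\in\fVGC_{1n}$ — two internal terrestrial vertices and no edges — with $c(\gamma)=1$ (Remark~\ref{rmk:m-egal-un}). This is not a contradiction: under the identification of Lemma~\ref{lem:iso-pbw}, this $\gamma$ corresponds to the Lie-decorated graph whose two internal terrestrial vertices lie in a single Lie word (the bracket), hence is Lie-\emph{connected} and survives in $\VGC_{1n}$; it is only the companion Lie-decorated graph, with the two vertices in two different Lie words (the commutative product), that is Lie-disconnected and thus among the relations defining $\VGC_{1n}$. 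This is precisely the reason the definition of $\VGC_{1n}$ quotients by Lie-disconnected rather than by disconnected graphs.
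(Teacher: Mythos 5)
Your proof is correct and follows essentially the same route as the paper, which simply observes that the statement follows from Lemma~\ref{lem:c-discon} (directly for $m \ge 2$, and via the lemma on Lie-disconnected graphs for $m = 1$). Your extra discussion of the exceptional graph with two terrestrial vertices is a helpful clarification and consistent with the paper, though strictly speaking that ordered graph corresponds under PBW to a combination of the bracket and product Lie-decorated graphs rather than to the bracket alone; this does not affect the argument, since the quotient is only by the Lie-disconnected part.
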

\begin{proof}
  This also follows from Lemma~\ref{lem:c-discon}, in both cases $m = 1$ and $m \ge 2$.
\end{proof}

\begin{remark}\label{rmk:c-maurer-cartan}
  The dual $\VGC_{mn}^{\vee}$ is a (pre-)Lie algebra and the Lie algebra $\GC_{n}^{\vee}$ acts on it by derivations, in both cases using insertion of graphs (respectively at terrestrial and aerial vertices).
  The differential of $\VGC_{mn}^{\vee}$ is given by $[\mu + c,-]$ where $c$ represents the coefficients from Equation~\eqref{eq:def-c}.
  The elements $\mu$ and $c$ satisfy the Maurer--Cartan equation, i.e.\ $[\mu,\mu] = 0$ and $[\mu,c] + \frac{1}{2} [c,c] = 0$.
\end{remark}

\paragraph{Reduction}

We now mod out internal components.

\begin{definition}
  \label{def:vgraphs}
  For $n - 2 \ge m \geq 2$, we define the graph cooperad $\VGraphs_{mn}$ to be the relative $\Graphs_{n}$-cooperad given by quotient of $\Tw\VGra_{mn}$ by the Hopf cooperadic ideal of graphs with internal components.
  For $n - 2 \ge m = 1$, we define $\VGraphs_{1n}$ to be the quotient of $\Tw\VGra_{1n}$ by the Hopf cooperadic ideal of Lie-disconnected graphs (Definition~\ref{def:lie-connected}).
\end{definition}

\begin{remark}
  Compare this with Willwacher's model $\SGraphs_{n}$ for the Swiss-Cheese operad~\cite{Willwacher2015a}.
  In $\SGraphs_{n}$, there are no dashed edges, the full edges are oriented, and their source is always aerial.
\end{remark}

\begin{proposition}\label{prop:omega-factors}
  The morphism $\omega$ factors through the quotient and defines a morphism $\omega : \VGraphs_{mn} \to \OmPA^{*}(\VFM_{mn})$.
\end{proposition}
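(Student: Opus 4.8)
The plan is to show that $\omega$ kills the ideal defining $\VGraphs_{mn}$; once that is done, the induced map $\omega \colon \VGraphs_{mn} \to \OmPA^*(\VFM_{mn})$ is automatically a morphism of Hopf cooperads, since $\omega$ is already one on $\Tw\VGra_{mn}$ (Lemma~\ref{lem:def-omega}) and $\Tw\VGra_{mn} \to \VGraphs_{mn}$ is a morphism of Hopf cooperads by construction. Because each $\omega \colon \Tw\VGra_{mn}(U,V) \to \OmPA^*(\VFM_{mn}(U,V))$ is a morphism of CDGAs and the defining ideal is spanned, as a vector space, by the basis graphs lying in it, it suffices to prove that $\omega(\Gamma) = 0$ whenever $\Gamma$ has an internal connected component (case $m \ge 2$), respectively whenever $\Gamma$ is Lie-disconnected (case $m = 1$). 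This is the exact analogue of~\cite[Lemma~9.3.7]{LambrechtsVolic2014} and of the corresponding reduction step in Willwacher's model for the Swiss-Cheese operad~\cite{Willwacher2015a}.

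\textbf{The case $m \ge 2$.} Let $\Gamma \in \VGra_{mn}(U \sqcup I, V \sqcup J) \subset \Tw\VGra_{mn}(U,V)$ have an internal component $\gamma$ supported on a vertex set $(I', J')$ with $I' \subseteq I$, $J' \subseteq J$ and $I' \sqcup J' \neq \varnothing$. Since $\gamma$ is a connected component, $\Gamma$ factors as a product $\Gamma = \pm\, \Gamma_0 \cdot \gamma$ in the CDGA $\VGra_{mn}(U \sqcup I, V \sqcup J)$, where $\Gamma_0$ carries the remaining vertices, so $\omega'(\Gamma) = \pm\, \omega'(\Gamma_0) \wedge \omega'(\gamma)$. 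Factor the forgetful projection as $p_{U,V} = p' \circ q$, where $q \colon \VFM_{mn}(U \sqcup I, V \sqcup J) \to \VFM_{mn}(U \sqcup (I \setminus I'), V \sqcup (J \setminus J'))$ forgets exactly the points of $I' \sqcup J'$. Since $\Gamma_0$ has no vertex in $I' \sqcup J'$, the form $\omega'(\Gamma_0)$ is a pullback along $q$; by the double pushforward and projection formulas for fiber integration~\cite[Section~8]{HardtLambrechtsTurchinVolic2011} we obtain $\omega(\Gamma) = \pm\, p'_*\bigl(\omega'(\Gamma_0) \wedge q_*\omega'(\gamma)\bigr)$, so it is enough to prove that $q_*\omega'(\gamma) = 0$.

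\textbf{The dimension count.} Recall that $\omega'(\gamma) = \Phi_\gamma^*(\vol_\gamma)$ is the pullback of a product of sphere volume forms along the map recording the directions of the edges of $\gamma$; since those directions depend only on the sub-configuration indexed by $I' \sqcup J'$, the form $\omega'(\gamma)$ factors through the forgetful projection $\VFM_{mn}(U \sqcup I, V \sqcup J) \to \VFM_{mn}(I', J')$, which, like $q$ and $p'$, is an SA bundle by Proposition~\ref{prop:vfm-sa}. If $\omega'(\gamma) = 0$ we are done; otherwise $\deg\omega'(\gamma) \le \dim \VFM_{mn}(I', J') \le m\#I' + n\#J' - m - 1$ by Proposition~\ref{prop:vfm-sa}. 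On the other hand the fiber of $q$ has dimension exactly $m\#I' + n\#J'$ (each forgotten terrestrial point contributes $m$, each aerial one $n$). Hence $\deg(q_*\omega'(\gamma)) = \deg\omega'(\gamma) - (m\#I' + n\#J') < 0$, forcing $q_*\omega'(\gamma) = 0$ and therefore $\omega(\Gamma) = 0$. The degenerate arities $2\#J' + \#I' \le 1$ — i.e.\ $\gamma$ a single isolated internal terrestrial vertex, as well as the case of tadpoles, which $\omega'$ sends to $0$ outright — are covered the same way, since then $\dim \VFM_{mn}(I',J') = 0 < m = \dim(\text{fiber of } q)$.

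\textbf{The case $m = 1$ and the main obstacle.} For $m = 1$ the argument is carried out in the Lie-decorated basis of Lemma~\ref{lem:iso-pbw}: a Lie-disconnected graph is written as a product $\Gamma_0 \cdot \gamma$ in which $\gamma$ collects a union of internal Lie-components, one factors $p_{U,V}$ through the projection $q$ forgetting the vertices of those components, reduces to $q_*\omega'(\gamma) = 0$ by the projection formula, and concludes by the dimension inequality — the relevant bookkeeping being exactly that in the proof of Lemma~\ref{lem:c-discon}, where the only borderline configuration spaces are $\VFM_{1n}(1,0) = \{*\}$ and $\VFM_{1n}(2,0) \cong S^0$, both still of dimension strictly smaller than the corresponding fiber of $q$; here one follows~\cite[Section~4]{Willwacher2015a}. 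The delicate point in the whole argument is precisely this last bookkeeping in the exceptional low arities — and, for $m = 1$, the interaction of the Lie-decoration with the forgetful projections — where the general dimension formula of Proposition~\ref{prop:vfm-sa} does not apply verbatim and one must verify the strict inequality by hand.
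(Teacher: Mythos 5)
Your argument is correct and is essentially the paper's proof: the paper likewise reduces, by multiplicativity of $\omega$, to the form attached to the all-internal part of the graph and kills the fiber integral by the same dimension count ($\dim \VFM_{mn}(I,J) \le m\#I + n\#J - m < \dim \fib p_{U,V}$), invoking \cite[Proposition~5.1.2]{HardtLambrechtsTurchinVolic2011} where you instead use the projection formula along the intermediate projection $q$ forgetting the internal component. The only point you leave implicit is the degenerate external arity $\#U + 2\#V \le 1$, where $\omega$ is not given by a fiber integral at all; there the claim is immediate from the ad hoc definition in Lemma~\ref{lem:def-omega}, which sends every graph with an internal (Lie-)component to zero, as the paper notes in one sentence.
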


\begin{proof}
  The proof is identical to the proof of~\cite[Lemma~9.3.7]{LambrechtsVolic2014}.
  Briefly,
  let $\gamma \in \Tw\VGra_{mn}(U,V)$ be a graph whose edges all are between internal vertices.
  Let $(I,J)$ be the sets of internal vertices of $\gamma$ and assume $\#U+2\#V \ge 2$ otherwise the claim is obvious (see Lemma~\ref{lem:def-omega}).
  We can fill the diagram, where $\rho$ is the product of two projections:
  \begin{equation*}
    \begin{tikzcd}[column sep = tiny, row sep = small]
      \VFM_{mn}(U \sqcup I, V \sqcup J) \ar[rr, "\Phi_{\gamma}"] \ar[dr, "\rho"] \ar[d, "p_{U,V}"]
      && (S^{m-1})^{E^{t}_{\Gamma}} \times (S^{n-1})^{E^{a}_{\Gamma}}
      \\
      \VFM_{mn}(U,V) & \VFM_{mn}(U,V) \times \VFM_{mn}(I,J) \ar[ur, dashed, "\exists \Phi'_{\gamma}" swap] \ar[l, "\operatorname{pr}_{1}"]
    \end{tikzcd}
    .
  \end{equation*}
  We therefore find that $\omega(\Gamma) = (p_{U,V})_{*}(\omega'(\Gamma)) = (p_{U,V})_{*}(\rho^{*}(\Phi_{\gamma}^{'*}\vol_{\Gamma}))$.
  The dimension of the fibers of $p_{U,V}$ and $\operatorname{pr}_{1}$ are respectively $m\#I+n\#J > m \#I+n\#J-m$.
  Therefore $\omega(\Gamma) = 0$ by~\cite[Proposition~5.1.2]{HardtLambrechtsTurchinVolic2011}.
  For $m = 1$, we note that
  a Lie-disconnected graph is in particular disconnected.
\end{proof}

\subsection{Vanishing lemmas and \texorpdfstring{$\vgraphs_{mn}$}{vgraphs\_mn}}
\label{sec:vanishing-lemmas}

We now prove some vanishing lemmas about $\omega$ and $c$, which allows us to define the further reduced cooperad $\vgraphs_{mn}$.
This will be useful to prove that the quotient map $\VGra_{mn} \to \vscV_{mn}$ extends to $\vgraphs_{mn}$ in Section~\ref{sec:proof-formality}.

\begin{lemma}
  \label{lem:vanish-loop-double}
  The morphism $\omega'$ vanishes on graphs with loops or double edges.
\end{lemma}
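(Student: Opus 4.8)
The plan is to reduce to the edge generators and then win by a degree count. Since $\omega'$ is a morphism of CDGAs and $\VGra_{mn}(U,V)$ is generated, as an algebra, by the edge classes $\tilde{e}_{uu'}$ (for $m \ge 2$), $e_{vv'}$, $e_{uu'}$, and $e_{vu}$, it suffices to check two things: that $\omega'$ annihilates the square of any edge generator (this handles double edges, and hence any graph containing one), and that $\omega'$ vanishes on every edge generator whose two endpoints coincide (this handles tadpoles).

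First I would treat double edges. By the formulas in~\eqref{eq:def-omega}, $\omega'$ sends each edge generator either to $0$ (the case $e_{uu'}$) or to the pullback $p_{\bullet\bullet}^{*}(\vol_{S})$ of the volume form of a sphere $S$, with $S = S^{m-1}$ for dashed edges and $S = S^{n-1}$ for full edges. Hence $\omega'$ of a double edge is, up to sign, the pullback of $\vol_{S} \wedge \vol_{S}$. This is a form of degree $2 \dim S$ on $S$, and $\dim S \ge 1$ in every case that can occur: dashed edges exist only when $m \ge 2$, so $\dim S^{m-1} = m-1 \ge 1$, and full edges have $\dim S^{n-1} = n-1 \ge 2$ since $n \ge 3$. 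Thus $2 \dim S > \dim S$, so $\vol_{S} \wedge \vol_{S}$ vanishes, and therefore so does $\omega'$ on any graph containing a double edge.

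Next I would treat tadpoles. Here $\omega'(e_{uu}) = 0$ by definition, and no tadpole can be built from $e_{vu}$ because its endpoints are of different types, so only $\tilde{e}_{uu}$ (for $m \ge 2$) and $e_{vv}$ remain. Unwinding the definition of the projections from Section~\ref{sec.cooperad-sgraphs_mn}: the ``diagonal'' projection $p_{uu}$ factors through the single-point space $\VFM_{mn}(\{u\},\varnothing) = \FM_{m}(1) = \{*\}$, and $p_{vv}$ factors through $\VFM_{mn}(\varnothing,\{v\}) = S^{n-m-1}$. Since $\deg \varphi_{m} = m-1 \ge 1 > 0 = \dim \FM_{m}(1)$ and $\deg \psi_{mn} = n-1 > n-m-1 = \dim S^{n-m-1}$ (here using $m \ge 1$), the pullbacks $p_{uu}^{*}(\varphi_{m})$ and $p_{vv}^{*}(\psi_{mn})$ both vanish, being pullbacks of positive-degree forms from spaces of too small a dimension. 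Hence $\omega'$ vanishes on any graph containing a tadpole, which completes the argument.

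The only step that requires genuine care is the description of the projections $p_{uu}$ and $p_{vv}$ on tadpoles, namely that the ``diagonal'' built into their definitions forces them to factor through the single-point configuration spaces $\FM_{m}(1)$ and $\VFM_{mn}(0,1)$ respectively; this is a routine unwinding of the Fulton--MacPherson coordinates of $\VFM_{mn}$ set up in Section~\ref{sec.comp-its-bound}. Everything else is a one-line degree count.
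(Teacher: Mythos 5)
Your proof is correct and is precisely the ``simple dimension argument'' that the paper's one-line proof invokes (with the citation to Lambrechts--Voli\'c, Section~9.3): double edges vanish because $\vol_{S}\wedge\vol_{S}$ is already zero on the sphere $S$ (its degree exceeds $\dim S$, which is $\ge 1$ in every case that occurs), and tadpoles vanish because, under the paper's convention that $p_{uu}$ and $p_{vv}$ depend only on the single point involved, the propagator is pulled back through $\FM_{m}(1)=\{*\}$ or $\VFM_{mn}(\varnothing,\{v\})=S^{n-m-1}$, whose dimension is strictly smaller than the degree of the form (the remaining generator $e_{uu'}$ being sent to $0$ by definition). Nothing further is needed.
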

\begin{proof}
  This follows from simple dimension arguments, cf.~\cite[Section~9.3]{LambrechtsVolic2014}.
\end{proof}

\begin{lemma}[{Cf.~\cite[Lemma~9.3.9]{LambrechtsVolic2014}}]
  \label{lem:vanish-unival}
  The morphism $\omega$ vanishes on graphs containing univalent aerial internal vertices, or univalent terrestrial internal  vertices connected to another terrestrial vertex.
\end{lemma}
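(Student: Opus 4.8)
The plan is to follow the strategy of \cite[Lemma~9.3.9]{LambrechtsVolic2014}: isolate the offending univalent internal vertex, integrate it out first, and observe that the resulting fiberwise integral vanishes for dimension reasons. Let $\Gamma \in \Tw\VGra_{mn}(U,V)$ be a graph with sets of internal vertices $I$ (terrestrial) and $J$ (aerial), and let $v$ be a univalent internal vertex of the forbidden type, i.e.\ either $v \in J$, or $v \in I$ with its unique incident edge going to another terrestrial vertex $w$. First I would dispose of a trivial sub-case: if the unique edge at $v$ is a \emph{full} edge between two terrestrial vertices, then $\omega'(\Gamma)$ already vanishes since it contains the factor $\omega'(e_{vw}) = 0$ from Equation~\eqref{eq:def-omega}, hence $\omega(\Gamma) = 0$. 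Invoking Lemma~\ref{lem:vanish-loop-double} we may also assume $\Gamma$ has no loops or double edges, so in the remaining cases the unique edge at $v$ is either a full edge $vw$ with $v$ aerial, or a dashed edge $vw$ with $v$ terrestrial; in both cases $w \neq v$, and $\omega'$ sends this edge to $\theta_{vw}^*(\vol_{S^{d}})$ with $d = n-1$ in the first case and $d = m-1$ in the second.

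Next I would factor the forgetful projection. When $\#U + 2\#V \le 1$ there is nothing to prove: $\VFM_{mn}(U,V)$ is a point and, by Lemma~\ref{lem:def-omega}, $\omega$ is zero on every graph carrying an edge or an internal vertex. So assume $\#U + 2\#V \ge 2$. Then $p_{U,V}$ factors as $p' \circ q$, where $q$ forgets only the single point $v$, landing in $\VFM_{mn}(U \sqcup I', V \sqcup J')$ with $(I',J')$ equal to $(I \setminus \{v\}, J)$ or $(I, J \setminus \{v\})$, and $p'$ forgets the remaining internal vertices. Both are SA bundles by Proposition~\ref{prop:vfm-sa}, and since removing an internal vertex changes neither $\#U$ nor $\#V$ the dimension formula there gives $\dim\fib q = n$ if $v$ is aerial and $\dim\fib q = m$ if $v$ is terrestrial. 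Because $v$ is univalent, the form $\omega'(\Gamma)$ splits as $q^{*}\bigl(\omega'(\Gamma \setminus v)\bigr) \wedge \theta_{vw}^{*}(\vol_{S^{d}})$, where $\Gamma \setminus v$ is the graph on $(U \sqcup I', V \sqcup J')$ obtained by deleting $v$ and its edge (all other propagators are pulled back from the base of $q$).

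The projection formula for fiberwise integration then gives $q_{*}(\omega'(\Gamma)) = \pm\, \omega'(\Gamma \setminus v) \wedge q_{*}\bigl(\theta_{vw}^{*}\vol_{S^{d}}\bigr)$, and the fiberwise integral $q_{*}(\theta_{vw}^{*}\vol_{S^{d}})$ vanishes because $\theta_{vw}^{*}\vol_{S^{d}}$ has degree $d < \dim\fib q$ (indeed $d = n-1 < n$ or $d = m-1 < m$), so integrating it along the fiber would yield a form of negative degree. Hence $\omega(\Gamma) = p'_{*}\bigl(q_{*}\omega'(\Gamma)\bigr) = 0$, as desired. The only point requiring any care is the dimension bookkeeping for $\fib q$ in the generic range $\#U + 2\#V \ge 2$, which is immediate; I do not anticipate a genuine obstacle, as this is a direct adaptation of the classical argument to the two-colored setting, the only new feature being the need to track the two dimensions $n$, $m$ and the two types of edges in parallel.
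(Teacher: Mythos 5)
Your argument is correct, and at heart it is the same strategy as the paper's proof (both being adaptations of \cite[Lemma~9.3.9]{LambrechtsVolic2014}): isolate the univalent internal vertex, integrate it out first, and conclude by a degree/dimension count. The implementations differ in a way worth noting. The paper factors the map $\Phi_{\Gamma}$ through a product of two compactifications --- $\VFM_{mn}(\varnothing,\{v,j\})\times\VFM_{mn}(\cdots)$ when the univalent vertex is aerial and attached to an aerial vertex, and $\VFM_{mn}(\{u\},\{i\})\times\VFM_{mn}(\cdots)$ or $\VFM_{mn}(\{u,i\},\varnothing)\times\VFM_{mn}(\cdots)$ when it is attached to a terrestrial vertex --- and then uses two different facts from~\cite{HardtLambrechtsTurchinVolic2011}: the vanishing (for degree reasons) of the pushforward of $\psi_{mn}$ in the first case, and a comparison of fiber dimensions ($m\#I+n\#J-1$ versus $m\#I+n\#J$) in the second. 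You instead factor the forgetful bundle itself as $p_{U,V}=p'\circ q$, with $q$ forgetting only the offending vertex, split $\omega'(\Gamma)=q^{*}\bigl(\omega'(\Gamma\setminus v)\bigr)\wedge\theta_{vw}^{*}\vol_{S^{d}}$, and combine the projection formula with the observation that $q_{*}$ lowers degree by $\dim\fib q\in\{m,n\}>d$; this treats the aerial and terrestrial cases uniformly and dissolves the paper's case distinction. All the tools you invoke (the projections being SA bundles, Proposition~\ref{prop:vfm-sa}; the projection formula and the double pushforward formula of~\cite{HardtLambrechtsTurchinVolic2011}) are exactly those the paper uses elsewhere, your handling of the degenerate range $\#U+2\#V\le 1$ via Lemma~\ref{lem:def-omega} matches the paper's, and, reassuringly, your degree count correctly does not apply to a univalent terrestrial internal vertex attached to an aerial vertex (there $d=n-1\ge m=\dim\fib q$), consistently with Remark~\ref{rmk:graph-eta}.
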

\begin{proof}
  The lemma is trivial if $\#U + 2 \#V \le 1$ (see Lemma~\ref{lem:def-omega}), so let us assume that we are not in this case.

  Let us first deal with univalent aerial internal vertices connected to another aerial vertex.
  The graph $\Gamma_{\text{uni}} = \tikz[scale=.5, baseline=(a.base)]{\node[exta] (a) {1}; \node[inta, right=1cm of a] (i) {}; \draw (a) -- (i);}$ is of negative degree, thus $\omega(\Gamma_{\text{uni}}) = 0$.
  For a general graph $\Gamma$ containing $\Gamma_{\mathrm{uni}}$, the argument is the same as~\cite[Corollary~44]{Idrissi2018b} or~\cite[Lemma~9.3.9]{LambrechtsVolic2014}.
  The map $\Phi_{\Gamma}$ of~\eqref{eq:20} factors through $\VFM_{mn}(\varnothing, \{v,j\}) \times \VFM_{mn}(U \sqcup I, V \sqcup J \setminus \{j\})$, and so does the canonical projection.
  The two factorizations make the obvious diagram (like in Proposition~\ref{prop:omega-factors}) commute.
  Using~\cite[Propositions~8.10, 8.13]{HardtLambrechtsTurchinVolic2011} and $\omega(\Gamma_{\text{uni}}) = 0$, we get $\omega(\Gamma) = 0$.

  Let us now assume that $\Gamma$ is a graph with a univalent internal vertex connected $i$ to a terrestrial vertex $u$.
  We can assume that $i$ is aerial or that the incident edge is dashed, as otherwise $\omega(\Gamma) = 0$ by definition~\eqref{eq:def-omega}.
  Let $(U \sqcup I, V \sqcup J)$ be the sets of vertices of $\Gamma$.
  Then both $\Phi_{\Gamma}$ and $p_{U,V}$ factor through either $X = \VFM_{mn}(\{u\}, \{i\}) \times \VFM_{mn}(U \sqcup I, V \sqcup J \setminus \{i\})$ (if $i$ is aerial) or $X = \VFM_{mn}(\{u,i\}, \varnothing) \times \VFM_{mn}(U \sqcup I \setminus \{i\}, V \sqcup J)$ if $i$ is terrestrial, and the factorizations make the obvious diagram commute.
  The dimension of the fiber of $p_{U,V}$ is $m\#I+n\#J$, while the dimension of the fiber of the projection $X \to \VFM_{mn}(U,V)$ is $m\#I+n\#J-1$ in both cases.
  Hence, by~\cite[Proposition~8.14]{HardtLambrechtsTurchinVolic2011}, $\omega(\Gamma) = 0$.
\end{proof}

\begin{remark}
  \label{rmk:graph-eta}
  The argument fails for graphs containing a univalent internal terrestrial vertex connected to an aerial vertex.
  If we consider the following graph, then we find that $\omega(\Gamma) \in \OmPA^{n-m-1}(\VFM_{mn}(0,1))$ represents $\eta$ from Section~\ref{sec.comp-cohom}:
  \begin{equation}\label{eq:graph-eta}
    \Gamma =
    \begin{tikzpicture}[scale=.3, baseline=(v.base)]
      \node[exta] (v) {1};
      \node[intt, right = .5cm of v] (i) {};
      \draw (v) -- (i);
    \end{tikzpicture}
    \in \vgraphs_{mn}(0,1),
  \end{equation}
  Indeed, by the definition of $\psi^{\partial}_{mn}$ in~\eqref{eq:psi-del} and by~\cite[Proposition~8.10]{HardtLambrechtsTurchinVolic2011}, we find that $\int_{\VFM_{mn}(1,1)} \psi^{\partial}_{mn} = \int_{S^{n-1}} \vol_{n-1} = 1$.
  Therefore $\int_{\VFM_{mn}(0,1)} \omega(\Gamma) = 1$ by~\cite[Proposition~8.13]{HardtLambrechtsTurchinVolic2011}.
  This also implies that if $\gamma$ is obtained from the $\Gamma$ above by making the external vertex internal, then we get $c(\gamma) = 1$.
\end{remark}

\begin{lemma}
  \label{lem:c-vanish-unival}
  The coefficient $c$ vanishes on graphs with more than two vertices and which either contain a univalent aerial vertex, or which contain a univalent terrestrial vertex connected to another terrestrial vertex.
  It also vanishes on the graph with exactly two vertices, both aerial, and a (full) edge between the two.
\end{lemma}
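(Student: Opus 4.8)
The plan is to reuse Kontsevich's vanishing argument exactly as in the proof of Lemma~\ref{lem:vanish-unival} and \cite[Lemma~9.3.9]{LambrechtsVolic2014}: forget the offending univalent vertex and apply the fibrewise integration and projection formulas of \cite[Section~8]{HardtLambrechtsTurchinVolic2011}. First I would clear out the degenerate cases. If $\Gamma$ has a tadpole or a double edge then $\omega'(\Gamma) = 0$ by Lemma~\ref{lem:vanish-loop-double}, hence $c(\Gamma) = 0$, so we may assume it has neither. If a univalent terrestrial vertex $w$ is joined to a terrestrial vertex $u$ by a \emph{full} edge, then $\omega'(e_{wu}) = 0$ by~\eqref{eq:def-omega}, so $\omega'(\Gamma) = 0$ and $c(\Gamma) = 0$; this already disposes of the entire terrestrial case when $m = 1$, where there are no dashed edges. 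There remain three situations: (a)~a univalent aerial vertex $w$, joined by a full edge to some vertex $x$ (aerial or terrestrial); (b)~for $m \ge 2$, a univalent terrestrial vertex $w$ joined by a dashed edge to a terrestrial vertex $u$; and (c)~the two-vertex graph $\gamma$ with two aerial vertices and one full edge.

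Case (c) is a pure degree count: $c(\gamma) = \int_{\VFM_{mn}(0,2)} \psi_{mn}$, but $\deg \psi_{mn} = n - 1$ while $\dim \VFM_{mn}(0,2) = 2n - m - 1$ by Proposition~\ref{prop:vfm-sa}, and $2n - m - 1 > n - 1$ since $n - m \ge 2$; so the integrand has too low a degree and the integral vanishes.

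For (a) and (b), write $(U, V)$ for the sets of terrestrial and aerial vertices of $\Gamma$ and let $w$ be the univalent vertex, joined to $x$. Since $\Gamma$ has more than two vertices, deleting $w$ leaves at least two vertices, which one checks still satisfy $2\#V' + \#U' \ge 2$; hence by Proposition~\ref{prop:vfm-sa} the forgetful projection
\[
  \pi\colon \VFM_{mn}(U,V) \longrightarrow
  \begin{cases}
    \VFM_{mn}(U \setminus \{w\},\, V), & \text{in case (b)},\\
    \VFM_{mn}(U,\, V \setminus \{w\}), & \text{in case (a)},
  \end{cases}
\]
is an SA bundle, of rank $m$ in case (b) and rank $n$ in case (a). Because $w$ is univalent, the map $\Phi_\Gamma$ of~\eqref{eq:20} splits off the single edge at $w$: the components indexed by the other edges factor through $\pi$ via $\Phi_{\Gamma \setminus w}$, so $\omega'(\Gamma) = \pm\,\pi^*\bigl(\omega'(\Gamma \setminus w)\bigr) \wedge \tau$, where $\tau$ is the pullback along the edge at $w$ of the relevant propagator --- either $\psi_{mn}$ or $\psi_{mn}^{\partial}$, of degree $n-1$, in case (a), or $\varphi_m$, of degree $m-1$, in case (b). Fibrewise integration over $\pi$ followed by integration over the base, together with the projection formula \cite[Proposition~8.10]{HardtLambrechtsTurchinVolic2011}, then gives $c(\Gamma) = \pm\int \omega'(\Gamma \setminus w) \wedge \pi_*(\tau)$. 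But $\pi_*$ lowers degree by the rank of $\pi$: by $n$ in case (a) applied to the degree-$(n-1)$ form $\tau$, and by $m$ in case (b) applied to the degree-$(m-1)$ form $\tau$. Since $n-1 < n$ and $m-1 < m$, we get $\pi_*(\tau) = 0$ by \cite[Proposition~5.1.2]{HardtLambrechtsTurchinVolic2011}, hence $c(\Gamma) = 0$.

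The main obstacle I anticipate is not conceptual but the arithmetic bookkeeping: one must check all the edge cases of the dimension formula of Proposition~\ref{prop:vfm-sa}, i.e.\ that after deleting $w$ one never lands in a degenerate arity ($\VFM_{mn}(1,0)$, $\VFM_{mn}(0,1)$, $\VFM_{mn}(0,0)$) where $\pi$ would fail to be an SA bundle of the stated rank --- this is where the hypothesis ``more than two vertices'' is used. One also needs the (routine) observation that for $m = 1$ the forgetting of an aerial vertex is compatible with the Lie-decorated description of Lemma~\ref{lem:iso-pbw}, which holds because $\omega'$ depends only on the full edges of the graph.
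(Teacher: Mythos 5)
Your proof is correct and takes essentially the same route as the paper: the paper's proof simply reuses Lemma~\ref{lem:vanish-unival} (the same fiberwise dimension count you carry out via the projection formula, the edge-form at the univalent vertex having degree one less than the rank of the forgetful bundle), notes that the hypothesis of more than two vertices is exactly what guarantees the graph with the univalent vertex removed still satisfies $\#U' + 2\#V' \ge 2$ so that no degree shift in $\dim \VFM_{mn}$ occurs, and handles the two-aerial-vertex graph by the same count $\dim \VFM_{mn}(0,2) = 2n - m - 1 > n - 1$. Your packaging (push forward the single propagator first) is only a cosmetic variant of the paper's factorization of $\Phi_{\Gamma}$ through a product of compactifications.
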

\begin{proof}
  We can reuse the proof of Lemma~\ref{lem:vanish-unival} almost verbatim.
  There is one caveat: the graph $\gamma$ with the univalent vertex removed must still satisfy the hypothesis $\#I+2\#J \geq 2$.
  Otherwise, a degree shift occurs, to deal with the fact that $\dim \VFM_{mn}(I,J) = 0$ and not $-1$ or $-m-1$ as the general formula would give.
  This is the case unless $\gamma$ has exactly two vertices with at most one aerial.
  If $\gamma$ has exactly two aerial vertices and one edge, then $c(\gamma)$ vanishes for degree reasons ($\dim \VFM_{mn}(0,2) = 2n-m-1 > n-1$).
\end{proof}

\begin{remark}
  \label{rmk:leading-c}
  The restriction about the number of vertices is necessary.
  Indeed, for \(
  \gamma =
    \begin{tikzpicture}[scale=.3, baseline = (i.south)]
      \node[intt] (i) {};
      \node[intt, right = 1cm of i] (j) {};
      \draw[dashed] (i) -- (j);
    \end{tikzpicture}
    \in \VGC_{mn}.
  \) we find $c(\gamma) = \int_{\VFM_{mn}(2,0)} \varphi_{m} = \int_{S^{m-1}} \vol_{S^{m-1}} = 1$.
\end{remark}

\begin{lemma}
  \label{lem:vanish-bival}
  The morphism $\omega$ and the coefficient $c$ vanish on graphs with bivalent internal terrestrial vertices connected to two terrestrial vertices.
\end{lemma}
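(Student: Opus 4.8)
The plan is to adapt the proof of Lemma~\ref{lem:vanish-unival}. First I would dispose of the trivial cases. If the bivalent internal terrestrial vertex $i$ carries a tadpole or is joined to one of its neighbours by a double edge, the graph is already killed by Lemma~\ref{lem:vanish-loop-double}; so I may assume that $i$ has exactly two incident edges, joining it to two \emph{distinct} terrestrial vertices $u_{1}\neq u_{2}$. If either of these edges is full, then $\omega'$ sends it to $0$ by the third line of~\eqref{eq:def-omega}, so $\omega(\Gamma)=0$ and $c(\Gamma)=0$; this settles the case $m=1$ entirely, since for $m=1$ there are no dashed edges at all. Hence from now on both edges at $i$ are dashed and $m\geq 2$. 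The degenerate range $\#U+2\#V\leq 1$ is immediate for $\omega$ — such a $\Gamma$ is not the edgeless graph, so $\omega(\Gamma)=0$ by Lemma~\ref{lem:def-omega} — and it never occurs for $c$, a graph with a bivalent vertex having at least three vertices.

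The main step is to integrate out the position of $i$ first. I would factor the projection $p_{U,V}$ (respectively the integral defining $c$) as a composite whose first arrow $\rho\colon \VFM_{mn}(U\sqcup I,V\sqcup J)\to\VFM_{mn}(U\sqcup I\setminus\{i\},V\sqcup J)$ forgets only the vertex $i$; by Proposition~\ref{prop:vfm-sa} this is an SA bundle whose fibre has dimension $m$ — away from lower strata the fibre is the position of the terrestrial point $i$ in $\R^{m}$, constrained only to avoid the other terrestrial points. Because $i$ is bivalent, $\omega'(\Gamma)=\pm\,\rho^{*}\bigl(\omega'(\Gamma_{0})\bigr)\wedge\xi_{i}$, where $\Gamma_{0}$ is $\Gamma$ with $i$ and its two edges removed and $\xi_{i}\coloneqq p_{u_{1}i}^{*}(\varphi_{m})\wedge p_{i u_{2}}^{*}(\varphi_{m})$ depends only on the three points $u_{1},u_{2},i$. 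Since $\omega'(\Gamma_{0})$ is $\rho$-basic, the projection formula for fibrewise integration gives $\rho_{*}\bigl(\omega'(\Gamma)\bigr)=\pm\,\omega'(\Gamma_{0})\wedge\rho_{*}(\xi_{i})$, and then the composition formula of~\cite[Section~8]{HardtLambrechtsTurchinVolic2011} reduces everything to showing $\rho_{*}(\xi_{i})=0$.

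That vanishing is the crux, and the only step I expect to require real care. The form $\rho_{*}(\xi_{i})$ has degree $2(m-1)-m=m-2$ and depends only on the relative position of $u_{1}$ and $u_{2}$, so it is the pullback along $p_{u_{1}u_{2}}\colon\VFM_{mn}(U\sqcup I\setminus\{i\},V\sqcup J)\to\FM_{m}(2)\cong S^{m-1}$ of a form $\bar\xi\in\OmPA^{m-2}(S^{m-1})$, which is $SO(m)$-invariant since all the data defining it is. For $m\geq 3$ the sphere $S^{m-1}$ carries no nonzero $SO(m)$-invariant form in degree $m-2$ (the invariant forms are concentrated in degrees $0$ and $m-1$), so $\bar\xi=0$. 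For $m=2$ the form $\bar\xi$ is a constant, equal to the integral of $\xi_{i}$ over a fibre of $\rho$, and this integral vanishes by oddness: the reflection of $\R^{2}$ across the line through $u_{1}$ and $u_{2}$ preserves $\xi_{i}$ (each of the two factors $\varphi_{2}$ changes sign, and there are two of them) while reversing the orientation of the plane. The same reflection argument, applied to a hyperplane through $u_{1}$ and $u_{2}$ in $\R^{m}$, in fact shows uniformly that the fibrewise integral of $\xi_{i}$ vanishes for every $m\geq 2$. With $\rho_{*}(\xi_{i})=0$ established, $\rho_{*}(\omega'(\Gamma))=0$, whence $\omega(\Gamma)=0$ and $c(\Gamma)=0$. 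The point I would handle most carefully is the identification of $\rho_{*}(\xi_{i})$ with a pullback from $S^{m-1}$, i.e.\ that the fibrewise integral is determined pointwise by the three-point data; I would justify it exactly as the analogous factorisations in the proof of Lemma~\ref{lem:vanish-unival}, using the pushforward technology of~\cite[Section~8]{HardtLambrechtsTurchinVolic2011}.
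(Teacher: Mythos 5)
Your proposal is correct and follows essentially the same route as the paper: after disposing of the trivial cases (full terrestrial--terrestrial edges, loops/double edges, $m=1$), you reduce by the pushforward formalism to the vanishing of the local integral over the position of the bivalent terrestrial vertex, and kill it by an orientation-reversing symmetry that preserves the integrand. The paper does exactly this, except that it quotes this symmetry step (Kontsevich's ``trick'') from~\cite[Lemma~9.3.9]{LambrechtsVolic2014} via the identification $\VFM_{mn}(U,\varnothing)\cong\FM_{m}(U)$ instead of reproving it, and extends to general graphs by the same factorization arguments as in Lemma~\ref{lem:vanish-unival}.
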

\begin{proof}
  Let us first consider the case of the graph
  \begin{equation}
    \Gamma_{\mathrm{biv}} =
    \begin{tikzpicture}[baseline=(a.base)]
      \node[intt] (i) {};
      \node[extt, right=1cm of i] (a) {v};
      \node[extt, left=1cm of i] (b) {u};
      \draw[dashed] (a) -- (i) -- (b);
    \end{tikzpicture}
    .
  \end{equation}
  Using $\VFM_{mn}(U,\varnothing) \cong \FM_{m}(U)$ and the fact that the terrestrial propagator is the same as the one used in the definition of the map $\Graphs_{m} \to \OmPA^{*}(\FM_{m})$, we deduce that $\omega(\Gamma_{\mathrm{biv}}) = 0$ from~\cite[Lemma~9.3.9]{LambrechtsVolic2014} (see also~\cite[Lemma~2.1]{Kontsevich1994} for the origin of this result: there is an involution on $\FM_{m}(U)$ which leaves $\omega'(\Gamma_{\mathrm{biv}})$ invariant but reverses the orientation, thus $\omega(\Gamma_{\mathrm{biv}}) = - \omega(\Gamma_{\mathrm{biv}})$).
  If $\Gamma$ contains $\Gamma_{\mathrm{biv}}$ as a subgraph, then we use the same reasoning as Lemma~\ref{lem:vanish-unival} to show that $\omega(\Gamma) = 0$.
  Finally, for the other cases, simply note that $\omega$ and $c$ vanish by definitions on graphs with full edges between terrestrial vertices.
\end{proof}

\begin{definition}\label{def:iuv}
  Let $I(U,V) \subset \VGraphs_{mn}(U,V)$ be the vector space spanned by graphs containing loops, double edges (of any type), univalent aerial internal vertices, univalent internal terrestrial vertices connected to another terrestrial vertex, or bivalent internal terrestrial vertices connected to two terrestrial vertices by dashed edges.
  For $m = 1$, we take graphs containing loops, double edges, and univalent aerial internal vertices.
\end{definition}

\begin{lemma}
    \label{lem:diff-ideal}
  The subspace $I(U,V)$ define a CDGA ideal in $\VGraphs_{mn}(U,V)$, and the subcollection $I \subset \VGraphs_{mn}$ define a cooperadic coideal.
\end{lemma}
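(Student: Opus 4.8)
The plan is to verify two things: that each $I(U,V)$ is an ideal of the CDGA $\vgraphs_{mn}(U,V)$ which is stable under the differential, and that the collection $I$ is stable under the cocomposition maps $\circ_{T}^{\vee}$ and $\circ_{W,T}^{\vee}$. In each case I would reduce to a local inspection of the five forbidden configurations of Definition~\ref{def:iuv}, treating the product, the differential, and the cocompositions in turn.

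First I would note that stability under the product is immediate. The product of $\vgraphs_{mn}$ glues two graphs along their external vertices, so it takes the disjoint union of their edge sets and keeps the internal vertices of the two factors disjoint; it therefore never erases an edge and never alters the valence, the terrestrial/aerial type, or the internal/external status of an internal vertex. Hence if $\Gamma$ contains a loop, a double edge, a forbidden univalent vertex, or a forbidden bivalent terrestrial vertex, so does $\Gamma\cdot\Gamma'$ for every $\Gamma'$, and $I(U,V)\cdot\vgraphs_{mn}(U,V)\subseteq I(U,V)$.

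For stability under $d$, I would fix a generator $\Gamma$ of $I(U,V)$ carrying a forbidden configuration and go through the three types of summands of $d\Gamma$ listed in Description~\ref{descr:diff}: contractions of a full edge at an internal aerial vertex; terms $c(\Gamma')\cdot\Gamma/\Gamma'$ indexed by subgraphs $\Gamma'$ with at most one (terrestrial) external vertex; and terms $c(\Gamma/\Gamma'')\cdot\Gamma''$ indexed by subgraphs $\Gamma''$ containing all external vertices. Whenever the differential does not act at the offending vertex, the configuration simply persists: a loop or double edge is carried onto the collapsed vertex (possibly as a tadpole), and a forbidden univalent or bivalent vertex retains its valence and type (the new vertex created by a subgraph contraction being terrestrial). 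When the offending edge lies inside the contracted subgraph $\Gamma'$ (resp.\ $\Gamma''$), then either $\Gamma'$ still carries the configuration and lies in $I$, or its coefficient $c(\Gamma')$ vanishes by Lemma~\ref{lem:vanish-loop-double}, Lemma~\ref{lem:c-vanish-unival}, or Lemma~\ref{lem:vanish-bival}, or $\Gamma'$ acquires an isolated internal vertex and is already zero in $\vgraphs_{mn}$ (a Lie-disconnected graph when $m=1$). The hard part will be the summands obtained by contracting the single edge of a univalent internal vertex or one of the two edges of a bivalent internal terrestrial vertex: here the ``persists'' argument fails, and one must produce a cancellation exactly as for $\graphs_{n}$ in~\cite[Section~9.3]{LambrechtsVolic2014} (and as recalled for $\Tw\Gra_{n}$ in Section~\ref{sec:twist}) --- contracting either edge at a bivalent vertex $v$ gives the same graph (with $v$ deleted and its two neighbours joined by a single edge), and the two terms appear with opposite signs; similarly the contraction at a univalent internal vertex occurs twice with opposite signs. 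For the dashed edges this is the bivalent-vertex cancellation underlying the model $\graphs_{m}\to\OmPA^{*}(\FM_{m})$, transported through $\VFM_{mn}(U,\varnothing)\cong\FM_{m}(U)$; for the full edges it is the cancellation already used for $\graphs_{n}$; and checking that these signs are genuinely opposite is the only real obstacle. This gives $d(I(U,V))\subseteq I(U,V)$.

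Finally, for the coideal property I would use that $\circ_{W,T}^{\vee}(\Gamma)=\sum_{\Gamma'}\pm\,\Gamma/\Gamma'\otimes\Gamma'$, summed over subgraphs $\Gamma'$ on the external vertex set $(W,T)$ (with the terrestrial order handled by block composition when $m=1$), and similarly for $\circ_{T}^{\vee}$, which collapses only aerial vertices and so never disturbs a terrestrial feature. No cancellation is needed here: for a generator $\Gamma$ of $I(U,V)$ and each term $\Gamma/\Gamma'\otimes\Gamma'$, a forbidden configuration of $\Gamma$ either lies entirely inside $\Gamma'$, whence $\Gamma'\in I$, or it survives in $\Gamma/\Gamma'$ --- a loop becoming a tadpole, a bad univalent or bivalent vertex keeping its valence and internal status since a collapsed neighbour is replaced by a terrestrial vertex --- whence $\Gamma/\Gamma'\in I$; and in the borderline case where the offending vertex lies in $V_{\Gamma'}$ but its incident edge does not, $\Gamma'$ has an isolated internal vertex and vanishes in $\vgraphs_{mn}$. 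Therefore $\circ_{W,T}^{\vee}$ and $\circ_{T}^{\vee}$ carry $I$ into $(I\otimes\vgraphs_{mn})+(\vgraphs_{mn}\otimes I)$ and into $I\otimes\graphs_{n}$ respectively, so $I$ is a cooperadic coideal, and the quotient inherits the structure of a relative Hopf cooperad.
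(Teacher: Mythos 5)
Your overall strategy coincides with the paper's: product stability is immediate, stability under $d$ is checked case by case over the three summand types of Description~\ref{descr:diff} using the vanishing lemmas, and the coideal property is a direct inspection with no cancellations. The coideal and ``persistence'' parts of your argument are fine. The gap is in the one step that carries the real content, namely why the leftover terms cancel. You justify the cancellation by appealing to the one-coloured mechanism (``exactly as for $\graphs_{n}$'', ``the contraction at a univalent internal vertex occurs twice with opposite signs'', ``the bivalent-vertex cancellation underlying $\graphs_{m}\to\OmPA^{*}(\FM_{m})$''). That mechanism is not available in $\Tw\VGra_{mn}$: because the output colour is terrestrial there is no ``$\mu_{1}\cdot x$'' term, and item (1) of Description~\ref{descr:diff} says explicitly that full edges at univalent aerial internal vertices \emph{are} contracted by the $\mu$-part --- they do not cancel internally as in $\Tw\Gra_{n}$; likewise dashed edges are never touched by the $\mu$-part, so nothing is literally ``transported'' from $\FM_{m}$.

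The actual cancelling partners come from the $c$-twisting, and the argument hinges on coefficient computations you never invoke: $c=1$ on the graph with two terrestrial vertices and one dashed edge (Remark~\ref{rmk:leading-c}) and on the graph with one terrestrial and one aerial vertex joined by a full edge (Remark~\ref{rmk:graph-eta}, cf.\ Equation~\eqref{eq:c-zero}). Concretely, for a univalent aerial internal vertex with edge $e$, the dangerous graph $\Gamma\setminus e$ arises once from contracting $e$ (in $d_{1}$, or in $d_{2}$ with $\Gamma'=e$ when the neighbour is terrestrial) and once from the $d_{3}$ term with $\Gamma''=\Gamma\setminus e$, weighted by $c(\Gamma/\Gamma'')=1$; it is these two that cancel. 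For a bivalent internal terrestrial vertex, the two $d_{2}$ terms $\Gamma'=e$ and $\Gamma'=e'$, each weighted by $c=1$, cancel each other, while all other subgraphs containing $e$ or $e'$ are killed by Lemmas~\ref{lem:c-vanish-unival} and~\ref{lem:vanish-bival}. Note also that Lemma~\ref{lem:c-vanish-unival} does not cover the two-vertex terrestrial--aerial subgraph, so your blanket claim that ``the coefficient $c(\Gamma')$ vanishes'' whenever the offending edge lies inside $\Gamma'$ fails precisely in the case that must instead be paired off and cancelled. Without identifying these partner terms and the $c=1$ values, the asserted cancellations are not established; with them, your plan goes through and reproduces the paper's proof.
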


\begin{proof}
  It is clear that $I(U,V)$ is an algebra ideal.
  Let us show that it is a differential ideal.
  The fact that $I$ defines a cooperadic coideal can also be checked easily case-by-case.
  We deal with $m \ge 2$, and the case $m = 1$ is also checked by a similar case-by-case argument.
  Let $\Gamma \in I(U,V)$ be a graph.
  Let us show that $d\Gamma \in I(U,V)$.
  The three summands in Description~\ref{descr:diff} are called $d_{1} = (-) \cdot (\mu - \mu_{1})$, $d_{2} = (-) \cdot (c - c_{1})$, and $d_{3} = (c_{1}) \cdot (-)$, where the coefficients $c$ are defined analytically by integrals.
  \begin{itemize}[nosep]
  \item If $\Gamma$ contains a loop, then all the summands in $d\Gamma$ contain a loop, as both $\mu$ and $c$ vanish on loops.
  \item Suppose $\Gamma$ contains a univalent aerial internal vertex $i$, with incident edge $e$.
    In $d_{2} \Gamma = \sum_{\Gamma'} c(\Gamma') \Gamma/\Gamma'$, if $\Gamma'$ contains $e$ then $c(\Gamma')$ vanishes by Lemma~\ref{lem:c-vanish-unival}, and otherwise $\Gamma/\Gamma'$ contains either a loop (if $i \in \Gamma'$) or a univalent aerial internal vertex (if $i \not\in \Gamma'$).
    Similarly, all terms in $d_{3}\Gamma = \sum_{\Gamma''} c(\Gamma/\Gamma'') \Gamma''$ either vanish, contain a loop, or contain a univalent aerial internal vertex.
    The only problem in $d_{3} \Gamma$ is for $\Gamma'' = \Gamma \setminus e$: then $c(\Gamma/\Gamma'') = 1$ (see Equation~\eqref{eq:c-zero}) and $\Gamma''$ does not have a univalent vertex anymore.
    But this term cancels with the contraction of $e$ in $d_{1}\Gamma$, and all the other summands of $d_{1}\Gamma$ contain a univalent aerial internal vertex.
  \item If $\Gamma$ contains a univalent terrestrial vertex connected to another terrestrial vertex, then Lemma~\ref{lem:c-vanish-unival} and an argument similar to the previous one show that all summands of $d\Gamma$ either vanish, contain a loop, or contain a univalent terrestrial vertex connected to another terrestrial vertex.
  \item Suppose that $\Gamma$ contains a bivalent internal terrestrial vertices connected to two terrestrial vertices by dashed edges.
    Let $i$ be the internal terrestrial vertex, and $e$, $e'$ its incident edges.
    All terms in $d_{1} \Gamma$ contain a similar subgraph.
    In $d_{2}\Gamma = \sum_{\Gamma'} c(\Gamma') \Gamma/\Gamma'$:
    \begin{itemize}[nosep]
    \item If $i \not\in \Gamma'$, then $\Gamma/\Gamma'$ still contains a bivalent internal terrestrial vertices connected to two terrestrial vertices.
    \item If $i$ is in $\Gamma'$ but $e,e'$ are not, then $c(\Gamma') = 0$ by Lemma~\ref{lem:c-vanish-unival}.
    \item The term with $\Gamma' = e$ is cancelled with the terms with $\Gamma' = e'$.
    \item If $e \in \Gamma'$ but $e' \not\in \Gamma'$ and $\Gamma' \neq e$, then $c(\Gamma') = 0$ by Lemma~\ref{lem:c-vanish-unival}.
      The case $e' \in \Gamma'$ and $e \not\in \Gamma'$ is symmetric.
    \item Finally if both $e,e' \in \Gamma'$, then $c(\Gamma') = 0$ by Lemma~\ref{lem:vanish-bival}.
    \end{itemize}
    The fact that $d_{3}\Gamma \in I(U,V)$ follows by similar arguments.
    \qedhere
  \end{itemize}
\end{proof}

\begin{definition}\label{def:very-small-graph}
  The reduced graph cooperad $\vgraphs_{mn}$ is the relative $\graphs_{n}$-cooperad given in each arity by the quotient of $\VGraphs_{mn}(U,V)$ by $I(U,V)$.
\end{definition}

\begin{proposition}
  \label{prop:omega-reduced}
  The CDGA $\vgraphs_{mn}(U,V)$ is well-defined, and $\omega$ factors through the quotient to define $\vgraphs_{mn}(U,V) \xrightarrow{\omega} \OmPA^{*}(\VFM_{mn}(U,V))$.
\end{proposition}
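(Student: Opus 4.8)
The plan is to deduce both assertions directly from Lemma~\ref{lem:diff-ideal} together with the three vanishing lemmas~\ref{lem:vanish-loop-double},~\ref{lem:vanish-unival},~\ref{lem:vanish-bival}, with essentially no new computation.

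For well-definedness, I would simply invoke Lemma~\ref{lem:diff-ideal}: it asserts that in each arity $I(U,V)$ is a differential CDGA ideal of $\VGraphs_{mn}(U,V)$, and that the collection $I$ is a coideal for the relative $\graphs_n$-cooperad structure. Hence the quotients $\vgraphs_{mn}(U,V) = \VGraphs_{mn}(U,V)/I(U,V)$ are CDGAs, and the structure maps of $\VGraphs_{mn}$ descend to make $\vgraphs_{mn}$ a relative Hopf $\graphs_n$-cooperad; here one uses that $\graphs_n$ is already the reduced cooperad, so the $\graphs_n$-tensor factor occurring in each cocomposition $\circ_T^\vee$, $\circ_{W,T}^\vee$ needs no further reduction.

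For the factorization of $\omega$, recall from Proposition~\ref{prop:omega-factors} that $\omega$ is already a morphism $\VGraphs_{mn} \to \OmPA^*(\VFM_{mn})$ of Hopf cooperads. By Definition~\ref{def:iuv} the subspace $I(U,V)$ is \emph{spanned} by graphs carrying one of the forbidden local features, so by linearity it suffices to check that $\omega$ annihilates each such spanning graph. This is exactly the content of the vanishing lemmas: Lemma~\ref{lem:vanish-loop-double} gives $\omega'(\Gamma)=0$ for a graph with a tadpole or a double edge, whence $\omega(\Gamma) = (p_{U,V})_*(\omega'(\Gamma)) = 0$ by the formula of Lemma~\ref{lem:def-omega}; Lemma~\ref{lem:vanish-unival} disposes of univalent aerial internal vertices and of univalent terrestrial internal vertices joined to a terrestrial vertex; and Lemma~\ref{lem:vanish-bival} disposes of bivalent internal terrestrial vertices joined to two terrestrial vertices by dashed edges. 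Therefore $\omega(I(U,V)) = 0$, so $\omega$ factors through $\vgraphs_{mn}(U,V)$, and the induced map is automatically a morphism of Hopf cooperads since the quotient is one of Hopf cooperads. The only point requiring any care --- the ``main obstacle'', modest as it is --- is the transfer of vanishing from $\omega'$ to $\omega$ in the tadpole/double-edge case, which passes through fiber integration along $p_{U,V}$, together with the degenerate arities $\#U + 2\#V \le 1$ where $\omega$ is defined by hand in Lemma~\ref{lem:def-omega}: there the only graph on which $\omega$ is nonzero is the edgeless graph with no internal vertices, which does not lie in $I(U,V)$, so there is nothing to check.
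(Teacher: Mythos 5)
Your proposal is correct and follows essentially the same route as the paper, whose proof simply invokes the vanishing lemmas (Lemmas~\ref{lem:vanish-loop-double}, \ref{lem:vanish-unival}, \ref{lem:c-vanish-unival}, \ref{lem:vanish-bival}) — the $c$-vanishing input that you access through Lemma~\ref{lem:diff-ideal} for well-definedness. Your explicit handling of the transfer from $\omega'$ to $\omega$ and of the degenerate arities $\#U + 2\#V \le 1$ is consistent with Lemma~\ref{lem:def-omega} and adds nothing beyond what the paper's argument implicitly uses.
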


\begin{proof}
  This follows from Lemmas~\ref{lem:vanish-loop-double},~\ref{lem:vanish-unival},~\ref{lem:c-vanish-unival}, and~\ref{lem:vanish-bival}.
\end{proof}

\section{Proof of the formality}
\label{sec:proof-formality}

In this section we complete the proof of the formality of the operad $\VFM_{mn}$.
We first show that, up to homotopy, the differential of $\vgraphs_{mn}$ can be simplified: the Maurer--Cartan element $c$ is gauge equivalent to a much simpler one $c_{0}$.
To prove this, we compute a terrestrially-bound version of the twisted graph complex $(\VGC_{mn}^{\vee}, d+[c_{0},-])$ (denoted with a $\downarrow$ superscript).
The difference $c-c_{0}$ belongs to this terrestrially-bound complex.
We prove that $\VGC_{mn}^{\vee,c_{0},\downarrow}$ quasi-isomorphic to the classical hairy graph complex $\HGC_{mn}^{\vee}$, whose cohomology is known; in particular, it vanishes in the right degree.
We can then apply classical deformation-theorerical theorems to prove that $c \simeq c_{0}$.
This enables us to define a simpler version $\vgraphs_{mn}^{0}$ (replacing $c$ with $c_{0}$ in the definition) which is quasi-isomorphic to $\vgraphs_{mn}$.
Then we construct a map from $\vgraphs_{mn}^{0}$ to $H^{*}(\VFM_{mn})$ by explicit formulas.
We prove by combinatorial arguments that this map is a quasi-isomorphism.
Since $\omega$ is clearly surjective in cohomology, the theorem will follow.

\subsection{Change of Maurer--Cartan element and \texorpdfstring{$\vgraphs_{mn}^{0}$}{vgraphs0\_mn}}
\label{sec:change-maurer-cartan}

We would like to define a morphism $\vgraphs_{mn} \to \vscV_{mn}$.
However, $\vgraphs_{mn}$ depends on the Maurer--Cartan element $c \in \VGC_{mn}^{\vee}$ from Equation~\eqref{eq:def-c}, and we do not know the precise form of $c$.
We just know its leading terms:

\begin{proposition}\label{prop:c-leading}
  For $n - 2 \ge m \ge 1$, we have $c = c_{0} + (\dots)$, where $(\dots)$ denotes terms where $\#\{\text{terr.\ vert.}\} + 2 \#\{\text{aer.\ vert.}\} > 3$, and:
  \begin{equation}
    \label{eq:c-zero}
    c_{0} \coloneqq
    \begin{cases}
      \begin{tikzpicture}[baseline = (i.south)]
        \node[intt] (i) {};
        \node[intt, right = 1cm of i] (j) {};
        \draw[dashed] (i) -- (j);
      \end{tikzpicture}
      +
      \;
      \begin{tikzpicture}[baseline=(i.south)]
        \node[intt] (i) {};
        \node[inta, right = .5 of i] (j) {};
        \draw (i) -- (j);
      \end{tikzpicture}
      \in \VGC_{mn}^{\vee},
      & \text{ if } m \ge 2; \\
            \begin{tikzpicture}[baseline = (i.south)]
        \node[intt] (i) {};
        \node[intt, right = 1cm of i] (j) {};
      \end{tikzpicture}
      +
      \;
      \begin{tikzpicture}[baseline=(i.south)]
        \node[intt] (i) {};
        \node[inta, right=.5 of i] (j) {};
        \draw (i) -- (j);
      \end{tikzpicture}
      \in \VGC_{1n}^{\vee},
      & \text{ if } m = 1.
    \end{cases}
  \end{equation}
\end{proposition}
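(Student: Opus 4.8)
By construction (Remark~\ref{rmk:c-maurer-cartan} and Equation~\eqref{eq:def-c}), the element $c$ is the linear functional on $\VGC_{mn}$ that sends a connected (resp.\ Lie-connected, if $m=1$) graph $\gamma$ with internal vertex sets $(I,J)$ --- terrestrial and aerial --- to the integral $c(\gamma)=\int_{\VFM_{mn}(I,J)}\omega'(\gamma)$, with $c(\gamma)=0$ by convention whenever $\#I+2\#J\le 1$. Writing $w(\gamma)=\#I+2\#J$, the element $c_0$ of the statement is concentrated in weights $w\in\{2,3\}$, so the claim amounts to the identity $c(\gamma)=\langle c_0,\gamma\rangle$ for every graph $\gamma$ with $w(\gamma)\le 3$ (both sides vanishing when $w(\gamma)\le 1$). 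The plan is therefore to run a finite case analysis over the four possibilities $(\#I,\#J)\in\{(2,0),(0,1),(3,0),(1,1)\}$.

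In each case I would first discard, using Lemma~\ref{lem:vanish-loop-double} together with the fact that $\omega'$ kills full edges between terrestrial vertices, all but finitely many graphs; then I would compare $\deg\omega'(\gamma)$ with $\dim\VFM_{mn}(I,J)=m\#I+n\#J-m-1$ (which equals $0$ in the degenerate small cases), eliminating all remaining candidates except the few that genuinely contribute. Concretely: for $(2,0)$ with $m\ge 2$ only the single dashed edge survives, with coefficient $\int_{S^{m-1}}\vol_{S^{m-1}}=1$ by Remark~\ref{rmk:leading-c}; for $(2,0)$ with $m=1$ one has $\VFM_{1n}(2,0)\cong\FM_1(2)\cong S^0$ and only the edgeless graph survives, with coefficient $1$ by Remark~\ref{rmk:m-egal-un}; for $(1,1)$ the edgeless graph dies on degree grounds while the graph with a single full edge joining the terrestrial vertex to the aerial one has coefficient $\int_{\VFM_{mn}(1,1)}\psi^{\partial}_{mn}=1$ by the computation recalled in Remark~\ref{rmk:graph-eta}; for $(0,1)$ the unique graph has $\deg\omega'=0<n-m-1$, hence coefficient $0$; and for $(3,0)$ every connected graph on three internal terrestrial vertices has a bivalent internal terrestrial vertex attached to two terrestrial vertices (or a univalent one attached to a terrestrial vertex), so its coefficient vanishes by Lemma~\ref{lem:vanish-bival} (resp.\ Lemma~\ref{lem:c-vanish-unival}), while for $m=1$ no admissible edges exist and such a graph is Lie-disconnected, hence already zero in $\VGC_{1n}$. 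Collecting these values reproduces exactly the pairing against the displayed $c_0$.

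The hard part will be the case $m=1$, where $\VGC_{1n}$ is built from the Lie-decorated (PBW) basis of Lemma~\ref{lem:iso-pbw}: I would need to check carefully which low-weight graphs survive the quotient by Lie-disconnected graphs, and with which sign, so that the exceptional disconnected graph of Lemma~\ref{lem:c-discon} (cf.\ Remark~\ref{rmk:m-egal-un}) descends to the first term of $c_0$ with coefficient $+1$ rather than being killed --- this is the same phenomenon that lets $c$ factor through the quotient while remaining nonzero on that graph. Apart from this, and the routine tracking of Koszul signs and of the orientations on $\FM_m(2)$ and $\VFM_{mn}(1,1)$, the argument is a mechanical enumeration backed by the vanishing lemmas of Section~\ref{sec:vanishing-lemmas}.
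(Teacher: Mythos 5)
Your proposal follows essentially the same route as the paper: the actual proof is exactly this finite enumeration of (Lie-)connected graphs of weight $\#I+2\#J\le 3$, quoting Remark~\ref{rmk:leading-c}, Remark~\ref{rmk:graph-eta} and Remark~\ref{rmk:m-egal-un} for the surviving coefficients, the convention after Equation~\eqref{eq:def-c} for the single-vertex graph, and Lemmas~\ref{lem:vanish-loop-double} and~\ref{lem:vanish-bival} (together with $\omega'(e_{uu'})=0$) to kill everything else.

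One sub-case of yours is misstated, though the conclusion survives: for $m=1$ and $(\#I,\#J)=(3,0)$, the edgeless graph whose three internal terrestrial vertices sit in a single Lie word is Lie-\emph{connected} (Definition~\ref{def:lie-connected}), so it is \emph{not} zero in $\VGC_{1n}$, contrary to your claim. Its coefficient vanishes for a different, elementary reason: under the PBW identification it corresponds to degree-zero elements of $\R[\Sigma_3]^{\vee}$, so $\omega'$ of it is a $0$-form, while $\VFM_{1n}(3,0)\cong\FM_1(3)$ is $1$-dimensional, hence the integral defining $c$ vanishes by the same degree count you use in the $(0,1)$ case (and any graph carrying a full edge between terrestrial vertices is already killed by $\omega'$). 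With that correction your case analysis matches the paper's proof; the $m=1$ sign/quotient worry you flag for the weight-two graph is handled exactly by Remark~\ref{rmk:m-egal-un} as you anticipate.
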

\begin{proof}
  This follows from Remark~\ref{rmk:leading-c}, Remark~\ref{rmk:graph-eta}, and Remark~\ref{rmk:m-egal-un} for $m = 1$.
  Note that $c$ vanishes by definition on the graph with just one internal terrestrial vertex (see after Equation~\eqref{eq:def-c}).
  There are no other (Lie-)connected graphs that satisfy $\#\{\text{terr.\ vert.}\} + 2 \#\{\text{aer.\ vert.}\} \le 3$, with no loops, double edges, full edges between terrestrial vertices, or bivalent terrestrial vertices connected by dashed edges to other terrestrial vertices (see Lemmas~\ref{lem:vanish-loop-double} and~\ref{lem:vanish-bival}).
\end{proof}

If we knew that $c = c_{0}$, then we would be able to build a map $\vgraphs_{mn} \to \vscV_{mn}$ easily (see Section~\ref{sec:conn-graphs-cohom}).
In this section, we show that $c$ is gauge equivalent to $c_{0}$.
For this we show that the cohomology of $\VGC_{mn}^{\vee}$ twisted by $c_{0}$ vanishes in the right degree.
Obstruction theory then shows that $c$ is gauge equivalent to $c_{0}$.
As we will see, the cohomology of the graph complex
\begin{equation}
  \VGC_{mn}^{\vee,c_{0}} \coloneqq (\VGC_{mn}^{\vee}, d + [c_{0},-])
\end{equation}
is related to the cohomology of $\GC_{n}^{\vee}$ and the cohomology of the ``hairy graph complex'' $\HGC_{mn}^{\vee}$ that will be defined below (see e.g.\ \cite[Section~2.2.6]{FresseWillwacher2015} or~\cite{AroneTurchin2015}).

\begin{proposition}[{\cite[Proposition~2.2.3]{FresseWillwacher2015}}]
  \label{prop:vani-gcn}
  The cohomology of $\GC_{n}^{\vee}$ splits as $H^{*}(\GC_{n}^{\vee,\ge 3}) \oplus \bigoplus_{l \equiv 2n+1 \pmod{4}} \K \gamma_{l}$, where $\deg \gamma_{l} = l - n$ and $\GC_{n}^{\vee,\ge 3}$ is the Lie subalgebra of graphs whose vertices are all at least trivalent.
  The class $\gamma_{l}$ is represented by the loop with $l$ vertices.
  Moreover $H^{> -n}(\GC_{n}^{\vee, \ge 3}) = 0$ for $n \ge 3$.
\end{proposition}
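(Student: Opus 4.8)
The statement is Willwacher's computation of the cohomology of Kontsevich's graph complex~\cite{Willwacher2014}, recorded in~\cite{FresseWillwacher2015}; the plan is to recall the two ingredients it rests on, an elementary degree count for the trivalent part and the standard reduction that discards vertices of valence $\le 2$. I would first dispose of the vanishing $H^{>-n}(\GC_n^{\vee,\ge3}) = 0$, which is purely a matter of degrees. A graph $\gamma$, viewed in $\GC_n^\vee$, sits in degree $n\#V_\gamma - (n-1)\#E_\gamma - n$. If every vertex of $\gamma$ is at least trivalent then $2\#E_\gamma \ge 3\#V_\gamma$, so $(n-1)\#E_\gamma \ge \tfrac{3(n-1)}{2}\#V_\gamma \ge n\#V_\gamma$, the last inequality using $n \ge 3$; hence
\[ \deg_{\GC_n^\vee}\gamma = n\#V_\gamma - (n-1)\#E_\gamma - n \le -n. \]
Thus $\GC_n^{\vee,\ge3}$ is concentrated in degrees $\le -n$ and the vanishing is automatic (for $n = 3$ the bound is sharp, attained e.g.\ by the complete graph $K_4$).

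For the splitting, let $C^\vee \subseteq \GC_n^\vee$ be spanned by the graphs having at least one vertex of valence $\le 2$. Splitting a vertex of valence $v \le 2$ produces two vertices of valences $v_1+1,\, v_2+1$ with $v_1+v_2 = v$, at least one of which is again $\le 2$, while splitting any other vertex preserves the low-valence one; so $C^\vee$ is a subcomplex, with quotient $\GC_n^\vee/C^\vee = \GC_n^{\vee,\ge3}$ carrying the differential that keeps only the splittings with trivalent-or-higher output. Everything further decomposes as a direct sum over loop order $b = \#E - \#V + 1$, which the differential preserves and in which, at a fixed cohomological degree, only finitely many graphs occur, so the spectral sequences below converge. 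Granting the computation $H^*(C^\vee) = \bigoplus_{l \equiv 2n+1\,(4)} \K\gamma_l$ with $\gamma_l$ the $l$-cycle in degree $l-n$ (necessarily $l \ge 3$, as shorter cycles vanish), the long exact sequence of $0 \to C^\vee \to \GC_n^\vee \to \GC_n^{\vee,\ge3} \to 0$ splits into short exact sequences: its connecting map $H^j(\GC_n^{\vee,\ge3}) \to H^{j+1}(C^\vee)$ can be nonzero only when simultaneously $j \le -n$ and $j+1 \ge 3-n$, which is impossible. Hence $H^*(\GC_n^\vee) \cong H^*(\GC_n^{\vee,\ge3}) \oplus H^*(C^\vee)$.

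It remains to compute $H^*(C^\vee)$. Graphs with a univalent (``antenna'') vertex form a subcomplex $C_1^\vee \subseteq C^\vee$, and one shows $H^*(C_1^\vee) = 0$ by an explicit homotopy built from adjoining and deleting a univalent vertex in a canonical position (a Koszul-type argument), or equivalently by a filtration by the number of antennae. Then $H^*(C^\vee) = H^*(C^\vee/C_1^\vee)$, the cohomology of the graphs all of whose vertices are bivalent or higher with at least one exactly bivalent. Such a graph is either a single cycle $\gamma_l$, or is built from a nonempty ``core'' formed by its $\ge 3$-valent vertices by subdividing edges with bivalent vertices. The cycles form a subcomplex; on the complement, a filtration by the number of core vertices reduces the induced differential to subdividing each core edge independently once more, and the single-edge subdivision complex is acyclic in positive subdivision-degree — so the graphs with a nonempty core and at least one subdivision contribute nothing. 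Thus $H^*(C^\vee)$ is the cohomology of the subcomplex spanned by the cycles. On cycles the differential carries $\gamma_l$ to a scalar multiple of $\gamma_{l+1}$, and a dihedral-symmetry sign computation — which depends on the parity of $n$ through $e_{vu} = (-1)^n e_{uv}$ and the degree $-n$ of internal vertices — shows $\gamma_l \ne 0$ precisely when $l \equiv 2n+1 \pmod 4$. As consecutive such $l$ differ by $4$, each surviving $\gamma_l$ is then a cocycle that is not a coboundary, which gives $H^*(C^\vee) = \bigoplus_{l \equiv 2n+1\,(4)} \K\gamma_l$.

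The hard part is this last computation of $H^*(C^\vee)$: the two acyclicity inputs (for antennae and for subdivisions of a single edge) each require constructing an explicit contracting homotopy, and determining exactly which loop classes survive demands careful sign and automorphism bookkeeping; the reduction of the proposition to $H^*(C^\vee)$ is then purely formal.
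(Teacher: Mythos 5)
The paper does not prove this proposition: it is imported verbatim from Fresse--Willwacher (ultimately Willwacher's computation of $H(\GC_n)$), so there is no internal argument to compare yours with. Your outline is essentially the standard proof from those sources, and the parts you actually execute are correct and complete in the present setting: the degree count showing that $\GC_n^{\vee,\ge 3}$ is concentrated in degrees $\le -n$ (legitimate here since $n \ge m+2 \ge 3$), the observation that graphs with a vertex of valence $\le 2$ span a subcomplex $C^\vee$ for the splitting differential, and the degree argument forcing the long exact sequence of $0 \to C^\vee \to \GC_n^\vee \to \GC_n^{\vee,\ge 3} \to 0$ to split.

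Two caveats about the sketched part, which is where all the substance lies. First, in the conventions of this paper the subcomplex $C_1^\vee$ of graphs with a univalent vertex is \emph{not} acyclic as you assert: the single-edge graph $\mu$ is a cocycle of degree $1$ in $C_1^\vee$ (it is the Maurer--Cartan element, so $d\mu = \frac{1}{2}[\mu,\mu] = 0$), and its only possible primitive, the isolated one-vertex graph, lies outside $C_1^\vee$ since its vertex has valence $0$; correspondingly the isolated vertex survives as a degree-$0$ class of the quotient $C^\vee/C_1^\vee$ (which your description of that quotient also silently omits). These two classes cancel against each other via the connecting map of the pair, so the final answer $H^*(C^\vee) = \bigoplus_l \K\gamma_l$ is unaffected, but the step ``$H^*(C_1^\vee)=0$, hence $H^*(C^\vee) = H^*(C^\vee/C_1^\vee)$'' must be repaired (run the long exact sequence, or exclude the exceptional small graphs at the outset). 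Second, $\GC_n$ as defined in this paper allows tadpoles when $n$ is even, so your parenthetical ``necessarily $l \ge 3$, as shorter cycles vanish'' is convention-dependent and should be matched against the conventions of the cited source; note also that admitting a possible class in degree $1-n$ slightly weakens your connecting-map degree argument, which then needs the extra observation that $H^{-n}(\GC_n^{\vee,\ge 3}) = 0$ for $n \ge 4$. Finally, the two acyclicity inputs and the dihedral sign analysis that you explicitly defer are precisely the content of the citation, so as it stands your text is a correct outline of the quoted proof rather than a self-contained argument.
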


\begin{definition}\label{def:hgcmn}
  For $k \ge 0$, let $\Graphs'_{n}(k)$ be the quotient of $\Graphs_{n}(k)$ by the ideal spanned by graphs which are disconnected or whose external vertices are not univalent.
  The hairy graph complex is (with differential from $\Graphs'_{n}$):
  \begin{equation}
    \HGC^{\vee}_{mn} \coloneqq \prod_{k \ge 1} \bigl( \Graphs'_{n}(k)^{\vee} \otimes (\R[m])^{\otimes k} \bigr)^{\Sigma_{k}}[-m].
  \end{equation}
\end{definition}

The full hairy graph complex is the dual of the shifted CDGA $S(\HGC_{mn}[m])[-m]$.
The complex $\HGC_{mn}^{\vee}$ is spanned by (infinite sums of) graphs whose external vertices are exactly univalent and indistinguishable.
The differential is given by vertex splitting.
Each external vertex, together with its only incident edge, can be seen as a ``hair'', which justifies the terminology.
This hairy graph complex is of great topological interest, as it can e.g.\ be used to compute spaces of higher-codimensional long knots~\cite{AroneTurchin2015} or the mapping space $\operatorname{Map}(\DD_{m}, \DD_{n})$~\cite{FresseTurchinWillwacher2017}.

\begin{proposition}[{\cite[Proposition~2.2.7]{FresseWillwacher2015}}]
  \label{prop:vani-hgc}
  When $n - m \ge 2$, the cohomology of the hairy graph complex $\HGC_{mn}^{\vee}$ vanishes in degrees $> -1$.
\end{proposition}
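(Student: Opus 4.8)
This is~\cite[Proposition~2.2.7]{FresseWillwacher2015}, so one legitimate option is simply to cite it; here is how I would argue it directly, in the spirit of the vanishing statements already recalled. The plan rests on the observation that the vertex-splitting differential of $\HGC^{\vee}_{mn}$ leaves the number $k\ge1$ of (univalent, external) hairs unchanged, so that $\HGC^{\vee}_{mn}=\prod_{k\ge1}\HGC^{\vee,(k)}_{mn}$ as complexes, and it suffices to prove $H^{>-1}(\HGC^{\vee,(k)}_{mn})=0$ for every fixed $k$. A graph with $v$ internal vertices, $k$ hairs and $e'$ internal--internal edges lies in degree $vn-e'(n-1)-k(n-1+m)+m$, and one checks that for $k=1$ this is even independent of $m$. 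Before anything else I would replace $\HGC^{\vee,(k)}_{mn}$ by its quasi-isomorphic reduced model, in which internal vertices are at least trivalent and there are no tadpoles or double edges; this quasi-isomorphism is established exactly as the reduction $\Graphs_n\to\graphs_n$ in~\cite[Section~9.3]{LambrechtsVolic2014}, the discarded subcomplexes being acyclic by the same edge-contracting homotopies. In the reduced model the valence inequality $2e'+k=\sum_{\text{int.}}\operatorname{val}\ge3v$ holds, so for each pair (loop order $g$, hair number $k$) there are only finitely many graphs; the cohomology is then finite-dimensional in each such bidegree and the statement reduces to a uniform degree bound.

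The heart of the matter is the one-hair case. Deleting the unique hair and marking the vertex it was attached to sets up a comparison with the connected graph complex $\GC_n^{\vee}$: the ``attach a hair at every vertex'' map $\GC_n^{\vee}\to\HGC^{\vee,(1)}_{mn}$, with the Koszul signs of~\cite{Willwacher2014}, is a morphism of complexes raising degree by $1$, and I would show its cokernel is acyclic, so that $H^{*}(\HGC^{\vee,(1)}_{mn})\cong H^{*-1}(\GC_n^{\vee})$. Feeding in Proposition~\ref{prop:vani-gcn}: the part $\GC_n^{\vee,\ge3}$ contributes only in degrees $\le-n$, hence at most in degree $1-n\le-2$ after the shift; and the exceptional loop classes $\gamma_{l}$, which a priori sit in the positive degrees $l-n+1$, must be seen to contribute nothing to the hairy cohomology (a cycle carrying a single hair is never a cocycle, and one checks its class is a boundary). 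This reduces the one-hair vanishing to Proposition~\ref{prop:vani-gcn}.

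For $k\ge2$ I would induct on $k$ by filtering $\HGC^{\vee,(k)}_{mn}$ according to the effect of deleting one distinguished hair: the associated graded is assembled from $\HGC^{\vee,(k-1)}_{mn}$ and from acyclic ``hair-insertion'' complexes, which closes the induction. Alternatively one can bypass the induction using the finiteness noted above: the valence inequality bounds $e'$ in terms of $(g,k)$, hence bounds the top degree that occurs, and the finitely many small-$g$ cases are checked by hand, the degree being monotone in $e'$.

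The main obstacle is the one-hair step: identifying the cokernel of the hair-attaching map and proving it acyclic, and in particular disposing of the loop classes $\gamma_{l}$, which live a priori in positive degrees. A second, softer obstacle is pinning down where the hypothesis $n-m\ge2$ is genuinely used -- the degree count above shows the one-hair piece is insensitive to $m$, so the hypothesis must enter cohomologically, either through the many-hair estimate or, more transparently, through the homotopical identification of $\HGC_{mn}$ with the rational homotopy of embedding spaces $\overline{\operatorname{Emb}}_c(\R^m,\R^n)$ of~\cite{AroneTurchin2015}, which are highly connected precisely in codimension $\ge2$; that route yields an independent proof of the vanishing. The remaining work is the bookkeeping of signs, which is routine.
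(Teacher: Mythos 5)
Your fallback option---reconcile the definitions and cite \cite[Proposition~2.2.7]{FresseWillwacher2015}---is in fact all the paper does: its proof notes that the complex defined here allows univalent and bivalent internal vertices, shows that the inclusion of the Fresse--Willwacher complex is a quasi-isomorphism by filtering by the number of internal vertices of valence $\ge 3$ (the spectral sequences degenerate at $E^{2}$, cf.\ \cite[Proposition~3.4]{Willwacher2014}), and then quotes the cited vanishing. Your preliminary reduction to the ``at least trivalent, no tadpoles or double edges'' model plays the same role, but the justification you offer is off: the lemmas of \cite[Section~9.3]{LambrechtsVolic2014} assert that the integration map $\omega$ vanishes on such graphs, not that the corresponding sub- or quotient complexes are acyclic; the correct argument is precisely the valence filtration above.

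The bulk of your proposal, a direct proof of the vanishing itself, has genuine gaps (which you flag), and its central step is inconsistent as formulated. If the hair-attaching map $\GC_{n}^{\vee}\to\HGC_{mn}^{\vee,(1)}$ had acyclic cokernel, the asserted identification $H^{*}(\HGC_{mn}^{\vee,(1)})\cong H^{*-1}(\GC_{n}^{\vee})$ would, by Proposition~\ref{prop:vani-gcn}, produce the shifted loop classes $\gamma_{l}$ in degrees $l-n+1$ with $l\equiv 2n+1 \pmod{4}$; these degrees are unbounded above, contradicting the very statement you are proving. So either the cokernel is not acyclic or the images of the $\gamma_{l}$ die in the hairy complex---you prove neither, and the two claims cannot both hold as you state them. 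Likewise the induction for $k\ge 2$ rests on an unproved assertion that the ``hair-insertion'' pieces of the associated graded are acyclic, and you never locate where the hypothesis $n-m\ge 2$ enters (for $k\ge 2$ the parameter $m$ does intervene, through the degree shift and through the parity governing the symmetrization of the hairs). As it stands the direct route cannot be completed along these lines; the workable proof of this proposition is the comparison-plus-citation, which is the one the paper gives.
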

\begin{proof}
  Note that our definition of the hairy graph complex (denoted by $\HGC_{mn}$ without the dual in~\cite{FresseWillwacher2015}) is slightly different, as we allow bivalent and univalent internal vertices.
  However, we can reuse their arguments to show that the inclusion of their complex into ours is a quasi-isomorphism (see also~\cite[Proposition~3.4]{Willwacher2014} for a similar argument).
  Briefly, we can filter both complexes by the number of internal vertices of valence $\ge 3$.
  Both spectral sequences collapse starting on page $E^{2}$, and the inclusion induces an isomorphism on this page.
  We can then use~\cite[Proposition~2.2.7]{FresseWillwacher2015} to show the vanishing of the homology in degrees $> -1$ (note that in the reference, homologically graded complexes are used, so we just use the natural correspondence that reverse degrees).
\end{proof}

For $m \ge 2$, there is a natural preLie product on $\HGC_{mn}^{\vee}$, induced by the operad structure of $\Graphs_{n}^{\vee}$.
Roughly speaking, $\Gamma \circ \Gamma'$ is obtained by inserting $\Gamma'$ in an external vertex of $\Gamma$ and reconnecting the incident edge to a (non-hair) vertex of $\Gamma'$, in all possible ways.
Moreover, there is a natural action of the Lie algebra $\GC_{n}^{\vee}$ (see Definition~\ref{def:fgcn}) on $\HGC_{mn}^{\vee}$.
Given $\Gamma \in \HGC_{mn}^{\vee}$ and $\gamma \in \GC_{n}^{\vee}$, the action $\Gamma \cdot \gamma$ is given by inserting $\gamma$ at a vertex of $\Gamma$ in all possible ways.

When $m = 1$, this simple Lie algebra structure is not right.
There is an $L_{\infty}$-structure on $\HGC_{1n}^{\vee}$, called the Shoikhet structure~\cite{Willwacher2015}.
It is defined by a certain Maurer--Cartan element $m_{\mathrm{trans}}$~\cite{Shoikhet2018} (in an oriented version of the graph complex $\GC_{n}^{\vee}$).
The hairy graph complex with this $L_{\infty}$-structure is denoted by $\HGC_{1n}^{'\vee}$ and encodes the deformation complex of the map $\ee_{1} = \Ass \to \ee_{n}$ rather than $\Pois \to \ee_{n}$ (we refer to~\cite[Theorem~7.16]{FresseTurchinWillwacher2017} where this connection is spelled out in detail).
See~\cite[Section~7]{Willwacher2015} for examples of this $L_{\infty}$ structure.

\begin{definition}\label{def:fvgcterr}
  Let $\fVGC_{mn}^{\vee,c_{0},\downarrow}$ be the submodule of $\fVGC_{mn}^{\vee,c_{0}}$ spanned by graphs whose connected components all have at least one terrestrial vertex and at least one full edge.
  Let $\VGC_{mn}^{\vee,c_{0},\downarrow} \subset \fVGC_{mn}^{\vee,c_{0},\downarrow}$ be the submodule of connected graphs.
  Then clearly:
\end{definition}

\begin{lemma}
  The submodule $\VGC_{mn}^{\vee,c_{0},\downarrow} \subset \VGC_{mn}^{\vee,c_{0}}$ is a dg-Lie subalgebra and a Lie $\GC_{n}^{\vee}$-submodule.
  \qed{}
\end{lemma}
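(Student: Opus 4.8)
The plan is to read off from Remark~\ref{rmk:c-maurer-cartan} that all the structure in sight on $\VGC_{mn}^{\vee}$ is built from two elementary graph operations: the pre-Lie product (and hence the Lie bracket) inserts a graph at a \emph{terrestrial} vertex, the $\GC_{n}^{\vee}$-action inserts a graph at an \emph{aerial} vertex, and the differential of $\VGC_{mn}^{\vee,c_{0}}$ is $d+[c_{0},-]$ with $d=[\mu,-]$ the part not involving $c$. The one bookkeeping fact I need is: inserting a connected graph $\gamma'$ at a vertex $v$ of a connected graph $\gamma$ (reconnecting the half-edges formerly at $v$) yields connected graphs whose number of terrestrial vertices is $\#\{\text{terr.\ vert.\ of }\gamma\}-1+\#\{\text{terr.\ vert.\ of }\gamma'\}$ when $v$ is terrestrial, and $\#\{\text{terr.\ vert.\ of }\gamma\}$ when $v$ is aerial (an all-aerial graph inserted at an aerial vertex contributes no terrestrial vertex). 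Consequently, whenever $\gamma$ and $\gamma'$ are both connected with at least one terrestrial vertex, so is every insertion of one into the other.

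First I would handle the bracket: for $\gamma_{1},\gamma_{2}\in\VGC_{mn}^{\vee,c_{0},\terr}$ the terms $\gamma_{1}\circ\gamma_{2}$ and $\gamma_{2}\circ\gamma_{1}$ insert a connected graph with a terrestrial vertex at a terrestrial vertex of a connected graph with a terrestrial vertex, so they lie in $\VGC_{mn}^{\vee,c_{0},\terr}$ by the count above. Next, for $g\in\GC_{n}^{\vee}$, the element $\gamma\cdot g$ inserts the all-aerial graph $g$ at aerial vertices, leaving the terrestrial vertices and connectedness of $\gamma$ untouched, which gives the $\GC_{n}^{\vee}$-submodule assertion. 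Finally, for the differential: $d=[\mu,-]$ is a component of the $\GC_{n}^{\vee}$-action of the all-aerial graph $\mu$, hence preserves the submodule by the previous point; and by Proposition~\ref{prop:c-leading} the element $c_{0}$ is a sum of connected graphs each carrying at least one terrestrial internal vertex, so $[c_{0},-]$ preserves the submodule by the bracket argument. Thus $\VGC_{mn}^{\vee,c_{0},\terr}$ is closed under $d+[c_{0},-]$, and the lemma follows.

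For $m=1$ nothing changes except the vocabulary: ``terrestrial vertex'' is read inside the Lie-word decoration of a Lie-connected graph, insertion at a terrestrial vertex means insertion into a Lie word, and ``having a terrestrial vertex'' amounts to having a nonempty Lie word (since $\Pois=\Com\circ\Lie$ with $\Lie(\varnothing)=0$); the same counting preserves this property and Lie-connectedness. The only point demanding any care — and the closest thing to an obstacle — is to check, term by term against Description~\ref{descr:diff}, that the dualized differential of $\VGC_{mn}^{\vee,c_{0}}$ genuinely never produces a graph with no terrestrial vertex; this reduces exactly to the two facts that $\mu$ acts only on aerial vertices and that the leading coefficient $c_{0}$ is supported on graphs with a terrestrial vertex.
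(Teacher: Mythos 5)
Your proof is correct, and it spells out exactly the check the paper leaves implicit: the lemma is stated with no proof (the paper just says ``clearly'' and closes it with a \qed), the point being precisely your bookkeeping that insertion at a terrestrial vertex of a connected graph with a terrestrial vertex, insertion of an all-aerial graph at an aerial vertex, and bracketing with $\mu$ and with the two (Lie-)connected, terrestrial-vertex-containing summands of $c_{0}$ all preserve connectedness and the presence of a terrestrial vertex. So your argument matches the paper's intended (omitted) verification, including the correct reading of the $m=1$ case in the Lie-decorated basis.
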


\begin{lemma}
  \label{lem:incl-sgc-hgc}
  There is an inclusion of dg-modules $\fHGC_{mn}^{\vee} \subset \fVGC_{mn}^{\vee,c_{0},\downarrow}$ obtained by considering all external vertices as terrestrial, with no dashed edges.
  On the connected parts, for $m \ge 2$ this inclusion is compatible with the Lie structure and the action of the Lie algebra $\GC_{n}^{\vee}$ on both sides.
  For $m = 1$, it can be extended to an $L_{\infty}$-morphism whose linear part is the inclusion.
\end{lemma}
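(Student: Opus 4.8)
The plan is: write down the candidate map $\iota$ on graphs, check it is well defined and injective, verify it is a chain map (the heart of the matter), and observe that compatibility with the preLie product and the $\GC_{n}^{\vee}$-action is built into the formula.

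\emph{The map, and injectivity.} Given a graph representing an element of $\fHGC_{mn}^{\vee}$, let $\iota$ send it to the Swiss-Cheese graph obtained by declaring each hair vertex to be a univalent internal terrestrial vertex, each ordinary internal vertex to be an internal aerial vertex, and each edge to be a full edge; no dashed edges occur, and since every connected component of a hairy graph contains at least one hair, the result lies in $\fVGC_{mn}^{\vee,c_{0},\terr}$. The $\R[m]$-decoration attached to each hair in Definition~\ref{def:hgcmn} is exactly what makes a hair weigh the same as a terrestrial internal vertex, so the grading is preserved, and $\iota$ is a morphism of shifted CDGAs because it takes disjoint unions to disjoint unions. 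Injectivity is clear: distinct hairy graphs yield distinct graphs, and in fact the image of $\iota$ is precisely the span of graphs without dashed edges all of whose terrestrial vertices are univalent.

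\emph{Chain map.} The differential of $\fHGC_{mn}^{\vee}$ is vertex splitting (inherited from $\Graphs_{n}$), acting both on ordinary internal vertices and on hair vertices (splitting a hair lengthens it), while the differential of $\fVGC_{mn}^{\vee,c_{0}}$ is $[\mu + c_{0},-]$ (Remark~\ref{rmk:c-maurer-cartan}) with $c_{0}$ as in Proposition~\ref{prop:c-leading}. I would match the summands. The bracket $[\mu,-]$ is, by definition of $\mu$ as the single aerial edge, insertion at aerial vertices together with insertion of graphs into $\mu$, which is exactly the splitting of aerial (= ordinary internal) vertices, matching the corresponding part of the differential of $\fHGC_{mn}^{\vee}$. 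The component $\beta$ of $c_{0}$ (the full edge joining an internal terrestrial and an internal aerial vertex) contributes, when $\iota\gamma$ is grafted onto a hair so that the hair's edge is reattached to the aerial endpoint of $\beta$, exactly the hair-lengthening terms, i.e.\ $\iota$ applied to the part of the differential of $\fHGC_{mn}^{\vee}$ that splits a hair; the remaining $\beta$-terms (a bivalent terrestrial vertex with a univalent aerial vertex hanging off, or a fresh univalent aerial vertex hanging off some vertex) either reproduce the analogous "add a univalent internal vertex / add a hair" terms already present in the differential of $\fHGC_{mn}^{\vee}$, or cancel in pairs between $\beta \circ (-)$ and $(-)\circ\beta$. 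Finally, for $m \ge 2$ the component $\alpha$ of $c_{0}$ (the dashed edge between two terrestrial vertices) a priori introduces dashed edges outside the image, but $[\alpha,\iota\gamma] = 0$: the graph obtained by inserting $\iota\gamma$ at a vertex of $\alpha$, reattaching the dashed edge to a hair $w$ of $\iota\gamma$, coincides — up to the sign dictated by the preLie bracket — with the graph obtained by grafting $\alpha$ onto $w$ in $\iota\gamma \circ \alpha$, so all such terms cancel. For $m = 1$ the same mechanism applies with $\alpha$ replaced by the edgeless two-vertex graph, working throughout with the Lie-decorated description of $\Tw\Gra_{1n}$ from Lemma~\ref{lem:iso-pbw} and using Lemma~\ref{lem:c-discon} to see that the extra isolated terrestrial vertex produced cancels likewise.

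\emph{Compatibilities, and the main obstacle.} The preLie product on $\HGC_{mn}^{\vee}$ inserts one hairy graph at a hair vertex of another and reconnects; under $\iota$ this becomes insertion of one connected Swiss-Cheese graph at a terrestrial vertex of another, i.e.\ the preLie product on $\VGC_{mn}^{\vee,c_{0},\terr}$. Likewise the $\GC_{n}^{\vee}$-action on both sides is insertion at internal (resp.\ aerial) vertices, and $\iota$ sends internal vertices to aerial ones, so these structures are intertwined immediately from the explicit formulas. The main obstacle is the chain-map step: organizing the many summands of $[\mu + c_{0},\iota\gamma]$, keeping precise track of signs, and verifying that everything landing outside the image of $\iota$ cancels while what remains is exactly $\iota$ of the vertex-splitting differential — together with carrying this out in the Lie-decorated formalism for $m = 1$.
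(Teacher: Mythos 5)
Your proposal is correct and follows essentially the same route as the paper: the paper's own proof is a one-line ``simple inspection'' that the inclusion is well-defined and compatible with the differential and the algebraic structures, and you simply carry out that inspection explicitly (matching the $[\mu,-]$, $\beta$, and $\alpha$ contributions of $[\mu+c_{0},-]$ against vertex splitting and checking the preLie and $\GC_{n}^{\vee}$ compatibilities on the level of insertion formulas). The cancellation pattern you invoke for the terms leaving the image of $\iota$ is exactly the standard one built into the twisting formalism, so no new idea is needed beyond what you wrote.
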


\begin{proof}
  Simple inspection shows that the inclusion is well-defined, and that it is compatible with the differential.
  The compatibility with the Lie bracket is clear for $m \ge 2$.
  The case $m = 1$ follows by adapting the proof of~\cite[Proposition~5.1]{Willwacher2015}, replacing the Hochschild complex of $\Graphs^{\vee}_{n}$ with $\VGC_{1n}^{\vee,c_{0},\downarrow}$.
\end{proof}

\begin{proposition}
  \label{prop:qiso-incl}
  The inclusion $\fHGC_{mn}^{\vee} \subset \fVGC_{mn}^{\vee,c_{0},\downarrow}$ is a quasi-isomorphism.
\end{proposition}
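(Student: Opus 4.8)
The plan is to mirror Willwacher's treatment of the Swiss-Cheese operad~\cite[Section~4]{Willwacher2015a}: reduce to connected graphs and then run a spectral sequence in which Kontsevich's vanishing lemmas dispose of the terrestrial structure. \emph{Reduction.} Both sides are cofree on their connected parts: $\fHGC_{mn}^{\vee}$ is the dual of $S(\HGC_{mn}[m])[-m]$, and $\fVGC_{mn}^{\vee,c_{0}}$ is likewise the dual of a symmetric algebra on $\VGC_{mn}^{\vee}$, since $\VGC_{mn}$ generates $\fVGC_{mn}$ freely and the twisted differential $d_{\mu}+[c_{0},-]$ is a derivation preserving connectedness (every term either splits a vertex or grafts the connected graph $c_{0}$). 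The condition that each connected component carry a terrestrial vertex is compatible with these decompositions, and $\iota$ is the symmetric (co)power of the connected inclusion $\HGC_{mn}^{\vee}\hookrightarrow\VGC_{mn}^{\vee,c_{0},\terr}$, which is a chain map by Lemma~\ref{lem:incl-sgc-hgc}. As symmetric (co)powers over $\R$ are exact, it suffices to prove that this connected inclusion is a quasi-isomorphism. Throughout I would work in a fixed loop order (first Betti number); the complexes split as products over loop orders and are degreewise finite there (after a harmless completion by the number of vertices), which makes the spectral sequence below converge.

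\emph{The spectral sequence ($m\ge 2$).} Filter $\VGC_{mn}^{\vee,c_{0},\terr}$ by the number of full (aerial) edges. Reading $c_{0}$ off Proposition~\ref{prop:c-leading} and the differential off Description~\ref{descr:diff}, the aerial vertex-splitting part $d_{\mu}$ and the part of $[c_{0},-]$ coming from the second summand of $c_{0}$ (the terrestrial--aerial full edge) each raise the number of full edges, while the part $d_{0}$ coming from the first summand (the dashed edge on two internal terrestrial vertices) preserves it. Thus the $E^{0}$-differential is $d_{0}$, which touches only the terrestrial skeleton: with the feet of the full edges on the terrestrial side frozen as decorations, $(\VGC_{mn}^{\vee,c_{0},\terr},d_{0})$ becomes a direct sum of $\Tw\Gra_{m}$-type complexes, one per decorated terrestrial graph, to which Kontsevich's lemmas~\cite[Section~9.3]{LambrechtsVolic2014} apply verbatim (the decorating feet playing the role of external vertices). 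A terrestrial component carrying no foot is an internal dashed component and is acyclic; any other terrestrial component collapses, up to homotopy, onto the configuration in which each foot sits on its own univalent terrestrial vertex and there are no dashed edges. Hence $E^{1}$ is spanned by the hairy graphs (no dashed edges, every terrestrial vertex univalent and joined to the aerial part by a full edge), with induced $E^{1}$-differential the $\Graphs_{n}$ differential on the aerial core carrying the hairs along. Filtering $\HGC_{mn}^{\vee}$ the same way, its $E^{0}$-differential vanishes, so $E^{1}=\HGC_{mn}^{\vee}$ with its own differential; the filtration-preserving map $\iota$ is then an isomorphism on $E^{1}$, hence a quasi-isomorphism by convergence.

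\emph{The case $m=1$.} Here there are no dashed edges; one replaces the terrestrial structure by the Lie-decorated picture of Lemma~\ref{lem:iso-pbw}, under which the first summand of $c_{0}$ (the pair of disjoint internal terrestrial vertices, cf.\ Remark~\ref{rmk:m-egal-un}) supplies a Harrison/bar-type differential splitting terrestrial Lie words. The acyclicity of the relevant part of this bar complex replaces Kontsevich's lemmas, the surviving classes being precisely the hairy graphs — single-letter Lie words joined by full edges to the aerial core — and the same comparison of filtrations concludes.

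\emph{Main obstacle.} The delicate point is the $E^{0}$-computation: verifying that, once the full-edge feet are frozen as decorations, the terrestrial part of $(\VGC_{mn}^{\vee,c_{0},\terr},d_{0})$ is literally of the type covered by the Lambrechts--Voli\'c vanishing lemmas — with the correct treatment of univalent and bivalent internal terrestrial vertices, of the signs coming from their degree $-m$, and, for $m=1$, of the PBW identification and the odd parity of internal terrestrial vertices — together with making the convergence of the spectral sequence uniform in the loop order.
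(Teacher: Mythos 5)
Your setup matches the paper's: the reduction to connected parts is legitimate in characteristic zero (the paper goes the other way, proving the full statement and deducing the connected one via indecomposables, but that difference is harmless), and the filtration by the number of full edges, with $E^{0}$-differential the $[c'_{0},-]$ part acting only on the terrestrial structure, is exactly the paper's filtration. The gap is in your identification of the $E^{1}$ page. Once the feet of the full edges are frozen as decorations, the terrestrial part of $E^{0}\fVGC_{mn}^{\vee,c_{0},\terr}$ is a free $\Graphs_{m}^{\vee}$-algebra type complex on those decorations, so its homology is the \emph{free Poisson $m$-algebra} on the hairs, not the one-dimensional ``one foot per univalent terrestrial vertex'' piece you claim. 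Kontsevich's vanishing lemmas from~\cite[Section~9.3]{LambrechtsVolic2014} kill univalent and bivalent internal vertices, but they do not kill, for instance, a dashed edge joining two foot-bearing terrestrial vertices, nor the configuration with two feet on a single terrestrial vertex: these represent the bracket classes in $\ee_{m}$ and survive to $E^{1}$. Consequently $E^{1}$ is not $\fHGC_{mn}^{\vee}$ but (a shift of) the deformation complex $\Def(\hoe_{m}\to\Graphs_{n}^{\vee})$, i.e.\ hairy graphs whose hairs are grouped into Poisson/Lie expressions, and your ``collapse up to homotopy'' step is false as stated.

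The missing ingredient is precisely the nontrivial comparison that the paper imports: the inclusion of the hairy graph complex into $\Def(\hoe_{m}\to\Graphs_{n}^{\vee})$ is a quasi-isomorphism, which is~\cite[Lemma~4.4]{Willwacher2014} (for $m\ge 2$ via the character decomposition and the formality of $\Graphs_{m}^{\vee}$; for $m=1$ via the identification of the $E^{1}$ page with the deformation complex of $\hoe_{1}\to\Graphs_{n}^{\vee}$, whose homology is again the full hairy complex). Your $m=1$ paragraph has the same defect: the Harrison/bar differential on Lie words does not reduce everything to single-letter words at $E^{1}$; Lie words of length $>1$ persist and are only disposed of by the cited result. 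So the proof as written would go through only after replacing the ``collapse onto univalent feet'' claim by the identification of $E^{1}$ with the deformation complex and an appeal to (or reproof of) Willwacher's lemma, which is the actual mathematical content of this proposition.
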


\begin{proof}
  The proof is similar to~\cite[Lemma~4.4]{Willwacher2014}.
  Indeed, $\fVGC_{mn}^{\vee,c_{0},\downarrow}$ is very close to the deformation complex of the morphism $\Graphs_{m}^{\vee} \to \Graphs_{n}^{\vee}$ (denoted by $\Def(\hoe_{m} \to \Graphs_{n})$ in~\cite{Willwacher2014}).

  We first filter both complexes by the number of full edges, which is the only kind of edges in $\fHGC_{mn}^{\vee}$.
  The differential of $\fHGC_{mn}^{\vee}$ always increases this number strictly by $1$.
  Let us write $c_{0} = c'_{0} + c''_{0}$, where $c'_{0}$ is the part with two terrestrial vertices, and $c''_{0}$ with one vertex of each kind (see Equation~\eqref{eq:c-zero}).
  The differential of $\fVGC_{mn}^{\vee,c_{0},\downarrow}$ increases the filtration number by $1$ (for the action of $\mu + c''_{0}$) or keeps it constant (for the action of $c'_{0}$).
  Hence on the associated spectral sequences, the differential of $E^{0}\fHGC_{mn}^{\vee}$ vanishes, while the differential of $E^{0}\fVGC_{mn}^{\vee,c_{0},\downarrow}$ is just the bracket $[c'_{0}, -]$.

  We now check that the inclusion induces a quasi-isomorphism on these $E^{0}$ pages, from which the proposition follows.
  Let us first assume that $m \ge 2$.
  Given $\Gamma \in \fVGC_{mn}^{\vee,c_{0},\downarrow}$, define its character $[\Gamma] \in \fHGC_{mn}^{\vee}$ as follow: remove all terrestrial vertices and dashed edges, and call the full edges that used to be connected to terrestrial vertices ``dangling'', then make the dangling edges into hairs (see~\cite[Lemma~4.4]{Willwacher2014} for an analogous definition).
  The differential $[c'_{0},-]$ does not change the character of a graph.
  Hence $E^{0}\fVGC_{mn}^{\vee,c_{0},\downarrow}$ splits:
  \begin{equation}
    E^{0} \fVGC_{mn}^{\vee,c_{0},\downarrow} = \prod_{\gamma \in \fHGC_{mn}^{\vee}} \underbrace{\{ \Gamma \in E^{0}\fVGC_{mn}^{\vee,c_{0},\downarrow} \mid [\Gamma] = \gamma\}}_{\eqqcolon C_{\gamma}}.
  \end{equation}
  Let $\gamma \in \fHGC_{mn}^{\vee}$ be a graph with hairs $\{ h_{1}, \dots, h_{k} \}$.
  Let $G$ be the group of permutations of hairs.
  Then $C_{\gamma}$ is isomorphic to $C'_{\gamma} = (\Graphs_{m}^{\vee} \circ \bar{S}^{c}(H_{1}, \dots, H_{k})^{(1,\dots,1)})^{G}$, where $\bar{S}^{c}(H_{1},\dots,H_{k})$ is the (non counital) cofree cocommutative coalgebra on variables $H_{i}$ of degree $-m$, $\Graphs_{m}^{\vee} \circ -$ is the free $\Graphs_{m}^{\vee}$-algebra functor, and $(-)^{(1,\dots,1)}$ is the subcomplex where each $H_{i}$ appears exactly once.
  Indeed, we can view $\Xi \in C'_{\gamma}$ as a linear combination of graphs from $\Graphs_{m}^{\vee}(r)$ with each external vertex decorated by one or more $H_{i}$, with each $H_{i}$ appearing once.
  We can identify $\Xi$ with an element of $C_{\gamma}$ by making its edges dashed, its vertices terrestrial, and we glue $\gamma$ to the graph obtained, connecting the hair $h_{i}$ to the vertex decorated by $H_{i}$.
  The hairs are indistinguishable, but $\Xi$ is invariant under $G$ so this is well-defined.
  This is illustrated by (with $\Xi$ at the bottom):
  \begin{equation}
    \begin{tikzpicture}[baseline=.5cm]
      \node[exta, label={\tiny $H_{2}$}] (a) {1};
      \node[exta, right=.5cm of a, label={\tiny $H_{1},H_{3}$}] (b) {2};
      \node[inta, right=.5cm of b] (i) {};
      \draw (a) -- (b) -- (i);

      \node[minimum width = 1cm, above = 1cm of b, draw, densely dotted] (g) {$\gamma$};
      \draw[->] (g) to +(-1,-.5) node[left] {\tiny $h_{2}$};
      \draw[->] (g) to +(0,-.7) node[left] {\tiny $h_{1}$};
      \draw[->] (g) to +(.4,-.7) node[right] {\tiny $h_{3}$};
    \end{tikzpicture}
    \; \longmapsto \;
    \begin{tikzpicture}[baseline = .5cm]
      \node[intt] (a) {};
      \node[intt, right=1cm of a] (b) {};
      \node[intt, right=1cm of b] (i) {};
      \node[minimum width = 1cm, above = 1cm of b, draw, densely dotted] (g) {$\gamma$};
      \draw[dashed] (a) -- (b) -- (i);
      \draw (g) edge (a) edge (b) edge [bend left] (b);
    \end{tikzpicture}
    \in C_{\gamma}.
  \end{equation}
  For example, full edges between terrestrial vertices may be obtained when $\gamma$ contains a copy of the ``line graph'', i.e.\ the only connected hairy graph with no internal vertices.
  The differential $[c'_{0},-]$ replicates the differential of $\Graphs_{m}^{\vee}(k)$ (i.e.\ vertex splitting), thus this is an isomorphism of dg-modules.

  The homology of $\Graphs_{m}^{\vee}$ is the $m$-Poisson operad (Theorem~\ref{thm:fml-kont}).
  Checking the degrees and the induced differential $[\mu + c''_{0},-]$, we can identify the page $E^{1}\fVGC_{mn}^{\vee,c_{0},\downarrow}$ with (a shift of) the deformation complex $\Def(\hoe_{m} \xrightarrow{*} \Graphs_{n}^{\vee})$ considered in~\cite{Willwacher2014}.
  Note that there, the case $n = m$ is considered and so the map $\hoe_{n} \to \Graphs_{n}^{\vee}$ sends the Lie bracket to a nonzero element; however, in~\cite{Willwacher2014}, the part of the differential induced by this element is discarded, so the complex considered is $\Def(\hoe_{m} \xrightarrow{*} \Graphs_{n}^{\vee})$ up to shifts.
  Compare also with the results of~\cite[Section~5]{AroneTurchin2015}, where the full hairy graph complex is called $HH^{m,n}$.

  The differential of $\fHGC_{mn}^{\vee}$ raises the number of edges by $1$, so the page $E^{1}\fHGC_{mn}^{\vee}$ is just $\fHGC_{mn}^{\vee}$.
  Thanks to~\cite[Lemma~4.4]{Willwacher2014}, the inclusion $\fHGC_{mn}^{\vee} \to \Def(\hoe_{n} \xrightarrow{*} \Graphs_{n}^{\vee})$ is a quasi-isomorphism, thus our morphism induces an isomorphism on the $E^{2}$ page of the spectral sequence and so it is a quasi-isomorphism itself.

  For $m = 1$, the proof is similar, but Lie clusters replace dashed edges.
  We get that $E^{1}\fVGC_{1n}^{\vee,c_{0},\downarrow}$ is the (chains) deformation complex of $\hoe_{1} \to \Graphs^{\vee}_{n}(k)$, whose homology is the full hairy graph complex $\fHGC_{1n}$.
  The induced morphism on the $E^{2}$ pages is the identity, from which the result follows.
\end{proof}

\begin{corollary}
  The inclusion $\HGC_{mn}^{\vee} \subset \VGC_{mn}^{\vee,c_{0},\downarrow}$ is a quasi-isomorphism.
\end{corollary}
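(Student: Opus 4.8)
The plan is to deduce the corollary from Proposition~\ref{prop:qiso-incl} by passing from the two full graph complexes to their connected parts, using that over $\R$ the (graded) symmetric-power functors commute with homology. First I would record the structural fact, already implicit in the discussion around Definitions~\ref{def:vgcmn} and~\ref{def:fvgcterr}, that the subcomplex $\VGC_{mn}^{\vee,c_{0},\terr}$ of connected graphs (resp.\ of Lie-connected graphs when $m=1$) really is a subcomplex of $\fVGC_{mn}^{\vee,c_{0},\terr}$: the twisted differential $[\mu+c_{0},-]$ acts by vertex splittings and by contractions of connected subgraphs (the disconnected $\Gamma'$ being killed by Lemma~\ref{lem:c-discon}, and $c_{0}$ itself being a sum of connected two-vertex graphs), so it preserves the number of connected components, and it preserves the property that every component carries a terrestrial vertex. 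Hence the number of components is an additional grading on both full complexes, and as dg-modules they split as products $\fHGC_{mn}^{\vee}=\prod_{k\ge 0}S^{k}(\HGC_{mn}^{\vee})$ and $\fVGC_{mn}^{\vee,c_{0},\terr}=\prod_{k\ge 0}S^{k}(\VGC_{mn}^{\vee,c_{0},\terr})$, with the appropriate degree shifts and Koszul signs (in characteristic zero the $\Sigma_{k}$-coinvariants agree with the invariants and the functor is exact), the inclusion of Lemma~\ref{lem:incl-sgc-hgc} being the product of the maps $S^{k}(i)$, where $i$ is the inclusion on connected parts.

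Then I would run the usual Künneth argument. Since taking $\Sigma_{k}$-(co)invariants is exact over $\R$, one gets a natural identification $H(S^{k}V)\cong S^{k}(HV)$, and homology commutes with the products $\prod_{k}$; so the map induced on homology by the inclusion is $\prod_{k}S^{k}(H(i))$. By Proposition~\ref{prop:qiso-incl} this is an isomorphism, and a product of maps is an isomorphism if and only if each factor is; extracting the $k=1$ factor yields that $H(i)$ is an isomorphism, i.e.\ $i\colon\HGC_{mn}^{\vee}\hookrightarrow\VGC_{mn}^{\vee,c_{0},\terr}$ is a quasi-isomorphism. (Equivalently, and more invariantly: both full complexes are complete free graded-commutative algebras on their subcomplexes of connected graphs, the inclusion is a morphism of such, and over a field of characteristic zero passing to indecomposables is exact and commutes with homology, so a morphism of complete free graded-commutative algebras is a quasi-isomorphism exactly when its restriction to indecomposables is.)

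The one point needing genuine care — and which I would spell out — is the bookkeeping around the completions: both $\fHGC_{mn}^{\vee}$ and $\fVGC_{mn}^{\vee,c_{0},\terr}$ are honest infinite products (the number of hairs, respectively of internal vertices, is unbounded), so each $S^{k}$ above is again an infinite product and one must check that $H(S^{k}V)=S^{k}(HV)$ survives. This is fine because $H(-)$ commutes with arbitrary products, and with respect to the auxiliary grading by total number of internal vertices and hairs everything is bounded in each total weight; filtering by that weight gives exhaustive and complete filtrations on both sides whose associated graded pieces are degreewise finite-dimensional, so the comparison of spectral sequences reduces to the ordinary finite-dimensional Künneth theorem. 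Everything else is formal, and for $m=1$ the argument is identical with "connected" replaced by "Lie-connected" throughout, exactly as in the proof of Proposition~\ref{prop:qiso-incl}.
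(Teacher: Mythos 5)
Your argument is correct and follows essentially the same route as the paper: both exploit that $\fHGC_{mn}^{\vee}$ and $\fVGC_{mn}^{\vee,c_{0},\terr}$ are free (complete) graded-commutative algebras on their (Lie-)connected parts and deduce the claim from Proposition~\ref{prop:qiso-incl} and Lemma~\ref{lem:incl-sgc-hgc} by passing to generators/indecomposables. The paper justifies that last step in one line --- free CDGAs are cofibrant and the indecomposables functor is left Quillen, hence preserves quasi-isomorphisms between cofibrant objects --- whereas you give the equivalent elementary characteristic-zero K\"unneth/symmetric-power argument with the completion bookkeeping spelled out, which is fine.
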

\begin{proof}
  Both CDGAs $\fHGC_{mn}^{\vee}$ and $\fVGC_{mn}^{\vee,c_{0},\downarrow}$ are free as CDGAs, so they are in particular cofibrant.
  The functor of indecomposables is a left Quillen adjoint~\cite[Section~12.1.3]{LodayVallette2012}.
  It thus preserves quasi-isomorphisms between cofibrant objects.
  Since the indecomposables of the two CDGAs mentioned above are respectively $\HGC_{mn}^{\vee}$ and $\VGC_{mn}^{\vee,c_{0},\downarrow}$, we conclude by Lemma~\ref{lem:incl-sgc-hgc} and Proposition~\ref{prop:qiso-incl}.
\end{proof}

\begin{corollary}\label{cor:gauge-equivalence}
  The Maurer--Cartan element $c-c_{0} \in \VGC_{mn}^{\vee,c_{0}}$ is gauge equivalent to zero; equivalently, $c$ and $c_{0}$ are gauge equivalent.
\end{corollary}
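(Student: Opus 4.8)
The plan is to run the standard obstruction-theoretic gauge-fixing argument in the dg-Lie algebra $\VGC_{mn}^{\vee,c_{0}}$. First I would record that $\xi\coloneqq c-c_{0}$ is a Maurer--Cartan element of $\VGC_{mn}^{\vee,c_{0}}$ (whose differential is $[\mu,-]+[c_{0},-]$). Granting that $c_{0}$ itself solves the Maurer--Cartan equation $[\mu,c_{0}]+\tfrac12[c_{0},c_{0}]=0$, this is immediate from Remark~\ref{rmk:c-maurer-cartan}, since $\mu+c=\mu+c_{0}+\xi$; and $c_{0}$ is a solution because the graphs that could occur in $[\mu,c_{0}]+\tfrac12[c_{0},c_{0}]$ are of small weight and are all killed (they carry loops, double edges, or bivalent terrestrial vertices joined by dashed edges to terrestrial vertices, cf.\ Lemmas~\ref{lem:vanish-loop-double} and~\ref{lem:vanish-bival}). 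Next, by Proposition~\ref{prop:c-leading}, $\xi$ is a sum of graphs with $\#\{\text{terr.}\}+2\#\{\text{aer.}\}>3$, and combining this with the dimension-counting behind the vanishing lemmas one checks that $c$ agrees with $c_{0}$ on all trees, so that $\xi$ is supported on graphs of positive first Betti number (loop order).

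Then I would filter $\VGC_{mn}^{\vee,c_{0}}$ by loop order. The differential $[\mu,-]+[c_{0},-]$ preserves loop order, since each of $\mu$ and the two graphs composing $c_{0}$ adds exactly one vertex and one edge in the relevant insertion/contraction, while the quadratic term $\tfrac12[\xi,\xi]$ of the Maurer--Cartan equation strictly raises loop order. Hence the usual induction applies: if a gauge transformation has already been applied removing the components of $\xi$ of loop order $<L$, the loop-order-$L$ component $\widetilde\xi_{L}$ of what remains is a $([\mu,-]+[c_{0},-])$-cocycle sitting in the single internal degree $\deg c$, and if the cohomology of $\VGC_{mn}^{\vee,c_{0}}$ vanishes in that degree and loop order then $\widetilde\xi_{L}$ is a coboundary and can be gauged away. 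The composite of all the gauge transformations thus obtained is well defined (only finitely many of them affect any fixed loop order) and gauges $c$ to $c_{0}$.

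It remains to establish the cohomology vanishing. The connected graphs with at least one terrestrial vertex form a subcomplex $\VGC_{mn}^{\vee,c_{0},\terr}\subset\VGC_{mn}^{\vee,c_{0}}$, whose cohomology is identified with $H^{*}(\HGC_{mn}^{\vee})$ by the corollary to Proposition~\ref{prop:qiso-incl}, hence vanishes in nonnegative degrees by Proposition~\ref{prop:vani-hgc} (this uses $n-m\ge 2$). The quotient complex is spanned by purely aerial graphs, on which the $c_{0}$-part of the differential acts by zero, so it is $\GC_{n}^{\vee}$ up to a degree shift; the relevant shift places the pertinent degree strictly above $-n$, where by Proposition~\ref{prop:vani-gcn} the cohomology of $\GC_{n}^{\vee,\ge 3}$ vanishes and only a single loop class can survive, represented by the aerial $(m+1)$-gon. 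The long exact sequence of the pair then reduces the statement to showing that this loop class is never an actual obstruction, i.e.\ that its coefficient $c(\text{aerial }(m+1)\text{-gon})$ vanishes whenever the $(m+1)$-gon is a nonzero cocycle (which happens only under a congruence condition on $m,n$). This last point is the main obstacle: for $m=1$ the graph is a double edge and the coefficient vanishes by Lemma~\ref{lem:vanish-loop-double}, and in general one argues as in the proof of Lemma~\ref{lem:vanish-bival} via an orientation-reversing symmetry of $\VFM_{mn}(\varnothing,J)$ that leaves $\omega'(\text{$(m+1)$-gon})$ invariant. With that vanishing in hand, the obstruction procedure goes through and yields Corollary~\ref{cor:gauge-equivalence}.
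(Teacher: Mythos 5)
Your overall skeleton (obstruction theory fed by Propositions~\ref{prop:vani-gcn}, \ref{prop:vani-hgc} and~\ref{prop:qiso-incl}) is the same as the paper's, but two steps do not hold as written. First, the assertion that ``$c$ agrees with $c_{0}$ on all trees'', so that $\xi=c-c_{0}$ is concentrated in positive loop order, is unsupported: Proposition~\ref{prop:c-leading} only controls graphs with $\#\{\text{terr.}\}+2\#\{\text{aer.}\}\le 3$, and the vanishing lemmas do not kill all admissible trees. For instance, when $n=m+2$ the path $t_{1}\!-\!a_{1}\!-\!a_{2}\!-\!t_{2}$ with three full edges satisfies $\deg\omega'=\dim\VFM_{mn}(2,2)$ and is not covered by Lemmas~\ref{lem:vanish-loop-double}, \ref{lem:c-vanish-unival} or~\ref{lem:vanish-bival}; nothing in the paper claims its coefficient vanishes. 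Without this, your loop-order induction is not well founded: the bracket of two tree components is again a tree, so the quadratic term does not strictly raise loop order. (Filtering by the number of vertices, which is what the paper's appeal to obstruction theory/Goldman--Millson implicitly uses, avoids the issue.)

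Second, and more seriously, the reduction ``the loop class is never an actual obstruction, i.e.\ $c(\text{aerial }(m{+}1)\text{-gon})=0$'' is a non sequitur. From the second stage on, the obstruction is the leading term of the gauge-transformed element, and its component along a loop graph involves brackets of the previously chosen gauge data with lower-order pieces (for example, inserting a purely aerial path into the terrestrial vertex of a cycle having a single terrestrial vertex produces a purely aerial $l$-gon); these contributions are not controlled by the original coefficients $c(\gamma_{l})$. The paper's proof is built precisely to sidestep this: since $c$ and $c_{0}$ vanish on the loops $\gamma_{l}$, the element $c-c_{0}$ lies in the complex $C$ spanned by non-loop graphs, the short exact sequence $0\to\VGC_{mn}^{\vee,c_{0},\terr}\to C\to C'\to 0$ combined with Propositions~\ref{prop:vani-gcn}, \ref{prop:vani-hgc} and~\ref{prop:qiso-incl} gives the vanishing of $H^{>-1}(C)$, and the entire gauge-fixing is carried out inside $C$, so loop classes never arise as obstructions. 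You would need to establish such a confinement (element \emph{and} all gauge data), not merely check one coefficient of $c$. Two smaller inaccuracies: the $c_{0}$-part of the differential does not act by zero on the purely aerial quotient --- its terrestrial--aerial edge adds pendant aerial vertices, and it is the cancellation against the corresponding $\mu$-terms (the same cancellation used in the proof of Lemma~\ref{lem:diff-ideal}) that identifies the quotient with $\GC_{n}^{\vee}$; and your symmetry argument for the $(m{+}1)$-gon is only gestured at --- a reflection in a hyperplane of $\R^{m}$ multiplies the orientation of $\VFM_{mn}(0,l)$ and the form $\omega'$ by opposite signs rather than leaving $\omega'$ invariant, so the parity bookkeeping must actually be done, although a correct argument of this kind can be made.
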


\begin{proof}
  Let $C \subset \VGC_{mn}^{\vee,c_{0}}$ be the subalgebra spanned by graphs which are not the loops $\gamma_{l}$ from Proposition~\ref{prop:vani-gcn}.
  Similarly let $C' \subset \GC_{n}^{\vee}$ be the subalgebra spanned by graphs with are not the loops.
  We have a short exact sequence $0 \to \VGC_{mn}^{\vee,c_{0},\downarrow} \to C \to C' \to 0$.

  The coefficient $c - c_{0}$ belongs to the subalgebra $C$.
  Indeed, $c$ vanishes on the loops $\gamma_{l}$ by degree reasons (and so does $c_{0}$).
  Moreover, we can compute $c$ on purely terrestrial/dashed graphs and show that it agree with $c$.
  If $m = 1$, then this follows by immediate degree reasons.
  For $m \ge 2$, the restriction of $c$ to purely terrestrial/dashed graphs is equal to Kontsevich's coefficient $\mu \in \GC_{m}^{\vee}$ ($\VFM_{mn}(U,\varnothing) = \FM_{m}(U)$ and the integral is identical).
  So the fact that $c$ and $c_{0}$ agree on such graphs follows from the explicit description of $\mu$ in Equation~\eqref{eq:mu}.

  We can then combine Propositions~\ref{prop:vani-gcn},~\ref{prop:vani-hgc} and~\ref{prop:qiso-incl} to get that the homology of $C$ vanishes in degrees $> -1$.
  We conclude by applying the Goldman--Millson theorem~\cite{GoldmanMillson1988} (see~\cite{Getzler2009,DolgushevRogers2015} for modern accounts that explicitly deal with MC elements) to the inclusion of the truncation $\tau_{<0} \VGC_{mn}^{\vee,c_{0}} \subset \VGC_{mn}^{\vee,c_{0}}$.
\end{proof}

\begin{definition}
  \label{def:sgraphs-zero}
  Let $\VGraphs_{mn}^{0}$ be the variant of $\VGraphs_{mn}$ where we use $c_{0}$ instead of $c$ to twist the Hopf cooperad $\VGra_{mn}$ in the step of Definition~\ref{def:tw-sgra}.
\end{definition}

\begin{corollary}
  \label{cor:sgraphs-zero-qiso}
  The Hopf cooperads $\VGraphs_{mn}$ and $\VGraphs_{mn}^{0}$ are quasi-isomorphic.
\end{corollary}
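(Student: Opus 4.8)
The plan is to promote the gauge equivalence of Corollary~\ref{cor:gauge-equivalence} to an isomorphism of twisted relative Hopf cooperads and then push it through the two reductions that produce $\vgraphs_{mn}$. Recall that $\vgraphs_{mn}$ and $\vgraphs_{mn}^{0}$ arise from the same chain of constructions applied to the two Maurer--Cartan elements $c$ and $c_{0}$, keeping $\mu$ fixed throughout: one twists $\VGra_{mn}$ (Definition~\ref{def:tw-sgra}), quotients by graphs with internal components (resp.\ Lie-disconnected graphs when $m = 1$), and then quotients by the ideal $I(U,V)$ of Definition~\ref{def:iuv}. By Corollary~\ref{cor:gauge-equivalence} we may fix an element $x \in \VGC_{mn}^{\vee}$ realizing the gauge equivalence between $c$ and $c_{0}$; in the picture of Corollary~\ref{cor:gauge-equivalence}, $x$ exhibits $c - c_{0} \in \VGC_{mn}^{\vee,c_{0}}$ as gauge equivalent to $0$.

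First I would invoke the standard principle of operadic twisting~\cite{DolgushevWillwacher2015}: twisting by gauge-equivalent Maurer--Cartan elements yields isomorphic twisted (co)operads. Concretely, the exponential of the action of $x$ on $\Tw\VGra_{mn}$ by insertion of graphs at terrestrial vertices, as recalled in Appendix~\ref{sec:twist-relat-coop}, is an automorphism of the underlying graded relative cooperad over the fixed cooperad $\Tw\Gra_{n}$ which intertwines the differential twisted by $c$ with the one twisted by $c_{0}$. One checks that this automorphism respects the commutative products --- being a group-like exponential of insertion operations, which are compatible with gluing of graphs along vertices --- and the relative cocompositions, both being formal consequences of the formalism of Appendix~\ref{sec:twist-relat-coop}. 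Hence it is an isomorphism of relative Hopf cooperads from $\Tw\VGra_{mn}$ (twisted by $\mu$ and $c$) to the analogous cooperad twisted by $\mu$ and $c_{0}$.

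Next I would transport this isomorphism down to the reductions. Since $x$ is a sum of connected graphs with only internal vertices, inserting $x$ at a vertex never disconnects a component nor turns a component containing an external vertex into a purely internal one; therefore the isomorphism preserves the ideal of graphs with internal components (resp.\ Lie-disconnected graphs), and descends to an isomorphism of the cooperads obtained by the first reduction. For the second reduction, recall that $I(U,V)$ is a differential ideal for both twisted differentials, by Lemma~\ref{lem:diff-ideal} and its analogue for $c_{0}$. One then checks that any insertion term produced by the gauge automorphism which could create a tadpole, a double edge, a univalent aerial internal vertex, or a forbidden univalent or bivalent internal terrestrial vertex already lies in $I(U,V)$, so the isomorphism descends once more to an isomorphism $\vgraphs_{mn} \cong \vgraphs_{mn}^{0}$ of Hopf cooperads --- in particular a quasi-isomorphism, which is what is claimed.

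The main obstacle is this last descent: verifying that the gauge automorphism maps $I(U,V)$ into itself. This requires a case-by-case inspection, in the spirit of the proof of Lemma~\ref{lem:diff-ideal}, of how insertion of the connected, purely internal graph $x$ at a terrestrial vertex can affect valences and edge multiplicities near terrestrial vertices. Everything else --- the existence of the gauge transformation and its compatibility with the Hopf structure and the relative structure over $\Tw\Gra_{n}$ --- is provided by Corollary~\ref{cor:gauge-equivalence} and the twisting formalism of Appendix~\ref{sec:twist-relat-coop}.
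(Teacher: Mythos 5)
Your route is genuinely different from the paper's, and it is where the two diverge that the trouble lies. The paper never constructs an isomorphism: it realizes the gauge equivalence of Corollary~\ref{cor:gauge-equivalence} as a Maurer--Cartan element $c_{t} \in \VGC_{mn}^{\vee,\sim} \otimes S(t,dt)$ interpolating between $c$ (at $t=1$) and $c_{0}$ (at $t=0$), uses $c_{t}$ to put a twisted differential on $\vgraphs_{mn} \otimes S(t,dt)$, and concludes with the zigzag $\vgraphs_{mn} \xleftarrow{\operatorname{ev}_{t=1}} \vgraphs_{mn} \otimes S(t,dt) \xrightarrow{\operatorname{ev}_{t=0}} \vgraphs_{mn}^{0}$, each evaluation being a quasi-isomorphism because $S(t,dt) \to \R$ is (this follows the pattern of~\cite[Section~5.4]{CamposIdrissiLambrechtsWillwacher2018}). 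Since the corollary only claims a quasi-isomorphism, and the main theorem only uses this zigzag, nothing stronger is needed.

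Your proposal instead asks for a strict isomorphism $\exp(x \cdot)$ descending to $\vgraphs_{mn}$, and two steps you treat as routine require vanishing properties of the gauge element $x$ that are simply not available. First, Hopf compatibility is not formal: the product is gluing of graphs along external vertices, and the degree-zero operator attached to $x$ (contraction of subgraphs containing at most one external, necessarily terrestrial, vertex, weighted by $x$, together with the ``keep a subgraph containing all external vertices'' term) is a derivation of this product only if $x$ vanishes on connected graphs with a terrestrial cut point --- precisely the property of $c$ (proved by a dimension count on $\VFM_{mn}$) that the paper invokes to make $\Tw\VGra_{mn}$ a Hopf cooperad after Definition~\ref{def:tw-sgra}. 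An $x$ produced abstractly by Goldman--Millson obstruction theory comes with no such vanishing. Second, and more seriously, the descent through $I(U,V)$ fails as stated: the proof of Lemma~\ref{lem:diff-ideal} hinges on Lemmas~\ref{lem:vanish-loop-double},~\ref{lem:c-vanish-unival} and~\ref{lem:vanish-bival}, i.e.\ on the vanishing of $c$ on graphs with double edges, univalent aerial vertices, forbidden terrestrial vertices, etc., all proved from the integral formula~\eqref{eq:def-c}. The gauge element has no integral description and need not vanish on any of these graph classes; for instance, contracting a subgraph containing both strands of a double edge of some $\Gamma \in I(U,V)$, with coefficient $x(\Gamma') \neq 0$, can produce a graph with no loop, no double edge and no forbidden vertex, hence outside $I(U,V)$. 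So the ``case-by-case inspection'' you defer cannot be carried out for an arbitrary gauge element; to rescue this route you would first have to show that the gauge equivalence can be realized inside the subalgebra of functionals vanishing on all those graph classes (an additional cohomological input the paper does not provide), or else fall back on the path-object argument the paper actually uses.
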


\begin{proof}
  This follows from the same general arguments of~\cite[Section~5.4]{CamposIdrissiLambrechtsWillwacher2018}.
  Let us briefly describe them.
  Let $S(t,dt)$ be the algebra of polynomial forms on the interval $[0,1]$, with $\deg t = 0$ and $\deg dt = 1$.
  Let $\VGC_{mn}^{\vee,\sim}$ be the Lie algebra with differential $[\mu,-]$, i.e.\ we are only allowed to split aerial vertices.
  Both $c$ and $c_{0}$ are Maurer--Cartan elements, i.e.\ they satisfy $[\mu,c] + \frac{1}{2} [c,c] = [\mu,c_{0}] + \frac{1}{2} [c_{0},c_{0}] = 0$.
  The Lie algebra $\VGC_{mn}^{\vee}$ is the twist of $\VGC_{mn}^{\vee,\sim}$ with respect to $c$.

  The gauge equivalence between $c$ and $c_{0}$ is a Maurer Cartan element $c_{t} \in \VGC_{mn}^{\vee,\sim} \otimes S(t,dt)$ whose restriction at $t = 1$ (resp.\ $t=0$) is $c$ (resp.\ $c_{0}$).
  This element $c_{t}$ produces a differential on $\VGraphs_{mn} \otimes S(t,dt)$ such that restriction at $t = 1$ (resp.\ $t = 0$) gives $\VGraphs_{mn}$ (resp.\ $\VGraphs_{mn}^{0}$).
  We thus have a zigzag:
  \begin{equation}
    \VGraphs_{mn} \xleftarrow{\operatorname{ev}_{t=1}} \VGraphs_{mn} \otimes S(t,dt) \xrightarrow{\operatorname{ev}_{t=0}} \VGraphs_{mn}^{0}.
  \end{equation}
  The evaluation maps $\operatorname{ev}_{t=0}, \operatorname{ev}_{t=1} : S(t,dt) \to \R$ are quasi-isomorphisms of CDGAs.
  This implies that the two maps above are quasi-isomorphisms.
\end{proof}

\begin{definition}\label{def:small-vgraphs0}
  Let $\vgraphs_{mn}^{0}$ be the quotient of $\VGraphs^{0}_{mn}$ defined similarly to how $\vgraphs_{mn}$ is a quotient of $\VGraphs_{mn}$ (see Definition~\ref{def:very-small-graph}).
\end{definition}

\begin{lemma}
  The quotient $\vgraphs_{mn}^{0}$ is a relative Hopf $\graphs_{n}$-cooperad.
\end{lemma}
\begin{proof}
  See Proposition~\ref{prop:omega-reduced}: $c_{0}$ satisfies the same vanishing lemmas as $c$.
\end{proof}

\begin{proposition}\label{prop:quotient-qiso}
  The quotient map $\VGraphs^{0}_{mn} \to \vgraphs^{0}_{mn}$ is a quasi-isomorphism.
\end{proposition}
\begin{proof}
  This follows from the same arguments as in the proof of~\cite[Proposition~3.8]{Willwacher2014}.
  One can set up spectral sequences (counting bivalent vertices of the appropriate type) to see that univalent vertices and bivalent terrestrial vertices with dashed incident edges are killed up to homotopy.
  Similarly another spectral sequence shows that loops (called tadpoles in~\cite{Willwacher2014}) are killed up to homotopy.
\end{proof}

\subsection{Connecting the graphs to the cohomology}
\label{sec:conn-graphs-cohom}

The goal of this section is to describe a quasi-isomorphism of Hopf cooperads $\pi : \vgraphs_{mn}^{0} \to \vscV_{mn}$, where $\vscV_{mn} = H^{*}(\VFM_{mn})$ was obtained in Section~\ref{sec.comp-cohom}.
We will describe this map on generators.
The CDGA $\vgraphs_{mn}^{0}(U,V)$ is free as an algebra for $m \ge 2$.
Its generators are the ``internally connected'' graphs, i.e.\ the graphs which stay connected when all the external vertices are removed.
For example, if $\Gamma$ has no internal vertices, then it is internally connected iff it has exactly one edge (an empty graph is not connected).

\begin{definition}\label{def:pi}
  If $\Gamma$ is an internally connected graph, then $\pi(\Gamma) \in \vscV_{mn}(U,V)$ is given by:
  \begin{itemize}[nosep]
  \item If $\Gamma = e_{vv'}$ has no internal vertices and one full edge between $v \neq v' \in V$, then $\pi(\Gamma) = \omega_{vv'}$.
  \item If $\Gamma = \tilde{e}_{uu'}$ (for $m \ge 2$) has no internal vertices and one dashed edge between $u \neq u' \in U$, then $\pi(\Gamma) = \tilde{\omega}_{uu'}$.
  \item If $\Gamma$ is the graph of~\eqref{eq:graph-eta},
    then $\pi(\Gamma) = \eta_{u}$.
  \item In all other cases, $\pi(\Gamma) = 0$.
  \end{itemize}
  This is extended to the whole algebra.
  For $m = 1$, a graph $\Gamma \in \vgraphs_{mn}^{0}(U,V)$ is additionally decorated (in the dual basis) with an order on $U \sqcup I$ where $I$ is the set of terrestrial internal vertices.
  The element $\pi(\Gamma) \in \vscV_{1n}(U,V)$ is defined as above.
  It is decorated with the order on $U$ given by the restriction of the order on $U \sqcup I$, multiplied by the number of ways the internal vertices can be reordered while still keeping the same graph.
  (This normalization is due to the canonical isomorphism between invariants and coinvariants.)
\end{definition}

\begin{proposition}
  \label{prop:pi}
  The map $\pi$ defined above is a quasi-isomorphism of Hopf cooperads $\vgraphs_{mn}^{0} \to \vscV_{mn}$.
\end{proposition}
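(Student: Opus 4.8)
The plan is to establish the statement in three stages: that $\pi$ is a well-defined morphism of relative Hopf cooperads, lying over the Kontsevich quasi-isomorphism $\graphs_{n}\xrightarrow{\sim}\eV_{n}$ of Theorem~\ref{thm:fml-kont}; that $H^{*}(\pi)$ is surjective; and that it is an isomorphism.

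\textbf{$\pi$ is a morphism of Hopf cooperads.} Multiplicativity is automatic, as $\vgraphs_{mn}^{0}(U,V)$ is free as a CDGA on the internally connected graphs and $\pi$ is prescribed there. Since $\vscV_{mn}$ carries the zero differential, being a chain map means $\pi(d\Gamma)=0$ for every internally connected generator $\Gamma$, with $d$ the twisting differential of Description~\ref{descr:diff} (using $c_{0}$ from Proposition~\ref{prop:c-leading}). The crucial remark is that $\pi$ vanishes on all generators but $\mathbf{1}$, $e_{vv'}$, $\tilde{e}_{uu'}$ (for $m\ge 2$) and the graph of Remark~\ref{rmk:graph-eta} — call it $\Gamma_{\eta}$ — each having at most one internal vertex. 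As each summand of the differential removes exactly one internal vertex, only generators with at most two internal vertices can contribute to $\pi(d\Gamma)$; for these, a short case analysis shows that the $\pi$-visible terms of $d\Gamma$ cancel in pairs, by the same mechanisms as in the proof of Lemma~\ref{lem:def-omega}, using that the graphs spanning the ideal $I(U,V)$ of Definition~\ref{def:iuv} (loops, double edges, univalent internal vertices, and so on) vanish in $\vgraphs_{mn}^{0}(U,V)$. In particular $\mathbf{1}$, $e_{vv'}$, $\tilde{e}_{uu'}$ and $\Gamma_{\eta}$ are cocycles. Compatibility with the cocompositions is checked on these generators by matching the subgraph-contraction formulas of $\vgraphs_{mn}^{0}$ against the formulas of Proposition~\ref{prop:coop-G}, exactly as in the proof of Proposition~\ref{prop:gmn-iso-cohom} (for $\Gamma_{\eta}$ one also uses the cocomposition formulas for the twist from Appendix~\ref{sec:twist-relat-coop}), noting that whenever a cocomposition produces a graph in $I$ or with an internal (resp.\ Lie-disconnected, if $m=1$) component, its $\pi$-image vanishes.

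\textbf{$H^{*}(\pi)$ is surjective.} The CDGA $\vscV_{mn}(U,V)$ is generated by $1$, the $\tilde{\omega}_{uu'}$, the $\omega_{vv'}$ and the $\eta_{v}$, which are the $\pi$-images of the cocycles $\mathbf{1}$, $\tilde{e}_{uu'}$, $e_{vv'}$, $\Gamma_{\eta}$; since a product of cocycles is a cocycle, every element of $\vscV_{mn}(U,V)$ lifts to a cocycle of $\vgraphs_{mn}^{0}(U,V)$, so $H^{*}(\pi)$ is onto. As $\vscV_{mn}^{i}(U,V)$ is finite-dimensional, it therefore suffices to prove $\dim H^{i}(\vgraphs_{mn}^{0}(U,V))=\dim\vscV_{mn}^{i}(U,V)$ in each degree — a surjection between finite-dimensional graded vector spaces of equal dimension being forced to be an isomorphism.

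\textbf{$H^{*}(\pi)$ is an isomorphism.} One way is to invoke (together with Corollary~\ref{cor:sgraphs-zero-qiso}) that $\omega$ is a quasi-isomorphism onto $\OmPA^{*}(\VFM_{mn})$, whence $H^{*}(\vgraphs_{mn}^{0}(U,V))\cong H^{*}(\VFM_{mn}(U,V))=\vscV_{mn}(U,V)$ by Section~\ref{sec.comp-cohom}. Intrinsically, the same follows from a spectral sequence: filter $\vgraphs_{mn}^{0}(U,V)$ by the number of internal aerial vertices, so that the $E^{0}$-differential is the contraction of dashed edges incident to an internal terrestrial vertex (the analogous Lie-bracketing for $m=1$). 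The terrestrial subgraphs then form a reduced $\Tw\Gra_{m}$-type complex carrying "hairs" up to the aerial vertices, and by Theorem~\ref{thm:fml-kont} and the $\Graphs_{m}^{\vee}\circ\bar{S}^{c}$-analysis from the proof of Proposition~\ref{prop:qiso-incl} its cohomology is $\eV_{m}(U)$ tensored with the aerial part; the remaining differential on $E^{1}$ (the $\mu$-contractions together with the absorption of an aerial vertex into the ground) identifies the aerial part with the cohomology of a reduced aerial graph complex computing $\vscV_{mn}(\varnothing,V)=H^{*}(\Conf_{\R^{n}\setminus\R^{m}}(V))$, by the same combination of Theorem~\ref{thm:fml-kont} and the hairy-graph-complex vanishing of Proposition~\ref{prop:vani-hgc} used in Section~\ref{sec.comp-cohom} and in Corollary~\ref{cor:gauge-equivalence}. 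Thus $E^{\infty}\cong\eV_{m}(U)\otimes\vscV_{mn}(\varnothing,V)=\vscV_{mn}(U,V)$, and the induced map is seen to be $\pi$ by evaluating on the classes of the four generators above.

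\textbf{The main obstacle} is this last stage: one must either import the Kontsevich-style, vanishing-lemma-based proof that $\omega$ is a quasi-isomorphism (after which the dimension count is immediate), or carry out the spectral-sequence bookkeeping, identifying each page with the appropriate reduction of $\graphs_{m}$, $\graphs_{n}$ and the hairy graph complex. By contrast, that $\pi$ is a morphism of Hopf cooperads and that $H^{*}(\pi)$ is surjective are routine finite verifications.
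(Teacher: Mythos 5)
Your first two stages (that $\pi$ is a map of Hopf cooperads and that it is surjective on cohomology) match the paper, which verifies exactly these points in Section~\ref{sec:conn-graphs-cohom}; a minor caveat is that $\pi d\Gamma=0$ is established in the paper not by pairwise cancellation inside $d\Gamma$ but by observing that the $\pi$-image of $d\Gamma$ lands on relations in the target (Arnold relations, $\eta_v\omega_{vv'}=\eta_{v'}\omega_{vv'}$, $\eta_v^2=0$), but this is a presentational difference.

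The genuine gap is in your third stage. Your first option --- ``invoke that $\omega$ is a quasi-isomorphism, hence $H^{*}(\vgraphs_{mn}^{0})\cong\vscV_{mn}$'' --- is circular relative to the paper's logic: in Theorem~\ref{thm:main} the statement that $\omega$ is a quasi-isomorphism is \emph{deduced from} Proposition~\ref{prop:pi} (plus Corollary~\ref{cor:sgraphs-zero-qiso} and surjectivity of $\omega$ on cohomology), and there is no independent proof of it available. The Kontsevich--Lambrechts--Voli\'c vanishing lemmas only show that $\omega$ descends to the reduced complex; in that setting too, the essential input is the combinatorial computation of the cohomology of the graph complex (their Theorem~8.1 analogue), which is precisely what is being asked for here. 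Your second option, the spectral sequence filtered by internal aerial vertices, is a plausible alternative route but is left as a sketch that you yourself flag as ``the main obstacle'': the identifications of the $E^{1}$ and $E^{2}$ pages are asserted rather than proved, and the references you lean on do not do this work --- Section~\ref{sec.comp-cohom} computes $H^{*}(\Conf_{\R^{n}\setminus\R^{m}}(l))$ via Fadell--Neuwirth fibrations, not hairy graphs, and the vanishing of Proposition~\ref{prop:vani-hgc} is used in the paper only for the Goldman--Millson/gauge argument (Corollary~\ref{cor:gauge-equivalence}), not to compute arity-wise cohomology. The paper's actual argument is different and fully carried out: it reduces to connected graphs, proves the case $l=0$ by induction on $k$ (Lemma~\ref{lem:qiso-k-zero}, splitting off the acyclic quotient by the subcomplex $U$ of graphs whose last terrestrial vertex is univalent on a dashed edge), shows that components mixing external aerial and external terrestrial vertices form an acyclic ideal (Lemma~\ref{lem:split}), handles $k=0$ by a similar induction with an explicit homotopy for $l=1$ (Lemma~\ref{lem:other-lemma}), concludes by K\"unneth, and treats $m=1$ separately (Proposition~\ref{prop:case-m-1}). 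Until you carry out either that induction or your spectral-sequence bookkeeping in detail, the dimension count --- and hence injectivity of $H^{*}(\pi)$ --- remains unproved.
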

The proof of this proposition is split in a series of lemmas, which occupies the rest of this section (until the conclusion, Theorem~\ref{thm:main}).

\begin{lemma}\label{lem:pi-well-def}
  The map $\pi$ is a well-defined algebra map and is equivariant with the symmetric group actions.
\end{lemma}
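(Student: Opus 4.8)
The plan is to exploit that $\Tw\VGra_{mn}(U,V)$ is free as a graded-commutative algebra on the internally connected graphs: since the product glues graphs along their external vertices, every graph is uniquely (up to sign) a product of internally connected ones, so the indecomposables are exactly the internally connected graphs. The prescribed values of $\pi$ (including $\pi(\Gamma)=0$ in all the unlisted cases) therefore determine a unique morphism of graded-commutative algebras $\Tw\VGra_{mn}(U,V)\to\vscV_{mn}(U,V)$ — once one checks it is degree-preserving and compatible with the defining relations of a graph. Degrees match: $\deg\tilde e_{uu'}=m-1=\deg\tilde\omega_{uu'}$, $\deg e_{vv'}=n-1=\deg\omega_{vv'}$, and the graph of Remark~\ref{rmk:graph-eta} (one internal terrestrial vertex of degree $-m$ and one full edge of degree $n-1$) has total degree $n-m-1=\deg\eta_v$. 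The antisymmetry relations also match: $e_{vv'}=(-1)^n e_{v'v}$ corresponds to $\omega_{vv'}=(-1)^n\omega_{v'v}$, and (for $m\ge 2$) $\tilde e_{uu'}=(-1)^m\tilde e_{u'u}$ corresponds to $\tilde\omega_{uu'}=(-1)^m\tilde\omega_{u'u}$; for $m=1$ one instead checks that $\pi(1)=1\in\eV_1(U)=\R[\Sigma_U]^\vee$ extended $\Sigma_U$-equivariantly is consistent with the ordering of terrestrial vertices, using the Lie-decorated basis of Lemma~\ref{lem:iso-pbw} and the convention that internal terrestrial vertices are odd.

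Next I would verify that this morphism annihilates the two ideals by which one quotients $\Tw\VGra_{mn}$ to form $\vgraphs_{mn}^0$, so that it descends. For the ideal of graphs with internal components (resp.\ Lie-disconnected graphs when $m=1$): any internally connected graph with no external vertex, and more generally any internally connected graph not of one of the four listed types, is sent to $0$, so $\pi$ of any product containing such a factor vanishes. For the ideal $I(U,V)$ of Definition~\ref{def:iuv}: an internally connected graph containing a loop, a univalent aerial internal vertex, a univalent internal terrestrial vertex joined to another terrestrial vertex, or a bivalent internal terrestrial vertex joined by dashed edges to two terrestrial vertices is none of $1$, $e_{vv'}$, $\tilde e_{uu'}$, or the $\eta$-graph, hence is sent to $0$ (note the $\eta$-graph is not in $I$, since $I$ forbids univalent internal terrestrial vertices only when joined to a \emph{terrestrial} vertex); and a double edge between two external vertices $v,v'$ is the monomial $e_{vv'}^2$, sent to $\omega_{vv'}^2=0$ by the relation in $\vscV_{mn}(0,l)$. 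Thus $\pi$ factors through $\vgraphs_{mn}^0(U,V)$.

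For equivariance, observe that both the generating internally connected graphs and the target $\vscV_{mn}$ are functorial in $(U,V)$ for $\Bij\times\Bij$, and the formula for $\pi$ on a generator refers only to its combinatorial isomorphism type and to which external vertices its distinguished edge or internal vertex meets — data transported in the evident way by any bijection $(U,V)\xrightarrow{\cong}(U',V')$ (for $m=1$ the $\Sigma_U$-structure is matched on both sides by construction). Hence $\pi$ is a natural transformation of bisymmetric collections, so it intertwines the $\Sigma_U\times\Sigma_V$-actions. I expect the only point needing genuine care to be the $m=1$ case: reconciling the $\Sigma_U$-equivariant value of $\pi$ on edge-free graphs with the block-composition conventions and the Poincaré--Birkhoff--Witt identification underlying $\Tw\Gra_{1n}$, and checking the odd-degree sign rule for internal terrestrial vertices on both sides; the case $m\ge 2$ is a routine inspection.
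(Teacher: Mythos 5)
Your argument is correct and is essentially the paper's own proof in expanded form: the paper simply observes that $\pi$ is prescribed on the generators of the free algebra $\vgraphs_{mn}^{0}(U,V)$ (the internally connected graphs), hence well-defined, and is clearly equivariant. Your extra verifications (degrees, sign relations, descent through the quotient ideals, the $m=1$ conventions) are sound but only spell out what the paper leaves implicit.
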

\begin{proof}
  For $m \ge 2$, we defined $\pi$ on the generators of a free algebra (forgetting about the differential), so it is well-defined.
  It is moreover clearly equivariant.
  For $m = 1$, we need to check the compatibility with the order on the terrestrial vertices.
  One can directly compute that the coefficients match.
  Let us first illustrate with an example:
  \newlength{\myl}
  \setlength{\myl}{.2cm}
  \begin{align*}
    \biggl( 
    \begin{tikzpicture}[scale=.3, baseline=-.25cm]
      \node[exta] (a1) {\tiny 1};
      \node[exta, right = \myl of a1] (a2) {\tiny 2};
      \node[right = \myl of a2] (p) {};
      \node[exta, right = \myl of p] (a3) {\tiny 3};
      \node[exta, right = \myl of a3] (a4) {\tiny 4};
      \node[exta, right = \myl of a4] (a5) {\tiny 5};
      \node[extt, below = \myl of p] {\tiny 1};
      \node[intt, below = \myl of a1] (i1) {}; \draw (a1) -- (i1);
      \node[intt, below = \myl of a3] (i3) {}; \draw (a3) -- (i3);
      \node[intt, below = \myl of a4] (i4) {}; \draw (a4) -- (i4);
    \end{tikzpicture}
    \biggr)
    \cdot
    \biggl(
    \begin{tikzpicture}[scale=.3, baseline=-.25cm]
      \node[exta] (a1) {\tiny 1};
      \node[exta, right = \myl of a1] (a2) {\tiny 2};
      \node[right = \myl of a2] (p) {};
      \node[exta, right = \myl of p] (a3) {\tiny 3};
      \node[exta, right = \myl of a3] (a4) {\tiny 4};
      \node[exta, right = \myl of a4] (a5) {\tiny 5};
      \node[extt, below = \myl of p] {\tiny 1};
      \node[intt, below = \myl of a2] (i2) {}; \draw (a2) -- (i2);
      \node[intt, below = \myl of a5] (i5) {}; \draw (a5) -- (i5);
    \end{tikzpicture}
    \biggr)
    \mapsto
    \bigl( \frac{1}{2} \eta_{1}\eta_{3}\eta_{4} \bigr) \cdot \eta_{2} \eta_{5},
  \end{align*}
  The product on the LHS is given by:
  \newcommand{\hohoho}[5]{%
    \begin{tikzpicture}[scale=.3, baseline=-.25cm]
      \node[exta] (a1) {\tiny 1};
      \node[exta, right = \myl of a1] (a2) {\tiny 2};
      \node[right = \myl of a2] (p) {};
      \node[exta, right = \myl of p] (a3) {\tiny 3};
      \node[exta, right = \myl of a3] (a4) {\tiny 4};
      \node[exta, right = \myl of a4] (a5) {\tiny 5};
      \node[extt, below = \myl of p] {\tiny 1};
      \node[intt, below = \myl of a1] (i1) {}; \draw (a1) -- (i#1);
      \node[intt, below = \myl of a2] (i2) {}; \draw (a2) -- (i#2);
      \node[intt, below = \myl of a3] (i3) {}; \draw (a3) -- (i#3);
      \node[intt, below = \myl of a4] (i4) {}; \draw (a4) -- (i#4);
      \node[intt, below = \myl of a5] (i5) {}; \draw (a5) -- (i#5);
    \end{tikzpicture}
  }
  \begin{multline*}
    \hohoho{1}{2}{3}{4}{5}
    + \hohoho{2}{1}{3}{4}{5}
    + \hohoho{1}{2}{3}{5}{4}
    \\
    + \hohoho{2}{1}{3}{5}{4}
    + \hohoho{1}{2}{4}{5}{3}
    + \hohoho{2}{1}{4}{5}{3}
  \end{multline*}
  The normalization factor in the formula for $\pi$ is $\frac{1}{2! \cdot 3!} = \frac{1}{12}$ which cancels with the $6$ terms to give the $\frac{1}{2}$ in the RHS.
  More generally, to multiply $\Gamma, \Gamma' \in \vgraphs_{mn}^{0}(U,V)$, if the restriction of the orders on $U$ differ then the result is zero.
  Otherwise let $I,I'$ be the respective sets of internal vertices and let $k = \# U$.
  The linear orders on $U \sqcup I$ and $U \sqcup I'$ split $I$ and $I'$ in $(k + 1)$ blocks consecutive vertices, of respective sizes $i_{0}, \dots, i_{k}$, and $i'_{0}, \dots, i'_{k}$.
  The normalization factor in $\pi(\Gamma)$ (resp.\ $\pi(\Gamma')$) is then $\prod_{j} (i_{j})!^{-1}$ (resp.\ $\prod_{j} (i'_{j})!^{-1}$).
  In $\Gamma \cdot \Gamma'$, for each $0 \le j \le k$, the $i_{j}$ vertices of block $j$ in $I$ are shuffled with the $i'_{j}$ vertices of block $j$ in $I'$, yielding in total $(i_{j} + i'_{j})!(i_{j})!^{-1}(i'_{j})^{-1}$ shuffles.
  The normalization factor in $\pi(\Gamma \cdot \Gamma')$ is $\prod_{j}(i_{j}+i'_{j})!^{-1}$ which is equal to the product of the number of shuffles and the normalization factors of $\Gamma$ and $\Gamma'$.
\end{proof}

\begin{lemma}
  The map $\pi$ commutes with the differentials, i.e.\ $\pi d = 0$.
\end{lemma}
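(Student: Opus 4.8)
The plan is to reduce to generators and then exploit that $\pi$ is almost always zero. Since $\pi$ is a morphism of graded algebras, $d$ is a derivation on $\vgraphs_{mn}^{0}(U,V)$, and the differential of the target $\vscV_{mn}$ is zero, it suffices to prove $\pi(d\Gamma)=0$ when $\Gamma$ is one of the internally connected graphs that freely generate $\vgraphs_{mn}^{0}(U,V)$. Recall that $\pi$ is nonzero only on the empty graph, on the one-edge graphs $e_{vv'}$ and $\tilde{e}_{uu'}$, and on the graph $\Gamma_{\eta}$ with a single internal terrestrial vertex joined by one full edge to an external aerial vertex; in particular $\pi$ kills every graph containing an internal aerial vertex or an internal vertex of valence $\geq 2$. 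Hence $\pi(d\Gamma)$ is a linear combination of products of those four elementary graphs, and it can be nonzero only if $d\Gamma$ actually contains such an elementary product after reduction in $\vgraphs_{mn}^{0}$.

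Next I would use the explicit small shape of $c_{0}$ from Proposition~\ref{prop:c-leading} to pin down the three summands of the differential of Description~\ref{descr:diff}: with $c_{0}$ in place of $c$, summand (2) only contracts a $c_{0}$-subgraph — a dashed strand between two internal terrestrial vertices (for $m\geq 2$) or the full strand joining an internal terrestrial to an internal aerial vertex — and summand (3) only forgets a single internal vertex, namely the one for which $\Gamma/\Gamma''$ is a two-vertex $c_{0}$-graph. Tracking the number $\#J$ of internal aerial vertices of a term, summand (1) and the relevant cases of (2) and (3) lower $\#J$ by exactly one while the remaining cases leave it unchanged; therefore if $\Gamma$ has $\#J\geq 2$ every term of $d\Gamma$ still has an internal aerial vertex and is killed by $\pi$, so $\pi(d\Gamma)=0$ automatically. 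This reduces the statement to generators with $\#J\in\{0,1\}$, and — using that any surviving term must be an elementary product, that the differential is a local move, and that the ideal $I(U,V)$ already kills graphs with univalent internal aerial vertices or univalent terrestrial vertices attached to a terrestrial vertex — to a short explicit list of generators.

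Finally I would check that list by hand, the upshot being that all nontrivial cancellations are exactly the defining relations of $\vscV_{mn}$. The triangle graph with one internal aerial vertex joined to three external aerial vertices $v_{1},v_{2},v_{3}$ maps under $d$ (summand (1) only) to the Arnold combination $\omega_{v_{1}v_{2}}\omega_{v_{1}v_{3}}+\omega_{v_{2}v_{3}}\omega_{v_{2}v_{1}}+\omega_{v_{3}v_{1}}\omega_{v_{3}v_{2}}$, which vanishes in $\vscV_{mn}$; similarly one obtains the terrestrial Arnold/Jacobi relations (for $m\geq 2$), the relation $\eta_{i}^{2}=0$, and the relation $\eta_{i}\omega_{ij}=\eta_{j}\omega_{ij}$ from the generator with one internal aerial vertex joined to an internal terrestrial vertex and to two external aerial vertices, whose differential produces $\pm(\eta_{i}\omega_{ij}-\eta_{j}\omega_{ij})$ together with terms that $\pi$ kills. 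In addition, a contraction of the edge of a full strand $v\!-\!j\!-\!w$ in summand (1) and the contraction of the $c_{0}$-subgraph $\{w,j\}$ in summand (2) yield the same graph $\Gamma_{\eta}$ with opposite signs, and more generally the terms from summand (2) contracting a $c_{0}$-subgraph that contains the external vertex cancel against the terms from summand (3) forgetting everything outside it. The main obstacle will be the bookkeeping: getting every sign in Description~\ref{descr:diff} right, and especially handling $m=1$, where there are no dashed edges but Lie-word decorations and the quotient graphs of Definition~\ref{def:quotient-graph-1} are governed by the block/PBW combinatorics, so both the ``elementary product'' analysis and the pairing of cancelling terms must be rephrased in the Lie-decorated model of Lemma~\ref{lem:iso-pbw}.
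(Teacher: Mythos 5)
Your plan is correct and essentially reproduces the paper's proof: reduce to the internally connected generators (legitimate since $\pi$ is an algebra map and $\vscV_{mn}$ has zero differential), use the tiny support of $\pi$ together with the explicit form of $c_{0}$ to kill almost every term of $d\Gamma$, and check the few surviving shapes against the Arnold relations, $\eta_{v}^{2}=0$ and $\eta_{v}\omega_{vv'}=\eta_{v'}\omega_{vv'}$, plus the internal cancellations among the three summands of the differential. The paper organizes the case analysis by the total number of internal vertices (deriving the ``star'' shape when there are at least two) rather than by the number of internal aerial vertices, but this is only bookkeeping; your explicit mention of the cancellation between the edge contraction and the $c_{0}$-subgraph contraction for the strand $v\!-\!j\!-\!w$ (and its $m=1$ analogue) is a point the paper's proof leaves implicit.
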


\begin{proof}
  Since $\pi$ is an algebra map and the differential is a derivation, it is sufficient to check this on generators.
  Let $\Gamma$ be an internally connected graph.
  If $\Gamma$ has no internal vertices, then $d\Gamma = 0$ thus $\pi d \Gamma = 0$.
  If $\Gamma$ has one internal vertex, then $\pi d \Gamma = 0$ follows from the Arnold relations and the fact that full edges incident to terrestrial vertices are mapped to zero.

  Assume that $\Gamma$ has at least two internal vertices.
  If a summand in $d\Gamma$ is nonzero, then after contracting one edge, all remaining edges are between external vertices or between an external aerial vertex and a univalent terrestrial internal one.
  There can thus be at most one aerial internal vertex.
  Since contracting an edge cannot reduce the valence of the remaining vertices (contracting dead ends is forbidden), there can only be one internal vertex of valence greater than one, necessarily aerial.
  Using the internal connectedness of $\Gamma$, this special vertex must be connected to all the univalent terrestrial vertices by a full edge.
  In other words, the graph $\Gamma$ must be of this type (plus disconnected external vertices):
  \begin{equation}
    \begin{tikzpicture}[baseline=(i.base), yscale=.3]
      \node[exta] (a1) at (-1,1) {$v_{1}$};
      \node (a2) at (0,1) {\dots};
      \node[exta] (a3) at (1,1) {$v_{k}$};
      \node[inta] (i) at (0,0) {};
      \node[intt] (t1) at (-1,-1) {};
      \node (t2) at (0,-1) {\dots};
      \node[intt] (t3) at (1,-1) {};
      \draw (i) edge (a1) edge (a3) edge (t1) edge (t3);
    \end{tikzpicture}
  \end{equation}
  The Arnold relations in $\eV_{n}$, the symmetry relation $\eta_{v}\omega_{vv'} = \eta_{v'} \omega_{vv'}$, and $\eta_{v}^{2} = 0$ (if there is more than one terrestrial vertex) show that $\pi d \Gamma = 0$.
  The case $m = 1$ is identical except that everything is multiplied by the number of ways of reordering the internal vertices.
\end{proof}

\begin{lemma}
  The map $\pi$ commutes with the cooperad structure maps.
\end{lemma}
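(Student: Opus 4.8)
The plan is to reduce the verification to a finite check on the algebra generators of $\vgraphs_{mn}^{0}$ and then handle the remaining cases by a uniform combinatorial argument. First I observe that, in each arity, all the maps in sight — the cocompositions $\circ_{T}^{\vee}$ and $\circ_{W,T}^{\vee}$ of $\vgraphs_{mn}^{0}$, those of $\vscV_{mn}$ (Proposition~\ref{prop:coop-G}), the reduction $\graphs_{n} \to \eV_{n}$, and $\pi$ itself — are morphisms of CDGAs. Since $\vgraphs_{mn}^{0}(U,V)$ is a free commutative algebra on the internally connected graphs, it therefore suffices to check that the two composites agree on the unit (which is trivial) and on each internally connected graph $\Gamma$.

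If $\pi(\Gamma) \neq 0$, then $\Gamma$ is one of the finitely many ``special'' generators: the empty graph, the single-full-edge graph $e_{vv'}$ between two external aerial vertices, the single-dashed-edge graph $\tilde{e}_{uu'}$ between two external terrestrial vertices (only when $m \ge 2$), or the graph $\Gamma_{\eta}$ of Remark~\ref{rmk:graph-eta}. For each of these I compute the cocompositions by subgraph contraction (Definitions~\ref{def:quotient-graph} and~\ref{def:quotient-graph-1}) and check that applying $\pi$ afterwards yields precisely the formulas of Proposition~\ref{prop:coop-G}. For $e_{vv'}$ and $\tilde{e}_{uu'}$ this is identical to the corresponding computation already carried out for $\eV_{n}$ and $\eV_{m}$ in the proof of Proposition~\ref{prop:gmn-iso-cohom}; for $\Gamma_{\eta}$ one finds that $\circ_{T}^{\vee}$ sends it to $\Gamma_{\eta}$ placed over $[v]$, hence to $\eta_{[v]} \otimes 1$, and that $\circ_{W,T}^{\vee}$ sends it to $\eta_{v} \otimes 1$ or $1 \otimes \eta_{v}$ according to whether $v \in V$ or $v \in T$, matching $\circ^{\vee}(\eta_{v})$. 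When $m = 1$ one must also observe that $\pi$ is compatible with the block-composition behaviour on $\R[\Sigma_{U}]^{\vee}$ (resp.\ on the Lie clusters), which is again immediate from the definitions.

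If $\pi(\Gamma) = 0$, the right-hand composites $\circ^{\vee}(\pi(\Gamma))$ vanish, so I must show that $\circ_{W,T}^{\vee}(\Gamma)$ and $\circ_{T}^{\vee}(\Gamma)$ are mapped to zero by $\pi \otimes \pi$ (resp.\ by $\pi$ tensored with the reduction $\graphs_{n} \to \eV_{n}$). Writing the cocomposition as $\sum_{\Gamma'} \pm\, (\Gamma/\Gamma') \otimes \Gamma'$ over subgraphs $\Gamma'$ with the prescribed external vertices, I argue summand by summand: for $\circ_{T}^{\vee}$ any $\Gamma'$ carrying an internal vertex is killed by $\graphs_{n} \to \eV_{n}$, while for $\circ_{W,T}^{\vee}$ a surviving $\Gamma'$ must be a disjoint union of special graphs; and if in addition $\pi(\Gamma/\Gamma') \neq 0$, then $\Gamma/\Gamma'$ is likewise a disjoint union of special graphs. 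Re-expanding the contracted vertex $[\Gamma']$ then forces $\Gamma$ itself to be one of the special graphs — using crucially that $\Gamma$ is internally connected, hence cannot be a disjoint union of two or more edge-bearing pieces — contradicting $\pi(\Gamma) = 0$, unless the contraction has produced a tadpole or a double edge at $[\Gamma']$, in which case $\Gamma/\Gamma'$ lies in the ideal $I$ and the summand already vanishes in $\vgraphs_{mn}^{0}$. The same reasoning applies for $m = 1$, with ``internally connected'' replaced by ``Lie-connected'' and dashed edges by Lie clusters. The main obstacle is precisely this last step: one must keep careful track of which subgraph contractions fall into $I$ and verify, by a finite inspection of the shapes of the special graphs, that gluing two special pieces along an internally connected pattern can only reproduce a special graph or an element of $I$; the rest of the argument is routine.
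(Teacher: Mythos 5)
Your reduction to the internally connected generators is exactly the paper's strategy (all maps in sight are CDGA morphisms and $\vgraphs_{mn}^{0}(U,V)$ is free as an algebra), and your verification for the special generators matches what the paper intends. The gap is in the uniform argument you give for generators with $\pi(\Gamma)=0$, at precisely the step you yourself call the main obstacle. The claim that a summand $\Gamma/\Gamma'\otimes\Gamma'$ with both factors (products of) special graphs forces $\Gamma$ to be special unless a tadpole or double edge is created is false. Take $m\ge 2$, one external terrestrial vertex $u$, one external aerial vertex $t$, and let $\Gamma$ be the graph with a single internal terrestrial vertex $i$, a dashed edge between $i$ and $u$, and a full edge between $i$ and $t$. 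This graph is internally connected, it is not killed by any condition of Definition~\ref{def:iuv} (the vertex $i$ is bivalent, and only one of its edges is dashed), and $\pi(\Gamma)=0$ is forced, since $\vscV_{mn}(\{u\},\{t\})$ has no element in degree $n-2$ for $m\ge 2$. Now compute $\circ^{\vee}_{\varnothing,\{t\}}\colon \vgraphs_{mn}^{0}(\{u\},\{t\})\to\vgraphs_{mn}^{0}(\{u,*\},\varnothing)\otimes\vgraphs_{mn}^{0}(\varnothing,\{t\})$ using the formulas of Proposition~\ref{prop:coop-G} transported to graphs: the summand keeping $i$ in the left factor vanishes because the mixed full edge with $i\notin W$ and $t\in T$ cocomposes to zero, while the summand sending $i$ to the right factor gives $\pm\,\tilde e_{*u}\otimes\Gamma_{\eta}$, with $\Gamma_{\eta}$ the graph of Remark~\ref{rmk:graph-eta}. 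Both factors are special, no tadpole or double edge appears, and nothing cancels; so your re-expansion principle does not apply, and $(\pi\otimes\pi)$ of this summand is $\pm\,\tilde\omega_{*u}\otimes\eta_{t}\neq 0$, while $\circ^{\vee}_{\varnothing,\{t\}}(\pi(\Gamma))=0$.

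So your proof is incomplete exactly where it needed to do real work: internally connected graphs in which an internal terrestrial vertex carries both a dashed edge to a terrestrial vertex and a full edge to an aerial vertex are not disposed of by the ``both factors special $\Rightarrow$ $\Gamma$ special or degenerate'' argument, and this family must be analyzed head-on. Note moreover that this case is genuinely delicate: the offending cocomposition term is not an artifact of a sign or convention, since compatibility of $\omega$ with restriction to the corresponding boundary face of $\VFM_{mn}(1,1)$ forces its presence. The paper's own proof, which only asserts a long but easy case check, does not display how such graphs are handled either, so you cannot appeal to it; as it stands, your argument (like the uniform principle it rests on) does not establish the lemma for these generators.
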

\begin{proof}
  It is sufficient to check this on generators, i.e.\ internally connected graphs, which is completely straightforward but tedious.
\end{proof}

\subsection{Proof that \texorpdfstring{$\pi$}{pi} is a quasi-isomorphism}
\label{sec:proof-that-pi}

The last step for Proposition~\ref{prop:pi} is proving that $\pi$ is a quasi-isomorphism.
We split this proof in several sub-lemmas.

\subsubsection{Case $m \ge 2$}

Let us give a rough outline of our strategy.
It is clear that $\pi$ is surjective on cohomology, so we just need to check that $\vgraphs_{mn}^{0}(k,l)$ has the same Betti numbers as $\VFM_{mn}(k,l)$.
We first prove the case $l = 0$, using an inductive argument inspired by~\cite[Theorem~8.1]{LambrechtsVolic2014}.
Then, we reduce to the case of ``split'' graphs, where external aerial vertices and external terrestrial vertices are not in the same connected components.
This mirrors the fact that as a space, $\VFM_{mn}(k,l) \simeq \Conf_{\R^{m}}(k) \times \Conf_{\R^{n} \setminus \R^{m}}(l)$.
Finally, we prove the case $k = 0$, again using an inductive argument.
We conclude using the Künneth formula.

We make an observation that will be useful throughout the proof.
A graph $\Gamma \in \vgraphs_{mn}^{0}(k,l)$ determines a partition of $\{1,\dots,k\} \sqcup \{1,\dots,l\}$, by looking at connected components of $\Gamma$.
We can define the subcomplex of connected graphs:
\begin{equation}
  \vgraphs_{mn}^{0}(k,l)_{\conn} \subset \vgraphs_{mn}^{0}(k,l).
\end{equation}
Then the complex $\vgraphs_{mn}^{0}(k,l)$ splits as a direct sum, over all partitions of $\{1,\dots,k\} \sqcup \{1,\dots,l\}$, of tensor products of complexes $\vgraphs_{mn}^{0}(-,-)_{\conn}$, one for each set in the partition (cf.~\cite[Equation~(8.4)]{LambrechtsVolic2014}).
We define:
\begin{equation}
  \label{eq:betti-conn}
  \beta^{j}(k,l) \coloneqq \dim H^{j}(\vgraphs_{mn}^{0}(k,l)_{\conn}).
\end{equation}
Note that we will focus on the two cases $k = 0$ and $l = 0$, as these will be the relevant ones for the application of the Künneth formula.

\begin{lemma}
  \label{lem:qiso-k-zero}
  The map $\pi : \vgraphs_{mn}^{0}(k,0) \to \vscV_{mn}(k,0) = \eV_{m}(k)$ is a quasi-iso\-mor\-phism for all $k \ge 0$.
\end{lemma}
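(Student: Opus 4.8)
The target of this map is $\vscV_{mn}(k,0) = \eV_{m}(k) = H^{*}(\FM_{m}(k))$, since $\VFM_{mn}(U,\varnothing) = \FM_{m}(U)$, so the point is to recover Kontsevich's formality of $\FM_{m}$ from inside the larger complex $\vgraphs_{mn}^{0}(k,0)$. First I would unwind the differential: using Description~\ref{descr:diff} with the Maurer--Cartan element $c_{0}$ of Proposition~\ref{prop:c-leading}, only two types of terms survive on $\vgraphs_{mn}^{0}(k,0)$, the contraction of a full edge incident to an internal aerial vertex (from $\mu$ and from the aerial summand of $c_{0}$) and the contraction of a dashed edge incident to an internal terrestrial vertex (from the dashed summand of $c_{0}$); the ``forgetting'' term of Description~\ref{descr:diff} vanishes identically here, since any graph to which it could apply carries a univalent internal vertex and so lies in the ideal $I(k,0)$ of Definition~\ref{def:iuv}. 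Consequently the span $K(k) \subset \vgraphs_{mn}^{0}(k,0)$ of all graphs carrying an internal aerial vertex or a full edge is a subcomplex, the quotient is $\graphs_{m}(k)$ with Kontsevich's differential, the projection $p \colon \vgraphs_{mn}^{0}(k,0) \twoheadrightarrow \graphs_{m}(k)$ is a surjective map of complexes, and $\pi$ factors as $\pi_{\mathrm{Kont}} \circ p$ with $\pi_{\mathrm{Kont}} \colon \graphs_{m}(k) \xrightarrow{\sim} \eV_{m}(k)$ the quasi-isomorphism of Theorem~\ref{thm:fml-kont}. So the lemma reduces to the acyclicity of $K(k)$.

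To show $H^{*}(K(k)) = 0$ I would combine a spectral sequence with an induction on $k$ modelled on the proof of~\cite[Theorem~8.1]{LambrechtsVolic2014}. Filtering $\vgraphs_{mn}^{0}(k,0)$ by the number of dashed edges, the $E^{0}$-differential consists of the full-edge contractions alone; these only modify the internal aerial vertices and full edges, so for each fixed dashed subgraph they compute the homology of a $\Graphs_{n}$-type complex and return $\eV_{n}$ of the set of terrestrial vertices. Hence $E^{1}$ is the complex of dashed graphs on $\{1,\dots,k\}$ and extra internal terrestrial vertices, each decorated by an element of $\eV_{n}$ of the terrestrial vertex set (a product of ``aerial'' classes $\omega_{uu'}$ modulo Arnold), with the dashed-contraction differential acting on graphs and, through the merging map $\omega_{uu'} \mapsto \omega_{[u][u']}$, on decorations. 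The trivially decorated summand is $\graphs_{m}(k)$, with homology $\eV_{m}(k)$; the complementary summand --- which is exactly the image of $K(k)$ on $E^{1}$ --- must be shown acyclic, which I would do by the inductive argument of~\cite[Section~8]{LambrechtsVolic2014} applied to the last external vertex, exploiting that the aerial classes $\omega_{uu'}^{(n)}$, being of degree $n-1 > m-1$, restrict to zero on $\Conf_{\R^{m}}$, so a nontrivial decoration always supplies a contractible direction. (Alternatively one can run the same induction directly on $\omega \colon \vgraphs_{mn}^{0}(k,0) \to \OmPA^{*}(\FM_{m}(k))$, which by the dimension estimates in the style of Section~\ref{sec:vanishing-lemmas} factors as $\omega_{m} \circ p$ for Kontsevich's quasi-isomorphism $\omega_{m}$, and conclude by two-out-of-three.)

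The main obstacle is this acyclicity. The difficulty is the interaction of the two kinds of edges: contracting a full edge merges an internal aerial vertex into a terrestrial one, which alters the $\eV_{n}$-decoration and can create or destroy Arnold relations, so the bookkeeping in the Fadell--Neuwirth-type induction is heavier than in the purely terrestrial situation; one must also track the signs of contractions at univalent and bivalent internal vertices, as in~\cite[Appendix~I.3]{Willwacher2014} and the proof of Lemma~\ref{lem:diff-ideal}. The remaining points --- that $K(k)$ is a subcomplex, that $\pi = \pi_{\mathrm{Kont}} \circ p$, and the base cases $k \le 1$ --- are routine.
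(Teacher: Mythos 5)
Your reduction is sound as far as it goes: on $\vgraphs_{mn}^{0}(k,0)$ the span $K(k)$ of graphs carrying an aerial internal vertex or a full edge is indeed a subcomplex, the quotient is $\graphs_{m}(k)$, and $\pi$ factors through Kontsevich's projection (one small inaccuracy: the ``forgetting'' term of Description~\ref{descr:diff} does not vanish only because of the ideal $I$ --- a univalent internal terrestrial vertex attached by a \emph{full} edge to an aerial vertex is not in $I$; that term dies because $c_{0}$ vanishes on a full edge between two terrestrial vertices). The genuine gap is that the acyclicity of $K(k)$, which you yourself identify as ``the main obstacle'', is never proved. The appeal to ``the inductive argument of~\cite[Section~8]{LambrechtsVolic2014}'' and the phrase ``a nontrivial decoration always supplies a contractible direction'' are not arguments: LV's induction is designed for the one-colored complex $\Graphs_{m}$, and transporting it to the mixed complex is exactly the content of the lemma. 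Moreover your identification of the $E^{1}$ page is itself not justified: the relations defining $\vgraphs_{mn}^{0}$ (univalent internal terrestrial vertices connected to a terrestrial vertex, bivalent internal terrestrial vertices with two dashed edges) involve the \emph{total} valence, mixing dashed and full edges, so for a fixed dashed subgraph the $E^{0}$-complex is not literally a $\Graphs_{n}$-type complex with cohomology $\eV_{n}$ of the terrestrial vertices; the claimed splitting into a trivially decorated summand $\graphs_{m}(k)$ plus an acyclic complement therefore needs a real proof, and one must also rule out higher differentials interfering with the trivially decorated part.

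For comparison, the paper does this work with a different and more economical organization: it does not factor through $\graphs_{m}$, but argues by induction on $k$, restricts to the connected part $\vgraphs_{mn}^{0}(k+1,0)_{\conn}$, and splits it according to the last external vertex into $U=\bigoplus_{i}U_{i}$ (univalent, dashed edge to the external vertex $i$), $V$ (univalent, dashed edge to an internal vertex) and $W$ (all remaining graphs). One has $U_{i}\cong\vgraphs_{mn}^{0}(k,0)_{\conn}[1-m]$, and a filtration by the number of edges (shifted by one on $W$) makes the $E^{0}$-differential an isomorphism $V\to W$ --- this is the precise form of your ``contractible direction'' heuristic, cf.~\eqref{eq:exemple-qui-fache} --- so the quotient by $U$ is acyclic and the Betti numbers satisfy $\beta^{j}(k+1,0)=k\cdot\beta^{j-m+1}(k,0)$, which together with surjectivity of $\pi$ on cohomology gives the lemma. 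If you want to keep your route through $K(k)$, you would have to supply an analogous explicit filtration or homotopy establishing its acyclicity; as written, the essential step is missing.
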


If $\vgraphs_{mn}^{0}(k,0)$ had no aerial (internal) vertices and no full edges, then it would be equal to $\Graphs_{m}(k)$, which is quasi-isomorphic to $\eV_{m}(k)$ by~\cite[Theorem~8.1]{LambrechtsVolic2014}.
The next proof is the formalization of the intuition that a full edge is killed by:
\begin{equation}\label{eq:exemple-qui-fache}
  \begin{tikzpicture}[baseline=(a.base)]
    \node[extt] (a) {$u$};
    \node[intt, right = 1cm of a] (i) {};
    \node[extt, right = 1cm of i] (b) {$v$};
    \draw[dashed] (a) -- (i);
    \draw (i) -- (b);
  \end{tikzpicture}
  \,\xmapsto{\; d \;}\,
  \begin{tikzpicture}[baseline=(a.base)]
    \node[extt] (a) {$u$};
    \node[extt, right = 1cm of a] (b) {$v$};
    \draw (a) -- (b);
  \end{tikzpicture}
  ,
\end{equation}
and that internal vertices, whether aerial or terrestrial, do not produce any homology  class and are just here to kill the Arnold relations.

\begin{proof}[Proof of Lemma~\ref{lem:qiso-k-zero}]
  Let us work by induction.
  The case $k = 0$ is clear: each component must contain an external vertex, so all graphs are empty and $\pi$ is the identity.
  Assume that $\vgraphs_{mn}^{0}(k,0) \to \eV_{m}(k)$ is a quasi-isomorphism for some $k \ge 0$.
  It is sufficient to prove that $\vgraphs_{mn}^{0}(k+1,0)$ and $\eV_{m}(k+1)$ have the same Betti numbers since $\pi$ is clearly surjective on cohomology.
  Using the splitting in terms of connected components and the Betti numbers of $\Conf_{\R^{m}}$,
  it suffices to prove that $\beta^{i}(k+1,0) = k \cdot \beta^{i-m+1}(k,0)$.
  We split $\vgraphs_{mn}^{0}(k+1,0)_{\conn}$ in three according to the valence of the last external vertex:
  \begin{itemize}[nosep]
  \item $U$: either spanned by the unit graph (if $k = 0$), or the last vertex is univalent and connected to another external vertex by a dashed edge.
  \item $V$: the last external vertex is univalent and connected by a dashed edge to an internal vertex;
  \item $W$: the last external vertex is at least bivalent, or univalent and connected by a full edge to an internal vertex.
  \end{itemize}
  Let $\mathcal{Q} = \vgraphs_{mn}^{0}(k+1,0)_{\conn} / U \cong (V \oplus W, d)$.
  We filter $V$ by the number of edges, and $W$ by the number of edges minus $1$.
  In the $0$th page of the spectral sequence $E^{0}\mathcal{Q}$, the differential maps $V$ isomorphically onto $W$, so $\mathcal{Q}$ is acyclic and $U \simeq \vgraphs_{mn}^{0}(k+1,0)_{\conn}$.
  For $k = 0$, $U = \R = \eV_{m}(1)$ as expected.
  For $k > 0$, $U$ is isomorphic to $\bigoplus_{i=0}^{k} \vgraphs_{mn}^{0}(k,0)_{\conn}[1-m]$ (by removing the vertex $k+1$ and its incident edge).
  Hence $\beta^{j}(k+1,0) = k \cdot \beta^{j-m+1}(k,0)$ as expected too.
\end{proof}

Let us now turn to the second step of the proof.
We prove that we can, in some sense, ``split'' our graph complex in two: external aerial and external terrestrial.

\begin{lemma}
  \label{lem:split}
  Let $k,l \ge 1$ and let $I_{k,l} \subset \vgraphs_{mn}^{0}(k,l)$ be the module spanned by graphs where one of the connected components contains an external aerial vertex and an external terrestrial vertex.
  Then $I$ is an acyclic dg-ideal.
\end{lemma}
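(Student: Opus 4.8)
The plan is to establish three facts: $I_{k,l}$ is an algebra ideal, it is preserved by the differential, and it is acyclic; the quotient $\vgraphs_{mn}^{0}(k,l)/I_{k,l}$ is then the ``split'' graph complex, and Lemma~\ref{lem:split} lets us replace $\vgraphs_{mn}^{0}(k,l)$ by it when proving that $\pi$ is a quasi-isomorphism. The ideal property is immediate: the product of $\vgraphs_{mn}^{0}$ glues graphs along their external vertices, an operation under which connected components can only merge, so if one component of $\Gamma$ contains both an external aerial and an external terrestrial vertex, the same holds for $\Gamma\cdot\Gamma'$ for every $\Gamma'$. Along the way I would record that $\pi$ vanishes on $I_{k,l}$: the ``surviving'' internally connected generators of $\vgraphs_{mn}^{0}$ listed before Proposition~\ref{prop:pi} (the unit and single external vertices, the edges $e_{vv'}$ between two external aerial vertices, the dashed edges $\tilde{e}_{uu'}$ between two external terrestrial vertices, and the graph of Remark~\ref{rmk:graph-eta} with one external aerial vertex) can only be glued, along shared external vertices, into components whose external vertices are all aerial or all terrestrial; hence no graph with $\pi(\Gamma)\neq 0$ lies in $I_{k,l}$, and $\pi$ factors through $\vgraphs_{mn}^{0}(k,l)/I_{k,l}$.

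Next I would check $d(I_{k,l})\subseteq I_{k,l}$ by inspecting the three pieces of the twisted differential (Description~\ref{descr:diff}). Pieces~(1) and~(2) — contracting a full edge incident to an internal aerial vertex, and contracting a connected subgraph with at most one (necessarily terrestrial) external vertex — leave unchanged the partition of the external vertices induced by the connected components (piece~(2) at worst renames one external terrestrial vertex as $[\Gamma']$), so they preserve mixed components. Piece~(3), which forgets the internal vertices outside a subgraph $\Gamma''$ containing all external vertices and multiplies the result by $c(\Gamma/\Gamma'')$, is the delicate case, since forgetting internal vertices could a priori disconnect a mixed component. Here one uses that for $c(\Gamma/\Gamma'')$ to be nonzero the collapsed graph $\Gamma/\Gamma''$ must be connected and must satisfy a dimension equality (Lemma~\ref{lem:c-discon}): when the forgotten internal vertices are exactly what link two parts of $\Gamma''$, collapsing $\Gamma''$ to the single terrestrial vertex $[\Gamma'']$ creates a tadpole, a double edge, or a cycle through $[\Gamma'']$, and the dimension counts of Lemma~\ref{lem:c-discon} together with the vanishing of $c$ on multiple edges (Lemma~\ref{lem:vanish-loop-double}) force $c(\Gamma/\Gamma'')=0$ in all these cases. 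Thus every nonzero term of $d\Gamma$ again lies in $I_{k,l}$.

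It remains to prove acyclicity, and this is where the real work is. The key mechanism is the model computation
\begin{equation*}
  \begin{tikzpicture}[baseline=(a.base)]
    \node[exta] (a) {$v$};
    \node[intt, right = 1cm of a] (i) {};
    \node[extt, right = 1cm of i] (b) {$u$};
    \draw (a) -- (i);
    \draw[dashed] (i) -- (b);
  \end{tikzpicture}
  \;\xmapsto{\; d \;}\;
  \begin{tikzpicture}[baseline=(a.base)]
    \node[exta] (a) {$v$};
    \node[extt, right = 1cm of a] (b) {$u$};
    \draw (a) -- (b);
  \end{tikzpicture}
\end{equation*}
(piece~(2) of the differential contracting the dashed edge via the leading term $c_{0}$ of $c$, compare Equation~\eqref{eq:exemple-qui-fache}): since dashed edges only join terrestrial vertices, every mixed connected component contains at least one full edge from an aerial vertex to a terrestrial vertex, and such an edge can be ``split'' by inserting a bivalent internal terrestrial vertex on it. I would fix a total order on connected components and on edges, use it to single out in each graph of $I_{k,l}$ one such full aerial–terrestrial edge (or the internal terrestrial vertex splitting it) in the mixed component of smallest index, and let $h$ insert the distinguished vertex when it is absent and delete it when present. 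Filtering $I_{k,l}$ appropriately so that the leading part of the differential is exactly this insertion/deletion operator, one obtains that $(E^{0}I_{k,l},d_{0})$ is acyclic with contracting homotopy induced by $h$, whence $I_{k,l}$ is acyclic. The main obstacle is making this last step precise: one must choose the distinguished edge so that $h$ is unambiguous and compatible with the signs and with the filtration, and handle the boundary cases in which an insertion or deletion would produce a configuration that has been quotiented out of $\vgraphs_{mn}^{0}$ (a forbidden double edge, a forbidden low-valence terrestrial vertex), showing that such contributions vanish or cancel in pairs.
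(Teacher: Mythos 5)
Your ideal and differential-closure arguments are fine, though heavier than needed: the lemma lives in $\vgraphs_{mn}^{0}$, whose differential is twisted by $c_{0}$ rather than $c$, so the ``delicate'' piece~(3) can only delete a single univalent internal vertex (the only way $c_{0}(\Gamma/\Gamma'')\neq 0$), and deleting a univalent vertex can never split a component. Your attempted justification via Lemma~\ref{lem:c-discon} for the general-$c$ case does not actually cover the configurations you list (a cycle through $[\Gamma'']$ is neither disconnected nor a double edge), but this is moot here; the paper simply observes that edge contraction preserves the partition of external vertices into components.

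The genuine gap is in the acyclicity proof, which is the substance of the lemma. Your mechanism (splitting a full aerial--terrestrial edge by a bivalent internal terrestrial vertex, the mirror of Equation~\eqref{eq:exemple-qui-fache}) is the right intuition, but the contracting homotopy you describe is not constructed: you must specify a selection rule for the ``distinguished'' edge that is stable under the remaining differential, a bijection between graphs with and without the inserted vertex, and a filtration for which exactly this insertion/deletion survives on $E^{0}$ --- and you acknowledge this yourself. As stated, a choice anchored to an arbitrary total order on components and edges is problematic: contractions elsewhere in the graph change the edge set and can merge components, so the ``mixed component of smallest index'' and its minimal aerial--terrestrial edge can jump under $d$, and a graph may already contain bivalent internal terrestrial vertices splitting other such edges, making ``present/absent'' ambiguous. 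The paper avoids all of this by inducting on the number $k$ of external terrestrial vertices and anchoring the splitting at the \emph{last external terrestrial vertex}: for $k=1$ it splits $I_{1,l}$ into graphs where that vertex is univalent with a dashed edge versus the rest, and a filtration by edge count (as in Lemma~\ref{lem:qiso-k-zero}) makes the $E^{0}$ differential an isomorphism from the first summand onto the second; for the inductive step the summand where the last external terrestrial vertex is univalent and attached by a dashed edge to an external vertex is $k$ copies of $I_{k,l}$, and the quotient is killed by the same filtration trick. Since this canonical anchoring (or some equally explicit substitute) is exactly what your sketch leaves open, the acyclicity claim is not yet proved.
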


\begin{proof}
  The subspace $I_{k,l}$ is a dg-ideal:
  contracting edges does not affect connected components,
  and gluing along external vertices can merge connected components but never split them.
  Let us prove that it is acyclic.
  We only deal with connected graphs (the general case follows by the Künneth formula).
  The proof is similar to Lemma~\ref{lem:qiso-k-zero}: we fix $l \ge 1$ and work by induction on $k \ge 1$.

  For $k = 1$ we check acyclicity directly.
  We split $I_{1,l}$ in two submodules (either the external vertex is univalent with a dashed edge, or not) and we filter like in Lemma~\ref{lem:qiso-k-zero} to get a trivial $E^{1}$ page.
  Let us now assume that the claim is true for a given $k \ge 1$.
  Just like in the proof of Lemma~\ref{lem:qiso-k-zero}, we can split $I_{k+1,l}$ in three summands, depending on whether the last external terrestrial vertex is: univalent, connected by a dashed edge to an external vertex; univalent, connected by a dashed edge to an internal vertex; all other cases.
  The first summand is isomorphic to $\bigoplus_{i=1}^{k} I_{k,l} \simeq 0$.
  The quotient by this summand can be filtered like in Lemma~\ref{lem:qiso-k-zero} and is thus also acyclic.
\end{proof}

\begin{lemma}
  \label{lem:other-lemma}
  The map $\pi : \vgraphs_{mn}^{0}(0,l) \to \vscV_{mn}(0,l)$ is a quasi-isomorphism.
\end{lemma}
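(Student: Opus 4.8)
The plan is to run the same kind of inductive argument as in the proof of Lemma~\ref{lem:qiso-k-zero}, now adding external \emph{aerial} vertices one at a time. As there, $\pi$ is visibly onto in cohomology --- every generator $\omega_{vv'}$, $\eta_{v}$ of $\vscV_{mn}(0,l) = H^{*}(\Conf_{\R^{n}\setminus\R^{m}}(l))$ is the image of an explicit internally connected graph --- so it suffices to match Betti numbers. Exactly as in~\cite[Section~9]{LambrechtsVolic2014} (compare the reduction step at the start of the proof of Lemma~\ref{lem:qiso-k-zero}), $\vgraphs_{mn}^{0}(0,l)$ splits, over the set partitions of $\{1,\dots,l\}$, as a direct sum of tensor products of the connected complexes $\vgraphs_{mn}^{0}(0,B)_{\conn}$, one tensor factor per block $B$. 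Since we have already identified $\vscV_{mn}(0,l)$ with $H^{*}(\Conf_{\R^{n}\setminus\R^{m}}(l))$, whose Poincaré polynomial is $\mathscr{P}(l) \coloneqq \prod_{i=0}^{l-1}(1 + t^{n-m-1} + i\, t^{n-1})$ (Equation~\eqref{eq.poinc-poly}, with equality because $n-m \ge 2$), the same set-partition identity determines a unique sequence of ``connected'' Poincaré polynomials $b(l)$ with $\mathscr{P}(l) = \sum_{P \vdash \{1,\dots,l\}} \prod_{B \in P} b(\#B)$. A short exponential-generating-function computation (one has $\sum_{l\ge 0}\mathscr{P}(l)\,x^{l}/l! = (1 - t^{n-1}x)^{-(1+t^{n-m-1})/t^{n-1}}$) gives $b(1) = 1 + t^{n-m-1}$ and $b(l+1) = l\, t^{n-1}\, b(l)$ for $l \ge 1$. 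Hence it is enough to show that $\vgraphs_{mn}^{0}(0,l)_{\conn}$ has Poincaré polynomial $b(l)$ for every $l \ge 1$; the case $l=0$ is trivial since $\vgraphs_{mn}^{0}(0,0) = \R = \vscV_{mn}(0,0)$.

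We argue by induction on $l$, the base case being $l = 1$. Here $\vgraphs_{mn}^{0}(0,1)$ consists of connected graphs on a single external aerial vertex, and we must see that its cohomology is $\R\langle 1\rangle \oplus \R\langle\eta_{1}\rangle$. I would first observe that the subcomplex $S$ of graphs with no terrestrial vertices is stable under the differential: on $S$ the subgraph-contraction parts $d_{2}$, $d_{3}$ of Description~\ref{descr:diff} vanish (a purely aerial subgraph has zero coefficient, and forgetting a univalent internal aerial vertex is excluded since such vertices lie in the ideal $I$), so the differential on $S$ is the usual vertex-contraction differential, a version of the arity-$1$ part of Kontsevich's graph complex, with $H^{*}(S) = \R$ concentrated in degree $0$ (the class of the empty graph); cf.~the proof of Theorem~\ref{thm:fml-kont}. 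On the quotient by $S$ --- graphs with at least one terrestrial vertex --- one runs a further filtration by the number of edges incident to terrestrial vertices and checks that the cohomology collapses onto the single hair graph of Remark~\ref{rmk:graph-eta} representing $\eta_{1}$; this yields $H^{*}(\vgraphs_{mn}^{0}(0,1)) = \R\langle1\rangle \oplus \R\langle\eta_{1}\rangle$, with Poincaré polynomial $b(1)$.

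For the inductive step ($l \ge 1 \Rightarrow l+1$) I would transcribe the mechanism of Lemma~\ref{lem:qiso-k-zero}. Let $U = \bigoplus_{i=1}^{l} U_{i} \subset \vgraphs_{mn}^{0}(0,l+1)_{\conn}$, where $U_{i}$ is spanned by the graphs in which the external aerial vertex $l+1$ is univalent and joined to the external aerial vertex $i$ by a full edge. Because all external vertices here are aerial, the contraction of this edge is never triggered ($d_{1}$ ignores it since it is not an edge to an internal aerial vertex, while $d_{2}$ and $d_{3}$ never touch external vertices, the subgraphs they contract or complement carrying at most one external vertex, necessarily terrestrial); hence $U$ is a subcomplex, and deleting vertex $l+1$ and its edge gives $U_{i} \cong \vgraphs_{mn}^{0}(0,l)_{\conn}[1-n]$. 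On the quotient $\mathcal{Q} = \vgraphs_{mn}^{0}(0,l+1)_{\conn}/U$ I would put the analogue of the filtration of Lemma~\ref{lem:qiso-k-zero}: graphs in which $l+1$ is univalent with its edge going to an internal vertex get filtration degree (number of edges)$-1$, all other graphs get filtration degree (number of edges). On the $E^{0}$-page the remaining differential contracts or creates the edge at $l+1$, and it maps the ``$l+1$ univalent-to-internal'' summand isomorphically onto the ``$l+1$ at least bivalent'' summand, so $E^{1}\mathcal{Q} = 0$ and $\mathcal{Q}$ is acyclic. Therefore $U \hookrightarrow \vgraphs_{mn}^{0}(0,l+1)_{\conn}$ is a quasi-isomorphism, so the Poincaré polynomial of $\vgraphs_{mn}^{0}(0,l+1)_{\conn}$ is $l\, t^{n-1}\, b(l) = b(l+1)$, completing the induction. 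Reassembling via the set-partition decomposition, $\pi$ is a quasi-isomorphism in each arity $(0,l)$.

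The two genuinely new points --- the only ones not covered word-for-word by Lemma~\ref{lem:qiso-k-zero} --- are where I expect the real work to be. First, the base case $l=1$: unlike $\Graphs_{n}(1)$, the complex $\vgraphs_{mn}^{0}(0,1)$ is not acyclic in positive degrees, and one must carefully separate the terrestrial-hair class $\eta_{1}$ (whose differential involves the $c_{0}$-part contracting dashed and mixed edges) from the purely aerial part. Second, in the inductive step one must verify that the filtration on $\mathcal{Q}$ still makes the $E^{0}$-differential an isomorphism when the edge at $l+1$ meets an internal \emph{terrestrial} vertex, whose contraction and removal are governed by $c_{0}$ rather than by $\mu$; this is a finite bookkeeping exercise, but it is the point at which the colour distinction actually bites.
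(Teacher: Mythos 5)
Your overall architecture is the same as the paper's (surjectivity of $\pi$, reduction to the connected parts and to the Betti-number recurrence $\beta^{j}(0,l+1)=l\cdot\beta^{j-n+1}(0,l)$ with base $\beta^{0}(0,1)=\beta^{n-m-1}(0,1)=1$, induction by adding aerial external vertices, and the subcomplex $U=\bigoplus_i U_i\cong\bigoplus_i\vgraphs^{0}_{mn}(0,l)_{\conn}[1-n]$), but the key spectral-sequence claim in your inductive step is false as stated. You assert that on $E^{0}$ the differential maps the summand ``$l+1$ univalent, attached to an internal vertex'' isomorphically onto ``$l+1$ at least bivalent''. This only works when that internal vertex is \emph{aerial}. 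If $l+1$ is attached by its single full edge to an internal \emph{terrestrial} vertex, that edge is never contracted by any term of the differential: $d_{1}$ only contracts full edges with an internal aerial endpoint, and the subgraphs contracted by $d_{2}$ (and the complements used by $d_{3}$) may contain at most one external vertex, necessarily terrestrial, so they can never contain the external aerial vertex $l+1$ (Description~\ref{descr:diff}). Hence every nonzero differential term of such a graph again has $l+1$ univalent on a terrestrial internal vertex; these graphs can never hit the ``at least bivalent'' part, and your $E^{0}$-page does not become acyclic by the mechanism you describe. This is not mere bookkeeping: these ``terrestrial hair'' graphs are exactly the ones carrying the extra class $\eta$ in arity $(0,1)$ (Remark~\ref{rmk:graph-eta}), so one must genuinely show they contribute nothing new once $l+1\ge 2$. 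The paper does this with a finer splitting: besides $U$, $V$ ($l+1$ at least bivalent) and $V'$ ($l+1$ univalent on an aerial internal vertex), it isolates $W$ ($l+1$ univalent on a bivalent terrestrial internal vertex whose other edge is dashed) and $W'$ (the remaining terrestrial-hair graphs), and chooses the filtration so that $V'\to V$ \emph{and} $W'\to W$ are isomorphisms on $E^{0}$; the second pairing, driven by the $c_{0}$-part of the differential, is the ingredient missing from your argument.

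Your base case $l=1$ is also only a sketch at the two decisive points, and it differs from the paper's. The claim $H^{*}(S)=\R$ for the purely aerial subcomplex does not follow from Theorem~\ref{thm:fml-kont}: in $\vgraphs^{0}_{mn}(0,1)$ bivalent aerial internal vertices are \emph{not} killed, so $S$ is neither $\graphs_{n}(1)$ nor $\Graphs_{n}(1)$ but an intermediate quotient, and identifying its cohomology needs a separate argument (of the type used for Proposition~\ref{prop:vani-hgc}); likewise ``a further filtration shows the quotient collapses onto the hair graph'' is precisely the content to be proved, since that quotient is where $\eta_{1}$ lives. The paper sidesteps both issues with one explicit homotopy $h$ — replace the external aerial vertex by an internal aerial one and attach a fresh external aerial vertex to it by a full edge — for which $(dh+hd)(\Gamma)=\Gamma$ except on the unit graph and the $\eta$-graph. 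To repair your proof, either adopt that homotopy for the base case and add the $W/W'$ pairing in the inductive step, or supply complete proofs of the two assertions you currently leave as sketches; as written, both places are genuine gaps.
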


\begin{proof}
  This final lemma is also proved by induction.
  Once again $\pi$ is clearly surjective on cohomology, so it suffices to prove that both complexes have the same Betti numbers.
  Using the results of Section~\ref{sec.comp-cohom}, the Poincaré polynomial of $\VFM_{mn}(0,l) \simeq \Conf_{\R^{n} \setminus \R^{m}}(l)$ is $\mathscr{P}(\Conf_{\R^{n} \setminus \R^{m}}(l)) = \prod_{i=0}^{l-1} (1 + t^{n-m-1} + i t^{n-1})$.

  We can again work with the connected part of the graph complex $\vgraphs_{mn}^{0}(0,l)_{\conn}$.
  Note that the case $l = 0$ is covered by Lemma~\ref{lem:qiso-k-zero}.
  The base case that we need to prove  is $\beta^{0}(0,1) = \beta^{n-m-1}(0,1) = 1$, and $\beta^{j}(0,1) = 0$ for other $j$.
  The recurrence relation is $\beta^{j}(0,l+1) = l \cdot \beta^{j-n+1}(0,l)$ for all $j$ and all $l \ge 1$.

  For $l=1$,
  we split $\vgraphs_{mn}^{0}(0,1)_{\conn} = \vgraphs_{mn}^{0}(0,1)$ according to the valence of the only external vertex:
  \begin{itemize}[nosep]
  \item $U$: the external vertex is zero-valent (i.e.\ $\Gamma = 1$) or univalent, connected to a univalent internal terrestrial vertex (i.e.\
    $\Gamma =
    \begin{tikzpicture}[scale=.3, baseline=(v.base)]
      \node[exta] (v) {1};
      \node[intt, right = .5cm of v] (i) {};
      \draw (v) -- (i);
    \end{tikzpicture}
    $).
  \item $V$: the external vertex is at least bivalent.
  \item $V'$: the external vertex is univalent, connected to an aerial internal vertex.
  \item $W$: the external vertex is univalent, connected to a terrestrial internal vertex; this vertex is itself bivalent and its other incident edge is dashed.
  \item $W'$: the external vertex is univalent, connected to a terrestrial internal vertex; this vertex is itself either at least trivalent, or bivalent and both incident edges are full.
  \end{itemize}

  Let $\mathcal{Q} = \vgraphs_{mn}^{0}(0,1) / U$.
  We can set up a spectral sequence just like in Lemma~\ref{lem:qiso-k-zero} so that $E^{0}\mathcal{Q} = \Bigl( V' \xrightarrow[\cong]{d} V \oplus W' \xrightarrow[\cong]{d} W \Bigr)$.
  Thus $\mathcal{Q}$ is acyclic and $U \simeq \vgraphs_{mn}^{0}(0,1)$ is thus a quasi-isomorphism, as we wanted.

  For the induction step, we split $\vgraphs_{mn}^{0}(0,l+1)_{\conn}$ as above.
  We just replace $U$ by $U = \bigoplus_{i=1}^{l} U_{i}$ where $U_{i}$ is spanned by graphs where the external vertex $(l+1)$ is univalent, connected to the external vertex $i$.
  The others are similar but the valence conditions are on the external vertex $(l+1)$ instead.
  The same argument shows that $U \subset \vgraphs_{mn}^{0}(0,l+1)_{\conn}$ is a quasi-isomorphism.
  We have an isomorphism $U_{i} \cong \vgraphs_{mn}^{0}(0,l)[1-m]$ given by removing the last external vertex and its incident edge.
  The Betti numbers thus satisfy the expected recurrence relation: $\beta^{j}(0,l+1) = l \cdot \beta^{j-n+1}(0,l)$.
\end{proof}

\subsubsection{Case $m = 1$}
\label{sec:case-m-=}

We deal separately with the case $m = 1$, because $\ee_{1}$ is the associative operad and not the Poisson operad.
To summarize the differences, recall that: the graphs do not have dashed edges, and the terrestrial vertices are ordered (Definition~\ref{def:vgra-1}); the notion of ``disconnected'' is replaced by ``Lie-disconnected'' (Definition~\ref{def:lie-connected}); the differential $[c_{0},-]$ merges coLie clusters~\eqref{eq:c-zero}.

\begin{proposition}
  \label{prop:case-m-1}
  The map $\pi : \vgraphs_{1n}^{0}(k,l) \to \vscV_{1n}(k,l)$ is a quasi-isomorphism.
\end{proposition}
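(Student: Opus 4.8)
The plan is to transcribe the three-step argument used for $m \ge 2$ (Lemmas~\ref{lem:qiso-k-zero},~\ref{lem:split},~\ref{lem:other-lemma}), adjusting the combinatorics to the features of $m = 1$: no dashed edges, ordered terrestrial vertices, and the cluster-merging term of $c_0$. As before, $\pi$ is obviously surjective on cohomology: the class $\omega_{vv'}$ is hit by the single-full-edge graph on external aerial vertices, $\eta_v$ by the graph of Remark~\ref{rmk:graph-eta}, and the basis element of $\R[\Sigma_k]^{\vee} \subset \vscV_{1n}(k,0)$ dual to a linear order by the edgeless graph carrying that order on its external terrestrial vertices; hence it suffices to match Betti numbers in each bidegree. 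The first step is to record, as in~\cite[Equation~(8.4)]{LambrechtsVolic2014}, that $\vgraphs_{1n}^{0}(k,l)$ splits as a direct sum over the ways the external vertices distribute into Lie-connected components — a set partition of the aerial ones, together with the combinatorial data of distributing the terrestrial ones into Lie words compatibly with their linear order (reflecting $\Ass = \Com \circ \Lie$) — of tensor products of Lie-connected subcomplexes. This reduces everything to the Lie-connected parts $\vgraphs_{1n}^{0}(k,l)_{\conn}$, whose Betti numbers I abbreviate $\beta^{j}(k,l)$.

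The second step treats $l = 0$, i.e.\ $\pi : \vgraphs_{1n}^{0}(k,0) \to \eV_{1}(k) = \R[\Sigma_k]^{\vee}$. The role played in the case $m \ge 2$ by the homotopy of Equation~\eqref{eq:exemple-qui-fache} (killing a full edge via a dashed edge to an internal vertex) is taken over, for $m = 1$, by the cluster-merging part $c_0'$ of the differential — the second graph of Equation~\eqref{eq:c-zero}. Concretely one proceeds by induction on $k$: for $k = 0$ the complex is $\R$; for the inductive step one peels off the last external terrestrial vertex, splits $\vgraphs_{1n}^{0}(k+1,0)_{\conn}$ into the submodule where that vertex sits alone in its Lie word directly joined to an external vertex $i$ (this part is $k$ copies of $\vgraphs_{1n}^{0}(k,0)_{\conn}$), the submodule where it sits alone in a Lie word with an internal vertex, and the rest, and runs a spectral sequence — filtering by the number of full edges plus the number of internal aerial vertices, so that the $E^0$ differential is the cluster-merging $[c_0', -]$ — exactly as in the proof of Lemma~\ref{lem:qiso-k-zero}. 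This yields the recurrence $\beta^{j}(k+1,0) = k \cdot \beta^{j}(k,0)$ with $\beta^{0}(1,0) = 1$, matching $\dim \eV_1(k) = k!$ (now concentrated in degree $0$, since $m - 1 = 0$).

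The third step is immediate: Lemma~\ref{lem:split} carries over with the same proof — the dg-ideal $I_{k,l}$ of graphs having a Lie-connected component containing both an external aerial and an external terrestrial vertex is acyclic, by the same induction on $k$ and the same filtration, with cluster-merging in place of the dashed-edge homotopy — so $H^{*}(\vgraphs_{1n}^{0}(k,l)) \cong H^{*}(\vgraphs_{1n}^{0}(k,0)) \otimes H^{*}(\vgraphs_{1n}^{0}(0,l))$ by Künneth. The complex $\vgraphs_{1n}^{0}(0,l)$ has no external terrestrial vertices, so the inductive argument of Lemma~\ref{lem:other-lemma} (on $l$, peeling off the last external aerial vertex) applies with only cosmetic changes — internal terrestrial vertices now living in Lie words rather than being joined by dashed edges, the extra graphs so produced being killed by the spectral sequence — and gives $H^{*}(\vgraphs_{1n}^{0}(0,l)) \cong \vscV_{1n}(0,l)$. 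Combining, $H^{*}(\vgraphs_{1n}^{0}(k,l)) \cong \eV_1(k) \otimes \vscV_{1n}(0,l) = \vscV_{1n}(k,l)$, which together with surjectivity proves that $\pi$ is a quasi-isomorphism.

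The main obstacle is the $l = 0$ step. For $m \ge 2$ the identification $\vgraphs_{mn}^{0}(k,0) \simeq \eV_m(k)$ rested, after discarding the aerial clutter, on the formality theorem $\Graphs_m \simeq \eV_m$ of~\cite[Theorem~8.1]{LambrechtsVolic2014}; for $m = 1$ there is no such input available (we only have $n \ge 3$, not a usable $\graphs_1$), so the PBW bookkeeping must be done by hand. One must check that after the spectral sequence kills the full edges and the internal aerial vertices, the residual complex of edgeless Lie-decorated graphs on the ordered set $\underline k$ with the cluster-merging differential genuinely computes $\Ass^{\vee}(k)$, and — the delicate point — that the signs coming from internal terrestrial vertices being of odd degree $-1$ are consistent throughout. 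I expect essentially all of the real work to be concentrated here; the remaining steps are faithful transcriptions of their $m \ge 2$ counterparts.
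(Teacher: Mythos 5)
Your proposal is correct and takes essentially the same route as the paper: the paper likewise notes surjectivity of $\pi$ on cohomology and then adapts Lemmas~\ref{lem:qiso-k-zero},~\ref{lem:split} and~\ref{lem:other-lemma} by re-choosing the splittings (for $l=0$, the distinguished submodule is the one where the last external terrestrial vertex is isolated and not adjacent to an internal terrestrial vertex, with the remaining two pieces cancelling on the associated graded), so that the cluster-merging term $c'_{0}$ of Equation~\eqref{eq:c-zero} plays the role formerly played by dashed edges. The PBW and sign bookkeeping you flag at the end is left at the same level of detail in the paper, which simply asserts that these modified splittings yield the expected recurrences on the Betti numbers.
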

\begin{proof}
  As in the case $m = 2$, the map $\pi$ is clearly surjective on cohomology, so we just need to check that $\vgraphs_{1n}^{0}(k,l)$ has the correct Betti numbers.
  The proofs of Lemmas~\ref{lem:qiso-k-zero},~\ref{lem:split}, and~\ref{lem:other-lemma} can be adapted in a straightforward manner.
  We can follow the same proofs, replacing $m$ with $1$.
  The crucial difference will be in the splitting of the complex $\vgraphs_{1n}^{0}(k,l)_{\conn}$ or of $I_{k,l}$.
  \begin{itemize}[nosep]
  \item In $\vgraphs_{1n}^{0}(k,0)_{\conn}$ (for Lemma~\ref{lem:qiso-k-zero}), we set $U$ to be the submodule where the $(k+1)$th external vertex is isolated but not adjacent to a terrestrial internal vertex (terrestrial vertices are ordered for $m=1$), $V$ the submodule where the $(k+1)$th external vertex is isolated and adjacent to a terrestrial internal vertex, and $W$ all other kinds of graphs.
  \item In $I_{k,l}$ (for Lemma~\ref{lem:split}), we use the same splitting as for $\vgraphs_{1n}^{0}(k,0)_{\conn}$.
  \item In $\vgraphs_{1n}^{0}(0,l)$ (for Lemma~\ref{lem:other-lemma}), we keep the same $U$, $V$, and $V'$ as in the proof of Lemma~\ref{lem:other-lemma}.
    We change the submodules $W$ and $W'$: in $W$, we require the last external vertex to be connected to a univalent internal terrestrial vertex, while in $W'$ we put all other graphs.
  \end{itemize}
  With this, we obtain the correct recurrence relations on the Betti numbers.
\end{proof}

\subsubsection{Conclusion}

\begin{theorem}
  \label{thm:main}
  The operad $\VFM_{mn}$ is formal over $\R$ for $n - 2 \ge m \ge 1$.
\end{theorem}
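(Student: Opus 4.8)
The plan is to assemble into a single zigzag of quasi-isomorphisms of relative Hopf cooperads all the maps constructed in Sections~\ref{sec.graph-complexes} and~\ref{sec:proof-formality}, and then to feed this zigzag into the operadic rational homotopy theory recalled in Section~\ref{sec:pa-forms}. First I would write down the chain
\begin{equation*}
  H^{*}(\VFM_{mn}) = \vscV_{mn} \xleftarrow{\pi} \vgraphs_{mn}^{0} \xleftarrow{\operatorname{ev}_{t=0}} \vgraphs_{mn} \otimes S(t,dt) \xrightarrow{\operatorname{ev}_{t=1}} \vgraphs_{mn} \xrightarrow{\omega} \OmPA^{*}(\VFM_{mn}),
\end{equation*}
in which $\omega$ is the morphism of Proposition~\ref{prop:omega-reduced}, the two evaluation maps are the quasi-isomorphisms appearing in the proof of Corollary~\ref{cor:sgraphs-zero-qiso}, and $\pi$ is the quasi-isomorphism of Proposition~\ref{prop:pi} (for $m \ge 2$) and Proposition~\ref{prop:case-m-1} (for $m = 1$). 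By construction each of these maps is a morphism of \emph{relative} Hopf cooperads lying over the corresponding arrow of the formality zigzag $\eV_{n} \xleftarrow{\sim} \graphs_{n} \xrightarrow{\sim} \OmPA^{*}(\FM_{n})$ of Theorem~\ref{thm:fml-kont}.

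The one link whose quasi-isomorphism property is not yet on record is $\omega$, and proving it is the step I expect to be the main obstacle --- it is the analogue for $\vgraphs_{mn}$ of the hard part of~\cite[Chapters~8--9]{LambrechtsVolic2014}. The argument I would give is a dimension count. By Corollary~\ref{cor:sgraphs-zero-qiso} together with Propositions~\ref{prop:pi} and~\ref{prop:case-m-1}, the cohomology $H^{*}(\vgraphs_{mn}(U,V))$ is isomorphic, as a graded algebra, to $\vscV_{mn}(U,V) = H^{*}(\VFM_{mn}(U,V)) = H^{*}(\OmPA^{*}(\VFM_{mn}(U,V)))$; in particular it is finite dimensional in each degree, with the Poincaré series computed in Section~\ref{sec.comp-cohom}. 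On the other hand $H^{*}(\omega)$ is surjective: it is an algebra map, and the algebra generators $\omega_{vv'}$, $\tilde\omega_{uu'}$, $\eta_{v}$ of $H^{*}(\VFM_{mn}(U,V))$ all lie in its image, being represented respectively by the one-edge graphs $e_{vv'}$ and $\tilde{e}_{uu'}$ and by the graph of Remark~\ref{rmk:graph-eta}, whose images under $\omega$ are exactly these generators by the computations of Section~\ref{sec.cooperad-sgraphs_mn} and Remark~\ref{rmk:graph-eta}. Since a surjective morphism between finite-dimensional graded vector spaces of equal dimension in each degree is an isomorphism, $H^{*}(\omega)$ is an isomorphism in every arity, so $\omega$ is a quasi-isomorphism and the chain above is a zigzag of quasi-isomorphisms of relative Hopf cooperads.

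It remains to deduce formality in the sense of Section~\ref{sec:pa-forms}. The pair $(\VFM_{mn}, \FM_{n})$ is a relative operad in compact SA sets with $\FM_{n}(0) = \{*\}$ and $\VFM_{mn}(\varnothing,\varnothing) = \FM_{m}(\varnothing) = \{*\}$, so the cofibrancy/unitality hypothesis needed for the comparison statement of Fresse (in its colored form, as recalled after the definition of $\OmPA^{*}$ of an operad) is satisfied. By that statement, a relative Hopf cooperad quasi-isomorphic to $\OmPA^{*}(\VFM_{mn})$ over a Hopf cooperad quasi-isomorphic to $\OmPA^{*}(\FM_{n})$ encodes the real homotopy type of $(\VFM_{mn},\FM_{n})$, and the zigzag above together with the formality zigzag of Theorem~\ref{thm:fml-kont} for $\FM_{n}$ exhibits $(\vscV_{mn}, \eV_{n}) = (H^{*}(\VFM_{mn}), H^{*}(\FM_{n}))$ as such a cooperad. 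Therefore $\VFM_{mn}$ is formal over $\R$, which is the assertion of the theorem; and since by Propositions~\ref{prop:zigzag-disk-sfm} and~\ref{prop:zigzag-vsc-vfm} the operad $\VSC_{mn}$ has the same homotopy type and the same cohomology Hopf cooperad as $\VFM_{mn}$, this in particular gives Theorem~A.
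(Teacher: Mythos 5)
Your proposal is correct and follows essentially the same route as the paper: the same zigzag $\vscV_{mn} \xleftarrow{\pi} \vgraphs_{mn}^{0} \gets \vgraphs_{mn} \otimes S(t,dt) \to \vgraphs_{mn} \xrightarrow{\omega} \OmPA^{*}(\VFM_{mn})$, with the quasi-isomorphism property of $\omega$ deduced, exactly as in the paper, by noting that both sides have cohomology $\vscV_{mn}$ (finite dimensional in each degree) and that $H^{*}(\omega)$ is surjective because the algebra generators $\omega_{vv'}$, $\tilde\omega_{uu'}$, $\eta_{v}$ are represented by the one-edge graphs and the graph of Remark~\ref{rmk:graph-eta}. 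The only difference is cosmetic: you spell out the appeal to Fresse's comparison result from Section~\ref{sec:pa-forms}, which the paper leaves implicit in its definition of formality.
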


\begin{proof}
  We have a zigzag, where $\vscV_{mn}$ is defined in Section~\ref{sec.comp-cohom}, $\VGraphs_{mn}$ in Definition~\ref{def:vgraphs}, $\VGraphs_{mn}^{0}$ in Definition~\ref{def:sgraphs-zero}, $\VGraphs_{mn} \otimes S(t,dt)$ in Corollary~\ref{cor:sgraphs-zero-qiso}, and $\vgraphs_{mn}^{0}$ in Definition~\ref{def:small-vgraphs0}, the map $\pi$ is defined at the beginning of Section~\ref{sec:conn-graphs-cohom}, and the map $\omega$ is defined in Proposition~\ref{prop:omega-reduced}:
  \begin{multline*}
    \vscV_{mn} \xleftarrow{\pi} \vgraphs_{mn}^{0} \gets \VGraphs_{mn}^{0} \gets \\ \gets \VGraphs_{mn} \otimes S(t,dt) \to \VGraphs_{mn} \xrightarrow{\omega} \OmPA^{*}(\VFM_{mn}),
  \end{multline*}

  We proved in Proposition~\ref{prop:coop-G} that $\vscV_{mn} \cong H^{*}(\VFM_{mn})$ as Hopf cooperads.
  We moreover proved in Corollary~\ref{cor:sgraphs-zero-qiso} that the two maps involving the three variants of $\VGraphs_{mn}$ were quasi-isomorphisms of Hopf cooperads.
  We also proved that the quotient $\VGraphs_{mn}^{0} \to \vgraphs_{mn}^{0}$ is a quasi-isomorphism in Proposition~\ref{prop:quotient-qiso}.
  In addition, we proved that $\pi$ was a quasi-isomorphism of Hopf cooperads in Proposition~\ref{prop:pi} (for $m \ge 2$) and Proposition~\ref{prop:case-m-1} (for $m = 1$).
  Therefore it just remains to check that $\omega$ is a quasi-isomorphism of Hopf cooperads to conclude.

  We already know that $\VGraphs_{mn}$ and $\OmPA^{*}(\VFM_{mn})$ have the same cohomology $\vscV_{mn}$.
  Thus we only need $\omega$ to be surjective on cohomology, which is clear ($\eta_{v}$ is obtained by graphs of the type seen in Remark~\ref{rmk:graph-eta}).
\end{proof}

\appendix

\section{Relative cooperadic twisting}
\label{sec:twist-relat-cooper}

Operadic twisting is a tool originally introduced in~\cite[Appendix~I]{Willwacher2014}, studied in further detail in~\cite{DolgushevWillwacher2015}, and generalized to certain types of colored operads in~\cite[Appendix~C]{Willwacher2016}.
In this appendix, we quickly recall operadic twisting for cooperads and right comodules, and we combine both to deal with relative cooperads.

\subsection{Twisting cooperads}
\label{sec:twisting-cooperads}

General references for twisting of plain operads are~\cite[Appendix~I]{Willwacher2014} and~\cite{DolgushevWillwacher2015}.
The dual notion of cooperadic twisting is spelled out in~\cite[Section~1.5]{Idrissi2018b}.
Let $\Lie_{n} = \Lie\{n-1\}$ be the operad governing Lie algebras with a bracket of homological degree $n-1$ (so $\Lie_{n} \subset \ee_{n}$).
Let $\hoLie_{n} = \Omega(\Lie_{n}^{\ashk}) = \Omega(\Com^{\vee}\{n\})$ be its Koszul resolution.
Suppose that $\CC$ is a cooperad with finite-dimensional components (so that $\CC^{\vee}$ is an operad) equipped with a morphism $\mu : \hoLie_{n} \to \CC^{\vee}$.
We consider the following convolution Lie algebra:
\[ \gk_{\CC} \coloneqq \hom_{\Sigma}(\Com^{\vee}\{n\}, \CC^{\vee}) = \biggl( \prod_{i \ge 0} \bigl( \CC^{\vee}(i) \otimes \R[-n]^{\otimes i} \bigr)^{\Sigma_{i}}[n], d, [-,-] \biggr). \]
The differential is induced from $\CC$.
The Lie bracket of $f,g \in \gk_{\CC}$ is $[f,g] = f \star g \mp g \star f$, where $\star$ is the convolution product.
Thanks to~\cite[Theorem~6.5.7]{LodayVallette2012}, the morphism $\mu : \hoLie_{n} \to \CC^{\vee}$ can equivalently be seen as a Maurer--Cartan element $\mu \in \gk_{\CC}$.

The twist of $\CC$ with respect to $\mu$ is the dg-cooperad given in each arity by:
\[ \Tw \CC(U) \coloneqq \bigoplus_{i \ge 0} \bigl( \CC(U \sqcup \{1,\dots,i\}) \otimes \R[n]^{\otimes i} \bigr)_{\Sigma_{i}}. \]
The entries labeled by $U$ are called ``external'', whereas the entries that were labeled by $\{1,\dots,i\}$ before taking coinvariants are called ``internal''.
The cooperadic structure is inherited from $\CC$.
Let $\mu_{1} \in \prod_{j \ge 0} \CC^{\vee}(\{1,\dots,j,*\})^{\Sigma_{j}} = \Tw \CC^{\vee}(*)$ (up to shifts and signs) be the element obtained from $\mu$ by summing over all possible ways of distinguishing one of the inputs.
The differential of $x \in \Tw \CC$ is $dx = d_{\CC}x + x \cdot \mu - x \cdot \mu_{1} - \mu_{1} \cdot x$, i.e.\ the sum of the internal differential of $\CC$ with a threefold action of $\mu$ that we now describe term by term (see~\cite{Willwacher2014} for details):
\begin{enumerate*}[label={(\roman*)}]
\item co-insertion of $\mu$ in an internal input of $x$ in all possible ways;
\item co-insertion of $-\mu_{1}$ in an external input of $x$ in all possible ways;
\item co-insertion of $x$ in the external input of $-\mu_{1}$.
\end{enumerate*}
One checks that $\mu - \mu_{1}$ defines a Maurer--Cartan element in $\gk_{\CC} \ltimes \Tw \CC^{\vee}(*)$, so this differential squares to zero.
The compatibility with the cooperad structure is immediate by coassociativity.

\subsection{Twisting right comodules}
\label{sec:twist-right-comod}

We now recall twisting of right comodules (see~\cite[Appendix~C.1]{Willwacher2016} for the dual case of right modules).
Fix $\mu : \hoLie_{n} \to \CC^{\vee}$ as in Section~\ref{sec:twisting-cooperads}.
Suppose that $\MM$ is a right $\CC$-comodule.
Then as a graded module,
\[ \Tw \MM(U) \coloneqq \bigoplus_{i \ge 0} \bigl( \MM(U \sqcup \{1,\dots,i\}) \otimes \R[n]^{\otimes i} \bigr)_{\Sigma_{i}}. \]
This inherits a right $(\Tw\CC)$-comodule structure from the $\CC$-comodule structure of $\MM$.
The differential of $x \in \Tw \MM(U)$ is given by $dx = d_{\MM}x + x \cdot \mu - x \cdot \mu_{1}$ (where one uses the comodule structure instead of the cooperad structure).
Note that since $\MM$ is only a right module, there can be no term of the type $\mu_{1} \cdot x$.

\subsection{Twisting relative cooperads}
\label{sec:twist-relat-coop}

Let us finally deal with relative cooperads (see Section~\ref{sec:operads} for the definition).
The definition is inspired by the case of ``moperads'' (i.e.\ relative operads which can only admit operations with zero or one terrestrial input)~\cite[Appendix~C.3]{Willwacher2016}.

Let $\Lieto_{mn}$ be the relative $\Lie_{n}$-operad governing triples $(\mathfrak{g}, \mathfrak{h}, f)$ where $\mathfrak{g}$ is a $\Lie_{m}$-algebra, $\mathfrak{h}$ is a $\Lie_{n}$-algebra, and $f : \mathfrak{h}[m-n] \to \mathfrak{g}$ is a morphism of shifted Lie algebras.
We define below an operad $\hoLieto_{mn}$ over $\Lieto_{mn}$.
We will not prove that $\Lieto_{mn}$ is Koszul, although this seems doable using techniques similar to~\cite{HoefelLivernet2013}.

Let $\Comto_{mn}$ be the operad governing triples $(A,B,\alpha)$ where $A$ is a $\Com\{n\}$-algebra, $B$ is a $\Com\{m\}$-algebra, and $\alpha : A \to B[n-m]$ is a morphism of shifted commutative algebras.
In particular, $\Comto_{mn}(U,V) \cong \R[-m]^{\otimes U} \otimes \R[-n]^{\otimes V} \otimes \R[m]$ is one-dimensional for all pairs $(U,V) \neq (\varnothing, \varnothing)$ (and $\Comto_{mn}(\varnothing,\varnothing) = 0$).
By definition, $\Comto_{mn}$ is a relative $\Com\{n\}$-operad.
Thus the cobar construction $\hoLieto_{mn} \coloneqq \Omega(\Comto^{\vee}_{mn})$ is a relative $\hoLie_{n}$-operad (since $\hoLie_{n} = \Omega(\Com_{n}^{\vee})$).

Let $\DD$ be a relative $\CC$-cooperad equipped with a map $(c,\mu) : (\hoLieto_{mn}, \hoLie_{n}) \to (\DD, \CC)$.
This map can equivalently be seen as a Maurer--Cartan element in $\gk_{\CC} \oplus \gk_{\CC,\DD}$ (i.e.\ $d\mu + \frac{1}{2}[\mu,\mu] = dc + [\mu,c] + \frac{1}{2} [c,c] = 0$), where $\gk_{\CC,\DD} = \hom_{\Sigma}(\Comto_{mn}^{\vee}, \DD)$.

Let us define the twisted relative $(\Tw\CC)$-cooperad $\Tw \DD$.
As a graded module,
\[ \Tw \DD(U,V) \coloneqq \bigoplus_{i,j \ge 0} \bigl( \DD(U \sqcup \{1,\dots,i\}, V \sqcup \{1,\dots,j\}) \otimes \R[m]^{\otimes i} \otimes \R[n]^{\otimes j} \bigr)_{\Sigma_{i} \times \Sigma_{j}}. \]
The relative $(\Tw \CC)$-cooperad structure is inherited from $\DD$.
Let $c_{1}$ be the element obtained from $c$ by summing over all possible ways of distinguishing one of the terrestrial inputs of $c$ (similarly to how $\mu_{1}$ is defined from $\mu$).
Then the differential of $x \in \Tw \DD(U,V)$ is given by $d_{\DD}x + x \cdot \mu + x \cdot (c - c_{1}) - c_{1} \cdot x$.

\begin{proposition}
  The collection $\Tw \DD$ defines a relative $(\Tw \CC)$-cooperad.
\end{proposition}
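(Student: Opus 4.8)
The plan is to check the only two pieces of data that are not inherited verbatim from $\DD$: that the prescribed differential on $\Tw\DD$ squares to zero, and that the cocomposition maps are chain maps. Indeed, the relative $(\Tw\CC)$-cooperad structure maps $\circ_T^\vee : \Tw\DD(U,V) \to \Tw\DD(U,V/T) \otimes \Tw\CC(T)$ and $\circ_{W,T}^\vee : \Tw\DD(U,V\sqcup T) \to \Tw\DD(U/W,V) \otimes \Tw\DD(W,T)$ are induced componentwise by those of $\DD$ and of $\CC$, so coassociativity, counit, and equivariance hold on the underlying graded modules with no further argument, exactly as in Section~\ref{sec:twisting-cooperads}.

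For $d^2 = 0$ I would reduce to the computations already carried out in Sections~\ref{sec:twisting-cooperads} and~\ref{sec:twist-right-comod}. Write the twisted differential as $d = d_\DD + \partial_\mu + \partial_c$, where $\partial_\mu(x) = x\cdot\mu$ is the threefold co-insertion of $\mu$ at the internal aerial inputs (with, as in the comodule case, no left term), and $\partial_c(x) = x\cdot(c - c_1) - c_1\cdot x$ collects the co-insertions involving $c$ at the internal terrestrial inputs, $c_1$ being obtained from $c$ by distinguishing one terrestrial input just as $\mu_1$ is from $\mu$. Since $\hoLie_n$ acts on $\DD$ through $\CC$ and $(c,\mu)$ is a morphism of relative operads, $\partial_\mu$ and $\partial_c$ are precisely the twisting operators attached to the Maurer--Cartan element $(\mu, c) \in \gk_\CC \oplus \gk_{\CC,\DD}$ (together with the auxiliary operators built from $\mu_1$ and $c_1$) acting on $\Tw\DD$ as in the plain and comodule cases. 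Expanding $d^2$: the summands $d_\DD^2$, $d_\DD\partial_\mu + \partial_\mu d_\DD$ and $\partial_\mu^2$ cancel exactly as in Sections~\ref{sec:twisting-cooperads}–\ref{sec:twist-right-comod} by virtue of $d\mu + \tfrac12[\mu,\mu] = 0$; the remaining summands $d_\DD\partial_c + \partial_c d_\DD$, $\partial_\mu\partial_c + \partial_c\partial_\mu$ and $\partial_c^2$ reassemble, after matching the distinguished-input terms, into the relation $dc + [\mu,c] + \tfrac12[c,c] = 0$, and hence vanish. The summand $-c_1\cdot x$ plays in the terrestrial direction the role that $-\mu_1\cdot x$ plays in the aerial one in plain cooperad twisting, reflecting that $\Tw\DD$ carries genuine insertions of terrestrial operations rather than a mere comodule action.

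Finally, compatibility of $d$ with $\circ_T^\vee$ and $\circ_{W,T}^\vee$ is immediate from the coassociativity of the structure maps of $\DD$, exactly as at the end of Section~\ref{sec:twisting-cooperads}: co-inserting $\mu$, $c$ or $c_1$ at an internal input commutes, up to sign, with relative cocomposition because $\mu$ and $c$ are themselves (relative) operad morphisms and $\mu_1$, $c_1$ are defined so that the identities hold termwise. The main obstacle is the sign and combinatorial bookkeeping in the $d^2 = 0$ verification — in particular ensuring that co-insertions of $\mu$ at internal aerial inputs and of $c$, $c_1$ at internal terrestrial inputs interact correctly, and that the mixed-color terms assemble exactly into the cross term $[\mu,c]$ of the Maurer--Cartan equation. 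This is entirely parallel to, but bulkier than, the plain and comodule cases of Sections~\ref{sec:twisting-cooperads}–\ref{sec:twist-right-comod} and the moperad case of~\cite[Appendix~C.3]{Willwacher2016}, so I would present it by reduction to those, spelling out only the new mixed terms.
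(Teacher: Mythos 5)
Your proposal is correct and follows essentially the same route as the paper: the paper also reduces $d^2=0$ to the Maurer--Cartan equations for $(\mu,c)$ (packaged there as the statement that $\mu + c + c_{1}$ is a Maurer--Cartan element in $\gk_{\CC} \rtimes \gk_{\CC,\DD} \rtimes \Tw\DD^{\vee}(*,\varnothing)$ acting by cooperadic coderivations, which is exactly your term-matching done once and for all), and likewise obtains compatibility of the differential with the cocomposition maps from coassociativity, citing the same plain-operad and moperad precedents. Your hand expansion into $d_{\DD}$, $\partial_{\mu}$, $\partial_{c}$ and the cross term $[\mu,c]$ is just an unpacked version of that argument.
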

\begin{proof}
  Generalizing the proofs of~\cite[Appendix~I]{Willwacher2014} and~\cite[Appendix~C.3]{Willwacher2016} to this setting is straightforward.
  One checks that $\mu + c + c_{1}$ defines a Maurer--Cartan element in $\gk_{\CC} \rtimes \gk_{\CC,\DD} \rtimes \Tw \DD^{\vee}(*,\varnothing)$ -- which acts by cooperadic coderivations on $\Tw \DD$ -- so the differential above squares to zero.
  Compatibility with the cooperad structure follows from the coassociativity of the cooperad structure.
\end{proof}

\section{Compactifications and projections}
\label{sec:semi-algebr-comp}

In this appendix, we sketch a proof of Proposition~\ref{prop:vfm-sa}: $\VFM_{mn}(U,V)$ is a compact SA manifold and a smooth manifold with corners, and the canonical projection maps are SA bundles.
Our proofs are heavily inspired by~\cite[Section~5.9]{LambrechtsVolic2014}.

\begin{wrapfigure}[4]{r}{2.5cm}
  \vspace{-1.5em}
  \begin{tikzpicture}[grow' = up, sibling distance = .5cm, level distance = .5cm]
    \coordinate
    child{
      child{ node {$v_{1}$} }
      child{ node {$v_{2}$}}
    }
    child{ node {$v_{3}$} }
    child[dashed]{
      child[solid] { node {$v_{4}$}}
      child[dashed]{
        child[dashed] { node {$u_{1}$}}
        child[dashed] { node {$u_{2}$}}
      }
    };
  \end{tikzpicture}
\end{wrapfigure}

Let $(U,V)$ be a pair of finite sets.
A relative (rooted) tree $\calT$ with leaves $(U,V)$ is a rooted tree with dashed and full edges.
We require that the leaves with incident full (resp.\ dashed) edge are in bijection with $U$ (resp.\ $V$), that if a vertex has only one incoming edge then this edge is full, and that if an edge is full then all the edges above it are full.
An example is on the side.

For a relative tree $\calT$, we let $V_{\calT}$ be the set of all its vertices, $V^{0}_{\calT} = V_{\calT} \setminus \rooot$, and $V^{*}_{\calT} = V_{\calT} \setminus (U \cup V)$.
The set $V_{\calT}$ is partially ordered by considering that a vertex is smaller than any vertex above it.
For $i \in V_{\calT}$, we let $\inc(i) = \inc_{t}(i) \cup \inc_{a}(i) = \{ \text{incoming dashed edges} \} \cup \{ \text{incoming full edges} \}$ and $\parent(i) \in V_{\calT}$ be the immediate predecessor of $i$.
Finally, we let:
\[ \Conft_{mn}^{\calT} \coloneqq \prod\nolimits_{i \in V^{*}_{\calT}} \Conft_{mn}(\inc_{t}(i), \inc_{a}(i)). \]
The spaces $\Conft_{mn}^{\calT}$ will be used to give a decomposition of $\VFM_{mn}(U,V)$ as in~\cite[Section~5.9.1]{LambrechtsVolic2014}.
Let $\xi = (\xi_{i})_{i \in V^{*}_{\calT}} \in \Conft_{mn}^{\calT}$.
We can represent $\xi_{i}$ by a configuration $\bar\xi_{i} \in \Conf_{mn}(\inc_{t}(v), \inc_{a}(i))$ of radius $1$ and whose barycenter is in $\{0\}^{m} \times \R^{n-m}$.
For $i \in V^{0}_{\calT}$, we let $\xi(i) \coloneqq \bar\xi_{\parent(i)}(i)$.
We then define, for $r > 0$ and $i \in V_{\calT}$:
\[ x(\xi,r,i) \coloneqq \sum\nolimits_{j \in V^{0}_{\calT}, j \le i} \xi(j) \cdot r^{\operatorname{height}(j)} \in \R^{n}. \]
Then $(x(\xi,r,i))_{i \in U \sqcup V}$ is a configuration for $r$ small enough.
Let use define $h_{\calT} : \Conft_{mn}^{\calT} \hookrightarrow \VFM_{mn}(U,V)$ by $h_{\calT}(\xi) = \lim_{r \to 0} (x(\xi,r,i))_{i \in U \sqcup V}$.
The map $h_{\calT}$ is a homeomorphism onto its image, $\{ \operatorname{im}(h_{\calT})\}_{\calT}$ covers $\VFM_{mn}(U,V)$, and the interior of $\VFM_{mn}(U,V)$ is the stratum corresponding to a corolla.

Now let $x = h_{\calT}(\xi) \in \VFM_{mn}(U,V)$.
We want to build an SA chart around $x$.
Let
\[ r_{1} \coloneqq \frac{1}{4} \min \{ \| \bar{\xi}_{i}(a) - \bar{\xi}_{i}(b) \| \mid i \in V^{0}_{\calT}, a \neq b \in \inc(i) \} \cup \{ d(\bar\xi_{i}(v), \R^{m}) \mid i \in V^{0}_{\calT}, v \in \inc_{a}(i) \}. \]
Note that $r_{1} \le \frac{1}{2}$, because $\bar\xi_{i}$ has radius $1$.
Define a neighborhood of $\xi$ by $W = \{ \zeta \in \Conft_{mn}^{\calT} \mid \forall i, \| \xi(i) - \zeta(i) \| \le r_{1}^{\#U+\#V+1}\}$ (thanks to the distance condition, distinct points stay distinct and aerial points stay aerial).
For $\tau \in [0,r_{1}]^{V^{*}_{\calT}}$ with $\tau_{\rooot} = 0$ and $0 \le r \le r_{1}$, we let $y(\zeta,\tau,r,\rooot) \coloneqq 0$ and
\[ y(\zeta,\tau,r,i) \coloneqq y(\zeta,\tau,r,\parent(i)) + \xi(i) \cdot \prod_{j < i} \max(r, \tau_{j}). \]
We can then define $\Phi : W \times [0,r_{1}]^{V^{*}_{\calT} \setminus \{\rooot\}} \to \VFM_{mn}(U,V)$ by $\Phi(\zeta,\tau) \coloneqq \lim_{r \to 0} (y(\xi,\tau,r,i))_{i \in U \sqcup V}$.
We also let $V$ be the image of $\Phi$.
The proof that $\Phi$ is an SA chart onto a compact neighborhood of $x$ is identical to~\cite[Lemma~5.9.3]{LambrechtsVolic2014}.
This proves the first part of Proposition~\ref{prop:vfm-sa}.

We would now like to prove that $p_{U,V} : \VFM_{mn}(U \sqcup I, V \sqcup J) \to \VFM_{mn}(U,V)$ is an SA bundle.
Since the composite of two SA bundles is an SA bundle~\cite[Proposition~8.5]{HardtLambrechtsTurchinVolic2011}, it is sufficient to check that the following are SA bundles:
\begin{equation*}
  p : \VFM_{mn}(U \sqcup *, V) \to \VFM_{mn}(U, V), \quad q : \VFM_{mn}(U, V \sqcup *) \to \VFM_{mn}(U,V).
\end{equation*}
We will describe the fibers explicitly as complements of open balls.
The fiber of $p$ will be almost identical to the one in~\cite[Section~5.9.4]{LambrechtsVolic2014}.
However the fiber of $q$ is slightly different, because the new aerial point cannot touch the ground.

Let $x = h_{\calT}(\xi)$, $r_{1} > 0$, and $W$ as before.
For $\zeta \in W$ and $i \in V_{\calT}$, define $x_{1}(\zeta,i) \coloneqq x(\zeta,r_{1},i)$ and $\varepsilon(i) \coloneqq 4 r_{1}^{\operatorname{height}(i)+1}$.
Let $B_{i}(\zeta) \coloneqq B(x_{1}(\zeta,i), \varepsilon(i))$ be the closed ball.
Then $B_{i}(\zeta) \subset 1/3 B_{j}(\zeta)$ if $i < j$ and $B_{i}(\zeta) \cap B_{j}(\zeta) = \varnothing$ otherwise.

Recall $\phi : \R^{n} \times [0,1] \times [0,2] \times \R^{n} \to \R^{n}$, $(c,r,\varepsilon,x) \mapsto \phi_{r}^{c,\varepsilon}(x)$ from~\cite[Lemma~5.9.5]{LambrechtsVolic2014}.
It is such that $\phi_{r}^{c,\varepsilon}$ is radial, the identity outside $B(c,\varepsilon)$, and shrinks $B(c,\varepsilon/3)$ by a factor $r$.
Moreover for a configuration $x \in \Conf_{B(c,\varepsilon/3)}(k)$, $\phi_{r}^{c,\varepsilon}(x)$ is a configuration in $\Conft_{n}(k)$ that does not depend on $r$, and $\phi$ behaves well with respect to other points $z \in B(c,\varepsilon)$ (see the reference for details).
We note in addition that, thanks to the properties of $\phi$, if $c \in \R^{m}$ then $\phi_{r}^{c,\varepsilon}(\R^{m}) \subset \R^{m}$, and if $c \not\in R^{m}$, then $\phi_{r}^{c,\varepsilon}(\R^{n} \setminus B(\R^{m}, \varepsilon)) \subset \R^{n} \setminus B(\R^{m}, \varepsilon)$ (where $B(\R^{m}, \varepsilon) = \bigcup_{z \in \R^{m}} B(z,\varepsilon)$).

Now, fix $\zeta \in W$ and $\tau \in [0,r_{1}]^{V_{\calT}}$ s.t.\ $\tau_{\rooot} = 0$ and $\tau_{i} = 0$ for $i \in U \sqcup V$ a leaf.
Then for $i \in V_{\calT}$ and $0 < r \le r_{1}$, let $\phi_{r}^{i} = \phi_{\max(r, \tau(i))/r_{1}}^{x_{1}(\zeta,i), \varepsilon(i)}$.
Moreover, let $\phi_{r}$ be the composition (in any order thanks to the disjointness of the balls) of the $\phi_{r}^{i}$ for $i \in U \sqcup V$.
(Despite the notation, $\phi_{r}$ depends on $x$, $\zeta$ and $\tau$.)
We then check, like in~\cite[Lemma~5.9.6]{LambrechtsVolic2014}, that $\phi_{r}(x_{1}(\zeta,i)) = x(\zeta,\tau,r,i)$ for $r > 0$ and $i \in U \sqcup V$.

We can now check the local trivialities of $p$ and $q$.
Let us first deal with $p : \VFM_{mn}(U \sqcup *, V) \to \VFM_{mn}(U,V)$.
We define $F_{\zeta} \coloneqq B(x_{1}(\zeta, \rooot), \varepsilon(\rooot)/2) \setminus \bigcup_{u \in U} B(x_{1}(\zeta,u), \varepsilon(u)/2) \cap \R^{m}$ for $\zeta \in W$, which will be the fiber of $p$ over $h_{\calT}(\zeta)$.
We also set $F \coloneqq B(0,\#U+1) \setminus \bigcup_{i = 1}^{\#U} \mathring{B}((i, 0, \dots, 0), 1/4) \cap \R^{m}$.
Note that $F$ is a compact SA manifold (a closed $m$-ball with $U$ open balls removed).
There is an SA-homeomorphism $\Theta_{\zeta}: F \cong F_{\zeta}$ since $W$ is small enough.
Let
\begin{align*}
  \widehat{\Phi} : W \times [0,1]^{V_{\calT}^{*} \setminus \rooot} \times F & \to \VFM_{mn}(U \sqcup *, V), \\
  (\zeta, \tau, z) & \mapsto \lim_{r \to 0} \bigl( (\phi_{r}(x_{1}(\zeta,i)))_{i \in U \sqcup V}, \phi_{r}(\Theta_{\zeta}(z)) \bigr).
\end{align*}
Then $\widehat{\Phi}$ covers $\Phi$.
The proof that $\widehat{\Phi}(\zeta,\tau,-)$ maps $F_{\zeta}$ homeomorphically onto $p^{-1}(\Phi(\zeta,\tau))$ is identical to the proof in~\cite[Section~5.9.4]{LambrechtsVolic2014}.

Now let us deal with $q : \VFM_{mn}(U, V \sqcup *) \to \VFM_{mn}(U,V)$.
Its fiber over $h_{\calT}(\zeta)$ will be $G_{\zeta} \coloneqq B(x_{1}(\zeta, \rooot), \varepsilon(\rooot)/2) \setminus \bigcup_{v \in V} \mathring{B}(x_{1}(\zeta,v), \varepsilon(v)/2) \setminus \mathring{B}(\R^{m}, r_{1})$.
We also have $G \coloneqq B(0, \#V+1) \setminus \mathring{B}(\R^{m}, 1/4) \setminus \bigcup_{i=1}^{\#V} \mathring{B}((i,0,\dots,0,1,0,\dots,0), 1/4)$ (where the $1$ in the open ball is in position $m+1$).
This is a compact SA manifold (a closed $n$-ball with an open tubular neighborhood of $\R^{m}$ and $V$ open $n$-balls removed) and we have an SA-homeomorphism $\Theta_{\zeta} : G \cong G_{\zeta}$ since, again, $W$ is small enough.
We can then define a chart $\widehat{\Phi} : W \times [0,1]^{V_{\calT}^{*} \setminus \rooot} \times G \to \VFM_{mn}(U \sqcup *, V)$ with a formula similar to the one above.
This map covers $\Phi$, and showing that $\widehat{\Phi}(\zeta,\tau,-)$ maps $G_{\zeta}$ SA-homeomorphically to $q^{-1}(\Phi(\zeta,\tau))$ is a straightforward adaption of the arguments in~\cite[Section~5.9.4]{LambrechtsVolic2014}.
This completes the proof of the second part of Proposition~\ref{prop:vfm-sa}.

\printbibliography
\end{document}